\documentclass[a4paper, 10pt, oneside]{amsart}
\usepackage{times,latexsym,amssymb}
\usepackage{amsmath}
\usepackage{amsthm}

\usepackage{typearea}
\areaset{13,4cm}{22,5cm}

\allowdisplaybreaks
\sloppy
\newtheorem{theorem}{Theorem}
\newtheorem*{theorem-plain}{Theorem}
\newtheorem{lemma}[theorem]{Lemma}
\newtheorem{definition}[theorem]{Definition}
\newtheorem{proposition}[theorem]{Proposition}

\newtheorem{remark}[theorem]{Remark}

\numberwithin{theorem}{section}
\numberwithin{equation}{section}

\newcommand{\mint}{- \mskip-19,5mu \int}
\newcommand{\tmint}{- \mskip-16,5mu \int}

\def\N{\mathbb{N}}
\def\R{\mathbb{R}}

\def\K{\mathcal K}
\def\dz{\,dz}
\def\dx{\,dx}

\def\dt{\,dt}
\def\ds{\,ds}
\def\dtau{\,d\tau}

\def\half{\textstyle\frac{1}{2}\displaystyle}

\def\<{\langle}
\def\>{\rangle}

\def\nn{\nonumber}

\DeclareMathOperator{\Div}{div}
\DeclareMathOperator{\diam}{diam}

\renewcommand{\epsilon}{\varepsilon}
\renewcommand{\rho}{\varrho}

\begin{document}
\title[Global gradient bound]{Global gradient bounds for the parabolic\\ $p$-Laplacian system}
\date{\today}
\author[V. B\"ogelein]{Verena B\"{o}gelein${}^\dagger$}
\address{Verena B\"ogelein\\Department Mathematik, Universit\"at
Erlangen--N\"urnberg\\ Cauerstra\ss e 11, 91058 Erlangen, Germany}
\email{boegelein@math.fau.de}
\thanks{${}^\dagger$Phone: +49 9131 85-67098, Fax: +49 9131 85-67100, E-mail address: boegelein@math.fau.de}

\keywords{parabolic $p$-Laplacian, boundary regularity, Gradient bound, Lipschitz regularity}
\subjclass[2010]{35K51, 35K55, 35K65, 35K92}

\begin{abstract} 
A by now classical result due to DiBenedetto states that the spatial gradient of solutions to the parabolic $p$-Laplacian system is locally H\"older continuous in the interior.
However, the boundary regularity is not yet well understood.
In this paper we prove a boundary $L^\infty$-estimate for the spatial gradient $Du$ of solutions to the parabolic $p$-Laplacian system
\begin{equation*}
	\partial_t u - \Div \big(|Du|^{p-2}Du\big) = 0
	\quad\mbox{in $\Omega\times(0,T)$}
\end{equation*}
for $p\ge 2$, together with a quantitative estimate. In particular, this implies the global Lipschitz regularity of solutions.
The result continues to hold for the so called asymptotically regular parabolic systems.
\end{abstract}

\maketitle

\section{Introduction}

The question concerning the regularity of solutions to partial differential equations of $p$-Laplacian type was a longstanding open problem and still there are unsolved questions in this field. The first breakthrough was achieved by Ural'tseva \cite{Uraltseva:1968} who proved that solutions of the elliptic $p$-Laplacian equation 
\begin{equation}\label{el-plap}
	\Div \big(|Du|^{p-2}Du\big) = 0
\end{equation}
are of class $C^{1,\alpha}$ in the interior of the domain. The analogous result for elliptic $p$-Laplacian systems -- which cannot be treated by the techniques used by Ural'tseva for equations -- was achieved  ten years later in the famous paper of Uhlenbeck \cite{Uhlenbeck:1977}. 
In turn, the elliptic techniques did not apply to treat the evolutionary counterpart, the parabolic $p$-Laplacian system
\begin{equation}\label{para-plap}
	\partial_t u - \Div \big(|Du|^{p-2}Du\big) = 0.
\end{equation}
It turned out that the inhomogeneity of the system, i.e. the fact that the scaling with respect to space and time is not homogeneous and therefore $const\cdot u$ is in general not anymore a solution, is a basic obstruction to deduce homogeneous estimates which are unavoidable in any regularity proof. The brilliant idea to use a certain intrinsic geometry which reflects the inhomogeneity of the parabolic system was  invented by 
DiBenedetto \& Friedman \cite{DiBenedetto-Friedman:1984, DiBenedetto-Friedman:1985} who proved H\"older continuity of the spatial gradient of the solution in the interior of the domain.
For the $C^{1,\alpha}$-estimate we also refer to Wiegner \cite{Wiegner:1986}. 
In this setting everywhere regularity cannot be expected.
The crucial idea of DiBenedetto \& Friedman to deal with the parabolic case was to introduce a system of parabolic cylinders different from the standard ones and whose space-time scaling depends on the local behavior of the solution itself. In a certain sense this re-balances the non-homogeneous scaling of the parabolic $p$-Laplacian system. The strategy is to find so called \textit{intrinsic parabolic cylinders} of the form 
\begin{equation*}
	Q_{\rho,\lambda}(z_o)
	:=
	B_\rho(x_o)\times\big(t_o-\lambda^{2-p}\rho^2, t_o+\lambda^{2-p}\rho^2\big),
	\quad z_o=(x_o,t_o)
\end{equation*} 
in such a way that the scaling parameter $\lambda>0$ and the average of $|Du|^p$ over the cylinder are coupled by a condition of the type
\begin{equation}\label{intrinsic}
	\mint_{Q_{\rho,\lambda}(z_o)} |Du|^p \dz
	\approx
	\lambda^p.
\end{equation} 
The delicate aspect within this coupling clearly relies in the fact that the value of the integral average must be comparable to the scaling factor $\lambda$ which itself is involved in the construction of its support. On such intrinsic cylinders the parabolic $p$-Laplacian system \eqref{para-plap} behaves in a certain sense like $\partial_t u - \lambda^{p-2}\Delta u$. Therefore, using cylinders of the type $Q_{\rho,\lambda}(z_o)$ allows to re-balance the occurring multiplicative factor $\lambda^{p-2}$ by re-scaling $u$ in time with a factor $\lambda^{2-p}$. For an application of the technic of intrinsic scaling in the context of higher integrability we refer to Kinnunen \& Lewis \cite{Kinnunen-Lewis:2000, Kinnunen-Lewis:2002}.

With respect to the boundary regularity the situation is quite different. In the elliptic as well as in the parabolic case the boundary regularity is well understood only for the equations \eqref{el-plap} and \eqref{para-plap}. In the case of equations the Lipschitz regularity of solutions to \eqref{para-plap} up to the boundary has been established by DiBenedetto \& Manfredi \& Vespri \cite{DiBenedetto-Manfredi-Vespri:1992} and the boundary $C^{1,\alpha}$ regularity is due to Lieberman \cite{Lieberman:1988, Lieberman:1990}. In the parabolic setting by $C^{1,\alpha}$-regularity we understand H\"older-continuity 
of the spatial gradient $Du$ with respect to space and time. Unfortunately the known proofs of these results use tools like maximum principles that are available only for equations and therefore these techniques cannot apply in the case of systems.
For the associated systems the  $C^{1,\alpha}$ regularity is only known for homogeneous boundary data, i.e. $u\equiv 0$ on the lateral boundary \cite{Chen-DiBenedetto:1989}. 

On the contrary, we are  interested in general boundary data, which cannot be treated like homogeneous boundary data, since reflection arguments are not anymore available.
In this case, i.e. general data at the boundary, it has been shown by DiBenedetto \& Chen \cite{Chen-DiBenedetto:1989} that  the solution -- not the gradient -- is globally H\"older continuous with respect to the parabolic metric for any H\"older exponent $\alpha\in(0,1)$. 
This means that the H\"older exponent with respect to the spatial direction is $\alpha$ and the one with respect to the time direction is $\alpha/2$. 
The case $\alpha=1$ cannot be achieved by this method of proof.
With this respect, the case $\alpha=1$, i.e. Lipschitz continuity with respect to the parabolic metric and also higher regularity remained an open problem.

Very recently it has been proved in the elliptic case by Foss \cite{Foss:2008} that solutions of \eqref{el-plap} are Lipschitz continuous up to the boundary. 
As before, these techniques cannot be transferred to the parabolic setting because of the non-homogeneous scaling behavior of the problem. The main goal in this paper is to prove global boundedness of the spatial gradient of solutions to the parabolic $p$-Laplacian system and in turn to obtain the global Lipschitz continuity.
The basic difference with respect to the known results is, that this result provides a first boundary regularity result for the gradient of the solution, i.e. boundedness of the spatial gradient. 

In order not to overburden the exposition we restrict ourselves to the more interesting lateral boundary regularity. 
The result -- in a simplified version -- reads as follows:

\begin{theorem}
Let $p\ge 2$ and $\Omega$ be a bounded smooth domain in $\R^n$ and $T>0$ and suppose that 
$$
	u \in C^0\big([0,T];L^2(\Omega,\R^{N})\big)\cap L^p\big(0,T;W^{1,p}(\Omega,\R^{N})\big)
$$ 
is a weak solution to the parabolic Cauchy-Dirichlet problem 
\begin{equation}\label{system-intro}
\left\{
\begin{array}{cc}
    \partial_t u - \Div \big(|Du|^{p-2}Du\big) = 0
    &\qquad\mbox{in $\Omega\times(0,T)$} \\[5pt]
    u=g
    &\qquad\mbox{on $\partial\Omega\times(0,T)\cup \overline\Omega\times\{0\}$}
\end{array}
\right.
\end{equation}
with smooth boundary data $g$. 
Then, $Du$ is bounded and $u$ is Lipschitz-continuous with respect to the parabolic metric up to the lateral boundary, i.e. there holds
$$
	Du\in L^\infty\big(\Omega\times(\epsilon,T),\R^{Nn}\big)
	\quad\mbox{and}\quad
	u\in C^{0;1,1/2}\big(\Omega\times(\epsilon,T),\R^{N}\big)
	\quad\forall\, \epsilon\in(0,T).
$$
\end{theorem}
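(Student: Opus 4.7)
My plan is to combine DiBenedetto--Friedman's intrinsic scaling with a comparison/perturbation analysis carried out up to the lateral boundary. After a finite covering of $\partial\Omega\times(0,T)$, I would fix a boundary point $z_o$, apply a smooth diffeomorphism locally straightening $\partial\Omega$ near $x_o$, and subtract a smooth extension $\tilde g$ of the boundary data. This reduces the task to a quantitative $L^\infty$-bound on $Dv$ for $v:=u-\tilde g$, where $v$ vanishes on a flat face $\{x_n=0\}$ and solves a perturbed parabolic system of the form
\begin{equation*}
    \partial_t v - \Div\bigl(|Dv+D\tilde g|^{p-2}(Dv+D\tilde g)\bigr) = -\partial_t \tilde g + \text{(lower-order terms from flattening)}
\end{equation*}
on a half-cylinder centered at the origin.

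The core step is to work at the intrinsic scale. For each half-cylinder $Q_{\rho,\lambda}^+(z_o):=Q_{\rho,\lambda}(z_o)\cap\{x_n>0\}$ coupled to the solution via $\mint_{Q_{\rho,\lambda}^+(z_o)}(|Dv|^p+|D\tilde g|^p)\dz \approx \lambda^p$, I would compare $v$ with the solution $h$ of the \emph{homogeneous} parabolic $p$-Laplacian system on $Q_{\rho,\lambda}^+$ sharing the same parabolic boundary values. Since $h\equiv 0$ on the flat face, I can extend $h$ by odd reflection across $\{x_n=0\}$; the reflected function still solves the $p$-Laplacian system, so the interior $C^{1,\alpha}$ theory of DiBenedetto--Friedman applies and supplies both a uniform estimate $\|Dh\|_{L^\infty(Q_{\rho/2,\lambda}^+)} \lesssim \lambda$ and a Campanato-type excess decay for $Dh$. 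The difference $w:=v-h$ satisfies a linearized system whose right-hand side depends only on $D\tilde g$ and $\partial_t \tilde g$; an energy/Caccioppoli argument then bounds $\mint_{Q_{\rho,\lambda}^+}|Dw|^p\dz$ by a quantity which is small in $\rho$ (of order $\|\tilde g\|_{C^{1,1}}^p\rho^\delta$, measured in the intrinsic scale).

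Combining the excess decay for $Dh$ with the smallness of $Dw$ should yield an iterative inequality of the type
\begin{equation*}
    \mint_{Q_{\theta\rho,\lambda}^+}|Dv|^p\dz
    \le C\theta^{p\alpha}\mint_{Q_{\rho,\lambda}^+}|Dv|^p\dz + C(\tilde g)\,\rho^\delta,
\end{equation*}
valid for $\theta\in(0,1)$ small. Iterating along a sequence of intrinsic cylinders, paired with a stopping-time alternative that propagates the intrinsic coupling \eqref{intrinsic} to every scale, should then produce the desired quantitative $L^\infty$-bound on $Du$ up to the lateral boundary. The hard part will be exactly this iteration: unlike the interior case, one cannot freely rescale $\lambda$ to unity without distorting the flat-face geometry and the normalization of $\tilde g$, so the argument must run through a delicate dichotomy between a non-degenerate regime (where $|Dv|\approx\lambda$ and the linearization is uniformly elliptic) and a degenerate regime (where $|Dv|\ll\lambda$ and the $p$-Laplacian behaves as a perturbation of a linear heat-type system). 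Preserving the intrinsic coupling and absorbing the $\tilde g$-error across scales in both regimes is what I expect to be the technical heart of the proof.
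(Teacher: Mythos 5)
Your overall architecture coincides with the paper's: boundary flattening, subtraction of the datum so that $v$ vanishes on a flat face, intrinsic cylinders, comparison with the homogeneous $p$-Laplacian via odd reflection and the DiBenedetto--Friedman theory, and a degenerate/non-degenerate dichotomy run through a stopping-time argument. However, the quantitative core of your sketch has genuine gaps. The iteration inequality you propose is stated for the wrong quantity: a decay of the form $\mint_{Q_{\theta\rho,\lambda}^+}|Dv|^p\dz \le C\theta^{p\alpha}\mint_{Q_{\rho,\lambda}^+}|Dv|^p\dz + C(\tilde g)\rho^\delta$ would, upon iteration, drive the averages (hence $|Dv(z_o)|$ at Lebesgue points) to zero, which is absurd. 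What is actually needed, and what the paper proves, is regime-dependent: in the degenerate regime a \emph{propagation} statement, namely $\mint_{Q_{\vartheta\rho,\lambda_1}^+}|Du|^p\dz\le\lambda_1^p$ with $\lambda_1\le\lambda$ under the degeneracy alternative (Lemma \ref{lem:decay-p-cal}), and in the non-degenerate regime an \emph{excess} decay for the oscillation $\Phi_\lambda(z_o,\rho,D\ell_{z_o;\rho,\lambda})$ around affine maps, yielding convergence of $D\ell_{z_o;r}$ to a limit of size $\approx\lambda$ (Proposition \ref{prop:iter-a-cal}). Moreover, your comparison error is misquantified: the difference $\mint_{Q_{\rho,\lambda}^+}|Du-Dv|^p\dz$ is controlled by $c\,(\epsilon+R^\beta)\lambda^p$, i.e. it is small only \emph{relative to} $\lambda^p$ and only above the threshold $\lambda\gtrsim \epsilon^{-1}\big(\|Dg\|_\infty+K_\epsilon\big)$; it is not of order $\|\tilde g\|_{C^{1,1}}^p\rho^\delta$ uniformly in $\lambda$, and the alternative ``either the intrinsic average is below this threshold (and then $|Dv|$ is bounded by the data) or the comparison applies'' is an essential structural step you have not built in.

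The second gap is that the non-degenerate regime is not actually handled by your single comparison with the $p$-Laplacian solution $h$: the reflected DiBenedetto--Friedman decay saturates below an intrinsic scale $\rho_s$ at which $|Dh|\approx\lambda$ and the excess no longer improves (this is exactly the content of \eqref{DiBe-ex}--\eqref{DiBe-mean}). Precisely there the paper linearizes around the affine slope $D\ell_{z_o;\rho,\lambda}$ and invokes a boundary version of the $\mathcal A$-caloric approximation lemma (Lemma \ref{lem:a-cal-boundary}), unified interior/boundary a priori estimates for linear constant-coefficient systems valid also on cylinders that are neither interior nor centered on $\Gamma$ (Proposition \ref{prop:lin}), and Caccioppoli and Poincar\'e inequalities adapted to the perturbed system to convert decay of $u-\ell$ into decay of $Du-\,D\ell$. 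Asserting that ``the linearization is uniformly elliptic'' does not supply these ingredients, and your characterization of the degenerate regime as the one where ``$|Dv|\ll\lambda$ and the $p$-Laplacian behaves as a perturbation of a linear heat-type system'' is backwards: that is precisely where no linearization is available, the full nonlinear comparison must be used, and the scaling parameter $\lambda$ has to be decreased to restore the intrinsic coupling.
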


Of course, the assumptions on the boundary data can be weakened, see Theorem \ref{thm:main-lip} below. But we emphasize that the result is new even for smooth boundary data.
Moreover, we also obtain a point-wise estimate for the spatial gradient, see again Theorem \ref{thm:main-lip} below.

The result of the preceding Theorem continues to hold for a much larger class of degenerate parabolic systems, the so called asymptotically regular systems. By this we mean parabolic systems of the type 
\begin{equation*}
	\partial_t u - \Div a(Du) ,
\end{equation*}
where $a\colon\R^{Nn}\to\R^{Nn}$ is a $C^1$ vector field which behaves asymptotically like the $p$-Laplacian in the sense that
\begin{equation*}
    \lim_{|\xi|\to\infty} \frac{D a(\xi) - D b(\xi)}{|\xi|^{p-2}}
    =
    0,
    \qquad\mbox{with $b(\xi):=\K|\xi|^{p-2}\xi$}
\end{equation*}
holds for some $\K>0$. 
The crucial point here is that apart from the fact that $a$ has to be of class $C^1$ we neither impose a growth assumption for ``small values of $\xi$'', nor do we assume that $a$ has a quasi-diagonal structure which is usually necessary to obtain everywhere regularity results in the case of systems. 
Nevertheless, since $a$ tends to the regular vector field $b$ when the gradient of the solution becomes large it is reasonable to obtain gradient estimates for this kind of problems. 

In the elliptic framework such a result was first obtained by Chipot \& Evans \cite{Chipot-Evans:1986}. They proved Lipschitz regularity of minimizers to integral functionals $F(v)=\int_\Omega f(Dv)\dx$ with an integrand satisfying $D^2f(\xi)\to A$ when $|\xi|\to\infty$ for some elliptic bilinear form $A$ on $\R^{Nn}$. More general integrands were treated later by 
Giaquinta \& Modica \cite{Giaquinta-Modica:1986} and Raymond \cite{Raymond:1991} and the case of higher order functionals has been considered by Schemm \cite{Schemm:2009}.
These results provide a huge class of elliptic systems, respectively integral functionals with Lipschitz solutions, respectively minimizers, which is much larger than the well known class of quasi-diagonal structure.
Moreover, a Calder\'on \& Zygmund theory and partial Lipschitz regularity for a very general class of asymptotically regular elliptic systems and integral functionals has been developed by Scheven \& Schmidt \cite{Scheven-Schmidt:2009, Scheven-Schmidt:2010}.
Finally, global Morrey and Lipschitz regularity results have been obtained by 
Foss \cite{Foss:2008} and Foss \& Passarelli di Napoli \& Verde \cite{Foss-Passarelli-Verde:2008}.
To our knowledge asymptotically regular parabolic problems have not yet been studied.

The aim of this paper now is twofold. The first and most important one, is to prove the global Lipschitz regularity for the parabolic $p$-Laplacian system. Our second aim is to start the investigation of asymptotically regular parabolic systems and thereby deduce the global and local Lipschitz regularity result.

\section{Statement of the result}

We let $n\ge 2$, $N\ge 1$ and fix a growth exponent $p\ge 2$. For a differentiable vector field $a\colon\R^{Nn}\to\R^{Nn}$ we consider the 
parabolic Cauchy-Dirichlet problem 
\begin{equation}\label{system}
\left\{
\begin{array}{cc}
    \partial_t u - \Div a(Du) = 0
    &\qquad\mbox{in $\Omega_T$} \\[5pt]
    u=g
    &\qquad\mbox{on $\partial_{\mathcal P}\Omega_T$}
\end{array}
\right.
\end{equation}
in a cylindrical domain $\Omega_T:=\Omega\times(0,T)$, where $\Omega$ is a bounded domain in $\R^n$ and $T>0$. The parabolic boundary of $\Omega_T$ is given by
\begin{equation*}
	\partial_{\mathcal P}\Omega_T
	:=
	\partial\Omega\times(0,T)\cup \overline\Omega\times\{0\}.
\end{equation*}
At this point we emphasize that the solution $u\colon \Omega_T\to\R^N$ is allowed to be vector valued and refer to Definition \ref{def:weak-solution} below for the precise notion of a weak solution.
The only assumption we put on the vector field $a$ is that it is asymptotically of first order related to the $p$-Laplacian vector field in the sense that 
\begin{equation}\label{def_asymp}
    \lim_{|\xi|\to\infty} \frac{D a(\xi) - D b(\xi)}{|\xi|^{p-2}}
    =
    0,
    \qquad\mbox{where $b(\xi):=\K|\xi|^{p-2}\xi$}
\end{equation}
holds for some $\K>0$.
Note that the model case of the $p$-Laplacian, i.e. $a(\xi)=|\xi|^{p-2}\xi$ is included in \eqref{def_asymp}. 
Concerning the regularity of the boundary
values, i.e. of  $\partial\Omega$
and $g$, we shall assume that $g\colon \overline{\Omega}_T \to \R^N$ is a continuous function and
\begin{equation}\label{data}
	\partial\Omega \text{ is } C^{1;\beta},\quad
	Dg\in C^{0;\beta,0}\big(\overline\Omega_T,\R^{Nn}\big),\quad
	\partial_t g\in
	L^{p',(1-\beta)p'}\big(\Omega_T,\R^{N}\big)
\end{equation}
for some $\beta\in(0,1)$. As usual, by $p':=\frac{p}{p-1}$ we denote the H\"older conjugate of $p$ and $\overline\Omega_T:=\overline\Omega\times[0,T]$.
The definition of parabolic H\"older- and Morrey-spaces of the type $C^{0;\beta,0}$ and $L^{p',(1-\beta)p'}$ is given in Definitions \ref{def:Hoelder} and \ref{def:Morrey} below.
We now provide the notion of a weak solution
to the parabolic Cauchy-Dirichlet problem \eqref{system}.
\begin{definition}\label{def:weak-solution}
A map 
$$
	u \in C^0\big([0,T];L^2(\Omega,\R^{N})\big)\cap L^p\big(0,T;W^{1,p}(\Omega,\R^{N})\big)
$$ 
is called a (weak) solution to the parabolic Cauchy-Dirichlet problem \eqref{system}
if and only if
\begin{align}\label{weak}
	\int_{\Omega_T}
	u\cdot\varphi_t - \langle a(Du), D\varphi\rangle \dz
	=0
\end{align}
holds for every test-function $\varphi\in C_0^\infty(\Omega_T,\R^N)$, and the following boundary conditions are satisfied:
\begin{equation*}
	u(\cdot,t) - g(\cdot,t) \in W^{1,p}_0(\Omega,\R^N)
	\qquad\text{for a.e. } t\in(0,T)
\end{equation*}
and
$$
  \lim_{h \downarrow0 }\frac{1}{h} \int_{0}^{h} \int_\Omega |u(x,t) - g(x,0)|^2 \dx\dt =0
  \,.
$$
\end{definition}

As already explained before, our aim is to prove global and local Lipschitz regularity of the spatial gradient $Du$ of weak solutions to \eqref{system}. By Lipschitz regularity we of course mean Lipschitz with respect to the parabolic metric
$$
    d_{\mathcal P}\big((x,t),(y,s)\big):=\max \left\{|x-y|,\sqrt{|t-s|}\right\},
$$
for $x,y\in\R^n$ and $t,s\in\R$.
This is equivalent with the parabolic H\"older space $C^{0;1,1/2}$, i.e. the space of functions which are Lipschitz with respect to the spatial direction and H\"older-continuous with H\"older-exponent $1/2$ with respect to time; see Definition \ref{def:Hoelder} below.
Instead of \eqref{def_asymp} it will be convenient to use the following equivalent definition of asymptotic regularity:

\begin{remark}\label{rem:asyp-equiv}\upshape
The vector field $a$ is asymptotically of first order related to the $p$-Laplacian in the sense of \eqref{def_asymp} if and only if
\begin{equation}\label{def_asymp-}
    |Da(\xi) - Db(\xi)|
    \le
    \omega(|\xi|)(1+|\xi|)^{p-2},
    \qquad\forall\, \xi\in \R^{Nn}
\end{equation}
holds for some bounded function $\omega\colon[0,\infty)\to[0,\infty)$ with 
\begin{equation}\label{omega}
    \lim_{s\to \infty} \omega(s)
    = 0.
\end{equation}
\hfill$\Box$
\end{remark}

We are now in the position to state the main result.
\begin{theorem}\label{thm:main-lip}
Let
$$
	u \in C^0\big([0,T];L^2(\Omega,\R^{N})\big)\cap L^p\big(0,T;W^{1,p}(\Omega,\R^{N})\big)
$$ 
be a weak solution to the parabolic Cauchy-Dirichlet problem \eqref{system} in $\Omega_T$ under the assumptions \eqref{def_asymp} and \eqref{data}. Then, for any $\epsilon\in(0,T)$ we have
$$
	Du\in L^\infty\big(\Omega\times(\epsilon,T),\R^{Nn}\big)
	\quad\mbox{and}\quad
	u\in C^{0;1,1/2}\big(\Omega\times(\epsilon,T),\R^{N}\big)
$$
and there exists $\rho_o\in(0,1]$ such that the quantitative estimate
\begin{equation*}
    |Du(z_o)|
    \le
    c_1
    \bigg(\mint_{Q_{\rho}(z_o)\cap\Omega_T}|Du|^p\dz\bigg)^\frac12 + 
    c_2
\end{equation*}
holds for a.e. $z_o\in\Omega\times(\epsilon,T)$ and any parabolic cylinder $Q_{\rho}(z_o)\subset\R^n\times(0,T)$ with $\rho\in(0,\rho_o]$. Thereby, the constant
$c_1$ depends only on $n,N,p,\K$ and 
$c_2$ and $\rho_o$ depend on $n,N,p,\K,$ $\partial\Omega,\beta,|a(0)|, \omega(\cdot), \|Dg\|_{C^{0;\beta,0}(\overline\Omega_T)}, \|g_t\|_{L^{p',(1-\beta)p'}(\Omega_T)}$, where $\omega(\cdot)$ is from Remark \ref{rem:asyp-equiv}.
\end{theorem}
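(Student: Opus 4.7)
\emph{Step 1: Localization and flattening.} I fix a boundary point $z_o = (x_o,t_o) \in \partial\Omega \times (\epsilon,T)$ and work in a parabolic neighborhood of $z_o$. Using the $C^{1;\beta}$-regularity of $\partial\Omega$, I straighten $\partial\Omega$ near $x_o$ by a $C^{1;\beta}$-diffeomorphism; the transformed system is still a parabolic system of asymptotically $p$-Laplacian type, because the principal part transforms by a H\"older-continuous Jacobian and therefore the constant $\K$ and the function $\omega(\cdot)$ are perturbed only at lower order. Setting $v := u-g$ produces a map vanishing on the flat lateral face and satisfying
\begin{equation*}
\partial_t v - \Div \tilde a(Dv + Dg) = -\partial_t g
\end{equation*}
in the flattened half-cylinder, where $\tilde a(\,\cdot\, + Dg(z))$ remains asymptotically $p$-Laplacian uniformly in $z$ thanks to the H\"older bound on $Dg$, and the right hand side sits in the Morrey space $L^{p',(1-\beta)p'}$.

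\emph{Step 2: Intrinsic comparison.} I now adapt DiBenedetto's intrinsic geometry to the flat-boundary setting. On an intrinsic (half-)cylinder $Q_{\rho,\lambda}(z_o)$ with the coupling $\mint_{Q_{\rho,\lambda}(z_o)} |Dv|^p \dz \approx \lambda^p$, I compare $v$ with the unique solution $h$ of the pure $p$-Laplacian system $\partial_t h - \Div b(Dh) = 0$ attaining the same lateral and initial data as $v$ on the parabolic boundary of $Q_{\rho,\lambda}(z_o)$. Since $v$ vanishes on the flat face, so does $h$, and the boundary $C^{1,\alpha}$-estimate of Chen \& DiBenedetto, together with the interior $C^{1,\alpha}$-estimate of DiBenedetto \& Friedman, applies to $h$ after the time rescaling $t \mapsto \lambda^{p-2}t$ that restores homogeneity. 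Testing the difference equation with $v-h$ and invoking the monotonicity inequality for $V(\xi) := |\xi|^{(p-2)/2}\xi$ yields, schematically,
\begin{equation*}
\mint_{Q_{\rho,\lambda}(z_o)} |V(Dv) - V(Dh)|^2 \dz \le c\, \eta(\lambda,\rho)\, \lambda^p,
\end{equation*}
where the smallness function $\eta$ collects $\omega(\lambda)$ from the asymptotic regularity, a power $\rho^{2\beta}$ from the H\"older seminorm of $Dg$, and a contribution from $\partial_t g$ via its Morrey norm.

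\emph{Step 3: Excess decay and stopping time.} The uniform $C^{1,\alpha}$-estimate for $h$ gives, for every $\theta \in (0,1)$, the decay
\begin{equation*}
\mint_{Q_{\theta\rho,\lambda}(z_o)} |V(Dh) - (V(Dh))_{Q_{\theta\rho,\lambda}}|^2 \dz \le c\,\theta^{2\alpha} \mint_{Q_{\rho,\lambda}(z_o)}|V(Dh)|^2 \dz.
\end{equation*}
Transferring this decay to $v$ through the Step 2 comparison produces a Campanato-type iteration with perturbation $\eta(\lambda,\rho)$. I then run DiBenedetto's stopping-time argument at Lebesgue points of $|Dv|^p$: either the intrinsic coupling survives at all scales $\theta^j\rho$, in which case the iteration delivers $|Dv(z_o)| \le \lambda$, or one exits at a first scale and converts the broken coupling into the explicit pointwise bound $\lambda \le c_1 \bigl(\mint_{Q_\rho(z_o)}|Dv|^p \dz\bigr)^{1/2} + c_2$. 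Undoing the flattening transports this to $u$, and covering $\Omega \times (\epsilon,T)$ by boundary and interior patches (the interior case following the same scheme without a flat face) gives the global $L^\infty$-bound and hence the parabolic Lipschitz regularity.

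\emph{Main obstacle.} The core difficulty is coordinating three independent scales simultaneously near the boundary: the intrinsic parameter $\lambda$ (calibrated to $|Dv|$), the radius $\rho$ (calibrated to $\partial\Omega$ and to $g$), and the asymptotic threshold beyond which $\omega(\lambda)$ is small. All three must be chosen so that the perturbation $\eta(\lambda,\rho)$ in the Campanato iteration is strictly dominated by the decay factor $c\theta^{2\alpha}$ and can be absorbed without destroying the coupling $\mint|Dv|^p \approx \lambda^p$. This balance is exactly what forces $\rho_o$ and $c_2$ in the statement to depend on $\omega(\cdot)$, on $\|Dg\|_{C^{0;\beta,0}}$ and on $\|\partial_t g\|_{L^{p',(1-\beta)p'}}$, and it explains why the interior arguments of DiBenedetto \& Friedman do not suffice at the boundary.
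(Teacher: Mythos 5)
Your overall architecture (flattening the boundary via a $C^{1;\beta}$-diffeomorphism, passing to $v=u-g$ with zero data on the flat face, working on intrinsic cylinders $Q_{\rho,\lambda}$ coupled by $\tmint|Dv|^p\approx\lambda^p$, comparing with a $p$-caloric map and running a stopping-time iteration) matches the first half of the paper's proof. The genuine gap is that you try to run the whole argument with a \emph{single} comparison mechanism. The comparison estimate you obtain in Step 2 has the form $\tmint_{Q_{\rho,\lambda}}|V(Dv)-V(Dh)|^2\le c\,\eta(\lambda,\rho)\,\lambda^p$ (this is essentially \eqref{comparison}); crucially, the error is of size $\eta\lambda^p$ and does \emph{not} scale with the excess of $Dv$. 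Consequently your Campanato iteration in Step 3 stalls exactly when the excess drops below $\eta\lambda^p$, i.e.\ in what the paper calls the non-degenerate regime: the recursion $E_{j+1}\le c\theta^{2\alpha}E_j+\eta\lambda^p$ only gives $E_j\lesssim\eta\lambda^p$, which is not enough to control the drift of the gradient averages down to a Lebesgue point, and hence not enough for a pointwise bound. The same obstruction appears on the side of the a priori estimates for the comparison map: the DiBenedetto--Friedman theory (Lemma \ref{lem:DiBe}) produces the hidden radius $\rho_s$, and when the comparison map is non-degenerate ($r<\rho_s$, see \eqref{DiBe-mean}) the intrinsic rescaling factor $\mu$ in \eqref{DiBe-mu} is only of order $\mu_o>1$ rather than $\theta^{\alpha_o}$. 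Then the new intrinsic parameter satisfies only $\lambda_1\le C_d\lambda$ with $C_d>1$ (Lemma \ref{lem:decay-p-cal} yields $\lambda_1\le\lambda$ \emph{only} under the degeneracy conditions \eqref{cond-dec-p-3}), so iterating your scheme through this regime lets $\lambda_i$ grow geometrically and the bound is lost. Your alternative ``one exits at a first scale and converts the broken coupling into the pointwise bound'' is precisely the step that has no proof in your outline.

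What is missing is the second comparison mechanism that the paper devotes Section \ref{ndreg} to: in the non-degenerate regime one linearizes, showing via Lemma \ref{lem:approx-a-cal} that $u$ is approximately $\mathcal A$-caloric with an error proportional to $\sqrt{\Phi_\lambda}$ (plus $\rho^\beta$-terms), applies the interior and boundary $\mathcal A$-caloric approximation lemmas (Lemmas \ref{lem:a-cal} and \ref{lem:a-cal-boundary}) together with the unified a priori estimate of Proposition \ref{prop:lin} and the Caccioppoli inequality of Lemma \ref{lem:cac}, and iterates (Proposition \ref{prop:iter-a-cal}) to show that $\lim_{r\downarrow0}D\ell_{z_o;r}$ exists with modulus comparable to $\lambda$; this is what produces the bound $|Du(z_o)|\le2\lambda$ when the degenerate iteration must stop. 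This two-regime dichotomy (and the matching of the exit condition \eqref{cond-dec-p-3} with the entry condition \eqref{cond-dec-a-2-}) is the core of the proof, not a technicality, and it is exactly the parabolic analogue of the linearization-at-infinity step of Chipot--Evans that your sketch omits. Two smaller points: after flattening, the frozen vector field $\xi\mapsto a(\xi\Psi(y))\Psi(y)^t$ is not asymptotically related to the isotropic $p$-Laplacian uniformly in $y$; the paper keeps $\Psi$ explicit, normalizes $\Psi(0)=\mathbb I_{n\times n}$ and pays $\rho^\beta$-errors (Lemma \ref{lem:diff-2}), rather than absorbing it into $\K$ and $\omega$. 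And for the flat-face regularity of the comparison map the paper uses odd reflection plus the interior theory (proof of Lemma \ref{lem:DiBe}); your appeal to Chen--DiBenedetto for homogeneous lateral data is an acceptable substitute, but note the comparison map vanishes only on the flat face, not on the curved lateral boundary, so the estimate must be applied on interior subcylinders.
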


\begin{remark}\upshape
Here, we remark on some possible generalizations of Theorem \ref{thm:main-lip}.
Indeed, the same result holds when the vector field $a$ is asymptotically of first order related to a vector field $b$ of quasi-diagonal Uhlenbeck structure, i.e.
\begin{equation}\label{Uhlenbeck}
	b(\xi)
	=
	f(|\xi|)\xi
	\quad\mbox{with } f(|\xi|) \approx |\xi|^{p-2}.
\end{equation}
The symbol $\approx$ of course has to be made precise in a suitable way.
Moreover, one could allow that $a$ additionally depends on $(x,t)$. In order to keep the exposition as clear as possible we decided to consider only the easiest case where $a$ satisfies \eqref{def_asymp}. Furthermore, we did not include investigations about the regularity at the initial time $t=0$ and restricted our considerations to the lateral boundary.
\hfill$\Box$
\end{remark}

The strategy in the proof to deal with non-homogeneous boundary data $g$ is to interpret $g$ as a perturbation of the solution $u$. More precisely, instead of $u$ we consider the function $v:=u-g$ which has boundary values equal to zero. Then, $v$ satisfies the parabolic system 
\begin{equation*}
	\partial_t v - \Div a(Dv+Dg) = -\partial_t g
\end{equation*}
and since $Dg$ is assumed to be bounded we have that $Dv$ is bounded if and only if $Du$ is bounded. 
Here, we should also mention that a change of variable allows us to reduce the proof to the model situation where $\Omega_T$ is a half-cylinder.
Now, the first important observation is that it is enough to consider regions where $|Dv|$ is in a certain averaged sense larger then $\|Dg\|_\infty$, since otherwise we can bound $|Dv|$ in terms of $\|Dg\|_\infty$.
The next crucial step is to exploit the structure of the vector field $a$ which is done by two different comparison arguments. If -- roughly speaking -- the mean value of $|Dv|^p$ 
on some cylinder is small compared to the oscillations of $Dv$ (this case is called the degenerate regime) we compare $v$ to the solution $w$ of the parabolic system
\begin{equation*}
	\partial_t w - \Div b(Dw) = 0
	\qquad \mbox{on $Q\cap\Omega_T$ for some cylinder $Q$}
\end{equation*}
which has lateral and initial boundary values equal to $v$. The advantage now is that $w$ is zero on the boundary portion $Q\cap(\partial\Omega\times(0,T))$ and therefore satisfies certain a priori estimates which are a consequence of the $C^{1,\alpha}$ regularity theory of DiBenedetto \& Friedman. Nevertheless, the application of the a priori estimate is not straight forward since it involves intrinsic cylinders as explained in \eqref{intrinsic}.
The comparison argument together with the a priori estimate then ensure that the mean value of $|Dv|$ on some smaller cylinder remains small. Thereby one has to ensure that the smaller cylinder still is an intrinsic one, with a possibly different scaling factor.

On the other hand, if -- roughly speaking -- the mean value of $|Dv|^p$ 
on some intrinsic cylinder is large compared to the oscillations of $Dv$ (this case is called the non-degenerate regime)
we compare $v$ to the solution of a linear parabolic system. This is achieved via the so called $\mathcal A$-calloric approximation lemma from \cite{Duzaar-Mingione-Steffen:2011} which is a parabolic counterpart of De Giorgi's Harmonic Approximation Lemma \cite{DeGiorgi:1968}.
Together with good a priori estimates for solutions to linear systems and an iteration argument we can prove a bound for the gradient in the center of the cylinder in this case.

Finally, in order to obtain the desired gradient bound in any case we have to combine the degenerate and the non-degenerate regime. This is achieved via a delicate choice of the involved radii 
and the observation that the conditions for both regimes perfectly match together.

\section{Notation and auxiliary tools}

\subsection{Notations}\label{sec:notation}
Throughout the paper we will write $x=(x_1,\dots,x_n)$ for a point in $\R^n$ and $z=(x,t)=(x_1,\dots,x_n,t)$ for a point in $\R^{n+1}$. By $B_\rho(x_o):=\{x\in\R^n:|x-x_o|<\rho\}$, respectively $B_\rho^+(x_o):=B_\rho(x_o)\cap \{x\in\R^n:x_n>0\}$ we denote the open ball, respectively upper part of the open ball in $\R^n$ with center $x_o\in\R^n$ and radius $\rho>0$. When considering $B_\rho^+(x_o)$ we do not necessarily assume $(x_o)_n=0$. Indeed, if $B_\rho(x_o)\subset\{x\in\R^n:x_n>0\}$ it can also happen that $B_\rho^+(x_o)\equiv B_\rho(x_o)$. Moreover, we write 
$$
	\Lambda_{\rho,\lambda}(t_o)
	:=
	\big(t_o-\lambda^{2-p}\rho^2, t_o+\lambda^{2-p}\rho^2\big)
$$
for the open interval around $t_o\in\R$ of length $2\lambda^{2-p}\rho^2$ with $\rho,\lambda>0$. As basic sets for our estimates we usually take cylinders. These are denoted by 
$$
	Q_{\rho,\lambda}(z_o)
	:=
	B_\rho(x_o)\times \Lambda_{\rho,\lambda}(t_o)
$$
and the upper part of the cylinder by
$$
	Q_{\rho,\lambda}^+(z_o)
	:=
	B_\rho^+(x_o)\times \Lambda_{\rho,\lambda}(t_o),
$$
where $z_o=(x_o,t_o)\in\R^{n+1}$. As before, when considering $Q_{\rho,\lambda}^+(z_o)$ we do not necessarily assume $(x_o)_n=0$. For the hyperplane $x_n=0$ in $\R^{n+1}$ we write
$$
	\Gamma
	:=
	\big\{(x_1,\dots,x_{n-1},0,t)\in\R^{n+1}\big\}
$$
and
$$
	\Gamma_{\rho,\lambda}(z_o)
	:=
	Q_{\rho,\lambda}(z_o)\cap\Gamma
$$
for the flat part of the lateral boundary of $Q_{\rho,\lambda}^+(z_o)$. Note that it can happen that $\Gamma_{\rho,\lambda}(z_o)=\emptyset$. If $\lambda=1$ we use the shorter notations 
$\Lambda_{\rho}(t_o):=\Lambda_{\rho,1}(t_o)$, $Q_{\rho}(z_o):=Q_{\rho,1}(z_o)$, $\Gamma_{\rho}(z_o):=\Gamma_{\rho,1}(z_o)$
and if furthermore $z_o=0$ we write
$B_{\rho}:=B_{\rho}(0)$, $\Lambda_{\rho}:=\Lambda_{\rho}(0)$, $Q_{\rho}:=Q_{\rho}(0)$, $\Gamma_{\rho}:=\Gamma_{\rho}(0)$.
For an integrable map $v\colon A\to \R^k$, $k\in\N$, $|A|>0$ we denote by 
$$
	(v)_{A}
	:=
	\mint_A v\dz
	=
	\frac{1}{|A|}\int v\dz
$$ 
its mean value on $A$. If $A=Q_{\rho,\lambda}(z_o)$ we write 
$(v)_{z_o;\rho,\lambda}$ for the mean value of $v$ on the cylinder $Q_{\rho,\lambda}(z_o)$ and 
$(v)_{z_o;\rho,\lambda}^+$ for the mean value on the upper part $Q_{\rho,\lambda}^+(z_o)$ of the cylinder. As before, when $\lambda=1$ we use the short-hand notations
$(v)_{z_o;\rho}:=(v)_{z_o;\rho,1}$ and
$(v)_{z_o;\rho}^+:=(v)_{z_o;\rho,1}^+$.
Next, we define the relevant function spaces
\begin{definition}\label{def:Hoelder}
With $\alpha,\beta \in (0,1]$ and $Q \subset \R^{n+1}$ being a bounded open set, a  map $v\colon Q\to \R^k$, $k\ge 1$ belongs
to the parabolic H\"older space $C^{0;\alpha,\beta}(Q,\R^k)$ if and only if
\begin{equation*}
	\|v\|_{C^{0;\alpha,\beta}(Q,\R^k)}
	:=
	\sup_{z_o\in Q} |v(z_o)| +
	\sup_{z_o,z_1\in Q, z_o\not= z_1} 
	\frac{|v(z_o)-v(z_1)|}{|x_o-x_1|^\alpha + |t_o-t_1|^\beta}
	<
	\infty.
\end{equation*}
\end{definition}

\begin{definition}\label{def:Morrey}
With $q \geq 1$, $\theta \in [0,n+2]$ and $Q \subset \R^{n+1}$ being a bounded open set, a measurable map $v\colon Q\to \R^k$, $k\ge 1$ belongs
to the parabolic Morrey space $L^{q,\theta}(Q,\R^k)$ if and only if
\begin{equation*}
	\|v\|_{L^{q,\theta}(Q,\R^k)}^q
	:=
	\sup_{z_o\in Q,\, 0<\rho<\diam(Q)}
	\varrho^{\theta-(n+2)} \int_{Q\cap Q_\rho(z_o)} |v|^q \dz
	<
	\infty.
\end{equation*}
\end{definition}

\subsection{The $V$-function}\label{sec:V}
Since we are dealing with $p$-growth problems it is convenient to use
the function $V_\mu\colon\R^k\to\R^k$, with $1<p<\infty$, $\mu\in [0,1]$ and $k\in\N$, devined by
\begin{equation}\label{def-V}
    V_\mu(A)
    :=
    \big(\mu^2 + |A|^2\big)^{\frac{p-2}{4}}A
    \qquad\text{for }\ A\in\R^k .
\end{equation}
The basic properties of the $V$-function and some related estimates are stated in the following lemmata.

\begin{lemma}\label{lem:monotone}
For $1<p<\infty$ and $k\in\N$ there exists a constant $c=c(k,p)\ge 1$ such that for any $A,B\in\R^k$ there holds

\vspace{0.1cm}
{\rm (i)} 
$
	\big\langle |B|^{p-2}B - |A|^{p-2}A, B-A\big\rangle
    \ge
    \tfrac{1}{c}\, |V_{|A|}(B-A)|^2
$,

{\rm (ii)} 
$
    \big||B|^{p-2}B - |A|^{p-2}A\big|
    \le
    c\, \big(|A|^2 + |B-A|^2\big)^{\frac{p-2}{2}}|B-A|.
$
\end{lemma}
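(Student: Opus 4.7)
The plan is to deduce both estimates from the fundamental theorem of calculus applied to the vector field $g(\xi):=|\xi|^{p-2}\xi$. For $\xi\ne 0$ one computes
\begin{equation*}
    Dg(\xi)
    =
    |\xi|^{p-2}\Big(\mathrm{Id}+(p-2)\,\tfrac{\xi\otimes\xi}{|\xi|^2}\Big),
\end{equation*}
which is symmetric with smallest and largest eigenvalues equal to $\min\{1,p-1\}|\xi|^{p-2}$ and $\max\{1,p-1\}|\xi|^{p-2}$ respectively. Using the representation
\begin{equation*}
    |B|^{p-2}B-|A|^{p-2}A
    =
    \int_0^1 Dg\big(A+\tau(B-A)\big)(B-A)\,d\tau,
\end{equation*}
and pairing with $B-A$ for (i) or taking moduli for (ii), both inequalities reduce to a two-sided bound on the scalar quantity
\begin{equation*}
    I(A,B):=\int_0^1\big|A+\tau(B-A)\big|^{p-2}\,d\tau.
\end{equation*}
Concretely, it will suffice to prove the elementary claim $I(A,B)\approx(|A|^2+|B-A|^2)^{(p-2)/2}$, with constants depending only on $p$.

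To establish this comparison I would split cases by the relative size of $|A|$ and $|B-A|$. If $|B-A|\le|A|/2$, the triangle inequality yields $|A+\tau(B-A)|\approx|A|$ uniformly on $[0,1]$, so that $I(A,B)\approx|A|^{p-2}\approx(|A|^2+|B-A|^2)^{(p-2)/2}$. Otherwise $|B-A|$ dominates $|A|$, and the lower bound is extracted from a subinterval of $[0,1]$ of uniformly positive length on which $|A+\tau(B-A)|$ is of order $|B-A|$; such a subinterval always exists because the affine map $\tau\mapsto A+\tau(B-A)$ has speed $|B-A|$, so either it stays far from the origin or it crosses zero and one works on the complement of a small neighborhood of the crossing. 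This yields $I(A,B)\approx|B-A|^{p-2}\approx(|A|^2+|B-A|^2)^{(p-2)/2}$. The matching upper bound in both regimes is immediate from $|A+\tau(B-A)|\le|A|+|B-A|$ together with the elementary equivalence $(s+t)^{q}\approx(s^2+t^2)^{q/2}$ for $s,t\ge 0$ and $q\in\R$.

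The only delicate point is the lower bound on $I(A,B)$ in the sub-quadratic range $1<p<2$, where the integrand $|A+\tau(B-A)|^{p-2}$ blows up precisely when $\tau\mapsto A+\tau(B-A)$ approaches the origin; the case split above is organised to produce, in either regime, a subinterval of $[0,1]$ of positive length on which the integrand is bounded from below by $(|A|^2+|B-A|^2)^{(p-2)/2}$ up to a constant. For $p\ge 2$, the range actually needed in the rest of the paper, the integrand is uniformly bounded on $[0,1]$ and the argument becomes entirely routine.
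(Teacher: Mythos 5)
Your route is the standard one: since the paper states this lemma without proof, the benchmark is the classical argument, and your reduction via $g(\xi)=|\xi|^{p-2}\xi$, the eigenvalue bounds $\min\{1,p-1\}|\xi|^{p-2}\le Dg(\xi)\le\max\{1,p-1\}|\xi|^{p-2}$, and the comparison $I(A,B)\approx(|A|^2+|B-A|^2)^{(p-2)/2}$ is exactly it. For $p\ge2$ (the only range in which the lemma is actually used in the paper) your proof is complete: the upper bound on $I$ is immediate, and your "positive-length subinterval" lower bound can be made precise by writing $|A+\tau(B-A)|^2=(\tau-\tau_*)^2|B-A|^2+d^2$ with $\tau_*=-\langle A,B-A\rangle/|B-A|^2$, so that on $\{\tau\in[0,1]:|\tau-\tau_*|\ge 1/4\}$, a set of measure at least $1/2$, one has $|A+\tau(B-A)|\ge|B-A|/4$, while the case $|B-A|\le|A|/2$ gives $|A+\tau(B-A)|\ge|A|/2$ on all of $[0,1]$.

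The gap is in the sub-quadratic range $1<p<2$, which the lemma as stated covers, and there you have the delicate direction backwards. Since $p-2<0$, the pointwise bound $|A+\tau(B-A)|\le|A|+|B-A|$ yields a \emph{lower} bound for the integrand, so the lower bound on $I$ (which is what (i) needs, as $\min\{1,p-1\}=p-1>0$) is trivial; the genuinely delicate point is the \emph{upper} bound on $I$ needed for (ii), because the integrand blows up where the segment approaches the origin. Your sentence that "the matching upper bound in both regimes is immediate from $|A+\tau(B-A)|\le|A|+|B-A|$" is therefore false for $1<p<2$. The repair is standard: either invoke the paper's Lemma \ref{lem:Fusco} with $\sigma=(p-2)/2$ and $\mu=0$, or argue directly that $|A+\tau(B-A)|\ge|\tau-\tau_*|\,|B-A|$ gives $\int_0^1|A+\tau(B-A)|^{p-2}\,d\tau\le\tfrac{2}{p-1}|B-A|^{p-2}$, and combine this with the case $|B-A|\le|A|/2$ (where $I\le(|A|/2)^{p-2}$) to obtain $I\le c(p)\,(|A|^2+|B-A|^2)^{(p-2)/2}$. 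In the same range one should also add a word justifying the fundamental-theorem representation when the segment passes through the origin, since $g$ is not $C^1$ there for $p<2$: the singularity $|\tau-\tau_*|^{p-2}$ is integrable because $p>1$, so one integrates over $[0,1]\setminus(\tau_*-\delta,\tau_*+\delta)$ and lets $\delta\downarrow0$ (the analogous caveat appears in the paper's own treatment of $D^2b$ for $p\in(2,3)$).
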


The following algebraic fact can be deduced from \cite[Lemma 2.1]{Acerbi-Fusco:1989}.
\begin{lemma}\label{lem:Fusco}
For every $\sigma\in(-1/2,0)$, $\mu\ge 0$ and $k\in\N$ we have
\begin{equation*}
    \int_0^1 \big(\mu^2 + |A + sB|^2\big)^\sigma\ds
    \le
    \frac{16}{2\sigma  +1}\,
    \big(\mu^2 + |A|^2 + |B|^2\big)^\sigma
    \qquad\forall A,B\in\R^{k}.
\end{equation*}
\end{lemma}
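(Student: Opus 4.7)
The plan is a direct computation after exploiting the symmetry of the integrand. I first dispose of the trivial case $|B|=0$, in which the inequality reduces to $(\mu^2+|A|^2)^\sigma\le\frac{16}{2\sigma+1}(\mu^2+|A|^2)^\sigma$ and holds because $\frac{16}{2\sigma+1}>1$ for $\sigma\in(-1/2,0)$. Henceforth assume $|B|>0$ and perform the orthogonal decomposition
\[
    |A+sB|^2=c^2+|B|^2(s-s_\ast)^2,\qquad s_\ast:=-\frac{A\cdot B}{|B|^2},\quad c^2:=|A|^2-|B|^2 s_\ast^2\ge 0.
\]
Setting $m^2:=\mu^2+c^2$ and $X:=\mu^2+|A|^2+|B|^2$, the identity $X=m^2+|B|^2(1+s_\ast^2)$ follows by expanding the right-hand side.

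The second step reduces to a one-dimensional integral. Because $\sigma<0$, the function $t\mapsto(m^2+t^2)^\sigma$ is even and nonincreasing in $|t|$, so by symmetric-decreasing rearrangement its integral over any interval of length $|B|$ is maximised by the centred interval $[-|B|/2,|B|/2]$. After the substitution $t=|B|(s-s_\ast)$ this yields
\[
    \int_0^1(\mu^2+|A+sB|^2)^\sigma\,ds \;\le\;\frac{2}{|B|}\int_0^{|B|/2}(m^2+t^2)^\sigma\,dt.
\]
Splitting the right-hand integral at $t=m$ and using the elementary bounds $(m^2+t^2)^\sigma\le m^{2\sigma}$ for $t\le m$ and $(m^2+t^2)^\sigma\le t^{2\sigma}$ for $t\ge m$ --- where the local integrability of $t^{2\sigma}$ at zero is ensured exactly by the hypothesis $\sigma>-1/2$ --- a short computation produces a bound of the form $\frac{C_1}{2\sigma+1}(m^2+|B|^2)^\sigma$ with $C_1\le 2\sqrt{2}$.

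The final step replaces $m^2+|B|^2$ by $X$, and for this I split on the size of $|s_\ast|$. If $|s_\ast|\le 2$, the identity above gives $X\le 5(m^2+|B|^2)$, whence $(m^2+|B|^2)^\sigma\le 5^{-\sigma}X^\sigma\le\sqrt{5}\,X^\sigma$; combined with the one-dimensional estimate this contributes at most $\frac{2\sqrt{10}}{2\sigma+1}X^\sigma$. If $|s_\ast|>2$, the Cauchy--Schwarz bound $|B|^2 s_\ast^2\le|A|^2$ forces $|A|>2|B|$, and the triangle inequality yields $|A+sB|\ge|A|/2$ for every $s\in[0,1]$; a direct computation then shows $\mu^2+|A+sB|^2\ge X/5$, so the integrand is pointwise dominated by $5^{-\sigma}X^\sigma\le\sqrt{5}\,X^\sigma$ and the integral is controlled without loss. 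In every regime the bound stays below $\frac{16}{2\sigma+1}X^\sigma$.

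The only real obstacle is the numerical bookkeeping: one has to verify that the constants arising in the two sub-regimes $|s_\ast|\le 2$ and $|s_\ast|>2$, together with the constant $C_1$ coming from the one-dimensional estimate, all fit under the absolute constant $16$. The dependence $1/(2\sigma+1)$ is intrinsic and sharp, as it reflects the failure of $(1+t^2)^\sigma$ to be integrable on $\R$ exactly at the threshold $\sigma=-1/2$, so the blow-up must enter once and only once in any correct bound.
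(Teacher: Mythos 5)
Your argument is correct. The paper itself does not prove this lemma: it merely cites Lemma 2.1 of Acerbi--Fusco, which is the case $\mu=0$, and the stated version is deduced from it by applying that inequality in $\R^{k+1}$ to the augmented vectors $(\mu,A)$ and $(0,B)$, since $\mu^2+|A+sB|^2=|(\mu,A)+s(0,B)|^2$ and $\mu^2+|A|^2+|B|^2=|(\mu,A)|^2+|(0,B)|^2$. Your proof is instead self-contained and essentially reproves the Acerbi--Fusco estimate with $\mu$ carried along: the orthogonal decomposition $|A+sB|^2=c^2+|B|^2(s-s_*)^2$, the reduction to a one-dimensional integral maximised over the centred interval, the splitting at $t=m$ (which is exactly where the hypothesis $2\sigma+1>0$ enters), and the final dichotomy $|s_*|\le 2$ versus $|s_*|>2$ --- with the pointwise lower bound $\mu^2+|A+sB|^2\ge X/5$ for $X:=\mu^2+|A|^2+|B|^2$ in the second case, which correctly handles the only dangerous situation, namely $s_*$ large --- are all sound, and the degenerate cases ($B=0$, $m=0$) are harmless precisely because $2\sigma>-1$. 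What the citation buys is brevity; what your argument buys is a transparent, quantitative proof with explicit constants that also explains why the blow-up factor $1/(2\sigma+1)$ is intrinsic. One cosmetic remark: depending on how crudely you carry out the ``short computation'' after splitting at $t=m$, the constant $C_1$ may come out as large as $2\sqrt5$ rather than $2\sqrt2$ (you do get $C_1\le\sqrt5$ if you keep the negative term $2\sigma m^{2\sigma+1}$), but in every variant the combined constant stays well below $16$, so nothing is at stake.
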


The first part of the following lemma states that the mean value $(Du)_Q$ is a quasi-minimum of the mapping $\xi\mapsto \tmint_Q |V_{|\xi|}(Du-\xi)|^2\dz$ amongst all $\xi\in\R^{Nn}$, cf. \cite[Lemma 2.4]{Boegelein-Duzaar-Mingione:2010}. The second part of the lemma can be seen as a boundary version of this fact and can be proved by similar arguments.

\begin{lemma}\label{lem:V-A}
Let $p\ge 2$, $Q\subset\R^{n+1}$ such that $|Q|>0$ and $u\in L^p(Q,\R^N)$ with $Du\in L^p(Q,\R^{Nn})$. 

\vspace{0.1cm}
{\rm (i)} Then, for any $\xi\in\R^{Nn}$ we have
\begin{align*}
    \mint_{Q} \big|V_{|(Du)_Q|}\big(Du-(Du)_Q\big)\big|^2 \dz
    \le
    2^{2p}
    \mint_{Q} \big|V_{|\xi|}(Du-\xi)\big|^2 \dz .
\end{align*}

{\rm (ii)} Moreover, for any $\eta\in\R^{N}$ we have
\begin{align*}
    \mint_{Q} \big|V_{|(D_nu)_Q\otimes e_n|}\big( Du-(D_nu)_Q\otimes e_n\big)\big|^2 \dz
    \le
    2^{2p}
    \mint_{Q} \big|V_{|\eta\otimes e_n|}(Du-\eta\otimes e_n)\big|^2 \dz .
\end{align*}
\end{lemma}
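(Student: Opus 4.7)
The plan for part (i) is to exploit the quasi-minimality of $(Du)_Q$ for the functional $\xi\mapsto \mint_Q|V_{|\xi|}(Du-\xi)|^2\dz$, which rests on the fact that for $p\ge 2$ the $V$-function satisfies the two-sided comparability
\[
    |V_\mu(A)|^2 = \bigl(\mu^2+|A|^2\bigr)^{\frac{p-2}{2}}|A|^2 \approx \mu^{p-2}|A|^2 + |A|^p.
\]
Setting $\xi_o:=(Du)_Q$, I would first apply the triangle inequality in the two forms
\[
    |Du-\xi_o|^2 \le 2|Du-\xi|^2 + 2|\xi-\xi_o|^2,\qquad |\xi_o|^{p-2}\le c\bigl(|\xi|^{p-2}+|\xi-\xi_o|^{p-2}\bigr),
\]
to dominate $|V_{|\xi_o|}(Du-\xi_o)|^2$ pointwise by a sum of the five terms $|\xi|^{p-2}|Du-\xi|^2$, $|\xi|^{p-2}|\xi-\xi_o|^2$, $|\xi-\xi_o|^{p-2}|Du-\xi|^2$, $|\xi-\xi_o|^p$ and $|Du-\xi|^p$. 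The pieces depending only on $|Du-\xi|$ are controlled after averaging by the pointwise lower bounds $|V_{|\xi|}(Du-\xi)|^2\ge |\xi|^{p-2}|Du-\xi|^2$ and $|V_{|\xi|}(Du-\xi)|^2\ge |Du-\xi|^p$. The pieces involving $|\xi-\xi_o|$ are handled via Jensen's inequality applied to the identity $\xi_o-\xi=\mint_Q(Du-\xi)\dz$, which gives $|\xi-\xi_o|^q\le \mint_Q|Du-\xi|^q\dz$ for every $q\ge 1$; for the mixed contribution $|\xi-\xi_o|^{p-2}\mint_Q|Du-\xi|^2\dz$ I would combine this Jensen bound with H\"older's inequality with exponents $\tfrac{p}{p-2}$ and $\tfrac{p}{2}$.

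For part (ii), the argument is structurally identical once the space of ``competitors'' is restricted to matrices of the form $\eta\otimes e_n$. The starting decomposition now reads
\[
    Du-(D_nu)_Q\otimes e_n = \bigl(Du-\eta\otimes e_n\bigr)-\bigl((D_nu)_Q-\eta\bigr)\otimes e_n,
\]
while the role of Jensen's inequality is played by
\[
    \bigl|(D_nu)_Q-\eta\bigr|^q \le \mint_Q |D_nu-\eta|^q\dz \le \mint_Q |Du-\eta\otimes e_n|^q\dz,
\]
where the last step uses the elementary inequality $|D_nu-\eta|\le|Du-\eta\otimes e_n|$. With these substitutions in place, the pointwise decomposition of $|V_{|(D_nu)_Q\otimes e_n|}(Du-(D_nu)_Q\otimes e_n)|^2$ into a $\mu^{p-2}|\cdot|^2$-part and a $|\cdot|^p$-part, and the subsequent absorption of each term into $\mint_Q |V_{|\eta\otimes e_n|}(Du-\eta\otimes e_n)|^2\dz$, proceed exactly as in part (i).

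The main obstacle is bookkeeping rather than conceptual: tracking the multiplicative constants through the triangle, Jensen and H\"older inequalities carefully enough to arrive at the sharp factor $2^{2p}$ stated in the conclusion, and invoking the pointwise lower bounds on $|V_{|\xi|}(\cdot)|^2$ at exactly the right places so that every term produced by the expansion is absorbed into a single mean value on the right-hand side. No new idea beyond these standard $p$-Laplacian manipulations appears to be necessary.
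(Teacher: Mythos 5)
Your argument is correct and follows essentially the route the paper intends: for (i) it is the standard quasi-minimality argument (triangle-inequality splitting, then Jensen for the powers of $|\xi-(Du)_Q|$ and H\"older for the mixed term) underlying the cited Lemma 2.4 of B\"ogelein--Duzaar--Mingione, and for (ii) the reduction $|D_nu-\eta|\le|Du-\eta\otimes e_n|$ combined with Jensen is exactly the ``similar argument'' the paper alludes to. The bookkeeping you defer does close: each term of your expansion is absorbed into $\mint_Q|V_{|\xi|}(Du-\xi)|^2\dz$ by the Jensen/H\"older steps you describe, and the accumulated factor is of order $2^{3p/2}$, comfortably below the stated $2^{2p}$ for every $p\ge 2$.
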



\subsection{Relevant affine functions}\label{sec:affine}
Later on, we will transform the original Cauchy-Dirichlet problem \eqref{system} to a model problem on a half-cylinder $Q_R^+$ for some $R>0$. 
In order to approximate the solution to the model problem on a cylinder $Q_{\rho,\lambda}^+(z_o)\subset Q_R^+$ we shall use certain affine functions depending only on the spatial variable $x$. This reflects the fact that we want to prove an $L^\infty$-bound for the spatial derivative $Du$. 
When defining these affine functions one usually considers only the cases where $Q_{\rho,\lambda}(z_o)$ is an interior cylinder, i.e. when $(x_o)_n\ge \rho$ and hence $Q_{\rho,\lambda}(z_o)\subset Q_R^+$ or a boundary cylinder, i.e. when $(x_o)_n=0$, respectively $z_o\in\Gamma$.
In the latter case one has to work with affine functions which are zero on $\Gamma_{\rho,\lambda}(z_o)$, since such functions can be used as testing functions in the weak formulation of the problem. 
On the contrary, for our purposes it will be more convenient to work on general cylinders $Q_{\rho,\lambda}(z_o)\subset Q_R$ with $z_o\in Q_R^+\cup\Gamma_R$ which are not necessarily interior or boundary cylinders. Therefore, we will extend the definition of the relevant affine functions to such cylinders.
Recalling the notational convention 
$z_o\equiv (x_o,t_o)\equiv ((x_o)_1,\dots,(x_o)_n,t_o)$ we define for $z_o \in \R^{n+1}$ with $(x_o)_n\ge 0$ the affine function
\begin{equation}\label{def-ell-mean}
	\ell_{z_o;\rho,\lambda}(x)
	:=
	\begin{cases}
		(u)_{z_o;\rho,\lambda}^+ + (Du)_{z_o;\rho,\lambda}^+ (x-x_o)
		&\mbox{if $(x_o)_n\ge \frac{\rho}{2}$,}\\[5pt]
		(D_nu)^+_{z_o;\rho,\lambda}\, x_n
		&\mbox{if $0\le (x_o)_n< \frac{\rho}{2}$.}
	\end{cases}
\end{equation}
Moreover, by $\widehat \ell_{z_o;\rho,\lambda}$ we denote the unique affine map minimizing 
\begin{equation}\label{def-lmin}
	\ell\mapsto \mint_{Q_{\rho,\lambda}^+(z_o)} |u-\ell|^2 \dz
\end{equation}
amongst all affine maps $\ell(z)=\ell(x)$ which are independent of $t$ and additionally satisfy $\ell =0$ on $\Gamma$ in the case that
$(x_o)_n< \frac{\rho}{2}$.
In the following we do not distinguish in notation affine maps on $\R^{n+1}$ from their restriction to $\R^n=\R^n\times\{0\}$.
If $Q_{\rho,\lambda}(z_o)$ is an interior or boundary cylinder, i.e. if $(x_o)_n\ge \rho$ or $(x_o)_n=0$, a straightforward computation 
shows that 
\begin{equation}\label{def-ell-min}
	\widehat \ell_{z_o;\rho,\lambda}(x)
	\equiv
	\begin{cases}
		(u)_{z_o;\rho,\lambda} + 
		\displaystyle\tfrac{n+2}{\rho^2}
		\mint_{Q_{\rho,\lambda}(z_o)} u\otimes (\tilde x{-}x_o) \,d\tilde z\, (x{-}x_o)
		&\mbox{if $(x_o)_n\ge \rho$}\\[12pt]
		\phantom{mmmmm}\displaystyle
		\tfrac{n+2}{\rho^2}
		\mint_{Q_{\rho,\lambda}^+(z_o)} u(\tilde z)\, \tilde x_n \,d\tilde z\ x_n
		&\mbox{if $(x_o)_n=0$.}
	\end{cases}
\end{equation}
Moreover, if $Q_{\rho,\lambda}(z_o)$ is an interior cylinder, i.e. if $(x_o)_n\ge \rho$ one can show
that for any $\eta\in\R^N$ and $\xi\in\R^{Nn}$ there holds
\begin{equation}\label{kronz-est}
    |D\widehat\ell_{z_o;\rho,\lambda} - \xi|^2
    \le
    \frac{n(n+2)}{\rho^2}
    \mint_{Q_{\rho,\lambda}(z_o)} |u-\eta-\xi(x-x_o)|^2 \dz
\end{equation}
while in the case that $Q_{\rho,\lambda}(z_o)$ is a boundary cylinder, i.e. $(x_o)_n=0$ we deduce from \cite[Lemma 2.2]{Boegelein-Duzaar-Mingione:2010-boundary} that for any $\zeta\in\R^{N}$ there holds
\begin{equation}\label{kronz-est-boundary}
    |D\widehat\ell_{z_o;\rho,\lambda} - \zeta\otimes e_n|^2
    \le
    \frac{n+2}{\rho^2}
    \mint_{Q_{\rho,\lambda}^+(z_o)} |u-\zeta\, x_n|^2 \dz.
\end{equation}
Finally, for interior and boundary cylinders one can show that the minimizer $\widehat\ell_{z_o;\rho,\lambda}$ of \eqref{def-lmin} is a quasi minimum of the mapping $\ell\mapsto \tmint_{Q_{\rho,\lambda}^+(z_o)} |u-\ell|^s\dz$ for any $s\ge 2$.
This fact is stated in the following lemma. The proof for interior cylinders can be found in 
\cite[Lemma 2.8]{Boegelein-Duzaar-Mingione:2010}, while the one for boundary cylinders follows by similar arguments.
\begin{lemma}\label{lem:quasimin}
Let $s\ge 2$ and $Q_{\rho,\lambda}(z_o)\subset \R^{n+1}$ be a parabolic cylinder. 

\vspace{0.1cm}
{\rm (i)} 
If $(x_o)_n\ge \rho$, then for any affine function $\ell\colon\R^n\to \R^N$ independent of $t$ there holds
\begin{align*}
    \mint_{Q_{\rho,\lambda}(z_o)} |u-\widehat\ell_{z_o;\rho,\lambda}|^s \dz
    \le
    c(n,s) \mint_{Q_{\rho,\lambda}(z_o)} |u-\ell|^s \dz.
\end{align*}

{\rm (ii)} 
If $(x_o)_n=0$, then for any $\zeta\in \R^N$ we have that
\begin{align*}
    \mint_{Q_{\rho,\lambda}^+(z_o)} |u-\widehat\ell_{z_o;\rho,\lambda}|^s \dz
    \le
    c(n,s) \mint_{Q_{\rho,\lambda}^+(z_o)} |u-\zeta\,x_n|^s \dz.
\end{align*}
\end{lemma}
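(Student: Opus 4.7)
The plan is the standard quasi-minimality argument. The admissible affine competitors form a finite-dimensional vector space on which any two $L^q$-norms (with respect to the natural reference domain) are comparable; combining this with the $L^2$-minimality that defines $\widehat\ell_{z_o;\rho,\lambda}$ and with Jensen's inequality then yields $L^s$-quasi-minimality for every $s\ge 2$.

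Denote by $\mathcal A$ the space of all time-independent affine maps $\ell\colon\R^n\to\R^N$ (relevant for case (i)) and by $\mathcal A_0:=\{x\mapsto \zeta x_n : \zeta\in\R^N\}$ the subspace of those vanishing on $\Gamma$ (relevant for case (ii)). The first step I would carry out is the scaling-invariant key estimate
\begin{equation*}
    \bigg(\mint_{\mathcal C} |m|^s\dz\bigg)^{1/s}
    \le c(n,s)\bigg(\mint_{\mathcal C} |m|^2\dz\bigg)^{1/2},
\end{equation*}
valid for every $m$ in the relevant space, where $\mathcal C$ denotes $Q_{\rho,\lambda}(z_o)$ in case (i) and $Q_{\rho,\lambda}^+(z_o)$ in case (ii). Since $m$ is independent of $t$, the factor $2\lambda^{2-p}\rho^2$ from the $t$-integration cancels and the claim reduces to the corresponding spatial inequality on $B_\rho(x_o)$, resp.\ $B_\rho^+(x_o)$. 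An affine change of variable $x\mapsto x_o+\rho y$, which in case (ii) preserves $\Gamma$ because $(x_o)_n=0$, reduces the inequality to a fixed reference domain, where it is just the equivalence of two norms on the finite-dimensional vector space $\mathcal A$, resp.\ $\mathcal A_0$. This produces a constant depending only on $n$ and $s$.

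With the key estimate in hand, let $\ell$ be an arbitrary competitor (any time-independent affine map in case (i), any $\zeta x_n$ in case (ii)). Then $m := \widehat\ell_{z_o;\rho,\lambda}-\ell$ belongs to $\mathcal A$, resp.\ $\mathcal A_0$, so the triangle inequality together with the key estimate yields
\begin{equation*}
    \bigg(\mint_{\mathcal C} |u-\widehat\ell_{z_o;\rho,\lambda}|^s\dz\bigg)^{1/s}
    \le
    \bigg(\mint_{\mathcal C} |u-\ell|^s\dz\bigg)^{1/s}
    + c(n,s)\bigg(\mint_{\mathcal C} |m|^2\dz\bigg)^{1/2}.
\end{equation*}
A further triangle inequality combined with the $L^2$-minimality of $\widehat\ell_{z_o;\rho,\lambda}$ gives
\begin{equation*}
    \bigg(\mint_{\mathcal C}|m|^2\dz\bigg)^{1/2}
    \le \bigg(\mint_{\mathcal C}|u-\widehat\ell_{z_o;\rho,\lambda}|^2\dz\bigg)^{1/2}+ \bigg(\mint_{\mathcal C}|u-\ell|^2\dz\bigg)^{1/2}
    \le 2\bigg(\mint_{\mathcal C}|u-\ell|^2\dz\bigg)^{1/2},
\end{equation*}
and Jensen's inequality ($s\ge 2$) upgrades the $L^2$-average on the right to an $L^s$-average. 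Chaining these estimates delivers the claim.

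The only genuine subtlety is ensuring that the constant in the norm-equivalence step is independent of $\rho$ and $\lambda$; this is precisely the point of the rescaling to a fixed reference domain. The boundary case then goes through identically once one observes that $\mathcal A_0$ is closed under subtraction, so that $\widehat\ell_{z_o;\rho,\lambda} - \zeta x_n$ again has the form $\zeta' x_n$ and the key estimate still applies.
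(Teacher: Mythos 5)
Your proof is correct. The paper does not actually write out a proof of Lemma \ref{lem:quasimin}: it defers the interior case to \cite[Lemma 2.8]{Boegelein-Duzaar-Mingione:2010} and asserts that the boundary case follows by similar arguments. The argument used there (and reflected in the companion estimates \eqref{def-ell-min}, \eqref{kronz-est}, \eqref{kronz-est-boundary}) is more concrete than yours: one exploits the explicit representation of the coefficients of $\widehat\ell_{z_o;\rho,\lambda}$ as weighted means of $u$ over the cylinder, which gives a pointwise bound for $\widehat\ell_{z_o;\rho,\lambda}-\ell$ on $B_\rho(x_o)$, resp.\ $B^+_\rho(x_o)$, in terms of the integral average of $|u-\ell|$; you obtain the same control abstractly, from the equivalence of the $L^2$- and $L^s$-averages on the finite-dimensional spaces $\mathcal A$, $\mathcal A_0$ after factoring out the $t$-integration and rescaling to a reference (half-)ball. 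Both routes then run the identical chain: triangle inequality, $L^2$-minimality of $\widehat\ell_{z_o;\rho,\lambda}$, Jensen for $s\ge 2$. Your version has the merit of treating the interior and boundary classes uniformly; the two points that need checking, and which you did check, are that the translation--dilation preserves $\Gamma$ (hence $\mathcal A_0$) when $(x_o)_n=0$, and that $\mathcal A_0$ is closed under differences so that the competitor $\widehat\ell_{z_o;\rho,\lambda}-\zeta x_n$ stays admissible. One small caveat: invoking ``equivalence of norms in finite dimensions'' literally yields a constant depending on the dimension $N(n+1)$, hence on $N$, whereas the lemma states $c(n,s)$; this is repaired by applying the elementary bound $\sup_{B_1}|m^\alpha|\le c(n)\big(\tmint_{B_1}|m^\alpha|^2\,dx\big)^{1/2}$ componentwise to affine $m$ (and in any case an $N$-dependent constant would be harmless for the rest of the paper).
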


\subsection{A priori estimates}
From the theory of linear parabolic systems it is known that weak solutions are smooth in the interior and up to the boundary.
Later on, in the so called non-degenerate regime where the solution behaves approximately like a solution to a linear parabolic system, we shall exploit good \textit{excess-decay-estimates} for linear parabolic systems with constant coefficients. Since we are dealing with different situations, i.e. interior cylinders contained in the half-space $\{z\in\R^{n+1}:x_n>0\}$ and cylinders intersecting the hyperplane $\Gamma$, we will need suitable excess-decay-estimates for any of them.

Our aim in this section is to provide a priori estimates not depending on the position of the larger cylinder $Q_\rho(z_o)$; see inequality \eqref{apriori} below.
The precise form of the estimates plays a crucial role later, since it allows for a proper comparison argument after linearization. For a parabolic cylinder $Q_\rho(z_o)$ with $(x_o)_n\ge 0$ we consider the following linear parabolic system with constant coefficients
\begin{equation}\label{lin-system}
	\left\{
	\begin{array}{cc}
	\partial_t h - \Div( ADh)
	=
	0
	&\qquad\mbox{in $Q_\rho^+(z_o)$} \\[7pt]
	h=0
	&\qquad\mbox{on $\Gamma_{\rho}(t_o)$ if $\Gamma_{\rho}(t_o)\not=\emptyset$}.
	\end{array}
	\right.
\end{equation}
Thereby the coefficients $A$ are supposed to satisfy the following ellipticity and boundedness conditions:
\begin{equation}\label{lin-cond-A}
	\langle A\xi,\xi\rangle\geq \nu\,|\xi|^2\, ,
	\qquad
	\langle A\xi,\xi_o\rangle\le L\,|\xi||\xi_o|\, ,
\end{equation}
whenever $\xi,\xi_o\in\R^{Nn}$ and with some parameters $0<\nu\le L<\infty$. For a good first order approximation with respect to $x$ of the solution one typically uses certain affine functions depending only on $x$ and not on the time variable $t$. As in Section \ref{sec:affine} we distinguish in the definition of the affine function whether we are more in the interior, or more in the boundary situation. More precisely, we set
\begin{equation}\label{def-lh}
	\ell_{z_o;\rho}^{(h)}(x,t)
	:=
	\begin{cases}
		(h)_{z_o;\rho}^+ + (Dh)_{z_o;\rho}^+ (x-x_o)
		&\mbox{if $(x_o)_n\ge \rho/2$,}\\[5pt]
		(D_nh)^+_{z_o;\rho}\, x_n
		&\mbox{if $0\le (x_o)_n< \rho/2$.}
	\end{cases}
\end{equation}
Note that in the boundary situation, i.e. when the second line of the definition is in force, we have $\ell_{z_o;\rho}^{(h)}\equiv 0$ on $\Gamma$.
With this notation at hand we can now state a unified \textit{up-to-the-boundary excess-decay-estimate for linear parabolic systems}.

\begin{proposition}\label{prop:lin}
Let $s\ge 2$ and $Q_\rho(z_o)\subset \R^{n+1}$ with $(x_o)_n\ge 0$ and suppose that
$$
	h\in C^0\big(\Lambda_{\rho}(t_o); L^2(B_\varrho^+ (x_o) ,\R^N)\big)\cap
	L^s\big(\Lambda_{\rho}(t_o); W^{1,s}(B_\varrho^+ (x_o) ,\R^N)\big)
$$
is a weak solution of the linear parabolic system \eqref{lin-system} in $Q_\varrho^+(z_o)$ under the assumption \eqref{lin-cond-A}. Then, $h$ is smooth in $Q_\rho^+(z_o)$ and also smooth up to the lateral boundary $\Gamma_\rho(z_o)$ if $\Gamma_\rho(z_o)\not=\emptyset$. Moreover, there exists a constant $c=c(n,N,\nu,L,s)$ such that for any $\theta\in(0,1/16]$ there holds
\begin{equation}\label{apriori}
	\mint_{Q_{\theta\rho}^+(z_o)}
	\bigg|
    \frac{h-\ell_{z_o;\theta\rho}^{(h)}}{\theta\rho}
    \bigg|^s
	\dz
	\le
	c\,\theta^s
	\mint_{Q_{\rho}^+(z_o)}
	\Big|\frac{h}{\rho}\Big|^s + |Dh|^s \dz 
\end{equation}
provided either $(x_o)_n\ge 2\theta\rho$ or $(x_o)_n=0$.
\end{proposition}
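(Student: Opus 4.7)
The proof combines smoothness of $h$ up to $\Gamma_\rho(z_o)$ with a Taylor-type excess estimate based on scale-invariant pointwise bounds on $D^2h$ and $\partial_t h$. Interior smoothness is classical for constant-coefficient linear parabolic systems. When $\Gamma_\rho(z_o)\neq\emptyset$, regularity up to $\Gamma$ is obtained via tangential difference quotients: for each $i\in\{1,\dots,n-1\}$ the quotient $\Delta^\tau_i h$ solves the same linear system \eqref{lin-system} with vanishing trace on $\Gamma$, so Caccioppoli tested with $\Delta^\tau_i h$ itself gives uniform $L^2$-bounds and produces all tangential spatial derivatives. The PDE then yields $\partial_{x_n}^2 h$ algebraically from $\partial_t h$ and tangential second derivatives (the ellipticity condition \eqref{lin-cond-A} ensures invertibility of the $\partial_{x_n}^2$-block), and bootstrapping delivers $h\in C^\infty$ up to $\Gamma_\rho(z_o)$.

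With smoothness in hand, I would apply Caccioppoli plus Moser iteration to $h$ and its successive derivatives (each of which again solves the same linear system, with vanishing trace on $\Gamma$ in the tangential case and hence admissible as a test function) to obtain the scale-invariant pointwise bound
\begin{equation*}
    \rho^2\bigl(\|D^2h\|_{L^\infty(Q_{\rho/2}^+(z_o))} + \|\partial_t h\|_{L^\infty(Q_{\rho/2}^+(z_o))}\bigr)
    \le
    c\,\rho\,\biggl(\mint_{Q_\rho^+(z_o)} \bigl(|h/\rho|^s + |Dh|^s\bigr)\dz\biggr)^{1/s},
\end{equation*}
valid uniformly in both the interior and the flat-boundary regime.

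Taylor expansion then closes the argument. In the interior case $(x_o)_n\ge 2\theta\rho$ the cylinder $Q_{\theta\rho}(z_o)$ lies entirely in $Q_{\rho/2}^+(z_o)$, and $\ell_{z_o;\theta\rho}^{(h)}$ is the first-order spatial Taylor polynomial at $(x_o,t_o)$ built from averages; the remainder estimate
\begin{equation*}
    \sup_{Q_{\theta\rho}(z_o)}\bigl|h-\ell_{z_o;\theta\rho}^{(h)}\bigr|
    \le
    c(\theta\rho)^2\bigl(\|D^2h\|_{L^\infty(Q_{\theta\rho}(z_o))}+\|\partial_t h\|_{L^\infty(Q_{\theta\rho}(z_o))}\bigr)
\end{equation*}
(the $\partial_t h$ contribution arises because $\ell$ is $t$-independent whereas $h$ is not), combined with the a priori bound from the previous step, yields \eqref{apriori} after dividing by $\theta\rho$ and raising to the $s$-th power. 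In the boundary case $(x_o)_n=0$ the definition \eqref{def-lh} forces $\ell_{z_o;\theta\rho}^{(h)}=(D_nh)_{z_o;\theta\rho}^+\,x_n$, and $h\equiv 0$ on $\Gamma$ (whence also $D_{x'}h\equiv 0$ there) reduces the Taylor expansion at $(x'_o,0,t_o)$ to a single $x_n$-term with the same quadratic remainder, leading to the same conclusion.

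The main obstacle is the uniformity of the a priori bound across both regimes, with a constant depending only on $n,N,\nu,L,s$. The hypothesis ``$(x_o)_n\ge 2\theta\rho$ or $(x_o)_n=0$'' in the statement is precisely what excludes the intermediate regime $0<(x_o)_n<2\theta\rho$, where neither the pure interior nor the pure flat-boundary estimate applies with a uniformly controlled constant; this delimitation is also the reason for the split in the definition of $\ell_{z_o;\theta\rho}^{(h)}$ in \eqref{def-lh}.
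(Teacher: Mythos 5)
Your proposal is correct in substance, but it follows a genuinely different route from the paper. The paper never works with pointwise bounds on $D^2h$ and $\partial_t h$: it quotes Campanato-type $L^2$/$L^s$ excess-decay estimates for the pure interior case $(x_o)_n\ge\rho$ and the pure flat-boundary case $(x_o)_n=0$ (Lemma \ref{lem:lin-excess}), and the whole point of its proof is the intermediate regime $2\theta\rho\le (x_o)_n<\rho$, which it handles by chaining: interior excess decay on $Q_{(x_o)_n}(z_o)$, then the Caccioppoli--Poincar\'e pair of Lemma \ref{lem:lin-cac} to trade the interior affine map for the boundary one at the projected point $z_o'$, then the boundary excess decay from scale $\rho/2$ down to $4(x_o)_n$ at $z_o'$. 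Your approach instead proves a single scale-invariant sup-norm estimate for second (and higher) derivatives on $Q_{\rho/2}^+(z_o)$, uniform in $(x_o)_n\in[0,\rho)$, and concludes by Taylor expansion at the center; this absorbs the intermediate regime automatically, at the price of needing up-to-the-boundary Schauder/Moser-type estimates in $L^\infty$ for constant-coefficient systems, which is more machinery than the paper redevelops (it only borrows integral excess-decay statements from the cited references). Two points in your sketch deserve explicit care, though both are within reach of your method: (i) the uniformity of the sup bound on $Q_{\rho/2}^+(z_o)$ for centers with $0<(x_o)_n<\rho$ is not a single interior or single boundary estimate but a covering argument combining interior cylinders away from $\Gamma$ with boundary cylinders centered on $\Gamma$ at radii comparable to $\rho$ -- this is exactly the step the paper replaces by its $z_o'$-chaining; (ii) since $\ell_{z_o;\theta\rho}^{(h)}$ is built from averages rather than Taylor coefficients, bounding $|(Dh)^+_{z_o;\theta\rho}-Dh(z_o)|$ requires either a bound on $D\partial_t h$ (available, since $\partial_t h$ solves the same system) or the observation that the time variation of the spatial average of $Dh$ is controlled by $\|\partial_t h\|_\infty$ via the divergence theorem; the displayed remainder inequality is true but not a bare Taylor statement. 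Your reading of the hypothesis is right: the excluded regime is $0<(x_o)_n<2\theta\rho$, where the definition \eqref{def-lh} of the comparison map and the position of $Q_{\theta\rho}(z_o)$ relative to $\Gamma$ are mismatched; but note that the allowed intermediate centers $2\theta\rho\le(x_o)_n<\rho$ are also not covered by a ``pure'' interior or boundary estimate, and that is precisely the case the paper's proof (and your uniform sup bound) must address.
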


For the proof of Proposition \ref{prop:lin} we will combine the following classical results for linear parabolic systems. The first Lemma contains a Caccioppoli and a Poincar\'e's inequality for solutions which can be deduced for instance from 
\cite[Chapter 5]{Campanato:1966} and \cite[Lemma 3.1]{Boegelein-Duzaar-Mingione:2010} applied with $w(x,t)=h(x,t)-D\ell_{z_o;r}^{(h)}(x-x_o)$ and $\xi(x,t)= A(Dh(x,t)-D\ell_{z_o;r}^{(h)})$.

\begin{lemma}\label{lem:lin-cac}
Let $Q_\rho(z_o)\subset \R^{n+1}$ with $(x_o)_n\ge 0$ and suppose that
$$
	h\in C^0\big(\Lambda_{\rho}(t_o); L^2(B_\varrho^+ (x_o) ,\R^N)\big)\cap
	L^2\big(\Lambda_{\rho}(t_o); W^{1,2}(B_\varrho^+ (x_o) ,\R^N)\big)
$$
is a weak solution of the linear parabolic system \eqref{lin-system} in $Q_\varrho^+(z_o)$ under the assumption \eqref{lin-cond-A}. Then, for any $r\in(0,\rho]$ with $r\le (x_o)_n$ there holds
\begin{equation*}
	\mint_{Q_{r/2}(z_o)}
	\big|Dh-D\ell_{z_o;r}^{(h)}\big|^2 \dz
	\le
	c(\nu,L)\mint_{Q_{r}(z_o)}
	\bigg|\frac{h-\ell_{z_o;r}^{(h)}}{r}\bigg|^2 \dz 
\end{equation*}
and
\begin{equation*}
	\mint_{Q_{r}(z_o)}
	\big|h-\ell_{z_o;r}^{(h)}\big|^2 \dz
	\le
	c(n,N,L)\, r
	\mint_{Q_{r}(z_o)}
	\big|Dh-D\ell_{z_o;r}^{(h)}\big|^2 \dz .
\end{equation*}
\end{lemma}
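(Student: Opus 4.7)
The assumption $r \le (x_o)_n$ ensures that $B_r(x_o)\subset\{x_n>0\}$, so $Q_r(z_o)\subset Q_\rho^+(z_o)$ is genuinely an interior cylinder and in particular $(x_o)_n\ge r/2$. Thus the first case of \eqref{def-lh} is in force, and $\ell_{z_o;r}^{(h)}(x)=(h)_{z_o;r}+(Dh)_{z_o;r}(x-x_o)$ is an affine function of $x$ alone with constant gradient. The observation underlying both inequalities is that the shifted function $w(x,t):=h(x,t)-\ell_{z_o;r}^{(h)}(x)$ satisfies the same homogeneous linear parabolic system as $h$, namely $\partial_t w-\Div(A Dw)=0$ on $Q_r(z_o)$, because $\partial_t\ell_{z_o;r}^{(h)}\equiv 0$ and $\Div(A D\ell_{z_o;r}^{(h)})\equiv 0$ (the gradient being constant in both space and time).

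For the first (Caccioppoli-type) estimate the plan is the standard argument on concentric cylinders $Q_{r/2}(z_o)\subset Q_r(z_o)$. Pick a cutoff $\eta\in C_c^\infty(Q_r(z_o))$ with $\eta\equiv 1$ on $Q_{r/2}(z_o)$, $0\le\eta\le 1$, $|D\eta|\le c/r$ and $|\partial_t\eta|\le c/r^2$. Testing the PDE for $w$ against $\eta^2 w$ (after a Steklov-type time mollification), the time term generates the standard energy contribution $\tfrac12\int(\eta^2|w|^2)_t$, while the ellipticity of $A$ gives $\nu\int\eta^2|Dw|^2$ on the left. The cross term $\langle A Dw,2\eta D\eta\otimes w\rangle$ is handled by Young's inequality with a small parameter to absorb $|Dw|^2$ into the left hand side, and the bound $|A|\le L$ is used on the remaining terms. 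This is precisely the Campanato scheme of \cite[Chapter 5]{Campanato:1966} and yields the first inequality with constant depending only on $\nu$ and $L$.

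For the second (parabolic Poincar\'e) estimate I would apply \cite[Lemma 3.1]{Boegelein-Duzaar-Mingione:2010} exactly as indicated in the excerpt, with $\widetilde w(x,t):=h(x,t)-D\ell_{z_o;r}^{(h)}(x-x_o)$ (which differs from $w$ only by the additive constant $(h)_{z_o;r}$, so is irrelevant once means are subtracted) and flux $\xi(x,t):=A\bigl(Dh(x,t)-D\ell_{z_o;r}^{(h)}\bigr)=A D\widetilde w$. The pair $(\widetilde w,\xi)$ satisfies $\partial_t\widetilde w=\Div\xi$ in $Q_r(z_o)$, and by definition of $\ell_{z_o;r}^{(h)}$ the mean $(D\widetilde w)_{Q_r(z_o)}$ vanishes. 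The BDM lemma controls the time oscillations of the slice-means of $\widetilde w$ through the flux $\xi$, while the slice-wise spatial oscillations are controlled by the classical spatial Poincar\'e inequality. Using the pointwise bound $|\xi|\le L|D\widetilde w|=L|Dw|$ then delivers the claimed estimate, with constant depending on $n,N,L$.

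The only real obstacle is nothing conceptual but rather the careful treatment of the time derivative: since $h$ is assumed only in the energy class $C^0(\Lambda_\rho;L^2)\cap L^2(\Lambda_\rho;W^{1,2})$, both the Caccioppoli test function computation and the invocation of the BDM Poincar\'e lemma should be carried out on a time-mollified version of $h$ and then passed to the limit. For linear systems with constant coefficients this mollification step is entirely routine and does not affect the constants.
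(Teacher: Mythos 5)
Your proposal is correct and follows essentially the same route the paper indicates: the first inequality via the standard cutoff/testing (Campanato) argument for the shifted solution $w=h-\ell_{z_o;r}^{(h)}$, and the second via the parabolic Poincar\'e lemma of B\"ogelein--Duzaar--Mingione applied with exactly the pair $w(x,t)=h(x,t)-D\ell_{z_o;r}^{(h)}(x-x_o)$ and $\xi=A(Dh-D\ell_{z_o;r}^{(h)})$ that the paper cites. Your observations that $r\le (x_o)_n$ forces the interior case of \eqref{def-lh} and that $(D\ell_{z_o;r}^{(h)}$ being constant makes $w$ solve the same constant-coefficient system are precisely the points needed, and the mollification remark handles the only technical issue.
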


%

The next lemma already states the up-to-the-boundary smoothness of solutions to the linear parabolic system \eqref{lin-system} and provides a priori estimates on interior cylinders $Q_\rho(z_o)$ with $(x_o)_n\ge\rho$ and on half-cylinders $Q_\rho^+(z_o)$ with $(x_o)_n=0$.
The proof for the boundary situation can be deduced from the proof of 
\cite[Corollary 3.4]{Boegelein-Duzaar-Mingione:2010-boundary}, while the interior regularity is a consequence of \cite[Chapter 5]{Campanato:1966}. 

\begin{lemma}\label{lem:lin-excess}
Let $s\ge 2$ and $Q_\rho(z_o)\subset \R^{n+1}$ with $(x_o)_n\ge 0$ and suppose that
$$
	h\in C^0\big(\Lambda_{\rho}(t_o); L^2(B_\varrho^+ (x_o) ,\R^N)\big)\cap
	L^s\big(\Lambda_{\rho}(t_o); W^{1,s}(B_\varrho^+ (x_o) ,\R^N)\big)
$$
is a weak solution of the linear parabolic system \eqref{lin-system} in $Q_\varrho^+(z_o)$ under the assumption \eqref{lin-cond-A}. Then, $h$ is smooth in $Q_\rho^+(z_o)$ and also smooth up to the lateral boundary $\Gamma_\rho(z_o)$ if $\Gamma_\rho(z_o)\not=\emptyset$. Moreover, there exists a constant $c = c(n,\nu,L,s)$ such that for any $\theta\in(0,1/2)$ there holds
\begin{equation*}
 	\mint_{Q_{\theta \rho}^+(z_o)}
	\bigg|
    \frac{h-\ell_{z_o;\theta \rho}^{(h)}}{\theta \rho}
    \bigg|^s
	\dz
	\le
	c\,\theta^s
	\bigg( \mint_{Q_{\rho}^+(z_o)}
	\bigg|
    \frac{h-\ell_{z_o;\rho}^{(h)}}{\rho}
    \bigg|^2 \dz \bigg)^{\frac s2},
\end{equation*}
provided either $(x_o)_n\ge \rho$ or $(x_o)_n=0$. 
\end{lemma}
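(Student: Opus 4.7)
The plan is to combine classical Campanato interior regularity with up-to-the-boundary Schauder theory for the flat case, and then approximate $h$ by its first-order Taylor polynomial in $x$ at $z_o$. My first move is a reduction: set $\tilde h := h - \ell^{(h)}_{z_o;\rho}$. Since $\ell^{(h)}_{z_o;\rho}$ is affine in $x$ and independent of $t$, its time derivative vanishes and $\Div(A D\ell^{(h)}_{z_o;\rho}) = 0$, so $\tilde h$ is still a weak solution of \eqref{lin-system}. In the boundary case $(x_o)_n = 0$ the affine function $\ell^{(h)}_{z_o;\rho}(x) = (D_n h)^+_{z_o;\rho}\, x_n$ vanishes on $\Gamma$, so $\tilde h$ also vanishes on $\Gamma_\rho(z_o)$. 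A short computation using that $(x - x_o)^+_{z_o;\theta\rho} = 0$ in the interior case (by symmetry of $Q_{\theta\rho}(z_o)$ around $x_o$) and that $\ell^{(h)}$ has only an $x_n$-coefficient in the boundary case yields $\ell^{(\tilde h)}_{z_o;\theta\rho} = \ell^{(h)}_{z_o;\theta\rho} - \ell^{(h)}_{z_o;\rho}$, so $\tilde h - \ell^{(\tilde h)}_{z_o;\theta\rho} = h - \ell^{(h)}_{z_o;\theta\rho}$; this reduces the claim to the analogous estimate for $\tilde h$ with right-hand side $(\tmint_{Q_\rho^+(z_o)}|\tilde h/\rho|^2\dz)^{s/2}$.

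Next I invoke the regularity theory: \cite[Chapter 5]{Campanato:1966} in the interior case and the arguments in the proof of \cite[Corollary 3.4]{Boegelein-Duzaar-Mingione:2010-boundary} in the boundary case imply that $\tilde h$ is smooth on $Q_{\rho/2}^+(z_o)$ (up to $\Gamma_{\rho/2}(z_o)$ when it is nonempty) and satisfies the scale-invariant bounds
\begin{equation*}
	\rho^{k+2j}\big\|D_x^k \partial_t^j \tilde h\big\|_{L^\infty(Q_{\rho/2}^+(z_o))}
	\le c\bigg(\mint_{Q_\rho^+(z_o)}|\tilde h|^2\dz\bigg)^{1/2}
\end{equation*}
for every $k, j \in \N$, with $c = c(n,N,\nu,L,k,j)$. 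With these bounds I define the first-order Taylor polynomial of $\tilde h$ in $x$ at $z_o$: in the interior case $P(x) := \tilde h(z_o) + D_x\tilde h(z_o)(x-x_o)$, and in the boundary case $P(x) := D_n\tilde h(z_o)\, x_n$, the latter using that $\tilde h \equiv 0$ on $\Gamma$ forces $\tilde h(z_o) = 0$, the tangential derivatives $D_j\tilde h(z_o) = 0$ for $j < n$, and $\partial_t\tilde h(z_o) = 0$. Taylor's theorem combined with the parabolic scaling $|x-x_o| \le \theta\rho$, $|t-t_o| \le (\theta\rho)^2$ and the bounds above then yields
\begin{equation*}
	\sup_{Q_{\theta\rho}^+(z_o)}\big|\tilde h - P\big|
	\le c\,\theta^2 \bigg(\mint_{Q_\rho^+(z_o)}|\tilde h|^2 \dz\bigg)^{1/2}.
\end{equation*}

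The final step is to compare $P$ with $\ell^{(\tilde h)}_{z_o;\theta\rho}$. Applying the same Taylor expansion to $\tilde h$ and to $D\tilde h$ and then averaging over $Q_{\theta\rho}^+(z_o)$ shows that both $\big|(\tilde h)^+_{z_o;\theta\rho} - \tilde h(z_o)\big|$ and $(\theta\rho)\big|(D\tilde h)^+_{z_o;\theta\rho} - D\tilde h(z_o)\big|$ are bounded by $c\theta^2 (\tmint_{Q_\rho^+(z_o)}|\tilde h|^2 \dz)^{1/2}$, since the first-order correction terms average to zero (interior case) or vanish tangentially along $\Gamma$ (boundary case). Hence $|\tilde h - \ell^{(\tilde h)}_{z_o;\theta\rho}| \le c\theta^2 (\tmint_{Q_\rho^+(z_o)}|\tilde h|^2 \dz)^{1/2}$ pointwise on $Q_{\theta\rho}^+(z_o)$; dividing by $\theta\rho$, raising to the $s$-th power, and integrating produces the asserted estimate. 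The main technical obstacle is the boundary $L^\infty$--$L^2$ estimate for the higher derivatives of $\tilde h$ up to $\Gamma$, which, thanks to the flatness of $\Gamma$ in this model configuration, follows either by a reflection argument for the constant-coefficient system or directly from the cited boundary Schauder theory.
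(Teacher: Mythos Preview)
Your proof is correct and follows essentially the approach the paper indicates: the paper does not give a self-contained argument for this lemma but simply defers to \cite[Chapter~5]{Campanato:1966} for the interior case and to the proof of \cite[Corollary~3.4]{Boegelein-Duzaar-Mingione:2010-boundary} for the boundary case. Your write-up makes the standard mechanism explicit---subtract the affine map, invoke the scale-invariant $L^\infty$--$L^2$ derivative bounds from those references, and read off the excess decay via a second-order Taylor expansion in the parabolic metric---which is exactly how such estimates are extracted from the cited sources.
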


Now we come to the proof of Proposition \ref{prop:lin}. In the case where either $(x_o)_n\ge \rho$ or $(x_o)_n=0$ holds Proposition \ref{prop:lin} directly follows from Lemma \ref{lem:lin-excess}. Therefore, the crucial point in the proof will be to replace the assumption $(x_o)_n\ge \rho$ by the weaker assumption $(x_o)_n\ge 2\theta\rho$.

\begin{proof}[\rm\bf Proof of Proposition \ref{prop:lin}]
The smoothness of $h$ directly follows from Lemma \ref{lem:lin-excess}. Moreover, 
in the case that either $(x_o)_n\ge \rho$ or $(x_o)_n=0$ holds, \eqref{apriori} follows from Lemma \ref{lem:lin-excess} and the definition of 
$\ell_{z_o;\rho}^{(h)}$.
Therefore, it remains to consider the case where $2\theta\rho\le (x_o)_n< \rho$.

We fist consider the case $2\theta\rho\le (x_o)_n\le \rho/8$.
Applying Lemma \ref{lem:lin-excess} with $(\theta\rho,(x_o)_n)$ instead of $(\theta\rho,\rho)$ we obtain
\begin{align*}
	\mint_{Q_{\theta\rho}(z_o)}
	\bigg|\frac{h-\ell_{z_o;\theta\rho}^{(h)}}{\theta\rho}\bigg|^s
	\dz 
	\le
	c\,\bigg(\frac{\theta\rho}{(x_o)_n}\bigg)^s
	\bigg(\mint_{Q_{(x_o)_n}(z_o)}
	\bigg|\frac{h-\ell_{z_o;(x_o)_n}^{(h)}}{(x_o)_n}\bigg|^2
	\dz \bigg)^\frac{s}{2}.
\end{align*}
In the following we denote by $z_o':=(x_o',t_o):=((x_o)_1\dots,(x_o)_{n-1},0,t_o)$ the orthogonal projection of $z_o$ on $\Gamma$.
With the help of Lemma \ref{lem:lin-cac} and the minimizing property of $D\ell_{z_o;(x_o)_n}^{(h)}=(Dh)_{z_o;(x_o)_n}$ we further estimate the integral on the right-hand side as follows:
\begin{align*}
	\mint_{Q_{(x_o)_n}(z_o)}
	\bigg|\frac{h-\ell_{z_o;(x_o)_n}^{(h)}}{(x_o)_n}\bigg|^2 \dz
	&\le
	c\mint_{Q_{(x_o)_n}(z_o)}
	|Dh-D\ell_{z_o;(x_o)_n}^{(h)}|^2 \dz \\
	&\le
	c\mint_{Q_{(x_o)_n}(z_o)}
	|Dh-D\ell_{z_o';4(x_o)_n}^{(h)}|^2 \dz \\
	&\le
	c\mint_{Q_{2(x_o)_n}^+(z_o')}
	|Dh-D\ell_{z_o';4(x_o)_n}^{(h)}|^2 \dz \\
	&\le
	c\mint_{Q_{4(x_o)_n}^+(z_o')}
	\bigg|\frac{h-\ell_{z_o';4(x_o)_n}^{(h)}}{4(x_o)_n}\bigg|^2 \dz ,
\end{align*}
where $c=c(n,N,\nu,L)$.
Now we are in the position to apply once again Lemma \ref{lem:lin-excess} to the right-hand side, but now in the boundary version, i.e. with $(z_o,\theta\rho,\rho)$ replaced by $(z_o',4(x_o)_n,\rho/2)$ and with $s=2$. This leads to the estimate
\begin{align*}
	\mint_{Q_{(x_o)_n}(z_o)}
	\bigg|\frac{h-\ell_{z_o;(x_o)_n}^{(h)}}{(x_o)_n}\bigg|^2 \dz
	&\le
	c\,\bigg(\frac{4(x_o)_n}{\rho/2}\bigg)^2
	\mint_{Q_{\rho/2}^+(z_o')}
	\bigg|
    \frac{h-\ell_{z_o';\rho/2}^{(h)}}{\rho/2}
    \bigg|^2 \dz .\\
\end{align*}
Inserting this above and applying H\"older's inequality we find
\begin{align*}
	\mint_{Q_{\theta\rho}(z_o)}
	\bigg|
    \frac{h-\ell_{z_o;\theta\rho}^{(h)}}{\theta\rho}
    \bigg|^s
	\dz 
	&\le
	c\,\theta^s 
	\mint_{Q_{\rho/2}^+(z_o')}
	\bigg|
    \frac{h-\ell_{z_o';\rho/2}^{(h)}}{\rho/2}
    \bigg|^s
	\dz \\
	&\le
	c(n,N,\nu,L,s)\,\theta^s 
	\mint_{Q_{\rho}^+(z_o)}
	\Big|\frac{h}{\rho}\Big|^s + |Dh|^s
	\dz 
\end{align*}
and proves the asserted a priori estimate in the case $2\theta\rho\le (x_o)_n< \rho/8$.
If $\rho/8\le (x_o)_n< \rho$ we apply the a priori estimate from above with $(\theta\rho,\rho/8)$ instead of $(\theta\rho,\rho)$ and subsequently enlarge the domain of integration from $Q_{\rho/8}^+(z_o)$ to $Q_{\rho}^+(z_o)$. 
This finishes the proof of the lemma.
\end{proof}

It is not always possible to compare the solution of the original parabolic system \eqref{system} to a solution of a linear parabolic system. In the so called degenerate regime the solution will only be comparable to the solution of a parabolic $p$-Laplacian system. Consequently, we shall also need a priori estimates for solutions to the $p$-Laplacian system. These a priori estimates can be deduced from the $C^{1;\alpha}$-theory of DiBenedetto \& Friedman
\cite{DiBenedetto:book, DiBenedetto-Friedman:1984, DiBenedetto-Friedman:1985} by the arguments of \cite{Boegelein-Duzaar-Mingione:2010, Boegelein-Duzaar:2011}. In the boundary situation we shall use a reflection argument to obtain suitable estimates. This is possible since we assume that the solution is zero on the flat part of the boundary.

\begin{lemma}\label{lem:DiBe}
Let $p\ge 2$, $c_\ast\ge 1$ and let $Q_{\rho,\lambda}(z_o)\subset \R^{n+1}$ be a cylinder with
$(x_o)_n\ge 0$, $\rho\in(0,1]$ and $\lambda>0$ and suppose that
\begin{equation*}
    v\in C^0\big(\Lambda_{\rho,\lambda}(t_o);L^2(B_\rho^+(x_o),\R^N)\big) \cap
    L^p\big(\Lambda_{\rho,\lambda}(t_o);W^{1,p}(B_\rho^+(x_o),\R^N)\big)
\end{equation*}
is a weak solution of
\begin{equation}\label{p-laplace}
	\left\{
	\begin{array}{cc}
    \partial_t v - \K\Div \big(|Dv|^{p-2}Dv\big) = 0
    &\mbox{in $Q_{\rho,\lambda}^+(z_o)$,} \\[7pt]
    v= 0
    & \mbox{on $\Gamma_{\rho,\lambda}(z_o)$ if\, $\Gamma_{\rho,\lambda}(z_o)\not=\emptyset$,}
    \end{array}
    \right.
\end{equation}
satisfying
\begin{equation}\label{DiBe-intrinsic}
    \mint_{Q_{\rho,\lambda}^+(z_o)} |Dv|^p\dz
    \le
    c_*\, \lambda^p.
\end{equation}
Moreover, let $r\in(0,\rho/2]$ and suppose that either $(x_o)_n\ge r$ or $(x_o)_n=0$.
Then, there exist constants $\alpha_o=\alpha_o(n,p,\K) \in(0,1)$, $\mu_o=\mu_o(n,N,p,\K,c_*)\ge 1$ and $\rho_s\in[0,\rho/2]$,
such that the following holds: In the case $\rho_s>0$ there exists $\mu$
such that $Q_{r,\lambda\mu}(z_o) \subset Q_{\rho/2,\lambda}(z_o)$ and
\begin{equation}\label{DiBe-mu}
    \mu_o\bigg(\frac{2\max\{r,\rho_s\}}{\rho}\bigg)^{\alpha_o}
    \le
    \mu
    \le
    2\mu_o \bigg(\frac{2\max\{r,\rho_s\}}{\rho}\bigg)^{\alpha_o}
\end{equation}
and
\begin{equation}\label{DiBe-sup}
    \sup_{Q_{r,\lambda\mu}^+(z_o)} |Dv|
    \le
    \lambda\mu
\end{equation}
holds. Moreover, 
for any $s\in [2,p]$ we have
\begin{equation}\label{DiBe-ex}
    \mint_{Q_{r,\lambda\mu}^+(z_o)} |Dv-D\ell^{(v)}_{z_o;r,\lambda\mu}|^s\dz
    \le
    c\, \lambda^s\mu^s\,\min\Big\{1,\frac{r}{\rho_s}\Big\}^{2\alpha_o} 
\end{equation}
with a constant $c=c(n,N,p,\K)$.
Finally, if $0<r<\rho_s$ then there holds
\begin{equation}\label{DiBe-mean}
    |D\ell^{(v)}_{z_o;r,\lambda\mu}|
    \ge
    \frac{\lambda\mu}{16}\, ,
\end{equation}
where $\ell^{(v)}_{z_o;r,\lambda\mu}$ is defined according to \eqref{def-ell-mean} with $v$ instead of $u$.
In the case $\rho_s=0$ we have \eqref{DiBe-mu} with $\rho_s=0$, \eqref{DiBe-sup}, and \eqref{DiBe-ex} with $r/\rho_s:=\infty$. We
note that $\rho_s$ cannot be explicitly computed and might depend on $z_o$ and the solution $v$ itself.
\end{lemma}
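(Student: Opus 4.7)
The plan is to reduce the result to the classical DiBenedetto--Friedman interior theory via a reflection argument, and then perform a stopping-time iteration along dyadically shrinking intrinsic cylinders to construct $\mu$ and $\rho_s$ simultaneously.

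First I would reduce to a single geometric setting. When $(x_o)_n = 0$, since $v \equiv 0$ on $\Gamma_{\rho,\lambda}(z_o)$, I would extend $v$ by odd reflection across $\{x_n = 0\}$, setting $\tilde v(x',x_n,t) := -v(x',-x_n,t)$ for $x_n < 0$. The identities $|D\tilde v(x',x_n,t)| = |Dv(x',-x_n,t)|$ and the divergence structure of the system make $\tilde v$ a weak solution of $\partial_t \tilde v - \K\Div(|D\tilde v|^{p-2}D\tilde v) = 0$ on the full cylinder $Q_{\rho,\lambda}(z_o)$, preserving \eqref{DiBe-intrinsic} up to a harmless dimensional factor. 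In the case $(x_o)_n \ge r$, the cylinders $Q_{r,\lambda\mu}(z_o)$ are already interior, so no reflection is needed. In either case I may then invoke two standard ingredients from \cite{DiBenedetto-Friedman:1984, DiBenedetto-Friedman:1985, DiBenedetto:book}: the sup-estimate on intrinsic cylinders converting \eqref{DiBe-intrinsic} into $\sup_{Q_{\rho/2,\lambda}^+(z_o)}|Dv| \le \mu_o\lambda$ for some $\mu_o = \mu_o(n,N,p,\K,c_*)$, and the intrinsic H\"older continuity of $Dv$ with exponent $\alpha_o = \alpha_o(n,p,\K) \in (0,1)$.

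Next I would build $\mu$ and $\rho_s$ by a stopping-time iteration along dyadic radii $r_k := 2^{-k}(\rho/2)$ with candidate scalings $\mu_k \asymp \mu_o(2r_k/\rho)^{\alpha_o}$, so that \eqref{DiBe-mu} is respected along the sequence. At each step one verifies (i) that $Q_{r_k,\lambda\mu_k}(z_o) \subset Q_{\rho/2,\lambda}(z_o)$, which reduces to $\mu_k^{p-2} \ge (2r_k/\rho)^2$ and is automatic once $\alpha_o$ is taken sufficiently small (depending only on $p$), and (ii) that the intrinsic coupling transfers to the smaller cylinder, via the sup bound from the previous step. I would stop at the first index $k_*$ at which $\big(\mint_{Q_{r_{k_*},\lambda\mu_{k_*}}}|Dv|^p\big)^{1/p}$ drops below a definite fraction of $\lambda\mu_{k_*}$, and set $\rho_s := r_{k_*}$ (put $\rho_s := 0$ if no such index exists). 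The H\"older continuity of $Dv$ then forces $|D\ell^{(v)}_{z_o;r,\lambda\mu}| \ge \lambda\mu/16$ for every $r \le \rho_s$, giving \eqref{DiBe-mean}; the sup bound \eqref{DiBe-sup} on $Q_{r,\lambda\mu}^+(z_o)$ is a further application of the DiBenedetto--Friedman estimate on the already intrinsic sub-cylinder, and general $r \in (0,\rho/2]$ is handled by monotone interpolation between two consecutive $r_k$.

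Finally, the excess estimate \eqref{DiBe-ex} follows by a standard interpolation. For $r \ge \rho_s$ the factor $\min\{1,r/\rho_s\}^{2\alpha_o}$ equals one and the bound is immediate from \eqref{DiBe-sup}. For $r < \rho_s$, the gradient H\"older bound gives $\osc_{Q_{r,\lambda\mu}^+} Dv \le c\lambda\mu(r/\rho_s)^{\alpha_o}$ and hence an $L^2$-excess of order $(\lambda\mu)^2(r/\rho_s)^{2\alpha_o}$, which interpolates against the trivial $L^\infty$-bound $|Dv - D\ell^{(v)}_{z_o;r,\lambda\mu}| \le 2\lambda\mu$ to produce the required $L^s$-estimate with exponent $2\alpha_o$ for all $s \in [2,p]$. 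The main obstacle is the circularity inherent in the stopping-time construction: at each dyadic step the intrinsic character of $Q_{r_k,\lambda\mu_k}$, the applicability of the DiBenedetto--Friedman estimates, and the scaling \eqref{DiBe-mu} must be preserved simultaneously. Resolving this requires a careful calibration of $\mu_o$ together with the choice of $\alpha_o$, so that $\mu_k$ decays exactly at the rate $(r_k/\rho)^{\alpha_o}$ predicted by the H\"older theory --- neither faster (which would destroy the intrinsic coupling at the next scale) nor slower (which would destroy the sup bound).
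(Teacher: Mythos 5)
Your first step coincides with the paper's: for $(x_o)_n=0$ one extends $v$ by odd reflection across $\Gamma$ and checks that $\tilde v$ solves the same $p$-Laplacian system on the full cylinder with \eqref{DiBe-intrinsic} preserved up to a factor $2$. But from there the paper does \emph{not} rebuild the interior theory: it simply applies the already established intrinsic interior lemma (\cite[Lemma 3.3]{Boegelein-Duzaar:2011}, resp.\ \cite[Lemma 7.1]{Boegelein-Duzaar-Mingione:2010}) to $\tilde v$, and the only remaining work is to transfer the conclusions from $\tilde v$ on full cylinders back to $v$ on half-cylinders; the one non-obvious point there is the identification of the boundary affine function, $D\ell^{(v)}_{z_o;r,\lambda\mu}=(D_n\tilde v)^+_{z_o;r,\lambda\mu}\otimes e_n$ together with $(D\tilde v)_{z_o;r,\lambda\mu}=(D_n\tilde v)_{z_o;r,\lambda\mu}\otimes e_n$, which uses that the tangential means $(D_i\tilde v)_{z_o;r,\lambda\mu}$, $i<n$, vanish by odd symmetry. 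Your proposal says nothing about this transfer, which is where \eqref{DiBe-ex} and \eqref{DiBe-mean} for the half-cylinder quantities actually come from.

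The more serious gap is in your plan to re-derive the interior result itself by a stopping-time iteration. As formulated, the dichotomy is backwards: you stop at the first radius $r_{k_*}$ where $\big(\tmint|Dv|^p\big)^{1/p}$ \emph{drops below} a fixed fraction of $\lambda\mu_{k_*}$ and set $\rho_s:=r_{k_*}$; but then for $r<\rho_s$ the gradient is small compared with $\lambda\mu$, so the lower bound \eqref{DiBe-mean} and the oscillation bound $\osc_{Q^+_{r,\lambda\mu}}Dv\le c\,\lambda\mu(r/\rho_s)^{\alpha_o}$ you invoke for \eqref{DiBe-ex} cannot follow. The switching radius must mark the \emph{failure} of the degenerate sup-decay alternative (i.e.\ the onset of non-degeneracy, $|Dv|$ comparable to $\lambda\mu$ on a large portion of the cylinder), and converting the DiBenedetto--Friedman $C^{1,\alpha}$ theory -- whose H\"older estimates live on intrinsic, solution-dependent cylinders -- into the quantitative dichotomy with the calibration \eqref{DiBe-mu}, the sup bound \eqref{DiBe-sup}, the excess decay with exponent $2\alpha_o$, and the mean lower bound is exactly the content of the cited lemma. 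You flag this ``circularity'' as the main obstacle but do not resolve it, so the core of \eqref{DiBe-mu}--\eqref{DiBe-mean} remains asserted rather than proved; either carry out that construction in detail or, as the paper does, quote the interior lemma and devote the proof to the reflection and the half-cylinder transfer.
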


\begin{proof}
If $Q_{\rho,\lambda}(z_o)$ is an interior cylinder, i.e. $(x_o)_n\ge \rho$ the assertion has been proved in \cite[Lemma 3.3]{Boegelein-Duzaar:2011}, respectively in a slightly less general from in
\cite[Lemma 7.1]{Boegelein-Duzaar-Mingione:2010}.
Therefore, it remains to consider the case where $0\le (x_o)_n< \rho$ and hence $Q_{\rho,\lambda}(z_o)\cap\Gamma\not=\emptyset$.
We set $Q:=Q_{\rho,\lambda}(z_o)\cup Q_{\rho,\lambda}(z_o^-)$, where $z_o^-:=(x_o',-(x_o)_n,t_o)$ denotes the reflection of $z_o$ at the hyperplane $\Gamma$.
For $z=:(x',x_n,t)\in Q$ we define
\begin{equation*}
	\tilde v(x',x_n,t)
	:=
	\begin{cases}
	v(x',x_n,t)
	& \mbox{if $x_n\ge 0$}\\[5pt]
	-v(x',-x_n,t)
	& \mbox{if $x_n< 0$.}
	\end{cases}
\end{equation*}
Then, one can show that $\tilde v$ is a weak solution of \eqref{p-laplace}$_1$ on $Q$.
From \eqref{DiBe-intrinsic} and the definition of $\tilde v$ we further have
\begin{equation*}
    \mint_{Q_{\rho,\lambda}(z_o)} |D\tilde v|^p\dz
    \le
    2\mint_{Q_{\rho,\lambda}^+(z_o)} |Dv|^p\dz
    \le
    2c_*\, \lambda^p.
\end{equation*}
Therefore, we can apply \cite[Lemma 3.3]{Boegelein-Duzaar:2011} to $\tilde v$ on $Q_{\rho,\lambda}(z_o)$ to infer the existence of $\alpha_o=\alpha_o(n,p,\K) \in(0,1)$, $\mu_o=\mu_o(n,N,p,\K,c_*)\ge 1$ and $\rho_s\in[0,\rho/2]$,
such that the following holds: In the case $\rho_s>0$, for any $0<r\le\rho/2$ there exists $\mu$
such that $Q_{r,\lambda\mu}(z_o) \subset Q_{\rho/2,\lambda}(z_o)$ and
\eqref{DiBe-mu} hold
and moreover
\begin{equation}\label{DiBe-sup-tilde}
    \sup_{Q_{r,\lambda\mu}(z_o)} |D\tilde v|
    \le
    \lambda\mu\, .
\end{equation}
In addition, for any $s\in [2,p]$ we have
\begin{equation}\label{DiBe-ex-tilde}
    \mint_{Q_{r,\lambda\mu}(z_o)} |D\tilde v-(D\tilde v)_{z_o;r,\lambda\mu}|^s\dz
    \le
    c\, \lambda^s\mu^s\,\min\Big\{1,\frac{r}{\rho_s}\Big\}^{2\alpha_o} ,
\end{equation}
with a constant $c=c(n,N,p,\K)$. If $0<r<\rho_s$ we additionally have
\begin{equation}\label{DiBe-mean-tilde}
    |(D\tilde v)_{z_o;r,\lambda\mu}|
    \ge
    \frac{\lambda\mu}{8}\ .
\end{equation}
In the case $\rho_s=0$ we have \eqref{DiBe-mu} with $\rho_s=0$, \eqref{DiBe-sup-tilde}, and \eqref{DiBe-ex-tilde} with $r/\rho_s:=\infty$. 
In the case $(x_o)_n\ge r$ the assertions \eqref{DiBe-sup} -- \eqref{DiBe-mean} directly follow from \eqref{DiBe-sup-tilde} -- \eqref{DiBe-mean-tilde} since then $\tilde v\equiv v$ on $Q_{r,\lambda\mu}^+(z_o)=Q_{r,\lambda\mu}(z_o)$.
Therefore, it remains to consider the case $(x_o)_n=0$.
Since $|Dv|=|D\tilde v|$ on $Q_{r,\lambda\mu}^+(z_o)$ the assertion \eqref{DiBe-sup} immediately follows from \eqref{DiBe-sup-tilde}. Moreover, from the definitions of $\ell^{(v)}_{z_o;r,\lambda\mu}$ and $\tilde v$ we obtain that 
\begin{equation*}
    D\ell^{(v)}_{z_o;r,\lambda\mu}
    =
    (D_n v)_{z_o;r,\lambda\mu}^+\otimes e_n
    =
    (D_n \tilde v)_{z_o;r,\lambda\mu}^+\otimes e_n
\end{equation*}
and
\begin{equation*}
    (D_n \tilde v)_{z_o;r,\lambda\mu}\otimes e_n
    =
    (D\tilde v)_{z_o;r,\lambda\mu}
\end{equation*}
since $(D_i \tilde v)_{z_o;r,\lambda\mu}=0$ for $i\in\{1,\dots,n-1\}$.
Hence from \eqref{DiBe-ex-tilde} we conclude that
\begin{align*}
    &\mint_{Q_{r,\lambda\mu}^+(z_o)} |Dv-D\ell^{(v)}_{z_o;r,\lambda\mu}|^s\dz
    =
    \mint_{Q_{r,\lambda\mu}^+(z_o)}
    |D\tilde v-(D_n\tilde v)_{z_o;r,\lambda\mu}^+\otimes e_n|^s\dz \\
    &\ \ \ \le
    c\mint_{Q_{r,\lambda\mu}^+(z_o)}
    |D\tilde v-(D_n\tilde v)_{z_o;r,\lambda\mu}\otimes e_n|^s\dz + 
    c\,|(D_n\tilde v)^+_{z_o;r,\lambda\mu}-(D_n\tilde v)_{z_o;r,\lambda\mu}|^s\\
    &\ \ \ \le
    c \mint_{Q_{r,\lambda\mu}(z_o)} |D\tilde v-(D\tilde v)_{z_o;r,\lambda\mu}|^s\dz 
    \le
    c\, \lambda^s\mu^s\,\min\Big\{1,\frac{r}{\rho_s}\Big\}^{2\alpha_o} ,
\end{align*}
which proves \eqref{DiBe-ex}.
Finally, if $0<r<\rho_s$ we have from \eqref{DiBe-mean-tilde} that
\begin{equation*}
	|D\ell^{(v)}_{z_o;r,\lambda\mu}|
	=
	|(D_n \tilde v)^+_{z_o;r,\lambda\mu}|
	=
	\tfrac12 |(D_n \tilde v)_{z_o;r,\lambda\mu}|
	=
	\tfrac12|(D\tilde v)_{z_o;r,\lambda\mu}|
	\ge 
	\frac{\lambda\mu}{16}
\end{equation*}
which proves \eqref{DiBe-mean}. This concludes the proof of the proposition.
\end{proof}

\subsection{$\mathcal A$-caloric approximation lemma}

In order to compare the solution of the original problem to the solution of a linear parabolic system 
we shall use the $\mathcal A$-caloric approximation lemma from \cite[Lemma 3.2]{Duzaar-Mingione-Steffen:2011}.

\begin{lemma}\label{lem:a-cal}
Given $\epsilon > 0$, $0<\nu\le L$ and $p\ge 2$ there exists a positive
function $\delta = \delta(n,$ $p,\nu,L,\epsilon) \in (0,1]$ with the
following property: Whenever $A$ is a  bilinear form on $\R^{Nn}$ satisfying \eqref{lin-cond-A} and whenever
\begin{equation*}
	u \in L^p\big(\Lambda_{\rho}(t_o);W^{1,p}(B_\rho(x_o),\R^N)\big)
\end{equation*} 
is a map satisfying
\begin{equation*}
	\mint_{Q_\rho(z_o)}
	\Big|\frac{u}{\rho}\Big|^2 + |Du|^2 \dz + 
	\gamma^{p-2} \mint_{Q_\rho(z_o)} \Big|\frac{u}{\rho}\Big|^p + |Du|^p \dz
	\le 1 ,
\end{equation*}
for some $0<\gamma\le 1$ and which is approximately $A$-caloric in the sense that
\begin{equation*}
	\bigg| \mint_{Q_\rho(z_o)}
	u \cdot\varphi_t - \langle A Du , D\varphi\rangle \dz
	\bigg|
	\le
	\delta \sup_{Q_\rho(z_o)} |D\varphi|,
	\quad\forall\, 
	\varphi \in C_0^\infty(Q_\rho(z_o),\R^N),
\end{equation*}
then there exists an $A$-caloric map
$h \in L^p(\Lambda_{\rho/2}(t_o); W^{1,p}(B_{\rho/2}(x_o),\R^N))$ (i.e. $h$ satisfies \eqref{lin-system}$_1$ on $Q_{\rho/2}(z_o)$)
satisfying
\begin{equation*}
	\mint_{Q_{\rho/2}(z_o)}
	\Big|\frac{h}{\rho/2}\Big|^2 + |Dh|^2 \dz + 
	\gamma^{p-2} \mint_{Q_{\rho/2}(z_o)}
	\Big|\frac{h}{\rho/2}\Big|^p + |Dh|^p \dz
	\le
	2^{n+3+2p}
\end{equation*}
and
\begin{equation*}
	\mint_{Q_{\rho/2}(z_o)}
	\Big|\frac{u-h}{\rho/2}\Big|^2 +
	\gamma^{p-2}\Big|\frac{u-h}{\rho/2}\Big|^p \dz
	\le
	\epsilon .
\end{equation*}
\end{lemma}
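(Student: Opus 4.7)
The plan is to prove the statement by contradiction through a compactness argument, along the lines of the classical De Giorgi harmonic approximation. Suppose the conclusion fails: then for some $\epsilon > 0$ one finds, for every $k\in\N$, a bilinear form $A_k$ satisfying \eqref{lin-cond-A} with the same $\nu, L$, a point $z_{o,k}$, scales $\rho_k>0$, parameters $\gamma_k\in(0,1]$, and maps $u_k$ on $Q_{\rho_k}(z_{o,k})$ meeting the energy bound together with the approximate $A_k$-caloricity for parameter $\delta_k\downarrow 0$, for which no $A_k$-caloric map $h$ with the asserted energy and closeness bounds exists. A translation and the rescaling $\tilde u_k(x,t) := u_k(x_{o,k}+\rho_k x, t_{o,k}+\rho_k^2 t)$ reduces everything to the normalized case $z_{o,k}=0$, $\rho_k = 1$; by passing to a subsequence we may also assume $\gamma_k\to\gamma_\infty\in[0,1]$ and $A_k\to A$, the limit still obeying \eqref{lin-cond-A}.

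The first substantive step is a compactness extraction. The energy hypothesis provides a uniform $L^2(Q_1)$-bound on $(u_k, Du_k)$ and, when reweighted by $\gamma_k^{(p-2)/p}$, a uniform $L^p$-bound on the same quantities. The approximate caloricity, tested against arbitrary $\varphi\in C_0^\infty(Q_1,\R^N)$, gives a uniform bound on $\partial_t u_k$ in $L^{p'}\bigl(\Lambda_1; W^{-1,p'}(B_1,\R^N)\bigr)$ up to the $\delta_k$-error, which together with the spatial bound yields by a Lions–Aubin–Simon compactness theorem strong convergence $u_k \to u$ in $L^2(Q_1, \R^N)$, together with $Du_k \rightharpoonup Du$ weakly in $L^2$ (and in $L^p$ if $\gamma_\infty>0$). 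Passing to the limit in the approximate caloricity, using $\delta_k\to 0$ and $A_k\to A$, shows that the limit $u$ is $A$-caloric on $Q_1$.

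The second step is the construction of the approximant $h_k$ on $Q_{1/2}$. The natural choice is to take $h_k$ to be the (classical) solution of the $A_k$-caloric Cauchy–Dirichlet problem on $Q_{1/2}$ with lateral and initial data given by $u_k$ (or, more honestly, by a standard smooth mollification that restores the required trace regularity). The energy bound for $h_k$ then follows from the maximum / energy estimate for linear parabolic systems together with the energy hypothesis on $u_k$. Testing the equation satisfied by $w_k := u_k - h_k$, which has zero parabolic trace on the parabolic boundary of $Q_{1/2}$, against $w_k$ itself produces an energy identity whose right-hand side is controlled by the defect in approximate caloricity ($\delta_k$-small by hypothesis) plus cross terms that vanish in the limit because of the $L^2$-strong convergence $u_k\to u$ and the convergence of the corresponding $A_k$-caloric extensions $h_k\to h$, where $h$ is the $A$-caloric extension of $u$ — which is $u$ itself. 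This forces $\tmint_{Q_{1/2}}|u_k-h_k|^2 \to 0$, contradicting the negated conclusion.

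The main technical obstacle is the $p$-growth term $\gamma^{p-2}|u-h|^p$, which must be controlled simultaneously with the $L^2$-term whereas the compactness mechanism is inherently Hilbertian. Two complementary tools handle this: first, choosing $h_k$ as the $A_k$-caloric extension preserves $L^p$-integrability uniformly via Campanato's a priori estimates for linear systems (cf.\ Lemma \ref{lem:lin-excess}); second, an Acerbi–Fusco style Lipschitz truncation of $u_k - h_k$ can be used to upgrade the $L^2$-smallness to weighted $L^p$-smallness while respecting the bound on $\partial_t(u_k-h_k)$. Carrying out the localization so that $u_k - h_k$ genuinely vanishes on the lateral boundary of $Q_{1/2}$, without destroying the uniform energy bounds, is the delicate bookkeeping at the heart of the argument.
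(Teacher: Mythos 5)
The paper itself offers no proof of this lemma: it is quoted from \cite[Lemma 3.2]{Duzaar-Mingione-Steffen:2011}, so your proposal has to be judged against the known compactness proof rather than against anything in this text. Your skeleton --- negate the statement, rescale to the unit cylinder, extract strong $L^2$-compactness from the energy bound together with the time-derivative information hidden in the approximate caloricity, identify the limit as an $\mathcal A$-caloric map, and compare with $A_k$-caloric extensions --- is indeed the architecture of that proof. Two of your slips are harmless: the normalization must be $\tilde u_k(x,t)=\rho_k^{-1}u_k(x_{o,k}+\rho_k x,\,t_{o,k}+\rho_k^2 t)$ (otherwise the $|u/\rho|$-terms do not normalize), and the defect term $\delta_k\sup|D\varphi|$ only places $\partial_t u_k$ in the dual of a space embedded into $W^{1,\infty}_0$ (say $W^{-l,p'}$ with $l$ large), not in $W^{-1,p'}$; Aubin--Lions--Simon still applies with this weaker space, so the strong $L^2$ convergence and the $\mathcal A$-caloricity of the limit go through.

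Two steps, however, contain genuine gaps. First, you propose to get smallness of $w_k=u_k-h_k$ by testing the defect equation with $w_k$ itself and calling the right-hand side ``$\delta_k$-small by hypothesis''. It is not: the approximate caloricity controls the defect only against $\sup_{Q}|D\varphi|$, and nothing in the hypotheses controls $\sup|Dw_k|$, so this energy identity cannot be closed (this is precisely why the compactness proofs never energy-test the defect, and why the alternative ``direct'' proofs need Lipschitz truncation to manufacture admissible test functions). The smallness must come entirely from the limit identification: the cleanest repair is to let $h_k$ be the $A_k$-caloric map in $Q_{1/2}$ with parabolic boundary datum the limit $u$ (not $u_k$), so that $h_k-u$ has zero boundary values, solves a linear system with right-hand side $\Div\big((A_k-A)Du\big)$, and converges to $0$ by the plain energy estimate; if you insist on datum $u_k$ you must first select a good radius by Fubini so that the lateral traces converge, which your sketch does not do. Second, the weighted term $\gamma^{p-2}|(u-h)/(\rho/2)|^p$ in the conclusion is not addressed by a workable idea: Lipschitz truncation is not a device for upgrading $L^2$-smallness to weighted $L^p$-smallness. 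What is needed is a quantitative mechanism such as interpolation of the weighted $L^p$-norm of $u_k-h_k$ between the (vanishing) $L^2$-norm and a uniformly bounded stronger norm, the latter coming from a parabolic Gagliardo--Nirenberg/Sobolev bound based on the uniform $\gamma_k^{p-2}$-weighted $L^p(W^{1,p})$ bounds and a $\sup_t L^2$ estimate for the comparison maps, or alternatively a compactness statement formulated directly in the mixed $L^2+\gamma^{p-2}L^p$ norm. Without such an ingredient the contradiction is only reached in $L^2$, which is weaker than the closeness the lemma asserts.
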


The following boundary version of the $\mathcal A$-caloric approximation lemma can be deduced from \cite[Lemma 4.1]{Boegelein-Duzaar-Mingione:2010-boundary} by considering suitable nested cylinders. 
This modification allows a slightly more flexible choice of the cylinders, in the sense that the assumption $(x_o)_n=0$ is replaced by the weaker assumption $(x_o)_n<\rho/2$.
We refer to \cite[Corollary 2.11]{Boegelein-habil} for the precise proof.

\begin{lemma}\label{lem:a-cal-boundary}
Given $\epsilon > 0$, $0<\nu\le L$ and $p\ge 2$ there exists a positive
function $\delta = \delta(n,p,$ $\nu,L,\epsilon) \in (0,1]$ with the
following property: Whenever $A$ is a  bilinear form on $\R^{Nn}$ satisfying \eqref{lin-cond-A} and whenever
\begin{equation*}
	u \in L^p\big(\Lambda_{\rho}(t_o);W^{1,p}(B_\rho^+(x_o),\R^N)\big)
\end{equation*}
with $(x_o)_n<\rho/2$ and
$u\equiv 0$ on the lateral boundary $\Gamma_\rho(z_o)$ and
\begin{equation*}
	\mint_{Q_\rho^+(z_o)}
	|Du|^2 + \gamma^{p-2}|Du|^p \dz
	\le 2^{-(n+2)} ,
\end{equation*}
for some $0<\gamma\le 1$, is approximately $A$-caloric in the sense that
\begin{equation*}
	\bigg| \mint_{Q_\rho^+(z_o)}
	u \cdot\varphi_t - \langle A Du , D\varphi\rangle \dz
	\bigg|
	\le
	\delta \sup_{Q_\rho^+(z_o)} |D\varphi|,
	\quad\forall\,
	\varphi \in C_0^\infty(Q_\rho^+(z_o),\R^N),
\end{equation*}
then there exists an $A$-caloric map
$h \in L^p(\Lambda_{\rho/4}(t_o); W^{1,p}(B_{\rho/4}^+(x_o),\R^N))$ 
(i.e. $h$ satisfies \eqref{lin-system} on $Q_{\rho/4}(z_o)$)
with $h\equiv0$ on $\Gamma_{\rho/4}(z_o)$ if $\Gamma_{\rho/4}(z_o)\not=\emptyset$ satisfying
\begin{equation*}
	\mint_{Q_{\rho/4}^+(z_o)}
	\Big|\frac{h}{\rho/4}\Big|^2 + |Dh|^2 \dz + 
	\gamma^{p-2}\mint_{Q_{\rho/4}^+(z_o)}
	\Big|\frac{h}{\rho/4}\Big|^p + |Dh|^p \dz
	\le
	c(n,p)
\end{equation*}
and
\begin{equation*}
	\mint_{Q_{\rho/4}^+(z_o)}
	\Big|\frac{u-h}{\rho/4}\Big|^2 +
	\gamma^{p-2}\Big|\frac{u-h}{\rho/4}\Big|^p \dz
	\le
	\epsilon .
\end{equation*}
\end{lemma}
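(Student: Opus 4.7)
The plan is to reduce the claim to the boundary $\mathcal{A}$-caloric approximation result of \cite{Boegelein-Duzaar-Mingione:2010-boundary}, Lemma 4.1, which is formulated under the stricter hypothesis $(x_o)_n = 0$. The additional flexibility $(x_o)_n < \rho/2$ will be accommodated by working with nested half-cylinders at the orthogonal projection of $z_o$ onto the hyperplane $\Gamma$.

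First, I would set $z_o' := (x_o', 0, t_o) \in \Gamma$ and choose an auxiliary radius $R$ comparable to $\rho$ such that, on the one hand, the half-cylinder $Q_R^+(z_o')$ carries enough information from the given data on $Q_\rho^+(z_o)$ for the cited lemma to apply at $z_o'$, and, on the other hand, the smaller cylinder on which that lemma yields its $\mathcal{A}$-caloric approximation covers $Q_{\rho/4}^+(z_o)$. Once $R$ is fixed, the three hypotheses of the cited lemma on $Q_R^+(z_o')$ follow from those presently assumed: the vanishing trace $u \equiv 0$ on $\Gamma_R(z_o')$ is inherited from the inclusion $\Gamma_R(z_o') \subset \Gamma_\rho(z_o)$; the energy smallness is deduced from the bound on $Q_\rho^+(z_o)$ up to the dimensional factor $|Q_\rho^+(z_o)|/|Q_R^+(z_o')| \le c(n)$, which is absorbed by tightening the threshold $2^{-(n+2)}$; and the approximate $\mathcal{A}$-caloricity on $Q_R^+(z_o')$ is transferred by extending test functions $\varphi \in C_0^\infty(Q_R^+(z_o'), \R^N)$ by zero to $C_0^\infty(Q_\rho^+(z_o), \R^N)$ and invoking the given condition, the extra dimensional factor being absorbed into the final choice of $\delta$.

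Applying the cited lemma to $u$ on $Q_R^+(z_o')$ then produces an $\mathcal{A}$-caloric map $\tilde h$ on an inner half-cylinder around $z_o'$, satisfying the analogous $L^2$ and $L^p$ energy bounds and the $L^2$--$L^p$ closeness estimate relative to $u$, with constants depending only on $n$ and $p$. Restricting $\tilde h$ to $Q_{\rho/4}^+(z_o)$ and exploiting the smoothness of $\mathcal{A}$-caloric maps (Proposition~\ref{prop:lin}) together with the volume comparison $|Q_R^+(z_o')|/|Q_{\rho/4}^+(z_o)| \le c(n)$, these estimates transfer to $Q_{\rho/4}^+(z_o)$ in the asserted form.

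The hard part is the geometric compatibility of the two cylinder inclusions: the data-side inclusion $Q_R^+(z_o') \subset Q_\rho^+(z_o)$ forces $R \le \rho - (x_o)_n$, while covering $Q_{\rho/4}^+(z_o)$ by the cited lemma's output cylinder centered at $z_o'$ forces a lower bound on $R$ of order $\rho + 4(x_o)_n$; as $(x_o)_n$ approaches $\rho/2$ these constraints collide, so no single choice of $R$ can fulfil both. This is precisely why \emph{suitable nested cylinders} are required: one iterates the reduction at several intermediate scales, or equivalently re-runs the compactness argument behind the cited lemma directly, observing that the argument actually uses only the nonemptiness of $\Gamma_\rho(z_o)$ and the vanishing trace of $u$ on it, both of which survive under the weakened hypothesis $(x_o)_n < \rho/2$. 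The detailed execution is carried out in \cite{Boegelein-habil}, Corollary~2.11.
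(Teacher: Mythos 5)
The paper does not actually prove this lemma: its entire treatment is the sentence you end with, namely that the statement follows from \cite[Lemma 4.1]{Boegelein-Duzaar-Mingione:2010-boundary} by considering suitable nested cylinders, with the precise proof delegated to \cite[Corollary 2.11]{Boegelein-habil}. Your reduction is therefore the intended one, and your diagnosis of the geometric obstruction is accurate: writing $a:=(x_o)_n$ and $z_o'$ for the orthogonal projection of $z_o$ onto $\Gamma$, transferring the hypotheses requires $Q_R^+(z_o')\subset Q_\rho^+(z_o)$, hence $R\le\sqrt{\rho^2-a^2}$, while covering $Q_{\rho/4}^+(z_o)$ by the output cylinder $Q_{\theta R}^+(z_o')$ of the cited lemma requires $\theta R\ge \rho/4+a$; for the half- or quarter-radius output that these parabolic approximation lemmas provide, the two requirements become incompatible once $a$ exceeds a fixed fraction of $\rho$ (about $\rho/4$ for a half-radius output), so a single application cannot reach all centers with $a<\rho/2$.

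Where your proposal falls short of a proof is the resolution of that remaining case. The fallback ``iterate the reduction at several intermediate scales'' does not work as stated: each further application of the approximation lemma produces a \emph{different} $\mathcal A$-caloric map on an even smaller half-cylinder centered on $\Gamma$, so the coverage of $Q_{\rho/4}^+(z_o)$ only deteriorates, and there is no patching mechanism that turns several approximants into one map which is $\mathcal A$-caloric on all of $Q_{\rho/4}^+(z_o)$; nor is an odd reflection across $\Gamma$ available for a general constant bilinear form $A$, in contrast with the quasi-diagonal $p$-Laplacian situation exploited in the proof of Lemma \ref{lem:DiBe}. The substantive alternative you mention --- re-running the compactness/contradiction argument behind the cited lemma uniformly over the family of centers with $(x_o)_n\in[0,\rho/2)$, which is legitimate because $\Gamma_\rho(z_o)\neq\emptyset$ and $u$ vanishes there, so the Poincar\'e inequality of Lemma \ref{lem:poin-boundary} is available --- is precisely what the delegated proof in \cite[Corollary 2.11]{Boegelein-habil} must carry out, and you, like the paper, cite it rather than execute it. Relative to the paper your proposal is thus at the same level of (in)completeness, but judged as a self-contained proof it leaves the essential case of $(x_o)_n$ close to $\rho/2$ open.
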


\subsection{Asymptotic estimates}
In this section we deduce some useful estimates from the fact that the vector field $a$ is asymptotically regular in the sense of \eqref{def_asymp}. We start with an upper bound for $Da$.

\begin{lemma}\label{lem:growth-a}
Suppose that the vector field $a$ is asymptotically regular in the sense of \eqref{def_asymp}, then there exists a constant $c=c(p)$ such that
\begin{equation}\label{growth-Da}
    |Da(\xi)|
    \le
    c\,(\|\omega\|_\infty + \K)\,(1+|\xi|)^{p-2}
    \quad\forall\, \xi\in\R^{Nn}
\end{equation}
and
\begin{equation}\label{diff-a}
    |a(\xi) - a(\xi_o)| 
    \le
    c\, (\|\omega\|_\infty + \K) 
    \big(1+|\xi_o|^2 + |\xi - \xi_o|^2\big)^{\frac{p-2}{2}}
    |\xi - \xi_o|
    \quad\forall\, \xi,\xi_o\in\R^{Nn}.
\end{equation}
\end{lemma}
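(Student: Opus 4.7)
The plan is to deduce both estimates from the asymptotic regularity assumption \eqref{def_asymp-} by writing $Da = (Da - Db) + Db$ and exploiting the explicit form of $b(\xi) = \K|\xi|^{p-2}\xi$. The first estimate is essentially direct, while the second is obtained by integrating the first along the segment joining $\xi_o$ to $\xi$.

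For part \eqref{growth-Da}, I would start from the triangle inequality
\begin{equation*}
    |Da(\xi)|
    \le
    |Da(\xi)-Db(\xi)| + |Db(\xi)|.
\end{equation*}
The first term is controlled by \eqref{def_asymp-} and the boundedness of $\omega$, giving $\|\omega\|_\infty (1+|\xi|)^{p-2}$. The second term is computed directly from $b(\xi)=\K|\xi|^{p-2}\xi$: a routine calculation shows $|Db(\xi)|\le c(p)\K|\xi|^{p-2}\le c(p)\K(1+|\xi|)^{p-2}$ since $p\ge 2$. Combining these two bounds yields \eqref{growth-Da} with a constant depending only on $p$.

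For part \eqref{diff-a}, since $a$ is of class $C^1$ the fundamental theorem of calculus gives
\begin{equation*}
    a(\xi) - a(\xi_o)
    =
    \int_0^1 Da\big(\xi_o + s(\xi-\xi_o)\big)\,(\xi-\xi_o)\ds,
\end{equation*}
so that by \eqref{growth-Da},
\begin{equation*}
    |a(\xi)-a(\xi_o)|
    \le
    c(p)\,(\|\omega\|_\infty+\K)\,|\xi-\xi_o|
    \int_0^1 \big(1+|\xi_o+s(\xi-\xi_o)|\big)^{p-2} \ds.
\end{equation*}
Since $p\ge 2$, the map $t\mapsto (1+t)^{p-2}$ is increasing, and the elementary estimate $|\xi_o+s(\xi-\xi_o)|\le |\xi_o|+|\xi-\xi_o|$ together with $(a+b+c)^{p-2}\le c(p)(a^{p-2}+b^{p-2}+c^{p-2})$ and $|\xi_o|^{p-2}+|\xi-\xi_o|^{p-2}\le c(p)(|\xi_o|^2+|\xi-\xi_o|^2)^{(p-2)/2}$ lead to
\begin{equation*}
    \big(1+|\xi_o+s(\xi-\xi_o)|\big)^{p-2}
    \le
    c(p)\,\big(1+|\xi_o|^2+|\xi-\xi_o|^2\big)^{\frac{p-2}{2}}.
\end{equation*}
Inserting this pointwise bound into the integral yields \eqref{diff-a}.

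There is no genuine obstacle here: the argument is essentially bookkeeping, and the only point requiring a little care is the uniform pointwise bound on $(1+|\xi_o+s(\xi-\xi_o)|)^{p-2}$ in terms of $|\xi_o|$ and $|\xi-\xi_o|$, which is where the slight loss from $(1+|\xi|)^{p-2}$ to $(1+|\xi_o|^2+|\xi-\xi_o|^2)^{(p-2)/2}$ comes from. Both constants depend only on $p$, so the dependence on $\|\omega\|_\infty$ and $\K$ appears only through the prefactor $(\|\omega\|_\infty+\K)$, as claimed.
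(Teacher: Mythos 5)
Your proposal is correct and follows essentially the same route as the paper: the triangle inequality $|Da|\le|Da-Db|+|Db|$ combined with \eqref{def_asymp-} for the first estimate, and the fundamental theorem of calculus together with the pointwise bound $(1+|\xi_o+s(\xi-\xi_o)|)^{p-2}\le c(p)(1+|\xi_o|^2+|\xi-\xi_o|^2)^{(p-2)/2}$ for the second. The only difference is that you spell out this last elementary bound, which the paper leaves implicit.
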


\begin{proof}
The first assertion \eqref{growth-Da} immediately follows from \eqref{def_asymp-} and the definition of $b$ in \eqref{def_asymp}, since
\begin{align*}
    |Da(\xi)|
    &\le
    |Da(\xi) - Db(\xi)| +
    |Db(\xi)| \\
    &\le
    \omega(|\xi|)(1+|\xi|)^{p-2} + \K(p-1) |\xi|^{p-2} \\
    &\le
    c(p)\,(\|\omega\|_\infty + \K)(1+|\xi|)^{p-2} .
\end{align*}
The second assertion \eqref{diff-a} now is a consequence of \eqref{growth-Da} and the following computation
\begin{align*}
    |a(\xi) - a(\xi_o)| 
    &=
    \bigg|\int_0^1 Da\big(\xi_o + s(\xi - \xi_o)\big) \ds\, (\xi - \xi_o)\bigg|
    \\
    &\le
    c(p)\,(\|\omega\|_\infty + \K) 
    \int_0^1 (1+|\xi_o + s(\xi - \xi_o)|)^{p-2} \ds\,
    |\xi - \xi_o| \\
    &\le
    c(p)\, (\|\omega\|_\infty + \K) 
    \big(1+|\xi_o|^2 + |\xi - \xi_o|^2\big)^{\frac{p-2}{2}}
    |\xi - \xi_o|.
\end{align*}
This completes the proof of the lemma.
\end{proof}

Next, we provide an auxiliary estimate for the vector field $b$ that will be useful several times later on.
\begin{lemma}\label{lem:diff-1}
For the vector field $b$ from \eqref{def_asymp} there holds 
\begin{align*}
    \big|b(\xi\,\Psi) - b(\xi_o\,\Psi) &-
    \big[b\big((\xi+G)\Psi\big) - b\big((\xi_o+G)\Psi\big)\big] \big|\\
    &\le
    c\,|G| \big(|\xi_o|^2 + |\xi-\xi_o|^2 + |G|^2\big)^{\frac{p-3}{2}} 
    |\xi-\xi_o| 
\end{align*}
for any $\xi,\xi_o,G\in\R^{Nn}$ and $\Psi\in\R^{n\cdot n}$ invertible and with a constant $c=c(p,\K,|\Psi|,$ $|\Psi^{-1}|)$.
We note that both sides of the inequality converge to zero when $G\to 0$.
\end{lemma}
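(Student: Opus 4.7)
I would introduce the auxiliary function
\begin{equation*}
    F(\eta) := b(\eta\Psi) - b\bigl((\eta+G)\Psi\bigr),\qquad \eta\in\R^{Nn},
\end{equation*}
so that the left-hand side of the asserted inequality is exactly $|F(\xi) - F(\xi_o)|$. Since $b\in C^1(\R^{Nn})$ for $p\ge 2$ and $b$ is smooth away from the origin, applying the fundamental theorem of calculus first along the segment joining $\xi_o$ to $\xi$ and then along the segment from the base point to the base point plus $G$ gives, away from the measure-zero set of line segments hitting the origin,
\begin{equation*}
    F(\xi) - F(\xi_o) = -\int_0^1\!\!\int_0^1 D^2 b\bigl((\xi_o + s(\xi-\xi_o) + \tau G)\Psi\bigr)\bigl[\Psi G,\Psi(\xi-\xi_o)\bigr]\,d\tau\,ds.
\end{equation*}

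Using the elementary pointwise bound $|D^2 b(\zeta)|\le c(p,\K)|\zeta|^{p-3}$ valid for $\zeta\neq 0$, together with the two-sided comparison $|\zeta|/|\Psi^{-1}| \le |\zeta\Psi|\le |\Psi|\,|\zeta|$ that absorbs the $\Psi$-dependence into the constant, I would pull the factors $|G|$ and $|\xi-\xi_o|$ out of the integrand to obtain
\begin{equation*}
    \bigl|F(\xi) - F(\xi_o)\bigr| \le c\,|G|\,|\xi-\xi_o| \int_0^1\!\!\int_0^1 \bigl|\xi_o + s(\xi-\xi_o) + \tau G\bigr|^{p-3}\,d\tau\,ds,
\end{equation*}
with $c=c(p,\K,|\Psi|,|\Psi^{-1}|)$. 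It then remains only to bound the double integral by $c(|\xi_o|^2 + |\xi-\xi_o|^2 + |G|^2)^{(p-3)/2}$.

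For $p\ge 3$ this is immediate from the triangle inequality applied to the nonnegative power $p-3$. For $2<p<3$ the exponent $\sigma := (p-3)/2$ lies in $(-1/2,0)$, which is exactly the range in which Lemma \ref{lem:Fusco} applies: I would invoke that lemma twice, first integrating in $s$ with the choice $A = \xi_o + \tau G$, $B = \xi-\xi_o$, $\mu = 0$, and then integrating the resulting expression in $\tau$ with $A = \xi_o$, $B = G$, $\mu = |\xi-\xi_o|$. For $p = 2$ the vector field $b$ is linear so the left-hand side vanishes identically. The only genuine subtlety is the singularity of $D^2 b$ at the origin when $p\in(2,3)$; however the threshold $\sigma > -1/2$ of Lemma \ref{lem:Fusco} matches the hypothesis $p > 2$ exactly, so the relevant integrals are finite and the double application of the fundamental theorem of calculus can be rigorously justified by a standard mollification of $b$ followed by dominated convergence.
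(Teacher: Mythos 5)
Your proposal is correct and follows essentially the same route as the paper's proof: the double fundamental-theorem-of-calculus representation with $D^2b$, the bound $|D^2b(\zeta)|\le c(p,\K)|\zeta|^{p-3}$, a double application of Lemma \ref{lem:Fusco} when $p\in(2,3)$ (the paper merely performs the two iterated integrations in the opposite order and keeps $\Psi$ inside until the end, using $|A\Psi|^{-1}\le|\Psi^{-1}|\,|A|^{-1}$), the trivial case $p=2$, and the same mollification remark to justify the formula when the argument of $D^2b$ may vanish. Only a cosmetic point: the chain rule for $\eta\mapsto b(\eta\Psi)$ produces the directions $G\Psi$ and $(\xi-\xi_o)\Psi$ (right multiplication) rather than $\Psi G$, $\Psi(\xi-\xi_o)$, which is immaterial for the norm estimates.
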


\begin{proof}
In the case $p=2$ the expression under consideration is equal to zero, since then $b(\xi)\equiv\K\xi$. Therefore, it remains to consider the case $p>2$. Here, we first compute
\begin{align*}
    & \big|b(\xi\,\Psi) - b(\xi_o\,\Psi) -
    \big[b\big((\xi+G)\Psi\big) - b\big((\xi_o+G)\Psi\big)\big] \big|\\
    &\phantom{m}=
    \bigg|\int_0^1\int_0^1
    D^2 b\big((\xi_o + s(\xi-\xi_o) + \sigma G) \Psi\big)\big(G\Psi, (\xi-\xi_o)\Psi\big) 
    \ds\,d\sigma\, 
    \bigg|\\
    &\phantom{m}\le
    \int_0^1\int_0^1
    \big|D^2 b\big((\xi_o + s(\xi-\xi_o) + \sigma G)\Psi\big)\big| \ds\,d\sigma\, 
    |G\,\Psi||(\xi-\xi_o)\Psi| \\
    &\phantom{m}\le
    c(p)\,\K|\Psi|^2 \int_0^1\int_0^1
    |(\xi_o + s(\xi-\xi_o) + \sigma G)\Psi|^{p-3} \ds\,d\sigma\, |G||\xi-\xi_o| .
\end{align*}
To be precise, in the case $p\in(2,3)$ 
the third identity of the preceding computation needs to be justified, since the argument of $D^2b(\cdot)$ could be zero (see \cite[Remark 2.14]{Boegelein-habil} for the details).
Now, we continue estimating the right-hand side as follows, using Lemma \ref{lem:Fusco} twice if $p\in(2,3)$:
\begin{align*}
    & \big|b(\xi\,\Psi) - b(\xi_o\,\Psi) -
    \big[b\big((\xi+G)\Psi\big) - b\big((\xi_o+G)\Psi\big)\big] \big|\\
    &\phantom{m}\le
    c(p)\,\K|\Psi|^2 \int_0^1
    \big(|(\xi_o + s(\xi-\xi_o))\Psi|^2 + |G\Psi|^2\big)^{\frac{p-3}{2}} \ds\, 
    |G||\xi-\xi_o| \\
    &\phantom{m}\le
    c(p)\,\K|\Psi|^2 
    \big(|\xi_o\Psi|^2 + |(\xi-\xi_o)\Psi|^2 + |G\Psi|^2\big)^{\frac{p-3}{2}}  
    |G||\xi-\xi_o| \\
    &\phantom{m}\le
    c(p,\K,|\Psi|,|\Psi^{-1}|)\,|G|
    \big(|\xi_o|^2 + |\xi-\xi_o|^2 + |G|^2\big)^{\frac{p-3}{2}}  
    |\xi-\xi_o| .
\end{align*}
Note that in the case $p\in(2,3)$ we used in the last line the inequality $|A\Psi|^{-1}\le|\Psi^{-1}||A\Psi\Psi^{-1}|^{-1}=|\Psi^{-1}||A|^{-1}$ for $A\in\R^{Nn}$ .
This yields the result of the lemma. 
\end{proof}

The next lemma provides a useful estimate for the vector field $a$ that will be needed several times in the sequel.
\begin{lemma}\label{lem:diff-2}
Suppose that the vector field $a$ is asymptotically regular in the sense of \eqref{def_asymp}. Assume further that $\Psi\in C^{0;\beta}(B,\R^{n\cdot n})$ and $G\in C^{0;\beta}(B,\R^{Nn})$ with $\beta\in(0,1)$, $B\equiv B_\rho^+(x_o)$ and $\rho\in(0,1]$. Then, for any $x\in B$ and $\xi,\xi_o,\zeta\in\R^{Nn}$ there holds
\begin{align*}
	& \big|\big\langle a \big((\xi+G(x))\Psi(x)\big),
    \zeta\,\Psi(x)\big\rangle - 
    \big\langle a\big((\xi_o+G(x_o))\Psi(x_o)\big),
    \zeta\,\Psi(x_o)\big\rangle\big| \\
    &\phantom{mmmmmmm}\le
    c\,\Big[ \big(1+|\xi_o|^2 + |\xi-\xi_o|^2\big)^{\frac{p-2}{2}}|\xi-\xi_o| +
    \rho^\beta(1+|\xi_o|)^{p-1}\Big] |\zeta|
\end{align*}
for a constant $c$ depending on $p, \K, |a(0)|, \|\omega\|_\infty, \|\Psi\|_{C^{0;\beta}(B)}, \|G\|_{C^{0;\beta}(B)}$.
\end{lemma}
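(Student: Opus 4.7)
The idea is to split the difference into a ``variation of $\xi$'' part at the fixed point $x$, and a ``variation of $x$'' part at the fixed vector $\xi_o$, namely
\begin{align*}
    &\big\langle a\big((\xi+G(x))\Psi(x)\big),\zeta\Psi(x)\big\rangle
    - \big\langle a\big((\xi_o+G(x_o))\Psi(x_o)\big),\zeta\Psi(x_o)\big\rangle \\
    &\quad = \big\langle a\big((\xi+G(x))\Psi(x)\big) - a\big((\xi_o+G(x))\Psi(x)\big),\zeta\Psi(x)\big\rangle \\
    &\qquad + \big\langle a\big((\xi_o+G(x))\Psi(x)\big) - a\big((\xi_o+G(x_o))\Psi(x_o)\big),\zeta\Psi(x)\big\rangle \\
    &\qquad + \big\langle a\big((\xi_o+G(x_o))\Psi(x_o)\big),\zeta(\Psi(x)-\Psi(x_o))\big\rangle
    =: \mathrm{I}+\mathrm{II}+\mathrm{III}.
\end{align*}
All three terms will be estimated via the two basic tools from Lemma \ref{lem:growth-a}, with the Hölder data on $\Psi$ and $G$ absorbed into the constant $c$ through the boundedness $\|\Psi\|_\infty,\|G\|_\infty\le c$ on $B$.

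For term $\mathrm{I}$, apply \eqref{diff-a} with arguments $(\xi+G(x))\Psi(x)$ and $(\xi_o+G(x))\Psi(x)$, whose difference is $(\xi-\xi_o)\Psi(x)$. Using $|\Psi(x)|\le\|\Psi\|_{C^{0;\beta}}$ and $|G(x)|\le\|G\|_{C^{0;\beta}}$, the factor $(1+|(\xi_o+G(x))\Psi(x)|^2+|(\xi-\xi_o)\Psi(x)|^2)^{(p-2)/2}$ is bounded by $c\bigl(1+|\xi_o|^2+|\xi-\xi_o|^2\bigr)^{(p-2)/2}$, yielding exactly the first term on the right-hand side of the asserted estimate.

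For term $\mathrm{II}$, the key step is to control
\begin{equation*}
    \big|(\xi_o+G(x))\Psi(x)-(\xi_o+G(x_o))\Psi(x_o)\big|
    \le c(1+|\xi_o|)\,|x-x_o|^\beta \le c(1+|\xi_o|)\rho^\beta,
\end{equation*}
which follows by writing the left-hand side as $(\xi_o+G(x))(\Psi(x)-\Psi(x_o))+(G(x)-G(x_o))\Psi(x_o)$ and exploiting the Hölder continuity of $\Psi$ and $G$ together with $|x-x_o|\le\rho\le1$. Plugging this into \eqref{diff-a}, the ``$\xi_o$-slot'' in $(p-2)/2$ gives $(1+|\xi_o|)^{p-2}$ while the ``$|\cdot|$-slot'' gives one extra factor $(1+|\xi_o|)\rho^\beta$, producing the bound $c(1+|\xi_o|)^{p-1}\rho^\beta\,|\zeta|$. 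For term $\mathrm{III}$, combine \eqref{diff-a} applied with $\xi=(\xi_o+G(x_o))\Psi(x_o)$ and $\xi_o=0$ (which yields $|a(\eta)|\le|a(0)|+c(1+|\eta|)^{p-1}$) with the Hölder estimate $|\Psi(x)-\Psi(x_o)|\le c\rho^\beta$; this again produces a bound of the form $c(1+|\xi_o|)^{p-1}\rho^\beta\,|\zeta|$.

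Summing the three contributions gives the stated inequality, with the constant depending on $p,\K,|a(0)|,\|\omega\|_\infty,\|\Psi\|_{C^{0;\beta}(B)},\|G\|_{C^{0;\beta}(B)}$. The only genuinely delicate point is the $\mathrm{II}$ estimate, where one has to be careful that the ``$(p-2)/2$'' factor does not produce any uncontrolled power of $(1+|\xi-\xi_o|)$ (it does not, since $\xi$ is not involved there) and that the $\rho^\beta$ coming from the Hölder seminorms is not amplified by $|\xi_o|$ more than one linear power --- which is precisely what the above bound on $|(\xi_o+G(x))\Psi(x)-(\xi_o+G(x_o))\Psi(x_o)|$ ensures, using crucially that $\rho\le1$.
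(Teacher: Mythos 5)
Your argument is correct, and it uses the same basic toolkit as the paper (the growth/difference estimate \eqref{diff-a} from Lemma \ref{lem:growth-a} together with the H\"older continuity of $\Psi$ and $G$ and $\rho\le 1$), but with a different telescoping decomposition. The paper splits the difference into four terms, first varying the test slot $\zeta\,\Psi(x)\to\zeta\,\Psi(x_o)$ and the matrix $\Psi$ inside the argument of $a$ while the gradient variable is still $\xi$, then varying $\xi\to\xi_o$, and finally $G(x)\to G(x_o)$; as a consequence the two ``spatial'' terms come out with a factor $(1+|\xi|)^{p-1}$, which the paper then has to split as $c\,[\,|\xi-\xi_o|^{p-1}+\rho^\beta(1+|\xi_o|)^{p-1}\,]$ and absorb the $|\xi-\xi_o|^{p-1}|\zeta|$ piece into the first term of the asserted bound. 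You instead vary $\xi\to\xi_o$ first (your $\mathrm{I}$, which is the paper's term $\mathrm{III}$ and gives exactly the first term on the right-hand side), and only then vary the spatial point, so that all H\"older contributions (your $\mathrm{II}$ and $\mathrm{III}$) are evaluated at $\xi_o$ and directly produce $c\,\rho^\beta(1+|\xi_o|)^{p-1}|\zeta|$ without any splitting; your key intermediate bound $|(\xi_o+G(x))\Psi(x)-(\xi_o+G(x_o))\Psi(x_o)|\le c\,(1+|\xi_o|)\rho^\beta$ is correct, and the remark that the exponent-$(p-2)/2$ slot then only sees $(1+|\xi_o|)$ (using $\rho\le1$) is exactly the right point to check. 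The two routes give the same constant dependencies; yours is marginally more economical, the paper's is the same estimate organized in a different order.
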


\begin{proof}
We first decompose the term under consideration as follows:
\begin{align*}
    \big|\big\langle a\big( & (\xi+G(x))\Psi(x)\big),
    \zeta\,\Psi(x)\big\rangle - 
    \big\langle a\big((\xi_o+G(x_o))\Psi(x_o)\big),
    \zeta\,\Psi(x_o)\big\rangle\big| \\
    &\le
    \big|\big\langle a\big((\xi+G(x))\Psi(x)\big),
    \zeta (\Psi(x)-\Psi(x_o))\big\rangle| \\
    &\phantom{\le\ }+
    \big|\big\langle a\big((\xi+G(x))\Psi(x)\big) - a\big((\xi+G(x))\Psi(x_o)\big),
    \zeta\, \Psi(x_o)\big\rangle \big| \\
    &\phantom{\le\ }+
    \big|\big\langle a\big((\xi+G(x))\Psi(x_o)\big) - a\big((\xi_o+G(x))\Psi(x_o)\big),
    \zeta\, \Psi(x_o)\big\rangle \big| \\
    &\phantom{\le\ }+
    \big|\big\langle a\big((\xi_o+G(x))\Psi(x_o)\big) - a\big((\xi_o+G(x_o))\Psi(x_o)\big),
    \zeta\, \Psi(x_o)\big\rangle \big| \\
    &=:
    \mbox{I} +\mbox{II} + \mbox{III} + \mbox{IV}
\end{align*}
with the obvious meaning of I -- IV.
For the estimate of I we use the H\"older continuity of $\Psi$ and \eqref{diff-a} from Lemma \ref{lem:growth-a} with $\xi_o=0$ to infer that 
\begin{align*}
    \mbox{I}
    &\le
    \rho^\beta \|\Psi\|_{C^{0;\beta}(B)}
    \big|a\big((\xi+G(x))\Psi(x)\big)\big| |\zeta| \\
    &\le
    \rho^\beta \|\Psi\|_{C^{0;\beta}(B)}
    \Big[\big|a\big((\xi+G(x))\Psi(x)\big)-a(0)\big|+|a(0)|\Big] |\zeta| \\
    &\le
    c\,\rho^\beta \|\Psi\|_{C^{0;\beta}(B)}
    \big(1 + |(\xi+G(x))\Psi(x)|^2\big)^{\frac{p-1}{2}} |\zeta| \\
    &\le
    c\,\rho^\beta \|\Psi\|_{C^{0;\beta}(B)}
    \big(1 + \|\Psi\|_{L^\infty(B)}^2\|G\|_{L^\infty(B)}^2 +|\xi|^2\big)^{\frac{p-1}{2}} |\zeta| \\
    &\le
    c\,\rho^\beta 
    \big(1 + |\xi|^2\big)^{\frac{p-1}{2}} |\zeta| \\
    &\le
    c\,\Big[|\xi-\xi_o|^{p-1} + 
    \rho^\beta(1+|\xi_o|)^{p-1}\Big] |\zeta| ,
\end{align*}
where $c=c(p, \K, |a(0)|, \|\omega\|_\infty, \|\Psi\|_{C^{0;\beta}(B)}, \|G\|_{L^\infty(B)})$.
Similarly, we get for II:
\begin{align*}
    \mbox{II}
    &\le
    \|\Psi\|_{L^\infty(B)}
    \big| a\big((\xi+G(x))\Psi(x)\big) - a\big((\xi+G(x))\Psi(x_o)\big) \big|
    |\zeta| \\
    &\le
    c\,
    \big(1 + |(\xi+G(x))\Psi(x)|^2 + |(\xi+G(x))(\Psi(x)-\Psi(x_o))|^2\big)^{\frac{p-2}{2}} 
    \\
    &\phantom{mmmmmmmmmmmmmmmmm}\cdot
    |(\xi+G(x))(\Psi(x)-\Psi(x_o))|
    |\zeta| \\
    &\le
    c\,\rho^\beta \|\Psi\|_{C^{0;\beta}(B)} 
    \big(1 + |\xi|^2\big)^{\frac{p-1}{2}} 
    |\zeta| \\
    &\le
    c\,\Big[|\xi-\xi_o|^{p-1} + \rho^\beta
    (1+|\xi_o|)^{p-1}\Big] |\zeta| ,
\end{align*}
where $c=c(p, \K, \|\omega\|_\infty, \|\Psi\|_{C^{0;\beta}(B)}, \|G\|_{L^\infty(B)})$.
Once again by \eqref{diff-a} we get
\begin{align*}
    \mbox{III}
    &\le
    \|\Psi\|_{L^\infty(B)}
    \big|a\big((\xi+G(x))\Psi(x_o)\big) - a\big((\xi_o+G(x))\Psi(x_o)\big)\big|
    |\zeta| \\
    &\le
    c\,
    \big(1 + |(\xi_o+G(x))\Psi(x_o)|^2 + |(\xi-\xi_o)\Psi(x_o)|^2\big)^{\frac{p-2}{2}} 
    |(\xi-\xi_o)\Psi(x_o)| |\zeta| \\
    &\le
    c\,
    \big(1+|\xi_o|^2 + |\xi-\xi_o|^2 \big)^{\frac{p-2}{2}} 
    |\xi-\xi_o| |\zeta| ,
\end{align*}
where $c=c(p, \K, \|\omega\|_\infty, \|\Psi\|_{L^\infty(B)}, \|Dg\|^2_{L^\infty(B)})$.
Finally, for the estimate of the term IV we use \eqref{diff-a}  and the H\"older continuity of $G$ which yields 
\begin{align*}
    \mbox{IV}
    &\le
    \|\Psi\|_{L^\infty(B)}
    \big|a\big((\xi_o+G(x))\Psi(x_o)\big) - a\big((\xi_o+G(x_o))\Psi(x_o)\big)\big|
    |\zeta| \\
    &\le
    c\,
    \big(1 + |(\xi_o+G(x))\Psi(x_o)|^2 + |(G(x) - G(x_o))\Psi(x_o)|^2 \big)^{\frac{p-2}{2}} 
    |(G(x) - G(x_o))\Psi(x_o)|
    |\zeta| \\
    &\le
    c\,\rho^\beta\,\|G\|_{C^{0;\beta}(B)}\|\Psi\|_{L^\infty(B)}
    \big(1 + |\xi_o|^2 \|\Psi\|_{L^\infty(B)}^2 + 
    \|G\|_{L^\infty(B)}^2 \|\Psi\|_{L^\infty(B)}^2 \big)^{\frac{p-2}{2}} 
    |\zeta| \\
    &\le
    c\,\rho^\beta
    (1+|\xi_o|)^{p-1} |\zeta| 
\end{align*}
with a constant $c=c(p, \K, \|\omega\|_\infty, \|\Psi\|_{L^\infty(B)}, \|G\|_{C^{0;\beta}(B)})$.
Inserting the preceding estimates for I -- IV above we deduce the desired estimate.
\end{proof}

The following lemma allows to compare the vector field $a$ to the $p$-Laplacian vector field $b$ provided certain quantities are large. The lemma is a modified version of
\cite[Lemmas 2.1 and 2.2]{Chipot-Evans:1986}, or
\cite[Lemma 5.1]{Giaquinta-Modica:1986}.

\begin{lemma}\label{lem:asymp}
Suppose that $a$ is asymptotically regular in the sense of \eqref{def_asymp}. For $\epsilon>0$ and $\omega$ as in Remark \ref{rem:asyp-equiv} we find $K_\epsilon\ge 1$ depending on $\epsilon$ and $\omega$ such that 
\begin{equation}\label{def-Keps}
    \omega(s)
    \le 
    \epsilon
    \qquad\forall\, s\ge K_\epsilon.
\end{equation}
There exists a constant $c=c(p)$ such that the following holds true: Supposed that $\delta\ge 0$ and $A\in\R^{Nn}$ satisfy
\begin{equation}\label{cond-asymp}
    |A| + \delta
    \ge
    \frac{8\|\omega\|_\infty K_\epsilon}{\epsilon}\,,
\end{equation}
then for any $\xi\in\R^{Nn}$ we have
\begin{equation}\label{comp-asymp}
    |a(A)-a(\xi) - [b(A)-b(\xi)]|
    \le
    c\,\epsilon
    (|\xi-A| + \delta)
    \big(1 + |A|^2 + |\xi-A|^2\big)^{\frac{p-2}{2}}.
\end{equation}
\end{lemma}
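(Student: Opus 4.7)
The plan is to write the left-hand side of \eqref{comp-asymp} as an integral of $Da-Db$ along the straight segment from $\xi$ to $A$, and then to exploit two complementary bounds on that integral, each producing $\epsilon$-smallness through a different mechanism---the defining property of $K_\epsilon$ in one case, and the largeness hypothesis \eqref{cond-asymp} in the other. Setting $\eta(s):=(1-s)\xi+sA$, the fundamental theorem of calculus gives
\begin{equation*}
a(A)-a(\xi)-\big[b(A)-b(\xi)\big] = \int_0^1 \big[Da(\eta(s))-Db(\eta(s))\big]\,(A-\xi)\,ds,
\end{equation*}
and by Remark \ref{rem:asyp-equiv} the integrand is pointwise bounded by $\omega(|\eta(s)|)(1+|\eta(s)|)^{p-2}|A-\xi|$. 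I split $[0,1]$ into $S_1:=\{s:|\eta(s)|\ge K_\epsilon\}$ and $S_2:=[0,1]\setminus S_1$, and treat the two portions separately.

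On $S_1$ the defining property \eqref{def-Keps} gives $\omega(|\eta|)\le\epsilon$, while the crude inequality $|\eta(s)|\le|A|+|A-\xi|$ yields $(1+|\eta|)^{p-2}\le c(p)(1+|A|^2+|A-\xi|^2)^{(p-2)/2}$, which produces the target bound for the $S_1$ contribution at once. For the $S_2$ portion I prepare two estimates, each useful in a different regime. The first keeps the crude bound on $(1+|\eta|)^{p-2}$ and uses only $|S_2|\le 1$, giving
\begin{equation*}
\int_{S_2}\omega(|\eta|)(1+|\eta|)^{p-2}|A-\xi|\,ds \le c(p)\,\|\omega\|_\infty\,(1+|A|^2+|A-\xi|^2)^{(p-2)/2}|A-\xi|.
\end{equation*}
The second is geometric: $\eta([0,1])$ is a straight segment of length $|A-\xi|$ and $\eta(S_2)\subset B(0,K_\epsilon)$, so $|A-\xi|\cdot|S_2|\le 2K_\epsilon$, which combined with $(1+|\eta|)^{p-2}\le(1+K_\epsilon)^{p-2}$ on $S_2$ yields the bound $c(p)\,\|\omega\|_\infty\,K_\epsilon^{p-1}$.

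The main obstacle is to arrange a case split so that the factor $\|\omega\|_\infty K_\epsilon$ appearing (explicitly in the geometric bound, and implicitly after rewriting $\|\omega\|_\infty=(\|\omega\|_\infty K_\epsilon)/K_\epsilon$ in the first bound) gets converted via \eqref{cond-asymp} into $\epsilon(|A|+\delta)/8$, and the leftover factors fit into $c(p)\,\epsilon(|A-\xi|+\delta)(1+|A|^2+|A-\xi|^2)^{(p-2)/2}$. I may assume $\|\omega\|_\infty\ge\epsilon$, since otherwise $\omega\le\epsilon$ on all of $[0,\infty)$ and the $S_1$ argument applies throughout. Whenever $S_2\neq\emptyset$, choosing $s^*\in S_2$ yields $|A|\le K_\epsilon+|A-\xi|$. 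In the regime $|A-\xi|>K_\epsilon$ I use the geometric bound: then $|A|\le 2|A-\xi|$ controls $|A|+\delta$, while $(1+|A-\xi|^2)^{(p-2)/2}\ge c(p)K_\epsilon^{p-2}$ absorbs the extra $K_\epsilon^{p-2}$. In the complementary regime $|A-\xi|\le K_\epsilon$ I use the first bound; a sub-case split---depending on whether $|A|$ is comparable to $|A-\xi|$ (trivial), or is forced by \eqref{cond-asymp} to satisfy $|A|\gtrsim K_\epsilon$ (so that $(1+|A|^2)^{(p-2)/2}\ge c(p)K_\epsilon^{p-2}$), or else $\delta\gtrsim K_\epsilon$ dominates $|A|$---produces a uniform constant in every situation. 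Piecing together the $S_1$ and $S_2$ contributions gives \eqref{comp-asymp} with $c=c(p)$.
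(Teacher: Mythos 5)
Your proposal is correct and follows essentially the same route as the paper: the same segment representation with $Da-Db$, the same splitting of $[0,1]$ according to whether $|A+s(\xi-A)|$ exceeds $K_\epsilon$, the same geometric length estimate ($|\xi-A|\cdot|S_2|\le 2K_\epsilon$), and the same use of \eqref{cond-asymp} to force $\delta\gtrsim \|\omega\|_\infty K_\epsilon/\epsilon$ when the segment meets the $K_\epsilon$-ball and $|\xi-A|$ is not large. The paper merely organizes the case distinction on the size of $|\xi-A|$ relative to $2\|\omega\|_\infty K_\epsilon/\epsilon$ and bounds $|I_\epsilon|\le\frac{\epsilon}{\|\omega\|_\infty}\bigl(1+\frac{\delta}{|\xi-A|}\bigr)$, which streamlines your sub-case analysis (your second sub-case is in fact vacuous once $S_2\neq\emptyset$), but the mechanism is identical.
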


\begin{proof}
In the following we assume that $A\not=\xi$, since in the case $A=\xi$ estimate \eqref{comp-asymp} is trivially satisfied. We first define
\begin{equation*}
    I_\epsilon
    :=
    \{s\in[0,1]: |A + s(\xi-A)|\le K_\epsilon\}
\end{equation*}
and estimate the left-hand side of \eqref{comp-asymp} with the help of assumption \eqref{def_asymp-} by
\begin{align*}
    |a(A) & -a(\xi) - [b(A)-b(\xi)]| \\
    &=
    \bigg|\int_0^1\big[
    Da(A + s(\xi-A)) - Db(A + s(\xi-A)) \big]
    (\xi-A)\ds \bigg| \\
    &\le
    \int_0^1\omega\big(|A + s(\xi-A)|\big) \big(1 + |A + s(\xi-A)|\big)^{p-2}\ds\, |\xi-A| .
\end{align*}
Decomposing $[0,1]$ into the set $I_\epsilon$ and its complement $[0,1]\setminus I_\epsilon$ we get
\begin{align}\label{asymp-intermed}
    |a(A) & -a(\xi) - [b(A)-b(\xi)]| \\
    &\le
    c(p)\,\big(\|\omega\|_\infty |I_\epsilon| + \epsilon \big) \big(1+|A|^2 + |\xi-A|^2\big)^{\frac{p-2}{2}}\, |\xi-A| . \nn
\end{align}
It now remains to find a suitable bound for the measure of $I_\epsilon$. Initially, we find
\begin{equation*}
    |I_\epsilon|
    =
    \frac{1}{|\xi-A|}\,
    \big|\big\{s\in[0,|\xi-A|] :
    \big|A+s\tfrac{\xi-A}{|\xi-A|}\big|\le K_\epsilon\big\}\big|
    \le
    \frac{2K_\epsilon}{|\xi-A|}
\end{equation*}
which in the case $|\xi-A|\ge 2\|\omega\|_\infty K_\epsilon/\epsilon$ yields
\begin{equation}\label{I-est-1}
    |I_\epsilon|
    \le
    \frac{\epsilon}{\|\omega\|_\infty}\,.
\end{equation}
On the other hand, in the case 
$|\xi-A|<2\|\omega\|_\infty K_\epsilon/\epsilon$ we have with $s\in I_\epsilon$
\begin{equation*}
    |A|
    \le
    |A + s(\xi-A)| + |\xi-A|
    \le
    K_\epsilon + \frac{2\|\omega\|_\infty K_\epsilon}{\epsilon}
    \le
    \frac{4\|\omega\|_\infty K_\epsilon}{\epsilon},
\end{equation*}
since we may assume $\|\omega\|_\infty\ge 1$. By \eqref{cond-asymp} this implies
\begin{equation*}
    \delta
    \ge
    \frac{8\|\omega\|_\infty K_\epsilon}{\epsilon} - |A|
    \ge
    \frac{4\|\omega\|_\infty K_\epsilon}{\epsilon} 
\end{equation*}
and therefore
\begin{equation}\label{I-est-2}
    |I_\epsilon|
    \le
    \frac{2K_\epsilon}{|\xi-A|}
    \le
    \frac{\epsilon\, \delta}{2\|\omega\|_\infty |\xi-A|}\,.
\end{equation}
Combining \eqref{I-est-1} and \eqref{I-est-2} we obtain
\begin{equation*}
    |I_\epsilon|
    \le
    \frac{\epsilon}{\|\omega\|_\infty}\,
    \bigg(1+\frac{\delta}{|\xi-A|}\bigg).
\end{equation*}
Inserting this into \eqref{asymp-intermed} yields the asserted estimate \eqref{comp-asymp}.
\end{proof}

\section{Proof of the global Lipschitz regularity}

\subsection{Transformation to the model situation}\label{sec:model}
Since the proof of our Lipschitz regularity result is of local nature we can locally transform the problem to a model situation on a half-cylinder $Q_R^+$ for some $R>0$ and with boundary values zero on the lateral boundary $\Gamma_R$. The strategy will be outlined in the following.
Let $z_o=(x_o,t_o)\in \partial\Omega\times(0,T)$. Without loss of generality we can assume that $x_o=0$
and that the inward pointing unit normal to $\partial\Omega$ in $x_o$ is $\nu_{\partial\Omega}(x_o) = e_n$. Then, for $R>0$ sufficiently small, there exists a map $\Phi\colon B_R\cap\Omega\to B_R^+$ such that $\Phi(B_R\cap\partial\Omega)\subset D_R:=B_R \cap \{x\in\R^n: x_n=0\}$ with the properties that $\Phi$ and $\Phi^{-1}$ are of class $C^{1;\beta}$ and $\det D\Phi=1=\det D\Phi^{-1}$.
Next, we define the transformed maps
\begin{equation*}
	\hat g(y,t)
	:=
	g\big(\Phi^{-1}(y),t\big) ,\qquad (y,t)\in Q^+_R
\end{equation*}
and
$$
    v(y,t)
    :=
    u\big(\Phi^{-1}(y),t\big) - \hat g(y,t) ,\qquad (y,t)\in Q^+_R.
$$
Then, $v$ is a weak solution to the following Cauchy-Dirichlet problem
\begin{align}\label{transformation}
	\left\{
	\begin{array}{cl}
	\partial_t v -
    \Div \big[a\big((Dv+D\hat g)\Psi\big)\Psi^t\big]
	=
	\hat g_t
	\quad &
	\text{in } Q_R^+,\\[7pt]
	v=0
	\quad &
	\text{on } \Gamma_R,
	\end{array}
	\right.
\end{align}
where
\begin{equation*}
    \Psi(y)
    :=
	D\Phi\big(\Phi^{-1}(y)\big).
\end{equation*}

Now, it is easy to verify the fact that $y\in\Gamma_{R}$ is a regular point of $Dv$ if and only if
$\Phi^{-1}(y)\in\partial\Omega\times(0,T)$ is a regular point of $Du$. Therefore, it suffices to prove Theorem
\ref{thm:main-lip} in the model situation 
\begin{equation}\label{system-lat}
	\left\{
	\begin{array}{cl}
	\partial_t u -
    \Div \big[a\big((Du+Dg)\Psi\big)\Psi^t\big]
	=
	g_t
	\quad &
	\text{in } Q_R^+,\\[7pt]
	u=0
	\quad &
	\text{on } \Gamma_R,
	\end{array}
	\right.
\end{equation}
with a function 
\begin{equation}\label{Psi}
	\Psi\in C^{0;\beta}\big(\overline{B_R^+},\R^n\big) 
	\quad\mbox{such that $\Psi^{-1}\in L^\infty(B_R^+,\R^n)$ and $\Psi(0)=\mathbb I_{n\times n}$}
\end{equation}
and with 
\begin{equation}\label{g}
	Dg\in C^{0;\beta,0}\big(\overline{Q_R^+},\R^{Nn}\big) 
	\quad\mbox{and}\quad 
	\partial_t g\in L^{p',(1-\beta)p'}(Q_R^+;\R^N).
\end{equation}
Hence, Theorem \ref{thm:main-lip} is equivalent to the following 
\begin{proposition}\label{prop:main-lip}
There exists $R_o=R_o(n,N,p, \K, \psi_\beta, \beta,a_o, \|\omega\|_\infty, \mathcal G_\beta)>0$ such that the following holds: Let
$$
	u \in C^0\big(\Lambda_R;L^2(B_R^+,\R^{N})\big)
	\cap 
	L^p\big(\Lambda_R;W^{1,p}(B_R^+,\R^{N})\big)
$$ 
be a weak solution to the partial Cauchy-Dirichlet problem \eqref{system-lat} in $Q_R^+$ with $R\in(0,R_o]$ under the assumptions \eqref{def_asymp}, \eqref{Psi} and \eqref{g}. Then, $u$ is Lipschitz-continuous up to the boundary portion $\Gamma_R$, i.e.
$$
	Du\in L^\infty\big(Q_{R-\epsilon}^+,\R^{Nn}\big)
	\quad\mbox{and}\quad
	u\in C^{0;1,1/2}\big(Q_{R-\epsilon}^+ ,\R^{N}\big)
	\quad\mbox{for any $\epsilon\in(0,1)$.}
$$
Moreover, the
quantitative estimate
\begin{equation*}
    |Du(z_o)|
    \le
    c_1
    \bigg(\mint_{Q_{\rho}^+(z_o)}|Du|^p\dz\bigg)^\frac12 + 
    c_2
\end{equation*}
holds for a.e. $z_o\in Q_R^+$ and $\rho\in(0,R]$ such that $Q_{\rho}(z_o)\subset Q_R$. Thereby the constant $c_1$ depends only on 
$n,N,p,\K$, while $c_2$ depends on  
$n,N,p,\K,\psi_\beta, \beta, a_o, \omega(\cdot), \mathcal G_\beta$, where 
\begin{equation}\label{def-psibeta}
    a_o:=|a(0)|,
    \qquad
    \psi_\beta
    :=
    \|\Psi\|_{C^{0;\beta}(B_R^+)} + \|\Psi^{-1}\|_{L^\infty(B_R^+)} 
    \ge 1
\end{equation}
and 
\begin{equation}\label{def-gbeta}
    \mathcal G_\beta
    :=
    \|Dg\|_{C^{0;\beta,0}(Q_R^+)} + \|g_t\|_{L^{p',(1-\beta)p'}(Q_R^+)} .
\end{equation}
\end{proposition}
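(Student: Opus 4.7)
My plan is to prove Proposition \ref{prop:main-lip} by establishing, at every Lebesgue point $z_o\in Q_R^+$ of $Du$, a pointwise excess-decay alternative on intrinsic parabolic cylinders and then iterating it. For such a $z_o$ and an intrinsic cylinder $Q^+_{\rho,\lambda}(z_o)$ satisfying the coupling $\tmint_{Q^+_{\rho,\lambda}(z_o)}|Du+Dg|^p\dz\le C_o\lambda^p$, I measure the closeness of $Du$ to a constant through the excess
\[
    E(\rho,\lambda):=\mint_{Q^+_{\rho,\lambda}(z_o)} \bigl|V_\lambda\bigl(Du-D\widehat\ell_{z_o;\rho,\lambda}\bigr)\bigr|^2\dz,
\]
with $\widehat\ell_{z_o;\rho,\lambda}$ as in \eqref{def-lmin}. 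If the initial $\lambda$ is of order $1+\|Dg\|_\infty$ the asserted bound is immediate, so I may assume $\lambda$ is so large that $b$ is a good approximation of $a$ via Lemma \ref{lem:asymp}. A parameter $\sigma\in(0,1)$, to be fixed later, splits the analysis into a non-degenerate regime $E(\rho,\lambda)\le\sigma\lambda^p$ and a degenerate one $E(\rho,\lambda)>\sigma\lambda^p$.

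In the \emph{non-degenerate regime}, after the intrinsic rescaling $u_\lambda(y,s):=\lambda^{-1}u(z_o+(y,\lambda^{2-p}s))/\rho$, I would apply the appropriate $\mathcal A$-caloric approximation lemma (\ref{lem:a-cal} or \ref{lem:a-cal-boundary}, according to whether $(x_o)_n\ge\rho/2$ or not) with frozen coefficients $A:=\tfrac{1}{\K}Db\bigl((D\widehat\ell_{z_o;\rho,\lambda}+Dg(z_o))\Psi(0)\bigr)$, which is elliptic because $|D\widehat\ell_{z_o;\rho,\lambda}|\approx\lambda$. Verifying approximate $\mathcal A$-caloricity reduces to estimating the weak formulation of \eqref{system-lat} tested against an arbitrary $\varphi$, splitting $a((Du+Dg)\Psi)\Psi^t-\K A\,Du$ into an asymptotic defect controlled via Lemma \ref{lem:asymp}, Hölder-in-$x$ defects in $\Psi$ and $Dg$ controlled via Lemma \ref{lem:diff-2}, and the right-hand side $g_t$ controlled via the Morrey bound in \eqref{def-gbeta}. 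Combining the $L^2$-closeness of the comparison map $h$ with the quasi-minimizing property of $\widehat\ell$ (Lemma \ref{lem:quasimin}), Lemma \ref{lem:V-A}, and the excess decay of Proposition \ref{prop:lin} produces, for a suitable $\theta\in(0,1/16]$ depending only on $n,N,p,\K$,
\[
    E(\theta\rho,\lambda)\le \tfrac14\,E(\rho,\lambda) + c\bigl(\epsilon^2+\rho^{2\beta}+\mathcal G_\beta^2\rho^{2\beta}\lambda^{2-p}\bigr)\lambda^p,
\]
together with the persistence of the intrinsic coupling on $Q^+_{\theta\rho,\lambda}(z_o)$ with the same $\lambda$.

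In the \emph{degenerate regime} I would compare $u$ with the solution $w$ of the pure $p$-Laplacian Cauchy--Dirichlet problem \eqref{p-laplace} on $Q^+_{\rho,\lambda}(z_o)$ matching the parabolic boundary values of $u$. Testing the difference system with $u-w$ and using monotonicity (Lemma \ref{lem:monotone}(i)) together with Lemmas \ref{lem:diff-1}, \ref{lem:diff-2} and \ref{lem:asymp} for the $\Psi$, $Dg$ and asymptotic errors, and the Morrey bound on $g_t$, yields $\tmint_{Q^+_{\rho,\lambda}(z_o)}|D(u-w)|^p\dz\le\eta\lambda^p$ with $\eta$ as small as desired once $\epsilon$ and $R_o$ are small enough. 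Since $\tmint|Dw|^p\lesssim\lambda^p$ follows by comparison, Lemma \ref{lem:DiBe} transfers to $u$ either the sup-bound $\sup_{Q^+_{r,\lambda\mu}(z_o)}|Dw|\le\lambda\mu$ with $\mu\in[\mu_o(r/\rho)^{\alpha_o},2\mu_o(r/\rho)^{\alpha_o}]$, or the reverse bound \eqref{DiBe-mean}. Either outcome terminates the iteration with a pointwise control $|Du(z_o)|\le c\lambda+c_2$.

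The main obstacle, and the delicate heart of the argument, is the \emph{matching step}: one must iterate the non-degenerate decay along a geometric sequence $\rho_k=\theta^k\rho_0$ while guaranteeing at every step that the intrinsic coupling is preserved (possibly with a new $\lambda_k$ comparable to $\lambda_{k-1}$), and that the alternative either keeps firing the non-degenerate case or terminates in the degenerate one. I would first fix $\theta$ so that the prefactor in the non-degenerate excess decay is $\tfrac14$, then $\sigma$ so small that the intrinsic coupling survives the iteration, then choose $\epsilon$ in Lemma \ref{lem:asymp} and $R_o$ small so that the inhomogeneous errors contribute at most $\tfrac14\lambda_k^p$ at each scale. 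Summing the geometric series either produces a finite stopping time in the degenerate regime (and Lemma \ref{lem:DiBe} gives the bound) or forces $D\widehat\ell_{z_o;\rho_k,\lambda_k}\to Du(z_o)$ with $|Du(z_o)|\le c\lambda_0+c_2$, where $\lambda_0^p\lesssim\tmint_{Q^+_\rho(z_o)}|Du|^p\dz+\|Dg\|_\infty^p+1$. The Lipschitz continuity $u\in C^{0;1,1/2}$ is then a standard consequence of the gradient bound, since the equation \eqref{system-lat} allows one to control $\partial_t u$ in $L^{p'}$ via $|Du|^{p-1}$ and $g_t$, and Lipschitz-in-$x$ plus Hölder-$1/2$-in-$t$ follows by a classical interpolation on parabolic cylinders.
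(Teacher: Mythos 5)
Your outline follows the same overall architecture as the paper (linearization via $\mathcal A$-caloric approximation in a non-degenerate regime, comparison with the $p$-Laplacian system and the DiBenedetto--Friedman a priori estimates of Lemma \ref{lem:DiBe} in a degenerate regime, iteration on intrinsic cylinders), but the dichotomy and the iteration bookkeeping -- which you yourself identify as the delicate heart -- contain genuine gaps. First, splitting only according to $E(\rho,\lambda)\le\sigma\lambda^p$ versus $E(\rho,\lambda)>\sigma\lambda^p$ does not justify your claim that the frozen form $A=\tfrac1\K Db\big((D\widehat\ell_{z_o;\rho,\lambda}+Dg(z_o))\Psi(0)\big)$ is elliptic ``because $|D\widehat\ell_{z_o;\rho,\lambda}|\approx\lambda$'': the coupling you assume is only the one-sided bound $\tmint_{Q^+_{\rho,\lambda}(z_o)}|Du+Dg|^p\dz\le C_o\lambda^p$, so a small excess is perfectly compatible with $|D\widehat\ell_{z_o;\rho,\lambda}|\ll\lambda$ (e.g. $Du\approx 0$ with $\lambda$ large), and then, after the intrinsic rescaling by $\lambda^{2-p}$, the ellipticity ratio of $A$ degenerates like $(|D\widehat\ell|/\lambda)^{p-2}$, so the constants in the $\mathcal A$-caloric approximation and in the excess decay are no longer uniform. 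This is exactly why the paper's non-degenerate regime carries the additional slope condition $2^{-6}\lambda\le|D\ell_{z_o;\rho,\lambda}|\le\lambda$ in \eqref{cond-dec-a-1-}, and why the degenerate alternative \eqref{cond-dec-p-3} explicitly includes the small-slope case $|D\ell_{z_o;\vartheta\rho,\lambda_1}|\le\lambda_1/64$; without tracking the slope against $\lambda$ your alternative does not close. Second, your assertion that in the degenerate regime ``either outcome terminates the iteration with a pointwise control $|Du(z_o)|\le c\lambda+c_2$'' is unwarranted: the sup bound \eqref{DiBe-sup} concerns the comparison map only, and the comparison estimate \eqref{comparison} gives merely $L^p$-closeness of the gradients at scale $\rho$, so no pointwise bound on $Du(z_o)$ follows at that stage. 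In the paper the degenerate step (Lemma \ref{lem:decay-p-cal}) only produces a new parameter $\lambda_1\le C_d\lambda$ together with the coupling \eqref{bound-deg} on a smaller cylinder, and $\lambda_1\le\lambda$ under \eqref{cond-dec-p-3}; the pointwise bound emerges either through the non-degenerate iteration (Proposition \ref{prop:iter-a-cal}, existence of the limit $\Gamma_{z_o}$ with $|\Gamma_{z_o}|\le 2\lambda$) or, when the degenerate case persists for all $i$, through the Lebesgue-point property combined with the uniform bound $\lambda_i\le\lambda_o$ in \eqref{bound-i}.

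Two further points. Your initialization $\lambda_0^p\lesssim\tmint_{Q_\rho^+(z_o)}|Du|^p\dz+\|Dg\|_\infty^p+1$ cannot launch the scheme: passing from $Q_\rho^+(z_o)$ to the intrinsic cylinder $Q^+_{\rho,\lambda_0}(z_o)$ may increase the mean value by the factor $\lambda_0^{p-2}$, so to guarantee $\tmint_{Q^+_{\rho,\lambda_0}(z_o)}|Du|^p\dz\le\lambda_0^p$ one needs $\lambda_0\gtrsim\big(\tmint_{Q_\rho^+(z_o)}|Du|^p\dz\big)^{1/2}$; this is precisely why the paper defines $\lambda_o$ as the square root of the $p$-mean plus $\boldsymbol G(\epsilon_o)$, and why the exponent $\tfrac12$ (rather than $\tfrac1p$) appears in the statement of the proposition. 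Finally, $\partial_t u$ is not controlled in $L^{p'}$ by $|Du|^{p-1}$ and $g_t$: from the system one only obtains $\partial_t u$ in a negative Sobolev space, so the passage from the gradient bound to $u\in C^{0;1,1/2}$ requires the weighted-means/parabolic Poincar\'e argument (Lemmas \ref{lem:poin-inter} and \ref{lem:poin-boundary}) together with the Campanato--Da Prato characterization, as in Section \ref{sec:lipschitz}, rather than a direct $L^{p'}$ estimate for the time derivative followed by interpolation.
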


In the following we are concerned with the proof of Proposition \ref{prop:main-lip}. The weak form of the partial Cauchy-Dirichlet problem \eqref{system-lat} reads as follows:
\begin{equation}\label{system-lat-weak}
    \int_{Q_R^+} u\cdot\varphi_t -
    \big\langle a\big((Du+Dg)\Psi\big),D\varphi\, \Psi\big\rangle \dz
    =
    \int_{Q_R^+} g_t\cdot \varphi \dz
    \quad \forall\, \varphi\in C_0^\infty(Q_R^+,\R^N).
\end{equation}
Before we start with the proof of Proposition \ref{prop:main-lip} we introduce the following notation that will be used in the rest of the paper.
For $\epsilon\in(0,1]$ we set
\begin{equation}\label{def-Geps}
    \boldsymbol G(\epsilon)
    :=
    \frac{2^{11}\,\psi_\beta}{\epsilon}\Big[\|\omega\|_\infty K_{\epsilon} +
    \|Dg\|_{L^\infty(Q_R^+)}\Big]
    \ge 1,
\end{equation}
where $\psi_\beta$ is defined in \eqref{def-psibeta} and $K_\epsilon$ is chosen according to \eqref{def-Keps}.
Note that $\boldsymbol G$ is decreasing, i.e. $\boldsymbol G(\epsilon) \le \boldsymbol G(\tilde\epsilon)$ whenever $\tilde\epsilon\le\epsilon$.

\subsection{The non-degenerate regime}\label{ndreg}
In the final proof of the gradient $L^\infty$-bound we will distinguish at a certain point whether the solution behaves on a cylinder approximatively like a solution of a linear parabolic system -- this we call the \textit{non-degenerate regime} -- or like a solution of the parabolic $p$-Laplacian system -- this we call the \textit{degenerate regime}. In the present section we start considering the non-degenerate regime. 
The main result is Proposition \ref{prop:iter-a-cal} which states that once a cylinder $Q_{\rho,\lambda}(z_o)$ belongs to the non-degenerate regime (which is characterized by \eqref{cond-dec-a-1-} and \eqref{cond-dec-a-2-}) and $z_o$ is a Lebesgue point of $Du$ it already follows that $|Du(z_o)|$ is bounded by $2\lambda$.

\subsubsection{Caccioppoli and Poincar\'e inequalities}
Here, we provide Caccioppoli and Poincar\'e type inequalities that will be needed later on to prove the decay-estimate for the non-degenerate regime. 
We start with the following Caccioppoli inequality. For the definition and basic properties of the $V$-function we refer to Appendix \ref{sec:V}.

\begin{lemma}\label{lem:cac}
Let $R\in(0,1]$. Under the assumptions of Proposition \ref{prop:main-lip} there exist constants $\epsilon=\epsilon(n,N,p,\K,\psi_\beta)\in(0,1]$ and $c=c(n,N,p,\K,\psi_\beta,a_o,\|\omega\|_\infty,\mathcal G_\beta)\ge 1$ such that for any cylinder $Q_{\rho,\lambda}(z_o)\subset Q_R$
with $(x_o)_n\ge 0$ and $\lambda\ge 1$
and any affine function $\ell\colon\R^n\to\R^N$ independent of $t$ satisfying
\begin{equation}\label{cond-cac}
    |D\ell|
    \ge
    2^{-8}\,\boldsymbol G(\epsilon) ,
    \qquad
    |D\ell|
    \ge
    2^{-7}\,\lambda
\end{equation}
and $\ell\equiv 0$ on $\Gamma$ if $Q_{\rho,\lambda}(z_o)\cap \Gamma\not=\emptyset$ there holds
\begin{align*}
	\mint_{Q_{\rho/2,\lambda}^+(z_o)} 
    	|V_{|D\ell|}(Du-D\ell)|^2 \dz 
	\le
	c \mint_{Q_{\rho,\lambda}^+(z_o)}
	\Big|V_{|D\ell|}\Big(\frac{u-\ell}{\rho}\Big)
    	\Big|^2 \dz +
    	c\, \rho^{\beta} |D\ell|^{p} .
\end{align*}
\end{lemma}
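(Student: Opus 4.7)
The plan is to test the weak formulation \eqref{system-lat-weak} with $\varphi = \eta^p\zeta^2(u-\ell)$, where $\eta\in C_c^\infty(B_\rho(x_o))$ is a standard spatial cutoff between $B_{\rho/2}(x_o)$ and $B_\rho(x_o)$ with $|D\eta|\le c/\rho$, and $\zeta\in C_c^\infty(\Lambda_{\rho,\lambda}(t_o))$ is a time cutoff with $\zeta\equiv 1$ on $\Lambda_{\rho/2,\lambda}(t_o)$ and $|\zeta_t|\le c\lambda^{p-2}/\rho^2$. The hypothesis that $\ell\equiv 0$ on $\Gamma$ whenever $Q_{\rho,\lambda}(z_o)\cap\Gamma\neq\emptyset$ makes $\varphi$ an admissible test function on $Q_R^+$. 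After a standard Steklov-averaging argument and using that $\ell$ is independent of $t$, the parabolic term $\int u\cdot\varphi_t\,dz$ produces a supremum-in-time bound for $\int\eta^p|u-\ell|^2\,dx$ together with a lower order integral $\int\eta^p\,|\zeta\zeta_t|\,|u-\ell|^2\,dz$. The second condition in \eqref{cond-cac}, $|D\ell|\ge 2^{-7}\lambda$, gives $\lambda^{p-2}\le c|D\ell|^{p-2}$, so this last integral is controlled by $c\tmint_{Q_{\rho,\lambda}^+(z_o)} |V_{|D\ell|}((u-\ell)/\rho)|^2\,dz$, matching the first term on the right-hand side of the asserted inequality.

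For the elliptic part, decompose the diffusion term according to
\begin{align*}
\big\langle a((Du{+}Dg)\Psi),\,D\varphi\,\Psi\big\rangle
&= \big\langle a((Du{+}Dg)\Psi) - a((D\ell{+}Dg)\Psi),\,D\varphi\,\Psi\big\rangle \\
&\quad +\big\langle a((D\ell{+}Dg(x))\Psi(x)),\,D\varphi\,\Psi(x)\big\rangle .
\end{align*}
Inside the first summand, add and subtract the analogous expression with $b$ in place of $a$. Since $|D\ell|\ge 2^{8}\psi_\beta\epsilon^{-1}\|Dg\|_\infty$ and $|\Psi|,|\Psi^{-1}|\le\psi_\beta$, one has $|(D\ell{+}Dg)\Psi|\asymp|D\ell|$; hence Lemma~\ref{lem:monotone}(i) applied to $b$ at the transformed arguments yields
$$\big\langle b((Du{+}Dg)\Psi)-b((D\ell{+}Dg)\Psi),\,(Du-D\ell)\Psi\big\rangle \ge c^{-1}|V_{|D\ell|}(Du-D\ell)|^2 .$$
The asymptotic defect between $a$ and $b$ is then controlled by Lemma~\ref{lem:asymp} applied with $A=(D\ell{+}Dg)\Psi$, $\xi=(Du{+}Dg)\Psi$ and $\delta\asymp|D\ell|$; the hypothesis $|D\ell|\ge 2^{-8}\G(\epsilon)$ is exactly what ensures the smallness requirement \eqref{cond-asymp}, and after multiplying by $|Du-D\ell|$ and applying Young's inequality the resulting error splits into $c\epsilon|V_{|D\ell|}(Du-D\ell)|^2 + c\epsilon|D\ell|^p$, the first being absorbable.

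The second summand involves only $D\ell$ and the $x$-dependent functions $\Psi(x)$ and $Dg(x)$. Subtract its value at $x_o$: the integral of the constant quantity $\langle a((D\ell{+}Dg(x_o))\Psi(x_o)),\,D\varphi\,\Psi(x_o)\rangle$ vanishes by integration by parts, since $\varphi$ vanishes on the parabolic boundary of $Q_{\rho,\lambda}^+(z_o)$ (again using $\ell\equiv 0$ on $\Gamma$ in the boundary case). The discrepancy is bounded via Lemma~\ref{lem:diff-2} applied with $\xi_o=\xi=D\ell$, yielding $c\rho^\beta(1+|D\ell|)^{p-1}|D\varphi|$; after expanding $D\varphi=\eta^p\zeta^2(Du-D\ell)+p\eta^{p-1}\zeta^2(u-\ell)\otimes D\eta$ and applying Young, this contributes $c\rho^\beta|D\ell|^p$ plus another absorbable multiple of $|V_{|D\ell|}(Du-D\ell)|^2$. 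The forcing term $\int g_t\cdot\varphi\,dz$ is estimated by Hölder's inequality with exponent $p'$, the Morrey bound $\int_{Q_{\rho,\lambda}^+(z_o)}|g_t|^{p'}\,dz\le c\mathcal G_\beta^{p'}\rho^{(n+2)-(1-\beta)p'}$ (keeping in mind $\lambda\le c|D\ell|$), and a Sobolev--Poincaré estimate for $u-\ell$, giving a further $c\rho^\beta|D\ell|^p$ contribution. Finally the cutoff term coming from $D\eta$ produces exactly the leading term $c\tmint_{Q_{\rho,\lambda}^+(z_o)}|V_{|D\ell|}((u-\ell)/\rho)|^2\,dz$ on the right.

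Collecting all contributions, fixing $\epsilon$ small depending only on $n,N,p,\K,\psi_\beta$ so that every $c\epsilon|V_{|D\ell|}(Du-D\ell)|^2$ summand can be absorbed into the left-hand side, and integrating over $\Lambda_{\rho/2,\lambda}(t_o)$, yields the asserted Caccioppoli estimate. The main technical difficulty is to keep track of how the Hölder defects of $\Psi$ and $Dg$ (handled by Lemmas~\ref{lem:growth-a} and~\ref{lem:diff-2}), the asymptotic defect of $a$ from $b$ (handled by Lemma~\ref{lem:asymp}), and the Morrey defect of $g_t$ each fit cleanly into the single error budget $\rho^\beta|D\ell|^p$; the threshold $|D\ell|\ge 2^{-8}\G(\epsilon)$ is calibrated precisely so that these three error sources can be simultaneously absorbed after one universal choice of $\epsilon$.
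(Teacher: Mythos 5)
Your overall architecture is essentially the paper's: the same test function $\eta^p\zeta^2(u-\ell)$ (made rigorous by Steklov averaging), coercivity via Lemma \ref{lem:monotone}, the asymptotic defect via Lemma \ref{lem:asymp}, the frozen term handled by subtracting the slicewise-in-$t$ constant field $a\big((D\ell+Dg(x_o,t))\Psi(x_o)\big)$ (whose pairing with $D\varphi$ integrates to zero) and then invoking Lemma \ref{lem:diff-2}, and the $g_t$ term via the Morrey bound together with \eqref{cond-cac}$_2$. Your one genuine deviation, namely keeping $Dg$ inside both arguments of $b$ and applying monotonicity directly at the shifted arguments, is legitimate and even slightly leaner: since \eqref{cond-cac}$_1$ gives $|Dg|\le\epsilon|D\ell|$, one has $|(D\ell+Dg)\Psi|\asymp |D\ell|$ up to $\psi_\beta$, so the lower bound by $|V_{|D\ell|}(Du-D\ell)|^2$ survives, and the paper's intermediate shift-error (its term II, estimated through Lemma \ref{lem:diff-1}) disappears.

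There is, however, one step that does not deliver the stated inequality: the application of Lemma \ref{lem:asymp} with $\delta\asymp|D\ell|$. With that choice the bound \eqref{comp-asymp} contains the additive piece $c\,\epsilon\,|D\ell|\,(1+|A|^2+|\xi-A|^2)^{\frac{p-2}{2}}$, which, after pairing with $D\varphi$ and Young, produces exactly the error you record, $c\,\epsilon\,|D\ell|^p$, carrying no factor $\rho^\beta$. This term can be absorbed neither into the left-hand side (the excess may be far smaller than $|D\ell|^p$) nor into the admissible error $c\,\rho^\beta|D\ell|^p$: the parameter $\epsilon$ is a \emph{fixed} constant depending on $n,N,p,\K,\psi_\beta$ (it is pinned down by the absorption of the $c\,\epsilon\,|V_{|D\ell|}(Du-D\ell)|^2$ pieces and enters the hypothesis through $\boldsymbol G(\epsilon)$), while $\rho$ is arbitrary; and in the subsequent iteration (Lemma \ref{lem:decay-a-cal} and Proposition \ref{prop:iter-a-cal}) it is precisely the $\rho^\beta$ factor that renders the inhomogeneity harmless, so a remainder $\epsilon|D\ell|^p$ with fixed $\epsilon$ would break the excess decay. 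The repair is the paper's choice $\delta=0$: hypothesis \eqref{cond-asymp} is already satisfied by $|A|$ alone, since \eqref{cond-cac}$_1$ and \eqref{def-Geps} give $|(D\ell+Dg)\Psi|\ge\psi_\beta^{-1}\big(|D\ell|-\|Dg\|_{L^\infty(Q_R^+)}\big)\ge 8\|\omega\|_\infty K_\epsilon/\epsilon$; then every term of the defect carries a factor $|Du-D\ell|$, so after pairing with $D\varphi$ and Young everything is either absorbed into the left or controlled by $\tmint_{Q_{\rho,\lambda}^+(z_o)}|V_{|D\ell|}((u-\ell)/\rho)|^2\dz$. A minor further point: in the $g_t$ term you neither need nor (in the interior case) can directly invoke a Poincar\'e inequality for $u-\ell$; Young's inequality $|u-\ell||g_t|\le |(u-\ell)/\rho|^p+\rho^{p'}|g_t|^{p'}$ followed by \eqref{morrey-est}, $|D\ell|\ge1$ and $\lambda\le 2^7|D\ell|$ suffices.
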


\begin{remark}\upshape
Later on, we will apply the Caccioppoli inequality in two different situations, namely in the interior situation where $Q_{\rho,\lambda}(z_o)\subset Q_R^+$ and in the half space situation where $z_o\in \Gamma_R$. In the latter case we choose $\ell$ of the form $\zeta x_n$ with $\zeta\in\R^N$. Then, $\ell\equiv 0$ on $\Gamma$ and $D\ell=\zeta\otimes e_n$.
\hfill$\Box$
\end{remark}

\begin{proof}
For convenience in notation we abbreviate $B\equiv B_{\rho}^+(x_o)$ and $Q\equiv Q_{\rho,\lambda}^+(z_o)$.
In the following  we shall proceed formally concerning the use of the time derivative $\partial_t u$. The arguments can be made rigorous by use of a smoothing procedure in time, as for instance via Steklov averages.  Next, we choose a cut-off function $\eta\in C_0^1(B_{\rho}(x_o),[0,1])$ in space and $\zeta\in C^{1}(\R,[0,1])$ in time, such that $\eta\equiv 1$ on $B_{\rho/2}(x_o)$ and $|D\eta|\le 4/\rho$ and
$\zeta\equiv 0$ on $(-\infty,t_o-\lambda^{2-p}\rho^2)$, $\zeta\equiv 1$ on 
$(t_o-\lambda^{2-p}(\rho/2)^2, \infty)$ and $0\le\zeta_t\le 2\lambda^{p-2}/\rho^2$.
Moreover, for $\theta>0$ we define $\chi_\theta\in W^{1,\infty}(\R)$ by
\begin{equation*}
	\chi_\theta(t)
	:=
	\begin{cases}
		1 & 
		\mbox{if $t\in (-\infty,t_o+\lambda^{2-p}\rho^2-\theta]$}\\[3pt]
		\tfrac{1}{\theta}(t_o+\lambda^{2-p}\rho^2-t) & 
		\mbox{if $t\in (t_o+\lambda^{2-p}\rho^2-\theta,t_o+\lambda^{2-p}\rho^2)$}\\[3pt]
		0 & 
		\mbox{if $t\in [t_o+\lambda^{2-p}\rho^2,\infty).$}
	\end{cases}
\end{equation*}
and choose in the weak formulation \eqref{system-lat-weak} of the parabolic system the testing-function
$$\varphi_\theta(x,t):=\chi_\theta(t) \zeta(t)^2\eta(x)^p \big(u(x,t)-\ell(x)\big).$$
We thus obtain
\begin{align*}
    & \int_{Q} 
    \langle b(Du\,\Psi) - b(D\ell\,\Psi),D\varphi_\theta\rangle\dz \\
    &\phantom{m}=
    \int_{Q}
    u\cdot\partial_t\varphi_\theta  +
    \big\langle b(Du\,\Psi) - b(D\ell\,\Psi) - a\big((Du+Dg)\Psi\big),D\varphi_\theta\,\Psi\big\rangle -
    g_t\cdot\varphi_\theta\dz .
\end{align*}
Using assumption \eqref{cond-cac}$_2$ we have $0\le\zeta_t\le 2\lambda^{p-2}/\rho^2\le 2^{7p} |D\ell|^{p-2}/\rho^2$ and hence
\begin{align*}
    \int_{Q} u\cdot\partial_t & \varphi_\theta \dz
    =
    \int_{Q} (u-\ell)\cdot\partial_t \varphi_\theta \dz 
    =
    \frac12 \int_{Q} |u-\ell|^2 \eta^p \partial_t(\chi_\theta\zeta^2) \dz \\
    &=
    \frac12 \int_{Q} |u-\ell|^2 \eta^p\chi_\theta \partial_t \zeta^2 \dz -
    \frac{1}{2\theta} \int_{t_o+\lambda^{2-p}\rho^2-\theta}^{t_o+\lambda^{2-p}\rho^2} \int_{B_{\rho}^+(x_o)} |u-\ell|^2 \eta^p \zeta^2 \dx\dt \\
    &\le
    c(p)\,|D\ell|^{p-2}\int_{Q} \Big|\frac{u-\ell}{\rho}\Big|^2 \dz
    \le
    c(p) \mint_{Q}
	\Big|V_{|D\ell|}\Big(\frac{u-\ell}{\rho}\Big)
    	\Big|^2 \dz.
\end{align*}
Recalling the definition of the vector-field $b$ from \eqref{def_asymp} and Lemma \ref{lem:monotone} (i), i.e.
\begin{align*}
    \int_{Q} &
    \big\langle b(Du\,\Psi) - b(D\ell\,\Psi),(Du-D\ell)\Psi \big\rangle \eta^p\zeta^2\dz \\
    &\ge
    \frac{\K}{c(n,N,p)}
    \int_{Q}
    |V_{|D\ell\Psi|}(Du\Psi-D\ell\Psi)|^2 \eta^p\zeta^2\dz \\
    &\ge
    \frac{\K }{c(n,N,p)\|\Psi^{-1}\|_{L^\infty}^{p-2}}
    \int_{Q}
    |V_{|D\ell|}(Du-D\ell)|^2 \eta^p\zeta^2\dz
\end{align*}
we obtain in the limit $\theta\downarrow 0$ that
\begin{align}\label{split-cac}
    \mathbf L
    &:=
    \int_{Q} 
    |V_{|D\ell|}(Du-D\ell)|^2 \eta^p\zeta^2\dz \nn\\
    &\phantom{:}\le
    c(n,N,p,\K, \psi_\beta)\bigg[
    \int_{Q}
	\Big|V_{|D\ell|}\Big(\frac{u-\ell}{\rho}\Big)
    	\Big|^2 \dz +
    \mbox{I} + \mbox{II} + \mbox{III} + \mbox{IV}  + \mbox{V}\bigg],
\end{align}
where
\begin{align*}
    &\mbox{I}
    := 
    -\int_{Q}
    \big\langle b(Du\,\Psi) - b(D\ell\,\Psi),D\eta^p\otimes(u-\ell)\Psi\big\rangle\zeta^2\dz \\
    &\mbox{II}
    :=
    \int_{Q}
    \big\langle b(Du\,\Psi) - b(D\ell\,\Psi) -
    \big[b\big((Du+Dg)\Psi\big) - b\big((D\ell+Dg)\Psi\big)\big], D\varphi\,\Psi\big\rangle\dz \\
    &\mbox{III}
    :=
    \int_{Q}
    \big\langle b\big((Du+Dg)\Psi\big) - b\big((D\ell+Dg)\Psi\big) \\
    &\phantom{mmmmmmmmmmmmmm} - 
    \big[a\big((Du+Dg)\Psi\big) - a\big((D\ell+Dg)\Psi\big)\big],
    D\varphi\,\Psi\big\rangle\dz \\
    &\mbox{IV}
    :=
    \int_{Q}
    \big\langle a\big((D\ell+Dg)\Psi\big),D\varphi\,\Psi\big\rangle \dz \\
    &\mbox{V}
    :=
    -
    \int_{Q}
    g_t\cdot\varphi\dz
\end{align*}
and
$$\varphi(x,t):=\zeta(t)^2\eta(x)^p \big(u(x,t)-\ell(x)\big).$$
We now in turn estimate the terms I -- V.
Using the fact that $|D\eta|\le 4/\rho$ and applying Lemma \ref{lem:monotone} (ii)
we obtain 
\begin{align}\label{def-rest}
    |\,\mbox{I}\,|
    &\le
    c \int_{Q}
    \big(|D\ell|^2 + |Du-D\ell|^2\big)^{\frac{p-2}{2}} |Du-D\ell|\, 
    \Big|\frac{u-\ell}{\rho}\Big|\eta^{p-1}\zeta^2\dz 
    =:
    c\,\mathbf R,
\end{align}
where $c=c(n,N,p,\K,\psi_\beta)$.
Before we come to the estimate of II we first note that \eqref{cond-cac}$_1$ and the definition of $\boldsymbol G(\epsilon)$ from \eqref{def-Geps} imply that
\begin{equation*}
    |Dg(z)|
    \le
    \|Dg\|_{L^\infty(Q_R^+)}
    \le
    2^{-11}\epsilon\, \boldsymbol G(\epsilon)
    \le
    \epsilon|D\ell|\le |D\ell|
     \qquad \mbox{for a.e. $z\in {Q}$.}
\end{equation*}
Together with Lemma \ref{lem:diff-1} we therefore obtain
\begin{align*}
    \big|(Du\,\Psi) - b(D\ell & \,\Psi) -
    \big[b\big((Du+Dg)\Psi\big) - b\big((D\ell+Dg)\Psi\big)\big] \big|\\
    &\le
    c\, |Dg|\big(|D\ell|^2 + |Du-D\ell|^2 + |Dg|^2\big)^{\frac{p-3}{2}} |Du-D\ell| \\
    &\le
    c\,\epsilon|D\ell| \big(|D\ell|^2 + |Du-D\ell|^2\big)^{\frac{p-3}{2}} |Du-D\ell| \\
    &\le
    c(p,\K,\psi_\beta)\,\epsilon
    \big(|D\ell|^2 + |Du-D\ell|^2 \big)^{\frac{p-2}{2}} |Du-D\ell|.
\end{align*}
Inserting this estimate into II, recalling the definition of $\varphi$, using Young's inequality and the fact that $\epsilon\le 1$ we find
\begin{align*}
    \mbox{II}
    &\le
    c\,\epsilon\int_{Q}
    \big(|D\ell|^2 + |Du-D\ell|^2 \big)^{\frac{p-2}{2}} |Du-D\ell||D\varphi| \dz \\
    &\le
    c\,\epsilon\int_{Q}
    \big(|D\ell|^2 + |Du-D\ell|^2 \big)^{\frac{p-2}{2}} |Du-D\ell|
    \bigg[|Du-D\ell|\eta^p + \Big|\frac{u-\ell}{\rho}\Big| \eta^{p-1} \bigg]\zeta^2 \dz \\
    &\le 
    c\,\epsilon\,\mathbf L +
    c\,\mathbf R,
\end{align*}
where $c=c(p,\K,\psi_\beta)$, $\mathbf L$ is defined in \eqref{split-cac} and $\mathbf R$ in \eqref{def-rest}.
For the estimate of III we first apply Lemma \ref{lem:asymp} with $(A,\xi,\delta)$ replaced by $((D\ell+Dg(z))\Psi(z),(Du(z)+Dg(z))\Psi(z),0)$ for any $z\in Q$. Note that hypothesis \eqref{cond-asymp} is satisfied due to the assumption \eqref{cond-cac}$_1$ and the following computation
\begin{align*}
	|(D\ell+Dg(z))\Psi(z)|
	&\ge
	\psi_\beta^{-1} |D\ell+Dg(z)|
	\ge
	\psi_\beta^{-1} (|D\ell| - |Dg(z)|) \\
	&\ge
	\psi_\beta^{-1} \big(2^{-8}\boldsymbol G(\epsilon) - \|Dg\|_{L^\infty(Q_R^+)}\big) \\
	&=
	\psi_\beta^{-1} \bigg[
	\frac{8\,\psi_\beta}{\epsilon}\Big[\|\omega\|_\infty K_{\epsilon} +
    \|Dg\|_{L^\infty(Q_R^+)}\Big]- \|Dg\|_{L^\infty(Q_R^+)}\bigg] \\
	&\ge
	\frac{8\|\omega\|_\infty K_\epsilon}{\epsilon}\, .
\end{align*}
In this way we obtain
\begin{align*}
    \mbox{III}
    &\le
    c(p)\,\epsilon \int_{Q}
    |(Du-D\ell)\Psi|\big(1+|(D\ell+Dg)\Psi|^2 + |(Du-D\ell)\Psi|^2\big)^{\frac{p-2}{2}} 
    |D\varphi\,\Psi| \dz .
\end{align*}
Subsequently we use that $|D\ell|\ge 2^{-8}\boldsymbol G(\epsilon)\ge 1$ which is a consequence of hypothesis \eqref{cond-cac}$_1$ and then proceed as in the estimate of II. This leads us to 
\begin{align*}
    \mbox{III}
    \le
    c\,\epsilon \int_{Q}
    |Du-D\ell|\big(|D\ell|^2 + |Du-D\ell|^2\big)^{\frac{p-2}{2}} 
    |D\varphi| \dz 
    \le
    c\,\epsilon\,\mathbf L +
    c\,\mathbf R,
\end{align*}
where $c=c(p,\psi_\beta,\mathcal G_\beta)$.
At this point we estimate the remainder $\mathbf R$ in I -- III. With the help of Young's inequality we get for any $\delta>0$ that
\begin{align*}
    \mathbf R
    &\le
    \delta\,\mathbf L +
    \frac{1}{\delta}\int_{Q} 
    \big(|D\ell|^2 + |Du-D\ell|^2\big)^{\frac{p-2}{2}}  
    \Big|\frac{u-\ell}{\rho}\Big|^2\eta^{p-2}\zeta^2 \dz \\
    &\le
    \delta\,\mathbf L +
    \delta \int_{Q} |Du-D\ell|^{p} \eta^{p}\zeta^2 \dz +
    \frac{c(p)}{\delta^{2}}\int_{Q} \bigg[
    |D\ell|^{p-2} \Big|\frac{u-\ell}{\rho}\Big|^2 +
    \Big|\frac{u-\ell}{\rho}\Big|^p \bigg]\dz   \\  
    &\le
    2\delta\,\mathbf L +
    \frac{c(p)}{\delta^{2}}\int_{Q}
    \Big|V_{|D\ell|}\Big(\frac{u-\ell}{\rho}\Big)\Big|^2\dz .
\end{align*}
For the estimate of IV we first subtract the term 
\begin{align*}
    \int_{Q}
    \big\langle a\big((D\ell+Dg(x_o,t))\Psi(x_o)\big),D\varphi\,\Psi(x_o)\big\rangle
    \dz 
    \equiv
    0
\end{align*}
and then
apply Lemma \ref{lem:diff-2} with $\xi,\xi_o=D\ell$ and $G=Dg(\cdot,t)$ slice wise on $B\times\{t\}$ for a.e. $t\in\Lambda_{\rho,\lambda}(t_o)$. Subsequently we use the fact that 
$|D\ell|\ge 2^{-8}\boldsymbol G(\epsilon)\ge 1$ which is a consequence of assumption \eqref{cond-cac}$_1$ and Young's inequality to infer that
\begin{align*}
    \mbox{IV}
    &=
    \int_{Q}
    \big\langle a\big((D\ell+Dg)\Psi\big),D\varphi\,\Psi\big\rangle -
    \big\langle a\big((D\ell+Dg(x_o,t))\Psi(x_o)\big),D\varphi\,\Psi(x_o)\big\rangle
    \dz \\
    &\le
    c\,\rho^\beta (1+|D\ell|)^{p-1}
    \int_{Q}|D\varphi|\dz \\
    &\le
    c\,\rho^\beta |D\ell|^{p-1}
    \int_{Q}|Du-D\ell|\eta^p\zeta^2 + 
    \Big|\frac{u-\ell}{\rho}\Big| \eta^{p-1} \zeta^2 \dz \\
    &\le
    \delta \int_{Q}|D\ell|^{p-2}|Du-D\ell|^2\eta^p\zeta^2 \dz +
    \int_{Q}|D\ell|^{p-2}\Big|\frac{u-\ell}{\rho}\Big|^2 \dz +
    c\,\delta^{-1}\rho^{2\beta} |D\ell|^p |{Q}|\\
    &\le
    \delta\,\mathbf L + 
    \int_{Q}
    \Big|V_{|D\ell|}\Big(\frac{u-\ell}{\rho}\Big)\Big|^2\dz +
    c\,\delta^{-1}\rho^{2\beta} |D\ell|^{p}|{Q}|,
\end{align*}
where $c=c(p,\K, \psi_\beta,a_o,\|\omega\|_\infty,\mathcal G_\beta)$.
Finally, we estimate V with Young's inequality:
\begin{align*}
    \mbox{V}
    \le
    \int_{Q} |u-\ell| |g_t| \dz
    \le
    \int_{Q} \Big|\frac{u-\ell}{\rho}\Big|^p \dz +
    \rho^{p'}\int_{Q} |g_t|^{p'} \dz .
\end{align*}
Using the fact that $Q\equiv Q_{\rho,\lambda}^+(z_o)\subset Q_\rho(z_o)\cap Q_R^+$ (recall that $\lambda\ge 1$) we obtain for the second term on the right-hand side
\begin{align}\label{morrey-est}
    \rho^{p'}\int_{Q} |g_t|^{p'} \dz 
    &\le
    \rho^{p'}\int_{Q_\rho(z_o)\cap Q_R^+} |g_t|^{p'} \dz 
    \le
    \rho^{n+2+\beta p'} \|g_t\|^{p'}_{L^{p',(1-\beta)p'}(Q_R^+)} \nn\\
    &\le
    c(n)\,\mathcal G_\beta \rho^{\beta p'} \lambda^{p-2} |Q| 
    \le
    c(n)\,\mathcal G_\beta \rho^{\beta} |D\ell|^p |Q|,
\end{align}
where in the last line we also used $\lambda\ge 1$, $\rho\le 1$ and assumption \eqref{cond-cac}$_2$. This leads us to
\begin{align*}
    \mbox{V}
    \le
    \int_{Q} \Big|V_{|D\ell|}\Big(\frac{u-\ell}{\rho}\Big)\Big|^2 \dz +
    c(n)\,\mathcal G_\beta \rho^{\beta} |D\ell|^p |Q|,
\end{align*}
Joining the preceding estimates for I -- V with \eqref{split-cac} we arrive at
\begin{align*}
    \mathbf L
    &\le
    c_1(\delta + \epsilon)
    \mathbf L +
    \frac{c_2}{\delta^{2}}\int_{Q}
    \Big|V_{|D\ell|}\Big(\frac{u-\ell}{\rho}\Big)\Big|^2 +
    \frac{c_2}{\delta}\,\rho^{\beta} |D\ell|^{p} |Q| ,
\end{align*}
where $c_1=c_1(n,N,p,\K,\psi_\beta)$ and $c_2=c_2(n,N,p,\K,\psi_\beta,a_o,\|\omega\|_\infty,\mathcal G_\beta)$. Choosing $\delta=\epsilon=1/(4c_1)$ we can absorb the first integral of the right-hand side into the left. Note that this amounts in a dependence of $\epsilon$ on $n,N,p,\K,\psi_\beta$.
Finally, taking into account that $\eta^p\zeta^2\equiv 1$ on $Q_{\rho/2,\lambda}^+(z_o)$ and taking mean values we obtain the desired Caccioppoli inequality.
\end{proof}

In the next lemma we provide a Poincar\'e type inequality on interior cylinders for solutions to the parabolic system \eqref{system-lat}. The strategy is to apply the usual Poincar\'e inequality on time-slices $B\times\{t\}$ with respect to the spacial variable $x$. For the time-direction this is not allowed since we do not know that the time-derivative exists in a certain Sobolev-space. Therefore, we shall utilize the parabolic system which provides some regularity in time. More precisely, we can show that the weighted means in space are absolutely continuous with respect to time. This will be enough to prove the Poincar\'e inequality. 

\begin{lemma}\label{lem:poin-inter}
Let $R\in(0,1]$ and $\epsilon\in(0,1]$ and suppose that the assumptions of Proposition \ref{prop:main-lip} are in force.
Then, for any $s\in[1,p]$, any parabolic cylinder $Q_{\rho,\lambda}(z_o)\subset Q_R^+$ with $\rho\in(0,1]$, $\lambda\ge 1$ and any $A\in\R^{Nn}$ satisfying
\begin{equation}\label{cond-poin}
    |A|
    \le
    2\lambda
\end{equation}
there holds
\begin{align*}
    & \mint_{Q_{\rho,\lambda}(z_o)} 
    |u - (u)_{z_o;\rho,\lambda} - A(x-x_o)|^s\dz \nonumber\\
    &\phantom{mm}\le
    c\,\rho^s 
    \mint_{Q_{\rho,\lambda}(z_o)} |Du-A|^s dz +
    c\,\rho^s \bigg[
    \lambda^{2-p} 
    \mint_{Q_{\rho,\lambda}(z_o)} |Du-A|^{p-1} dz +
    \rho^{\beta} \lambda\bigg]^s
\end{align*}
for a constant 
$c=c(n,N,p, \K, \psi_\beta, a_o, \|\omega\|_\infty, \mathcal G_\beta)$.
\end{lemma}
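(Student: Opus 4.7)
The plan is to combine a slicewise spatial Poincar\'e inequality with a time-oscillation bound derived from the PDE. Since $(u)_{z_o;\rho,\lambda}$ is the $L^s$-best constant to subtract from $u-A(x-x_o)$, up to a factor $2^{s-1}$ we may replace it by any convenient constant. I pick a radially symmetric cut-off $\eta\in C_0^\infty(B_\rho(x_o),[0,1])$ with $\eta\equiv 1$ on $B_{\rho/2}(x_o)$, $|D\eta|\le c/\rho$, and $\int\eta\dx\approx\rho^n$, so that by symmetry $\int\eta(x)(x-x_o)\dx=0$. The weighted spatial mean
\[
   \bar u_\eta(t)
   :=
   \frac{1}{\int\eta\dx}\int_{B_\rho(x_o)}\eta(x)\big(u(x,t)-A(x-x_o)\big)\dx
\]
together with its time-mean $\tilde u:=\frac{1}{|\Lambda_{\rho,\lambda}(t_o)|}\int_{\Lambda_{\rho,\lambda}(t_o)}\bar u_\eta(t)\dt$ reduces the proof, via the triangle inequality, to bounding the two integrals
\[
   \int_Q|u-\bar u_\eta(t)-A(x-x_o)|^s\dz
   \quad\mbox{and}\quad
   \int_Q|\bar u_\eta(t)-\tilde u|^s\dz,
\]
where $Q:=Q_{\rho,\lambda}(z_o)$. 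The first integral is controlled by $c\rho^s\int_Q|Du-A|^s\dz$ via the weighted Poincar\'e inequality applied slicewise in $x$.

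For the second integral I derive an oscillation bound for $\bar u_\eta$ in time from the PDE. Testing \eqref{system-lat-weak} against $\zeta(t)\eta(x)$ (after Steklov regularization in $t$) yields
\[
   \tfrac{d}{dt}\bar u_\eta(t)
   =
   -\frac{1}{\int\eta\dx}\int_{B_\rho(x_o)}\big\langle a((Du+Dg)\Psi),D\eta\,\Psi^t\big\rangle\dx
   +\frac{1}{\int\eta\dx}\int_{B_\rho(x_o)}g_t\,\eta\dx
\]
for a.e.\ $t\in\Lambda_{\rho,\lambda}(t_o)$. The crucial observation is that the frozen integrand $\langle a((A+Dg(x_o,t))\Psi(x_o)),D\eta\,\Psi(x_o)^t\rangle$ has zero $x$-integral since $\int D\eta\dx=0$, so it may be subtracted for free. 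Applying Lemma \ref{lem:diff-2} to the resulting difference (with $\xi=Du$, $\xi_o=A$, $G=Dg(\cdot,t)$, and test direction $D\eta$) and using $|A|\le 2\lambda$, $|Dg|\le c$, and $\lambda\ge 1$ produces the pointwise bound
\[
   \big|\langle a((Du+Dg)\Psi),D\eta\,\Psi^t\rangle - \langle a((A+Dg(x_o,t))\Psi(x_o)),D\eta\,\Psi(x_o)^t\rangle\big|
   \le
   c\,\big[\lambda^{p-2}|Du-A|+|Du-A|^{p-1}+\rho^\beta\lambda^{p-1}\big]|D\eta|.
\]

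Integrating this pointwise bound over $Q$ and exploiting $|D\eta|\le c/\rho$, $\int\eta\dx\approx\rho^n$, and $|Q|\approx\lambda^{2-p}\rho^{n+2}$ splits $|\bar u_\eta(t_1)-\bar u_\eta(t_2)|$ into three pieces: the term $c\rho\,\mint_Q|Du-A|\dz$ from $\lambda^{p-2}|Du-A|$ (handled directly, \emph{without} Young's inequality, so as to preserve the linear factor $|Du-A|$); the term $c\rho\lambda^{2-p}\mint_Q|Du-A|^{p-1}\dz$ from $|Du-A|^{p-1}$; and the term $c\rho^{1+\beta}\lambda$ from $\rho^\beta\lambda^{p-1}$. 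The $g_t$ contribution is estimated by H\"older's inequality together with the Morrey hypothesis $g_t\in L^{p',(1-\beta)p'}$ exactly as in the computation \eqref{morrey-est}, giving a further term of order $\rho^{1+\beta}\lambda^{(2-p)/p}\le\rho^{1+\beta}\lambda$. Passing to the supremum over $t\in\Lambda_{\rho,\lambda}(t_o)$, using Jensen's inequality $\mint_Q|Du-A|\dz\le(\mint_Q|Du-A|^s\dz)^{1/s}$, and raising everything to the $s$-th power delivers the asserted estimate.

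The main technical point is the rigorous justification of the time-derivative identity for $\bar u_\eta(t)$: since $\partial_t u$ has no a priori pointwise meaning, the identity must first be established for Steklov averages of $u$ and $g$, after which one passes to the limit using the $L^p$-bound on $Du$ and the continuity $u\in C^0([0,T];L^2)$. The second delicate step is the frozen-coefficient decomposition of the flux integral around $a((A+Dg(x_o,t))\Psi(x_o))$: without this symmetrization one would obtain a $\lambda^{p-1}$ contribution rather than $\rho^\beta\lambda^{p-1}$, losing the sharp $\rho^\beta\lambda$ summand inside the $s$-th power of the estimate.
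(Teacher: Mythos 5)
Your proposal is correct and follows essentially the same route as the paper: weighted spatial means of $u$, a slicewise Poincar\'e inequality for the spatial part, and a time-oscillation bound obtained by testing the (Steklov-regularized) weak formulation with $\eta e_i$, subtracting the frozen flux $a\big((A+Dg(x_o,t))\Psi(x_o)\big)$ (admissible since $\int D\eta\,dx=0$), applying Lemma \ref{lem:diff-2}, and handling $g_t$ via the Morrey estimate \eqref{morrey-est}. The only differences (symmetric cutoff absorbing $A(x-x_o)$ into the weighted mean, a two-term instead of three-term splitting) are cosmetic.
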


\begin{proof}
In the following, we abbreviate $B:=B_\rho(x_o)$, $\Lambda:=\Lambda_{\rho,\lambda}(t_o)$ and $Q:=Q_{\rho,\lambda}(z_o)$, where $z_o=(x_o,t_o)$, so that $Q=B\times\Lambda$. For the construction of the weighted means we fix a nonnegative weight-function $\eta \in C_0^{\infty}(B)$ satisfying
\begin{equation*}
  \eta\ge 0, \qquad
  \mint_{B} \eta \dx=1\ \ \text{ and }\quad
  \|\eta\|_{\infty} + \rho\|D\eta\|_{\infty}\le c_{\eta} .
\end{equation*}
Note that $c_{\eta}$ depends on $n$ only.
Then, for a.e. $t\in\Lambda$ the weighted means of $u(\cdot,t)$ on $B$ are defined by
\begin{equation*}
  	(u)_{\eta}(t) 
	:=
	\mint_{B} u(\cdot,t)\, \eta \dx .
\end{equation*}	
Now, we decompose
\begin{align}\label{poin-ineq-1}
    \mint_{Q} & |u - (u)_{Q} - A(x-x_o)|^s\dz \\
	&\le
	3^{s-1}\bigg[ \mint_\Lambda \mint_{B} |u(x,t) - (u)_\eta(t) - A(x-x_o)|^s \dx\dt
    \nonumber\\
    &\phantom{\le3^{s-1}\bigg[\ }+
    \mint_\Lambda\mint_\Lambda
	|(u)_{\eta}(t) - (u)_{\eta}(\tau)|^s \dt\dtau +
    \bigg| \mint_\Lambda (u)_{\eta}(\tau)\dtau - (u)_{Q} \bigg|^s\bigg] \nn\\
    &=:
    3^{s-1}\big[\mbox{I} + \mbox{II} + \mbox{III}\big] , \nn
\end{align}
with the obvious meaning of I -- III.
To estimate I, we apply Poincar\'e's inequality slicewise with respect to $x$ for a.e. $t\in\Lambda$ to $u(\cdot,t)-(u)_\eta(t)$, which yields
\begin{equation*}
  \mbox{I}
  \le
  c(n,s)\,\rho^s \mint_{Q} |Du-A|^s\dz.
\end{equation*}
The estimate  for III is similar, since $\mbox{III}\le \mbox{I}$. It therefore remains to estimate II.
Here we use the fact that $u$ is a solution of the parabolic system \eqref{system-lat}. To be more precise, we start with its Steklov-formulation
\begin{equation*}
    \int_{B_R^+} \partial_t u_h(\cdot,t)\cdot\varphi - 
    \big\langle \big[a\big((Du+Dg)\Psi\big)\big]_h(\cdot,t),D\varphi\, \Psi\big\rangle
    \dx
    =
    \int_{B_R^+} [g_t]_h(\cdot,t)\cdot \varphi \dx
\end{equation*}
for any $\varphi\in C_0^\infty(B_R^+,\R^N)$, $0<|h|\le R^2$ and 
for a.e. $t\in\Lambda_R$. 
Thereby, the Steklov-mean $[f]_h$ of a function $f \in L^1(Q_R^+)$ is defined for $0<|h|\le R^2$ by
\begin{equation*}
	[f]_h(x,t) \equiv
	\left\{
	\begin{array}{cl}
		\displaystyle{\frac{1}{|h|} \int_t^{t+h} f(x,s) \,ds ,}
		& t\in [-R^2+|h|,R^2-|h|] , \\[7pt]
		0 ,
		& t\in (-R^2,-R^2+|h|)\cup (R^2-|h|,R^2) 
\,.
	\end{array}
	\right.
\end{equation*}
Now, for a.e. $t\in\Lambda\subset\Lambda_R$ and $i\in\{1,\dots,N\}$ we choose the test-function
$\varphi\colon\R^{n+1}\to\R^N$ with $\varphi(x,t)=\eta(x) e_i$ where $e_1,\dots,e_N$ denotes the standard basis in $\R^N$.
Integrating the result with respect to $t$ over $(t_1,t_2)\subset\Lambda$ yields
\begin{align*}
     & \big|\big([u_i]_h\big)_{\eta} (t_2) - \big([u_i]_h\big)_{\eta} (t_1)\big|
    =
    \bigg|\int_{t_1}^{t_2} \partial_t \big([u_i]_h\big)_{\eta} \dt \bigg| 
    =
    \bigg|\int_{t_1}^{t_2} \mint_{B} \partial_t [u_i]_h\cdot \eta \dx\dt \bigg| \\
    &\phantom{mmmmmmmmmm}=
    \bigg|\int_{t_1}^{t_2} \mint_{B} 
    \big\langle \big[a\big((Du+Dg)\Psi\big)\big]_h,D\eta\otimes e_i \Psi\big\rangle +
    \eta[\partial_t g_i]_h  \dx\dt\bigg| .
\end{align*}
Passing to the limit $h\downarrow 0$ and 
using that $(t_1,t_2)\subset\Lambda$ and $|\Lambda|=2\lambda^{2-p} \rho^2$
we find 
\begin{align*}
    |(u_i)_{\eta} (t_2) - (u_i)_{\eta} (t_1)| 
    &\phantom{i}\le
    c\,\lambda^{2-p} \rho^2 \bigg[
    \mint_{Q} |g_t| \dz +
    \bigg|\mint_{Q}\big\langle a\big((Du{+}Dg)\Psi\big), D\eta\otimes e_i \Psi \big\rangle
	\dz\bigg| \bigg]\\
    &\phantom{i} =
    c\,\lambda^{2-p} \rho^2 \big[\mbox{II}_1 +\mbox{II}_2 \big]
\end{align*}
with the obvious meaning of II$_1$ and II$_2$.
For the estimate of II$_1$ we use $\|\eta\|_\infty\le c_\eta$, H\"older's inequality and the second last estimate in \eqref{morrey-est} to infer that
\begin{align*}
    \mbox{II}_1
    \le
    c \bigg(\mint_{Q} |g_t|^{p'} \dz \bigg)^{\frac{1}{p'}}
    \le
    c\,\rho^{\beta-1}\lambda^{p-2},
\end{align*}
where $c=c(n,\mathcal G_\beta)$.
Next, we use that $a((A+Dg(x_o,t))\Psi(x_o))$ is constant with respect to $x$ and apply Lemma \ref{lem:diff-2} with $(G,\xi,\xi_o,\zeta)$ replaced by $(Dg(\cdot,t),Du,A,D\eta\otimes e_i)$. Subsequently recalling that 
$\lambda\ge 1$, $|A|\le 2\lambda$ and $\|D\eta\|_\infty\le c_\eta/ \rho$ we obtain  
\begin{align*}
	\mbox{II}_2
	&\le 
	\mint_{Q}
	\big|\big\langle a\big((Du{+}Dg)\Psi\big), D\eta\otimes e_i \Psi \big\rangle -
    \big\langle a\big((A{+}Dg(x_o,t))\Psi(x_o)\big), D\eta\otimes e_i \Psi(x_o)\big\rangle 	\big| \dz \\ 
	&\le
	c\mint_{Q}
   \Big[ \big(1+|A|^2 + |Du-A|^2\big)^{\frac{p-2}{2}} |Du-A| +
   \rho^\beta(1+|A|)^{p-1}\Big] |D\eta| \dz \\
	&\le 
	c\,\rho^{-1}\lambda^{p-2} \mint_{Q} |Du-A| \dz +
   c\,\rho^{-1} \mint_{Q}|Du-A|^{p-1} \dz +
   c\,\rho^{\beta-1} \lambda^{p-1} ,
\end{align*}
with a constant $c=c(n,p, \K, \psi_\beta, a_o, \|\omega\|_\infty, \mathcal G_\beta)$.
Inserting the preceding estimates for II$_1$ and II$_2$ above, summing over $i=1,\dots,N$ and using again that $\lambda\ge 1$ we obtain
\begin{align*}
   |(u)_{\eta} (t_2) - (u)_{\eta} (t_1)| \le
   c\,\rho \mint_{Q} |Du-A| \dz + 
   c\,\lambda^{2-p} \rho \mint_{Q}
   |Du-A|^{p-1} \dz +
	c\,\rho^{1+\beta} \lambda
\end{align*}
for a.e. $t_1,t_2\in\Lambda$, and this together with H\"older's inequality leads us to
\begin{align*}
    \mbox{II}
    &\le
    \sup_{t_1,t_2\in\Lambda}
    |(u)_{\eta} (t_2) - (u)_{\eta} (t_1)|^s \\
    &\le
    c\,\rho^s \mint_{Q} |Du-A|^s \dz + 
    c\,\rho^s \bigg(\lambda^{2-p} \mint_{Q} |Du-A|^{p-1} \dz\bigg)^s 	+
    c\,\rho^{s(1+\beta)} \lambda^s
\end{align*}
for a constant 
$c=c(n,N,p, \K, \psi_\beta, a_o, \|\omega\|_\infty, \mathcal G_\beta)$.
Combining the  estimates for I -- III with
(\ref{poin-ineq-1}), we obtain the desired Poincar\'e type inequality.
\end{proof}

When considering the boundary situation 
a Poincar\'e inequality for general maps $u\in L^p(\Lambda_{\rho,\lambda}(t_o);W^{1,p}(B_\rho(x_o)^+,\R^N))$ satisfying
$u\equiv0$ on the lateral boundary $\Gamma_{\rho,\lambda}(z_o)$ holds. This inequality can be obtained applying 
\cite[Corollary 4.5.3]{Ziemer} for a.e. $t\in\Lambda_{\rho,\lambda}(t_o)$
and then integrating with respect to $t$. Precisely, we have the following

\begin{lemma}\label{lem:poin-boundary}
Let $s>1$ and $Q_{\rho,\lambda}(z_o)\subset\R^{n+1}$ be a parabolic cylinder with $0\le (x_o)_n<\rho/2$.
Then, for any map
$u\in L^p(\Lambda_{\rho,\lambda}(t_o);W^{1,p}(B_\rho(x_o)^+,\R^N))$ satisfying $u\equiv 0$ on $\Gamma_{\rho,\lambda}(z_o)$
there holds
\begin{equation*}
	\mint_{Q_{\rho,\lambda}^+(z_o)} |u|^s \dz
	\le
	c(n,N,s)\, \rho^s \mint_{Q_{\rho,\lambda}^+(z_o)} |D u|^s \dz .
\end{equation*}
\end{lemma}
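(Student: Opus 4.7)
The plan is to slice the cylinder in time and reduce everything to an elliptic Poincaré inequality on the half-ball $B_\rho^+(x_o)$ for functions with vanishing trace on a portion of the flat boundary. For almost every $t\in\Lambda_{\rho,\lambda}(t_o)$, the slice $u(\cdot,t)$ lies in $W^{1,p}(B_\rho^+(x_o),\R^N)$, and since $u\equiv 0$ on $\Gamma_{\rho,\lambda}(z_o)=Q_{\rho,\lambda}(z_o)\cap\Gamma$ by hypothesis and the slicing is compatible with the product structure of the cylinder, the trace of $u(\cdot,t)$ on the flat part $D_\rho(x_o):=B_\rho(x_o)\cap\{x_n=0\}$ vanishes for a.e.\ such $t$.

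The hypothesis $(x_o)_n<\rho/2$ is exactly what guarantees that $D_\rho(x_o)$ occupies a definite, scale-invariant fraction of the boundary of $B_\rho^+(x_o)$; in particular, $\mathcal H^{n-1}(D_\rho(x_o))\ge c(n)\rho^{n-1}$. Consequently, the elliptic Poincaré inequality of Ziemer (Corollary 4.5.3) applied component-wise to $u(\cdot,t)$ yields
\begin{equation*}
    \int_{B_\rho^+(x_o)} |u(\cdot,t)|^s \dx
    \le
    c(n,N,s)\,\rho^s
    \int_{B_\rho^+(x_o)} |Du(\cdot,t)|^s \dx
\end{equation*}
with a constant $c$ that depends only on $n,N,s$ and not on the precise position of $x_o$ in the slab $0\le (x_o)_n<\rho/2$. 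The scale invariance of both sides under dilation by $\rho$ reduces the statement to the unit scale, and the uniformity of the constant is the only quantitative point that needs care.

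Integrating the above slice-wise inequality with respect to $t$ over $\Lambda_{\rho,\lambda}(t_o)$ and dividing by $|Q_{\rho,\lambda}^+(z_o)|=|B_\rho^+(x_o)|\cdot|\Lambda_{\rho,\lambda}(t_o)|$ turns the integrals on both sides into mean values and produces the claimed inequality. Note that $\lambda$ enters only through the length of the time interval and cancels, since the inequality is purely ``spatial'' on each time slice; this is what allows the assertion to hold uniformly for all $\lambda>0$.

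The main (and essentially only) obstacle is verifying that the Poincaré constant in Ziemer's inequality remains under control as $(x_o)_n$ ranges over $[0,\rho/2)$: at $(x_o)_n=0$ the half-ball has its natural flat face on $\Gamma$, while for $(x_o)_n>0$ part of $B_\rho^+(x_o)$ lies strictly in the upper half-space and $D_\rho(x_o)$ shrinks. The bound $(x_o)_n<\rho/2$ keeps $\mathcal H^{n-1}(D_\rho(x_o))/\rho^{n-1}$ bounded below by a dimensional constant, which is precisely the capacity-type condition required by Ziemer's result to yield a Poincaré constant depending only on $n,N,s$.
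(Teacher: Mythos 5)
Your proposal is correct and follows essentially the same route as the paper: the paper obtains the inequality precisely by applying \cite[Corollary 4.5.3]{Ziemer} slice-wise for a.e.\ $t\in\Lambda_{\rho,\lambda}(t_o)$ and then integrating in time, which is what you do. Your additional observation that $(x_o)_n<\rho/2$ keeps $\mathcal H^{n-1}(B_\rho(x_o)\cap\{x_n=0\})\ge c(n)\rho^{n-1}$, so the Poincar\'e constant is uniform over the position of $x_o$, is the right quantitative point and is consistent with the paper's stated dependence $c=c(n,N,s)$.
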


\subsubsection{Approximate $\mathcal A$-caloricity}
Given a parabolic cylinder $Q_{\rho,\lambda}(z_o)\subset Q_R$ with $\rho,\lambda>0$ and $(x_o)_n\ge 0$ and $A\in\R^{Nn}$ we define the
\textit{excess functional} by
\begin{equation}\label{def-excess}
    \Phi_\lambda(z_o,\rho,A)
    =
    \mint_{Q_{\rho,\lambda}^+(z_o)} |V_{|A|}(Du-A)|^2 \dz.
\end{equation}
In the next lemma we prove that the solution $u$ approximately satisfies a linear system, provided the excess is small. Later on, this will be the starting point to prove excess-decay estimates for the non-degenerate regime.

\begin{lemma}\label{lem:approx-a-cal}
Let $R\in(0,1]$ and $\epsilon_1,\epsilon_2\in(0,1]$ and suppose that the assumptions of Proposition \ref{prop:main-lip} are in force.
Then, for any cylinder $Q_{\rho,\lambda}(z_o)\subset Q_R$, with $(x_o)_n\ge 0$, $\lambda\ge \boldsymbol G(\epsilon_1)$
and any $A\in\R^{Nn}$ satisfying
\begin{equation}\label{cond-a-cal-2}
    |A|
    \ge
    2^{-7}\lambda,
\end{equation}
and
\begin{equation}\label{cond-a-cal-1}
    \Phi_\lambda(z_o,\rho,A)
    \le
    \epsilon_2^p\,|A|^p\,,
\end{equation}
there holds
\begin{align*}
    \bigg|\mint_{Q_{\rho,\lambda}^+(z_o)}  & 
    u\cdot\partial_t \varphi - 
    Db(A) (Du - A,D\varphi)\dz
    \bigg| \\
    &\le
    c\, |A|^{\frac{p-2}{2}} 
    \Big[\big(\epsilon_1+\sqrt{\epsilon_2}+R^\beta\big) \sqrt{\Phi_\lambda(z_o,\rho,A)} +
    \rho^\beta |A|^{\frac{p}{2}}\Big]
    \sup_{Q_{\rho,\lambda}^+(z_o)} |D\varphi|,
\end{align*}
for all $\varphi\in C_0^1(Q_{\rho,\lambda}^+(z_o) ,\R^N)$ and with a constant $c=c(n,p,\K,\psi_\beta,a_o, \|\omega\|_\infty,\mathcal G_\beta)$.
\end{lemma}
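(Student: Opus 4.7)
The plan is to start from the weak formulation \eqref{system-lat-weak}, employ a ``freezing at the center'' trick together with $\int_B D\varphi\dx=0$ to remove $x$-independent terms, and then decompose the integrand so that the asymptotic closeness of $a$ to $b$ (via Lemma \ref{lem:asymp}) and a first-order Taylor expansion of $b$ produce the desired cancellation with $Db(A)(Du-A,D\varphi)$ up to controllable errors.

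First I would test \eqref{system-lat-weak} with $\varphi\in C_0^1(Q_{\rho,\lambda}^+(z_o),\R^N)$ and rewrite
\begin{equation*}
    \mint_{Q_{\rho,\lambda}^+(z_o)} u\cdot\varphi_t - Db(A)(Du-A,D\varphi)\dz
    =\mathrm{I}_a+\mathrm{I}_g ,
\end{equation*}
where $\mathrm{I}_a$ is the mean-value integral of $\langle a((Du+Dg)\Psi),D\varphi\,\Psi\rangle-Db(A)(Du-A,D\varphi)$ over $Q_{\rho,\lambda}^+(z_o)$ and $\mathrm{I}_g=\mint g_t\cdot\varphi\dz$. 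The forcing term $\mathrm{I}_g$ can be bounded by $c\,\rho^\beta|A|^{p-1}\sup|D\varphi|$ using $|\varphi|\le\rho\sup|D\varphi|$, H\"older's inequality and the Morrey computation \eqref{morrey-est} for $g_t$ (noting $\lambda^{(p-2)/p'}\le\lambda^{p-1}\le c|A|^{p-1}$); since $|A|^{p-1}=|A|^{(p-2)/2}|A|^{p/2}$, this fits the $\rho^\beta|A|^{p/2}$-piece of the claim. The crucial observation for $\mathrm{I}_a$ is that $\langle a((A+Dg(x_o,t))\Psi(x_o)),D\varphi\,\Psi(x_o)\rangle$ depends on $x$ only through $D\varphi$, so subtracting it from the integrand leaves $\mathrm{I}_a$ unchanged by virtue of $\int_{B_\rho^+(x_o)} D\varphi\dx=0$.

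Next I would split $a=(a-b)+b$ in the resulting difference. For the $(a-b)$-contribution, hypothesis \eqref{cond-a-cal-2} combined with the definition \eqref{def-Geps} of $\boldsymbol G$ guarantees $|(A+Dg(x_o,t))\Psi(x_o)|\ge 8\|\omega\|_\infty K_{\epsilon_1}/\epsilon_1$ (using $|\cdot\,\Psi(x_o)|\ge \psi_\beta^{-1}|\cdot|$), so Lemma \ref{lem:asymp} applied with $\epsilon=\epsilon_1$ delivers a pointwise bound on the difference of $(a-b)$ at the two arguments of order $\epsilon_1|Du-A|(1+|A|^2+|Du-A|^2)^{(p-2)/2}$. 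Integrating against $|D\varphi\,\Psi|$, applying H\"older's inequality and invoking the definition \eqref{def-V} of $V_{|A|}$ produces the $\epsilon_1|A|^{(p-2)/2}\sqrt{\Phi_\lambda}$-term. For the $b$-contribution I would expand $b$ by a first-order Taylor formula between its two arguments, in the spirit of Lemma \ref{lem:diff-1}; writing the increment as $(Du-A)\Psi+A(\Psi-\Psi(x_o))+Dg(\cdot,t)\Psi-Dg(x_o,t)\Psi(x_o)$, the second and third pieces carry $\rho^\beta$-factors from the H\"older continuity of $\Psi$ and $Dg$ and fit into the $\rho^\beta|A|^{p/2}$-piece.

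The leading Taylor term has the form $Db(A\Psi(x_o))((Du-A)\Psi(x_o),D\varphi\,\Psi(x_o))$, which must be compared with $Db(A)(Du-A,D\varphi)$. Since $\Psi(0)=\mathbb I_{n\times n}$ by \eqref{Psi} and $\Psi\in C^{0;\beta}$, we have $|\Psi(x_o)-\mathbb I|\le\psi_\beta R^\beta$, and a further Cauchy--Schwarz yields the $R^\beta|A|^{(p-2)/2}\sqrt{\Phi_\lambda}$-error. The higher-order Taylor remainder is quadratic in $Du-A$ multiplied by integrable powers of $|A|+|Du-A|$; H\"older's inequality together with the excess hypothesis \eqref{cond-a-cal-1} (which, via the definition of $V_{|A|}$, implies $(\mint|Du-A|^p\dz)^{1/p}\le c\,\epsilon_2|A|$) yields the $\sqrt{\epsilon_2}|A|^{(p-2)/2}\sqrt{\Phi_\lambda}$-contribution. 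The main technical obstacle will be the careful bookkeeping of the several error terms so that they assemble precisely into the stated bound; in particular, as in the proofs of Lemmas \ref{lem:diff-1} and \ref{lem:diff-2}, the range $p\in(2,3)$ requires Lemma \ref{lem:Fusco} to handle Taylor intermediate quantities with negative exponents.
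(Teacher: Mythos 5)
Your proposal is correct and follows essentially the same route as the paper's proof: test the weak formulation, remove the frozen constant vector field $\langle a((A+Dg(x_o,t))\Psi(x_o)),D\varphi\,\Psi(x_o)\rangle$ using $\int D\varphi\,dx=0$, split $a=(a-b)+b$ with Lemma \ref{lem:asymp} (applied with $\epsilon=\epsilon_1$, admissible by \eqref{cond-a-cal-2} and $\lambda\ge\boldsymbol G(\epsilon_1)$) for the asymptotic part and a Taylor expansion of $b$ (with Lemma \ref{lem:Fusco} for $p\in(2,3)$) for the rest, use $|\Psi-\mathbb I_{n\times n}|\le\psi_\beta R^\beta$ and the excess hypothesis for the $R^\beta$- and $\sqrt{\epsilon_2}$-terms, and the Morrey bound \eqref{morrey-est} for $g_t$. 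The only difference is organizational: the paper isolates the $(x,t)$-dependence of $a$ in a separate term estimated by Lemma \ref{lem:diff-2} and then works with frozen arguments, whereas you carry the $x$-dependence through the asymptotic comparison and the Taylor expansion, which yields the same $\rho^\beta|A|^{p-1}$-type errors.
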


\begin{proof}
Without loss of generality we may assume that $\|D\varphi\|_\infty\le 1$ and we abbreviate $Q\equiv Q_{\rho,\lambda}^+(z_o)$ and $\Phi_\lambda\equiv \Phi_\lambda(z_o,\rho,A)$.
From the weak formulation \eqref{system-lat-weak} of the parabolic system we obtain
\begin{equation}\label{a-cal-split}
    \mint_{Q} 
    u \cdot\partial_t \varphi - Db(A) (Du - A,D\varphi)\dz 
    =
    \mbox{I}+\mbox{II}+\mbox{III}+\mbox{IV}+\mbox{V},
\end{equation}
where 
\begin{align*}
    \mbox{I}
    &:=
    \mint_{Q} \Big[
    Db(A)\big((Du - A)\Psi(x_o),D\varphi\,\Psi(x_o)\big) -
    Db(A) (Du - A,D\varphi)\Big]\dz \\
    \mbox{II}
    &:=
    \mint_{Q} \int_0^1
    \Big[Db\big((A+s(Du-A)+Dg(x_o,t))\Psi(x_o)\big) - Db(A)\Big] ds\, \\
    &\phantom{mmmmmmmmmmmmmmmmmmmmmmi} \cdot
    \big((Du - A)\Psi(x_o),D\varphi\,\Psi(x_o)\big)\dz \\
    \mbox{III}
    &:=
    \mint_{Q}
    \big\langle a\big((Du+Dg(x_o,t))\Psi(x_o)\big) - a\big((A+Dg(x_o,t))\Psi(x_o)\big) \nn\\
    &\phantom{mmn}- 
    \big[b\big((Du+Dg(x_o,t))\Psi(x_o)\big) - b\big((A+Dg(x_o,t))\Psi(x_o)\big)\big],
    D\varphi\,\Psi(x_o)\big\rangle \dz \nn\\
    \mbox{IV}
    &:=
    \mint_{Q}
    \big\langle a\big((Du+Dg)\Psi\big),D\varphi\,\Psi\big\rangle -
    \big\langle a\big((Du+Dg(x_o,t))\Psi(x_o)\big),D\varphi\,\Psi(x_o)\big\rangle \dz \nn\\
    \mbox{V}
    &:=
    \mint_{Q}
    g_t\cdot \varphi \dz .
\end{align*}
Here, we used in the term III that 
\begin{align*}
    \mint_{Q}
    \big\langle a\big((A+Dg(x_o,t))\Psi(x_o)\big), D\varphi\,\Psi(x_o)\big\rangle \dz 
    =0.
\end{align*}
Moreover, we have the following auxiliary estimate which is a consequence of assumption \eqref{cond-a-cal-2} and the definition of $\boldsymbol G(\epsilon_1)$:
\begin{equation}\label{Dg}
    |Dg(z)|
    \le
    \|Dg\|_{L^\infty(Q_R^+)}
    \le
    \frac{\epsilon_1}{2^{11}} \boldsymbol G(\epsilon_1)
    \le
    \frac{\epsilon_1}{2^{11}} \lambda
    \le
    \epsilon_1|A|
    \le 
    |A|
    \quad \mbox{for a.e. $z\in Q$.}
\end{equation}
Now, we in turn estimate the terms I -- V.
For I we use \eqref{Psi} and the definition \eqref{def_asymp} of $b$ to infer that
\begin{align*}
    \mbox{I}
    &=
    \mint_{Q} 
    Db(A)\big((Du - A)(\Psi(x_o)-\mathbb I_{n\times n}),D\varphi\,\Psi(x_o)\big) \dz \\
    &\phantom{\le\ }+
    \mint_{Q}
    Db(A) \big(Du - A,D\varphi(\Psi(x_o)-\mathbb I_{n\times n})\big)\dz \\
    &\le
    c\,R^\beta|A|^{p-2}
    \mint_{Q} |Du-A|\dz 
    \le
    c(p,\psi_\beta)\,R^\beta|A|^{\frac{p-2}{2}} \sqrt{\Phi_\lambda}.
\end{align*}
Concerning the term II we first note that $\mbox{II}=0$ in the case $p=2$. In the case $p>2$ we abbreviate 
\begin{align*}
	\mathfrak X_s(x,t)
	:=
    A(\Psi(x_o)-\mathbb I_{n\times n})+Dg(x_o,t)\Psi(x_o) +s(Du(x,t)-A)\Psi(x_o)
\end{align*}
and then use definition \eqref{def_asymp} of $b$ to estimate for $s\in[0,1]$:
\begin{align*}
	&\big|Db\big((A+s(Du-A)+Dg(x_o,t))\Psi(x_o)\big)-Db(A)\big| 
	=
    \bigg|\int_0^1
    D^2 b\big(A+\sigma\mathfrak X_s\big)\,d\sigma\ \mathfrak X_s \bigg| 
    \nn\\
    &\phantom{mmmmmmmmmi}\le
    c \int_0^1
    |A+\sigma\mathfrak X_s|^{p-3}\,d\sigma \ |\mathfrak X_s|
    \le
    c(p,\K) 
    \big(|A|^2+|\mathfrak X_s|^2\big)^{\frac{p-3}{2}}\, |\mathfrak X_s| ,
\end{align*}
where for the last step we have also used Lemma \ref{lem:Fusco} if $p\in(2,3)$.
To be precise, this computation has to be justified, since the argument of $D^2b(\cdot)$ could become zero (see \cite[Remark 2.14]{Boegelein-habil} for the details).
Now, we integrate with respect to $s\in[0,1]$ and subsequently use Lemma \ref{lem:Fusco} if $p\in(2,3)$ as well as \eqref{Dg} to infer that
\begin{align*}
	\bigg|\int_0^1 & Db\big((A+s(Du-A)+Dg(x_o,t))\Psi(x_o)\big)-Db(A)\ds \bigg| \\
    &\le
    c \int_0^1 \big(|A|^2+|\mathfrak X_s|^2\big)^{\frac{p-3}{2}}\, |\mathfrak X_s| \ds
    \\
    &\le
    c\, \big(|A|^2+|A(\Psi(x_o)-\mathbb I_{n\times n})+Dg(x_o,t)\Psi(x_o)|^2 +
    |(Du-A)\Psi(x_o)| ^2\big)^{\frac{p-3}{2}} \\
    &\phantom{mmmmmmm}\cdot
    \big(|A(\Psi(x_o)-\mathbb I_{n\times n})+Dg(x_o,t)\Psi(x_o)| +|(Du-A)\Psi(x_o)| \big)
    \\
    &\le
    c \Big[ |A|^{p-3}\big(|A(\Psi(x_o)-\mathbb I_{n\times n})+Dg(x_o,t)\Psi(x_o)| +|(Du-A)\Psi(x_o)| \big) \\
    &\phantom{\le c\Big[} +
    \chi_{p>3}\big(|A(\Psi(x_o)-\mathbb I_{n\times n})+Dg(x_o,t)\Psi(x_o)| +
    |(Du-A)\Psi(x_o)|\big)^{p-2}\Big] \\
    &\le
    c\Big[(R^\beta+\epsilon_1)|A|^{p-2} + 
    |A|^{p-3}|Du-A| +
    \chi_{p>3}\big((R^\beta+\epsilon_1)|A| + |Du-A|\big)^{\frac{p-2}{2}} \Big]\\
    &\le
    c\Big[|A|^{p-3}|Du-A|+
    (\epsilon_1+R^\beta)|A|^{p-2} +
    \chi_{p>3}|Du-A|^{p-2} \Big],
\end{align*}
where $\chi_{p>3}=1$ if $p>3$ and $\chi_{p>3}=0$ if $p\le 3$ and $c=c(p,\K,\psi_\beta)$.
This together with H\"older's inequality and \eqref{cond-a-cal-1} yields
\begin{align*}
    |\,\mbox{II}\,|
    &\le
    c\,
    \mint_{Q} |A|^{p-3}|Du-A|^2 +
    (\epsilon_1+R^\beta) |A|^{p-2}|Du-A| +
    \chi_{p>3} |Du-A|^{p-1}\dz \\
    &\le
    c\,|A|^{-1}
    \Phi_\lambda +
    c\,(\epsilon_1+R^\beta)|A|^{\frac{p-2}{2}}
    \sqrt{\Phi_\lambda} +
    c\,\chi_{p>3}\Phi_\lambda^{1-\frac{1}{p}} \\
    &\le
    c\,\Big[\epsilon_2^{\frac{p}{2}} +\epsilon_1+R^\beta+\chi_{p>3}\epsilon_2^{\frac{p-2}{2}}\Big] 
    |A|^{\frac{p-2}{2}}\sqrt{\Phi_\lambda} \\
    &\le
    c(p,\K)\,\big(\epsilon_1+\epsilon_2^\frac12+R^\beta\big) \, |A|^{\frac{p-2}{2}}
    \sqrt{\Phi_\lambda}.
\end{align*}
For the term III we infer a pointwise bound of the integrand with the help of Lemma \ref{lem:asymp} applied with $(A,\xi,\delta,\epsilon)$ replaced by 
$((A+Dg(x_o,t))\Psi(x_o), (Du(z)+Dg(x_o,t))\Psi(x_o),0,\epsilon_1)$. Note that this is possible due to assumption \eqref{cond-a-cal-2} and the fact that $\lambda\ge\boldsymbol G(\epsilon_1)$. In this way we obtain
\begin{align*}
    |\,\mbox{III}\,|
    &\le
    c\,\epsilon_1
    \mint_{Q}
    |(Du{-}A)\Psi(x_o)| \big(1+|A{+}Dg(x_o,t)\Psi(x_o)|^2 + |(Du{-}A)\Psi(x_o)|^2\big)^{\frac{p-2}{2}} \dz
\end{align*}
for a constant $c=c(p)$. 
In turn using \eqref{Dg}, the fact that $|A|\ge 1$ which is a consequence of \eqref{cond-a-cal-2} and the fact that $\lambda\ge\boldsymbol G(\epsilon_1)$, H\"older's inequality and \eqref{cond-a-cal-1} we find 
\begin{align*}
    |\,\mbox{III}\,|
    &\le
    c\,\epsilon_1
    \mint_{Q}
    \big(|A|^2 + |Du-A|^2\big)^{\frac{p-2}{2}}
    |Du-A| \dz \nn\\
    &\le
    c\,\epsilon_1 \sqrt{\Phi_\lambda(\rho)}
    \bigg(\mint_{Q}
    \big(|A|^2 + |Du-A|^2\big)^{\frac{p-2}{2}} \dz\bigg)^\frac12 \\
    &\le
    c\,\epsilon_1 \sqrt{\Phi_\lambda(\rho)}
    \Big[|A|^{\frac{p-2}{2}} +
    \Phi_\lambda(\rho)^{\frac{p-2}{2p}}\Big] \\
    &\le
    c(p,\psi_\beta)\,\epsilon_1|A|^{\frac{p-2}{2}} \sqrt{\Phi_\lambda(\rho)}.
\end{align*}
From Lemma \ref{lem:diff-2}, the fact that $|A|\ge 1$ and \eqref{cond-a-cal-1} we obtain
\begin{align*}
    |\,\mbox{IV}\,|
    &\le
    c\,\rho^\beta \mint_Q (1+|Du|)^{p-1} \dz 
    \le
    c\,\rho^\beta \mint_Q (|A|+|Du-A|)^{p-1} \dz  \\
    &\le
    c\,\rho^\beta \Big[|A|^{p-1} + \Phi_\lambda^{1-\frac{1}{p}}\Big] 
    \le
    c\,\rho^\beta |A|^{p-1} ,
\end{align*}
where $c=c(p,\K,\psi_\beta,a_o, \|\omega\|_\infty,\mathcal G_\beta)$.
Finally, we estimate the last term in \eqref{a-cal-split} with the help of H\"older's and Poincar\'e's inequality, the second last inequality in \eqref{morrey-est}, the fact that $|D\varphi|\le 1$ and hypothesis \eqref{cond-a-cal-2} as follows
\begin{align*}
	|\,\mbox{V}\,|
    &\le
    \bigg(\mint_{Q} |\varphi|^p\dz \bigg)^\frac1p
    \bigg(\mint_{Q} |g_t|^{p'} \dz \bigg)^{\frac{1}{p'}}  
    \le
    c\,\rho \bigg(\mint_{Q} |D\varphi|^p\dz \bigg)^\frac1p
    \mathcal G_\beta^{\frac{1}{p'}} \rho^{\beta-1} \lambda^{\frac{p-2}{p'}} \nn\\
    &\le
    c\,\rho^\beta \lambda^{p-2}
    \le
    c\,\rho^\beta \lambda^{p-1}
    \le
    c\,\rho^\beta |A|^{p-1} ,
\end{align*}
where $c=c(n,p,\mathcal G_\beta)$.
Joining the estimates for I -- V with \eqref{a-cal-split} we deduce the assertion of the lemma.
\end{proof}

\begin{remark}\label{rem:a-cal-p2}\upshape
In the case $p=2$ assumption \eqref{cond-a-cal-1} in the statement of Lemma \ref{lem:approx-a-cal} -- which characterizes the non-degenerate regime -- is not necessary. This can be seen from the proof of the lemma as follows: assumption \eqref{cond-a-cal-1} is needed only in the estimate of the term I which is zero in the case $p=2$.
This considerably simplifies the proof for the case $p=2$ since then the degenerate regime considered in Section \ref{sec:degenerate} is not necessary.
\hfill$\Box$
\end{remark}

\subsubsection{Boundedness of the gradient in the non-degenerate regime}
In the following lemma we provide a first excess-decay estimate for the non-degenerate regime which is characterized by (\ref{cond-dec-a-2}) below.
Here and in the following we recall the definitions and notations for the relevant affine functions from Section \ref{sec:affine}.

\begin{lemma}\label{lem:decay-a-cal}
There exist constants
$R_o\in(0,1]$, 
$\theta\in(0,8^{-p/\beta}]$, $\epsilon_1\in(0,1]$, $\epsilon_2\in(0,\theta^{n+2}]$ and $\bar c\ge 1$ depending on 
$n,N,p,\K, \psi_\beta, \beta, a_o, \|\omega\|_\infty, \mathcal G_\beta$
such that under the assumptions of Proposition \ref{prop:main-lip} with some $R\in(0,R_o]$ the following holds: Whenever $\lambda\ge \boldsymbol G(\epsilon_1)$ and
$Q_{\rho,\lambda}(z_o)\subset Q_R$ is a cylinder with $(x_o)_n\ge 0$ on which
\begin{equation}\label{cond-dec-a-1}
    2^{-7}\lambda
    \le
    |D\ell_{z_o;\rho,\lambda}|
    \le
    2 \lambda
\end{equation}
and
\begin{equation}\label{cond-dec-a-2}
    \Phi_\lambda\big(z_o,\rho,D\ell_{z_o;\rho,\lambda}\big)
    \le
    \epsilon_2^p\, |D\ell_{z_o;\rho,\lambda}|^p,
\end{equation}
hold, then we have the following excess-decay estimate:
\begin{equation}\label{decay-a-cal}
    \Phi_\lambda\big(z_o,\theta\rho,D\ell_{z_o;\theta\rho,\lambda}\big)
    \le
    \tfrac12 \theta^{\beta}
    \Big[\Phi_\lambda\big(z_o,\rho,D\ell_{z_o;\rho,\lambda}\big) +
    \bar c\,\rho^{\beta} |D\ell_{z_o;\rho,\lambda}|^{p}
    \Big] .
\end{equation}
\end{lemma}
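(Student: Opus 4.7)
\smallskip

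\noindent\textbf{Proof plan.} The strategy is the classical linearization-plus-$\mathcal{A}$-caloric-approximation scheme: in the non-degenerate regime $|D\ell_{z_o;\rho,\lambda}|\sim\lambda$ and the excess is small, so on the intrinsic cylinder $Q_{\rho,\lambda}^+(z_o)$ the solution $u$ behaves approximately like a solution of the linear system with constant coefficients $\mathcal{A}:=Db(A)$, where $A:=D\ell_{z_o;\rho,\lambda}$. We then exploit the decay for the linear problem (Proposition \ref{prop:lin}) to beat the target H\"older rate $\theta^\beta$, and absorb the perturbation.

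Concretely, the first step is to apply Lemma \ref{lem:approx-a-cal} with the choice $A:=D\ell_{z_o;\rho,\lambda}$; assumptions \eqref{cond-a-cal-2} and \eqref{cond-a-cal-1} there are exactly \eqref{cond-dec-a-1} and \eqref{cond-dec-a-2}, while $\lambda\ge\boldsymbol{G}(\epsilon_1)$ is assumed. This yields approximate $\mathcal{A}$-caloricity of $u$ on $Q_{\rho,\lambda}^+(z_o)$ with error of order $|A|^{(p-2)/2}[(\epsilon_1+\sqrt{\epsilon_2}+R^\beta)\sqrt{\Phi_\lambda}+\rho^\beta|A|^{p/2}]$. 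Next, I would rescale to a standard cylinder: set
\[
    \tilde u(y,s):=\frac{u(x_o+\rho y,\,t_o+|A|^{2-p}\rho^2 s)-\ell_{z_o;\rho,\lambda}(x_o+\rho y)}{|A|\rho}
\]
on the appropriate half-cylinder inside $Q_1$. Because of \eqref{cond-dec-a-1} the intrinsic time scaling $\lambda^{2-p}$ is comparable to $|A|^{2-p}$ up to a constant depending only on $n,N,p,\K$; in particular the rescaled bilinear form $\tilde{\mathcal{A}}=Db(A)/|A|^{p-2}$ is elliptic with bounds $(\nu,L)$ depending only on $n,N,p,\K$. Hypothesis \eqref{cond-dec-a-2} translates into
\[
    \mint |D\tilde u|^2+\gamma^{p-2}|D\tilde u|^p\,d\zeta\lesssim \epsilon_2^2
\]
with $\gamma:=1$, and $\tilde u$ vanishes on the flat part $\Gamma\cap Q_1$ in the boundary sub-case (where the definition of $\ell_{z_o;\rho,\lambda}$ already forces $\ell\equiv 0$ on $\Gamma$).

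The third step is to invoke Lemma \ref{lem:a-cal} in the interior sub-case or Lemma \ref{lem:a-cal-boundary} in the boundary sub-case: choosing $\epsilon_1,\epsilon_2$ small enough in terms of the $\delta$ furnished by those lemmas (for a value of $\delta$ yet to be fixed), we produce an $\tilde{\mathcal{A}}$-caloric $h$ on a smaller standard cylinder, with $\tmint|\tilde u-h|^2+\gamma^{p-2}|\tilde u-h|^p$ arbitrarily small and quantitative $L^2\cap L^p$ bounds on $h,Dh$. Proposition \ref{prop:lin} applied to $h$ then yields, for any $\theta\in(0,1/16]$ and $s\in\{2,p\}$,
\[
    \mint_{Q_\theta^+} \bigg|\frac{h-\ell_\theta^{(h)}}{\theta}\bigg|^s d\zeta
    \le c\,\theta^s.
\]
Now I unwind the rescaling. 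Set $\xi:=A+|A|\,D\ell_\theta^{(h)}$, viewed as an affine candidate on $Q_{\theta\rho,\lambda}^+(z_o)$ (admissible in \eqref{def-lmin} because $\ell_\theta^{(h)}\equiv 0$ on $\Gamma$ in the boundary sub-case). By the quasi-minimality in Lemma \ref{lem:V-A} and equivalence of the $V$-functional at nearby base points (since $|D\ell^{(h)}_\theta|$ is small compared to $|A|$, so $|\xi|\sim|A|$),
\[
    \Phi_\lambda\big(z_o,\theta\rho,D\ell_{z_o;\theta\rho,\lambda}\big)
    \le c\,|A|^{p-2}\mint |Du-\xi|^2\,d z+c\mint |Du-\xi|^p\,d z.
\]
Splitting $Du-\xi=(Du-A)-|A|\,D\ell^{(h)}_\theta$ and combining Poincar\'e-type control (Lemma \ref{lem:poin-inter}/\ref{lem:poin-boundary}) of $\tilde u-h$ with the linear decay above returns a bound of the form $c\,\theta^s\Phi_\lambda(z_o,\rho,A)+\text{(approximation error)}+c\,\rho^\beta|A|^p$, the last term originating from the $\rho^\beta|A|^{p-1}$-error in Lemma \ref{lem:approx-a-cal}. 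Since $\beta<1\le 2\le p$ and $s\ge 2$, I fix $\theta$ so small that $c\,\theta^s\le\tfrac14\theta^\beta$, then choose $\epsilon_1,\epsilon_2$ small enough (and correspondingly $R_o$ so that $R_o^\beta\le\epsilon_1$) to absorb the approximation error into a further $\tfrac14\theta^\beta\Phi_\lambda$-contribution, giving \eqref{decay-a-cal}.

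\smallskip

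\noindent\textbf{Main obstacle.} The delicate step is translating the linear decay for $h$ back into a bound on $\Phi_\lambda$ for $u$ \emph{at the non-standard candidate} $D\ell_{z_o;\theta\rho,\lambda}$, while maintaining the correct affine structure near $\Gamma$ (where $\ell_{z_o;\cdot,\lambda}$ has only the $D_n u\otimes e_n$ component, and one must use part (ii) of Lemma \ref{lem:V-A} and the boundary-version inequality \eqref{kronz-est-boundary}). One has to verify simultaneously that $|\xi|$ remains comparable to $|A|$ so the anisotropic $V_{|A|}$-functional is equivalent to $V_{|\xi|}$, and that the approximation error does not carry spurious negative powers of $\theta$ from the Poincar\'e-step (the term $\rho^{s(1+\beta)}\lambda^s$ in Lemma \ref{lem:poin-inter} requires $\rho$ after rescaling to become $\theta\rho$, which is why the threshold $\theta\le 8^{-p/\beta}$ appears).
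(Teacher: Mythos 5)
Your overall architecture (linearize via Lemma \ref{lem:approx-a-cal}, apply the interior/boundary $\mathcal A$-caloric approximation, use Proposition \ref{prop:lin} for the linear decay, return to $D\ell_{z_o;\theta\rho,\lambda}$ via Lemma \ref{lem:V-A}, and keep $\theta\le 8^{-p/\beta}$) is the paper's, but there is a genuine gap in the normalization. You rescale only by $|A|\rho$ and explicitly set $\gamma:=1$, so the rescaled map has norm of order $\epsilon_2^{p/2}$, while Lemma \ref{lem:a-cal} (resp.\ Lemma \ref{lem:a-cal-boundary}) returns a caloric map $h$ of size bounded by an \emph{absolute} constant and a distance $\le\epsilon$ that is likewise absolute. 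Undoing your scaling, both the approximation error and the decay from Proposition \ref{prop:lin} therefore come out as fixed multiples of $|A|^p$ (times powers of $\theta$), not proportional to $\Phi_\lambda(z_o,\rho,A)+\rho^{2\beta}|A|^p$. Your final step, ``choose $\epsilon_1,\epsilon_2$ small enough to absorb the approximation error into a further $\tfrac14\theta^\beta\Phi_\lambda$-contribution,'' cannot work: $\epsilon_1,\epsilon_2$ only enter the \emph{hypotheses} of the approximation lemma, whereas the term to be absorbed is a fixed constant times $|A|^p$, and both $\Phi_\lambda(z_o,\rho,A)$ and $\rho^\beta$ may be arbitrarily small compared with it. The paper's proof avoids exactly this by normalizing with $c_1\gamma|D\ell_{z_o;\rho,\lambda}|$, where $\gamma^2=\big(\Phi_\lambda(\rho)+\delta^{-2}\rho^{2\beta}|D\ell_{z_o;\rho,\lambda}|^{p}\big)/|D\ell_{z_o;\rho,\lambda}|^{p}$; then every error term produced by the approximation and by Proposition \ref{prop:lin} carries the factor $\Phi_\lambda(\rho)+\delta^{-2}\rho^{2\beta}|D\ell_{z_o;\rho,\lambda}|^{p}$, which is precisely the bracket on the right of \eqref{decay-a-cal}, and the $\delta^{-2}\rho^{2\beta}$ portion of $\gamma$ additionally guarantees that the $\rho^\beta|A|^{p/2}$ error of Lemma \ref{lem:approx-a-cal} is dominated by $\delta$ after rescaling.

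A second gap is the passage from function-level to gradient-level information. The caloric approximation and Proposition \ref{prop:lin} only control $u$ (not $Du$) near an affine map at scale $\theta\rho$; your proposed ``splitting $Du-\xi$ \dots combined with Poincar\'e-type control of $\tilde u-h$'' runs in the wrong direction, since Poincar\'e bounds the oscillation of a function by its gradient and you have no control whatsoever on $D(\tilde u-h)$. To convert the $L^2$/$L^p$ distance of $u$ from an affine function into the excess $\Phi_\lambda(z_o,\theta\rho,\cdot)$ one needs a Caccioppoli inequality for the original system with affine comparison maps --- this is the paper's Lemma \ref{lem:cac}, applied after verifying $|D\widehat\ell|\ge\tfrac12|D\ell_{z_o;\rho,\lambda}|\ge 2^{-8}\lambda\ge 2^{-8}\boldsymbol G(\epsilon_1)$ so that hypothesis \eqref{cond-cac} holds --- and your outline never invokes it. Incorporating the $\gamma$-normalization and this Caccioppoli step (together with the intermediate comparability $\tfrac12|D\ell_{z_o;\rho,\lambda}|\le|D\widehat\ell|\le 2|D\ell_{z_o;\rho,\lambda}|$, proved via \eqref{kronz-est}, \eqref{kronz-est-boundary} and the Poincar\'e lemmas) would essentially reproduce the paper's argument.
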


\begin{proof}
For convenience of the reader we use the shorter notation $\ell_{\rho,\lambda}$ and $\Phi_\lambda(\rho)$ for $\ell_{z_o;\rho,\lambda}$ and $\Phi_\lambda(z_o,\rho,D\ell_{z_o;\rho,\lambda})$. In the following, we assume that $\Phi_\lambda(\rho)>0$ since otherwise the conclusion of the lemma holds trivially.
Let $R_o\in (0,1]$ and $\theta\in(0,8^{-p/\beta}]$ to be fixed later, and assume that 
$\lambda\ge \boldsymbol G(\epsilon_1)$ for some $\epsilon_1\in(0,1]$. Moreover, assume that \eqref{cond-dec-a-2} is valid for some $\epsilon_2\in(0,\theta^{n+2}]$. The precise values of
$\epsilon_1,\epsilon_2$ will be determined in the course of the proof.
Note that \eqref{cond-dec-a-1} implies $|D\ell_{\rho,\lambda}|\ge 2^{-7}\lambda\ge 2^{-7}\boldsymbol G(\epsilon_1)\ge 1$.   Moreover, we let $\epsilon>0$ (to be specified later) and $\delta=\delta(n,N,p,$ $\nu\equiv 2^{-7(p-2)}\K,L\equiv 2^{p-2}p\K,\epsilon)
=\delta(n,N,p,\K,\epsilon)\in(0,1]$ be minimum of the constants from Lemma \ref{lem:a-cal} and Lemma \ref{lem:a-cal-boundary}.
We define
\begin{equation*}
    \tilde v(x,t)
    :=
    u(x,t) - \ell_{\rho,\lambda}(x)
    \qquad\text{for } (x,t)\in Q_{\rho,\lambda}^+(z_o)
\end{equation*}
and the rescaled function
\begin{equation}\label{def-w}
    v(x,t)
    :=
    \frac{\tilde v(x,\lambda^{2-p} t)}{c_1 \gamma\, |D\ell_{\rho,\lambda}|}
    \qquad\text{for } (x,t)\in Q_\rho^+(z_o)\equiv Q_{\rho,1}^+(z_o),
\end{equation}
where $c_1\ge 1$ will be specified later and
\begin{equation*}
    \gamma
    :=
    \bigg(\frac{\Phi_\lambda(\rho) + \delta^{-2}\rho^{2\beta} |D\ell_{\rho,\lambda}|^{p}}
    {|D\ell_{\rho,\lambda}|^p}\bigg)^\frac12.
\end{equation*}
Note that \eqref{cond-dec-a-1}, \eqref{cond-dec-a-2}, $\rho\le R_o$ and the additional assumption
\begin{equation}\label{cond-rho}
    \epsilon_2^p +
    \delta^{-2}R_o^{2\beta}
    \le
    1
\end{equation}
imply that $\gamma\le 1$.
In the following we will ensure that the assumptions of either the interior, or the boundary version of the $\mathcal A$-caloric approximation lemma are satisfied.
Due to assumptions \eqref{cond-dec-a-1}, \eqref{cond-dec-a-2} and $\lambda\ge \boldsymbol G(\epsilon_1)$ we are allowed to apply Lemma \ref{lem:approx-a-cal} which yields 
\begin{align*}
    \bigg|\mint_{Q_{\rho,\lambda}^+(z_o)} &
    \tilde v\cdot\partial_t \varphi - 
    Db(D\ell_{\rho,\lambda}) 
    (D\tilde v,D\varphi)\dz\bigg| \\
    &\le
    c\, |D\ell_{\rho,\lambda}|^{\frac{p-2}{2}} 
    \Big[\big(\epsilon_1+\sqrt{\epsilon_2}+R^\beta\big) \sqrt{\Phi_\lambda(\rho)} +
    \rho^\beta |D\ell_{\rho,\lambda}|^{\frac{p}{2}}
    \Big]
    \sup_{Q_{\rho,\lambda}^+(z_o)}|D\varphi| ,
\end{align*}
for any $\varphi\in C_0^1(Q_{\rho,\lambda}^+(z_o) ,\R^N)$. 
Note that
$c=c(n,p,\K,\psi_\beta,a_o, \|\omega\|_\infty,\mathcal G_\beta)$.
Recalling the definition of $v$ and scaling to the cylinder $Q_{\rho}^+(z_o)$ this inequality can be rewritten as 
\begin{align*}
    &\bigg|\mint_{Q_{\rho}^+(z_o)} 
    v\cdot\partial_t \varphi - 
    \frac{Db(D\ell_{\rho,\lambda})}{\lambda^{p-2}} 
    (Dv,D\varphi)\dz\bigg| \nonumber\\
    &\qquad \le
    \frac{c\,|D\ell_{\rho,\lambda}|^{\frac{p}{2}-2}}{c_1\gamma\, \lambda^{p-2}}
    \Big[\big(\epsilon_1+\sqrt{\epsilon_2}+R^\beta\big) \sqrt{\Phi_\lambda(\rho)} +
    \rho^\beta |D\ell_{\rho,\lambda}|^{\frac{p}{2}}
    \Big]
    \sup_{Q_{\rho}^+(z_o)}|D\varphi|\nonumber\\
    &\qquad \le \frac{c}{c_1}\,
    \big[\epsilon_1+\sqrt{\epsilon_2}+R^\beta + \delta \big]
    \sup_{Q_{\rho}^+(z_o)}|D\varphi|,
\end{align*}
for any $\varphi\in C_0^1(Q_{\rho}^+(z_o) ,\R^N)$.
In the last line we have also used assumption
\eqref{cond-dec-a-1}.
Choosing $c_1\ge 2c$ in dependence on $n,p, \K, \psi_\beta, a_o, \|\omega\|_\infty, \mathcal G_\beta$ large enough and assuming
\begin{equation}\label{cond-eps0}
    \epsilon_1+\sqrt{\epsilon_2}+R_o^\beta
    \le
    2^{-(n+3)}\delta
\end{equation}
we obtain
\begin{equation}\label{approx}
    \bigg|\mint_{Q_{\rho}^+(z_o)} 
    v\cdot\partial_t \varphi - 
    \frac{Db(D\ell_{\rho,\lambda})}{\lambda^{p-2}} 
    (Dv,D\varphi)\, dz\bigg| 
    \le
    2^{-(n+3)}\delta \sup_{Q_{\rho}^+(z_o)}\! |D\varphi|.
\end{equation}
Next, we use the definition of $\Phi_\lambda(\rho)$ and $\gamma$ to infer for $s\in\{2,p\}$ that 
\begin{align}\label{small-Dv-s}
    \mint_{Q_{\rho,\lambda}^+(z_o)} |D\tilde v|^s\dz 
    \le
    |D\ell_{\rho,\lambda}|^{s-p}\Phi_\lambda(\rho) 
    \le
    \gamma^2 |D\ell_{\rho,\lambda}|^s 
\end{align}
%
With the help of H\"older's inequality and the fact that $\gamma\le 1$ we conclude from \eqref{small-Dv-s} that
\begin{align}\label{small-Dv-3}
    \mint_{Q_{\rho,\lambda}^+(z_o)}\! |D\tilde v|^{p-1} \dz 
    &\le
    \gamma^{2-\frac2p} |D\ell_{\rho,\lambda}|^{p-1} 
    \le
    \gamma |D\ell_{\rho,\lambda}|^{p-1} .
\end{align}

In the following we have to distinguish the cases $(x_o)_n\ge \rho/2$ and $(x_o)_n< \rho/2$. In the first case when $(x_o)_n\ge \rho/2$ we apply the Poincar\'e type inequality from Lemma \ref{lem:poin-inter} on $Q_{\rho/2,\lambda}(z_o)\subset Q_R^+$, which is allowed due to assumption \eqref{cond-dec-a-1}. 
In this way we infer for $s\in\{2, p\}$ that
\begin{align*}
    \mint_{Q_{\rho/2,\lambda}(z_o)} 
    \Big|\frac{\tilde v}{\rho/2}\Big|^s\dz 
    &\le
    c \mint_{Q_{\rho/2,\lambda}(z_o)}|D\tilde v|^s\dz +
    c \bigg[\lambda^{2-p}\mint_{Q_{\rho/2,\lambda}(z_o)}
    |D\tilde v|^{p-1}\dz  +
    \rho^{\beta} \lambda\bigg]^s
    \nn\\
    &\le
    c \mint_{Q_{\rho,\lambda}^+(z_o)}|D\tilde v|^s\dz +
    c \bigg[\lambda^{2-p}\mint_{Q_{\rho,\lambda}^+(z_o)}
    |D\tilde v|^{p-1}\dz  +
    \rho^{\beta} \lambda\bigg]^s,
\end{align*}
where $c=c(n,N,p,\K,\psi_\beta,a_o,\|\omega\|_\infty, \mathcal G_\beta)$.
In order to further estimate the right-hand side we use \eqref{small-Dv-s} and \eqref{small-Dv-3}.
Subsequently, we use hypothesis \eqref{cond-dec-a-1} as well as $\rho^\beta\le\gamma$ and $\gamma\le 1$. This leads us to
\begin{align}\label{est-poin}
    \mint_{Q_{\rho/2,\lambda}(z_o)} 
    \Big|\frac{\tilde v}{\rho/2}\Big|^s\dz 
    &\le
    c\, \gamma^2|D\ell_{\rho,\lambda}|^{s}  +
    c\Big[\lambda^{2-p} \gamma |D\ell_{\rho,\lambda}|^{p-1}
    +
    \rho^{\beta} \lambda \Big]^s \\
    &\le
    c\, \gamma^2|D\ell_{\rho,\lambda}|^{s} +
    c\Big[\gamma |D\ell_{\rho,\lambda}|
    +
    \rho^{\beta} |D\ell_{\rho,\lambda}| \Big]^s \nn\\
    &\le
    c\, \gamma^2|D\ell_{\rho,\lambda}|^{s} ,\nn
\end{align}
where $c=c(n,N,p, \K, \psi_\beta, a_o, \|\omega\|_\infty, \mathcal G_\beta)$.
From the definition of $v$ and \eqref{small-Dv-s} and \eqref{est-poin} applied with $s=2$ and $s=p$ we infer that
\begin{equation}\label{small-v}
    \mint_{Q_{\rho/2}^+(z_o)} 
    \Big|\frac{v}{\rho/2}\Big|^2 + |Dv|^2 \dz +
    \gamma^{p-2} \mint_{Q_{\rho/2}^+(z_o)}
    \Big|\frac{v}{\rho/2}\Big|^p + |Dv|^p \dz 
    \le
    \frac{c}{c_1^2} +
    \frac{c}{c_1^p} 
    \le
    1, 
\end{equation}
provided we have chosen $c_1\gg 1$ large enough, in dependence of $n,N,p, \K,\psi_\beta, a_o,$ $\|\omega\|_\infty, \mathcal G_\beta$. 
Note that in combination with the conditions from above $c_1$ can be chosen in dependence on $n,N,p, \K, \psi_\beta, a_o, \|\omega\|_\infty, \mathcal G_\beta$.
Finally, we observe that \eqref{approx} and $|Q_{\rho/2}(z_o)|/|Q_{\rho}^+(z_o)|\le 2^{n+3}$ imply that
\begin{equation}\label{approx-int}
    \bigg|\mint_{Q_{\rho/2}(z_o)} 
    v\cdot\partial_t \varphi - 
    \frac{Db(D\ell_{\rho,\lambda})}{\lambda^{p-2}} 
    (Dv,D\varphi)\, dz\bigg| 
    \le
    \delta \sup_{Q_{\rho/2}(z_o)} |D\varphi|
\end{equation}
holds for any $\varphi\in C_0^1(Q_{\rho/2}(z_o) ,\R^N)$. 
Now, we define
\begin{equation}\label{def-A}
    \mathcal A(\xi,\xi_o)
    :=
    \frac{Db(D\ell_{\rho,\lambda}) (\xi,\xi_o)}{\lambda^{p-2}}
    \qquad\text{for }\ \xi, \xi_o\in\R^{Nn}.
\end{equation}
From the definition of $b$ in \eqref{def_asymp} and hypothesis \eqref{cond-dec-a-1} we see that $\mathcal A$ satisfies the following
ellipticity and growth conditions:
\begin{equation}\label{ellip-bound-a}
    \mathcal A(\xi,\xi)
    \ge 
    2^{-7(p-2)}\K\,|\xi|^2,
    \qquad
    |\mathcal A(\xi,\xi_o)|\le 2^{p-2}p\K\, |\xi| |\xi_o|,
\end{equation}
for any $\xi,\xi_o\in\R^{Nn}$. At this stage we want to apply the $\mathcal A$-caloric approximation lemma \ref{lem:a-cal}
to $(v,\mathcal A)$; note that $(\nu,L)$ must be replaced by $(2^{-7(p-2)}\K, 2^{p-2}p\K)$ here.
Then, that the hypothesis of the lemma are satisfied due to \eqref{small-v} and \eqref{approx-int}.
The application of Lemma \ref{lem:a-cal} yields the existence of an $\mathcal A$-caloric function $h\in L^p(\Lambda_{\rho/4}(t_o);W^{1,p}(B_{\rho/4}(x_o),\R^N))$ on $Q_{\rho/4}(z_o)$ satisfying 
\begin{align*}
    \mint_{Q_{\rho/4}(z_o)}\Big|\frac{h}{\rho/4}\Big|^2 + |Dh|^2 \dz +
    \gamma^{p-2} \mint_{Q_{\rho/4}(z_o)}
    \Big|\frac{h}{\rho/4}\Big|^p + |Dh|^p \dz
    \le
    c(n,p)
\end{align*}
and
\begin{align*}
    \mint_{Q_{\rho/4}(z_o)}\Big|\frac{v-h}{\rho/4}\Big|^2 +
    \gamma^{p-2}\Big|\frac{v-h}{\rho/4}\Big|^p \dz
    \le
    \epsilon.
\end{align*}

Next, we consider the case $(x_o)_n< \rho/2$.
Here, we use the the definition of $v$ and inequality \eqref{small-Dv-s} applied with $s=2$ and $s=p$ to infer that
\begin{align*}
    \mint_{Q_{\rho}^+(z_o)} |Dv|^2 \dz +
    \gamma^{p-2} \mint_{Q_{\rho}^+(z_o)} |Dv|^p \dz 
    \le
    \frac{1}{c_1^2} + \frac{1}{c_1^p}
    \le 
    \frac{2}{c_1^2}, \nn
\end{align*}
since $c_1\ge 1$. 
Applying the Poincar\'e inequality from Lemma \ref{lem:poin-boundary} which is possible since $v\equiv0$ on $\Gamma_\rho(z_o)$ by the definition of $\ell_{\rho,\lambda}$ and using the last estimate we obtain
\begin{align}\label{small-v-boundary}
    \mint_{Q_{\rho}^+(z_o)} &
    \Big|\frac{v}{\rho}\Big|^2 + |Dv|^2 \dz +
    \gamma^{p-2} \mint_{Q_{\rho}^+(z_o)}
    \Big|\frac{v}{\rho}\Big|^p + |Dv|^p \dz \\ \nn
    &\le
    c\bigg[\mint_{Q_{\rho}^+(z_o)} |Dv|^2 \dz +
    \gamma^{p-2}\mint_{Q_{\rho}^+(z_o)} |Dv|^p \dz \bigg]
    \le
    \frac{c(n,N,p)}{c_1^2} 
    \le
    1, 
\end{align}
provided we have chosen $c_1\gg1$ large enough, in dependence on $n,N,p$. 
In combination with the conditions from above $c_1$ can still be chosen in dependence on $n,N,p, \K, \psi_\beta, a_o,$ $ \|\omega\|_\infty, \mathcal G_\beta$.
At this point we recall the definition of the bilinear form $\mathcal A$ in \eqref{def-A} and observe that the ellipticity and growth condition from \eqref{ellip-bound-a} also hold in the present case. Moreover, from \eqref{approx} and \eqref{small-v-boundary} we see that the assumptions of the boundary version of the $\mathcal A$-caloric approximation lemma, i.e. Lemma \ref{lem:a-cal-boundary} are satisfied. Therefore, we are in the position to apply Lemma \ref{lem:a-cal-boundary}
to $(v,\mathcal A)$ and with $(\nu,L)$ replaced by $(2^{-7(p-2)}\K, 2^{p-2}p\K)$.
The application then yields the existence of an $\mathcal A$-caloric function $h\in L^p(\Lambda_{\rho/4}(t_o);W^{1,p}(B_{\rho/4}^+(x_o),\R^N))$ on $Q_{\rho/4}^+(z_o)$ satisfying 
\begin{align}\label{h-small}
    \mint_{Q_{\rho/4}^+(z_o)}\Big|\frac{h}{\rho/4}\Big|^2 + |Dh|^2 \dz +
    \gamma^{p-2} \mint_{Q_{\rho/4}^+(z_o)}
    \Big|\frac{h}{\rho/4}\Big|^p + |Dh|^p \dz
    \le
    c(n,p)
\end{align}
and
\begin{align}\label{w-h}
    \mint_{Q_{\rho/4}^+(z_o)}\Big|\frac{v-h}{\rho/4}\Big|^2 +
    \gamma^{p-2}\Big|\frac{v-h}{\rho/4}\Big|^p \dz
    \le
    \epsilon
\end{align}
and also $h\equiv 0$ on $\Gamma_{\rho/4}(z_o)$ if $\Gamma_{\rho/4}(z_o)\not=\emptyset$.
Hence, in any case we have proved the existence of an $\mathcal A$-caloric function $h\in L^p(\Lambda_{\rho/4}(t_o);W^{1,p}(B_{\rho/4}^+(x_o),\R^N))$ on $Q_{\rho/4}^+(z_o)$ satisfying \eqref{h-small} and \eqref{w-h}.
We now let $\theta\in(0,2^{-7}]$ to be fixed later and distinguish the cases $(x_o)_n\ge 4\theta\rho$ and $(x_o)_n< 4\theta\rho$.

We first consider the \textit{the case $(x_o)_n\ge 4\theta\rho$}.
With the definition of the affine function $\ell^{(h)}$ from \eqref{def-lh} we use the a priori estimate for the $\mathcal A$-caloric function $h$ from Proposition \ref{prop:lin} which yields that for $s\in\{2,p\}$ there holds (note that $Q_{2\theta\rho}^+(z_o)=Q_{2\theta\rho}(z_o)$)
\begin{align*}
    \mint_{Q_{2\theta\rho}(z_o)} 
    \bigg|\frac{h-\ell^{(h)}_{2\theta\rho}}{16\theta\rho} \bigg|^s\dz 
    &\le
    c\,(8\theta)^s 
    \mint_{Q_{\rho/4}^+(z_o)}\bigg|\frac{h}{\rho/4} \bigg|^s +|Dh|^s \dz \le
    c(n,N,p,\K)\,\gamma^{2-s}\,\theta^s .
\end{align*}
Here we have also used (\ref{h-small}) in the last line. Combining this with (\ref{w-h}) we deduce
\begin{align*}
    \mint_{Q_{2\theta\rho}(z_o)} 
    \bigg|\frac{v-\ell^{(h)}_{\theta\rho}}{\theta\rho} \bigg|^s\dz 
    &\le
    2^{s-1}
    \mint_{Q_{2\theta\rho}(z_o)} 
    \bigg|\frac{v-h}{2\theta\rho} \bigg|^s +
    \bigg|\frac{h-\ell^{(h)}_{2\theta\rho}}{2\theta\rho} \bigg|^s\dz 
	 \\
    &\phantom{m} \le
    2^{s-1}\bigg[
    (8\theta)^{-n-2-s} \mint_{Q_{\rho/4}^+(z_o)}\Big|\frac{v-h}{\rho/4}\Big|^s \dz +
    c\,\gamma^{2-s}\, \theta^s \bigg] \\
    &\phantom{m} \le
    c(n,N,p,\K)\,\gamma^{2-s}
    \big[\theta^{-n-2-s} \epsilon + \theta^s \big].
\end{align*}
At this stage we choose $\epsilon:=\theta^{n+2+2p}$, where $\theta\in (0,2^{-7}]$  is a fixed parameter which will be specified later. This particular choice of $\epsilon$ determines $\delta =\delta (n,N,p,\K,\theta)$.
Rescaling back from $v$ on $Q_{\rho}^+(z_o)$ to $u$
on $Q_{\rho,\lambda}^+(z_o)$ 
we obtain for $s\in\{2,p\}$ that
\begin{align*}
    \mint_{Q_{2\theta\rho,\lambda}(z_o)} &
    \bigg|\frac{u - \ell_{\rho,\lambda} -
    c_1\gamma |D\ell_{\rho,\lambda}|\,
    \ell^{(h)}_{2\theta\rho}}{2\theta\rho} \bigg|^s\dz \nn\\
    &\le
    c\, c_1^s \,\gamma^2
    |D\ell_{\rho,\lambda}|^{s}\,
    \theta^s 
    =
    c\,\theta^s\, |D\ell_{\rho,\lambda}|^{s-p}
    \Big[\Phi_\lambda(\rho) +
    \delta^{-2}\rho^{2\beta}|D\ell_{\rho,\lambda}|^{p}
    \Big],
\end{align*}
where $c=c(n,N,p,\K,c_1)$.
Recalling that by $\widehat\ell_{2\theta\rho,\lambda}\colon\R^n\to\R^N$ we denote the 
unique affine map minimizing \eqref{def-lmin} (with $\rho$ replaced by $2\theta\rho$), the preceding inequality together with Lemma \ref{lem:quasimin} implies for $s\in\{2,p\}$
\begin{align}\label{excess-0}
    \mint_{Q_{2\theta\rho,\lambda}(z_o)}
    \Big|\frac{u - \widehat\ell_{2\theta\rho,\lambda}}{2\theta\rho}\Big|^s\dz
    \le
    c\,\theta^s\, |D\ell_{\rho,\lambda}|^{s-p} \Big[\Phi_\lambda(\rho) +
    \delta^{-2}\rho^{2\beta}|D\ell_{\rho,\lambda}|^{p}
    \Big],
\end{align}
where again $c=c(n,N,p,\K,c_1)$.
In order to proceed further we will show that 
\begin{equation}\label{replace}
	\tfrac12 |D\ell_{\rho,\lambda}|
	\le
	|D\widehat\ell_{2\theta\rho,\lambda}|
	\le 
	2|D\ell_{\rho,\lambda}|
\end{equation}
holds, which allows us to replace $D\ell_{\rho,\lambda}$ in (\ref{excess-0}) by $D\widehat\ell_{2\theta\rho,\lambda}$. 
From \eqref{kronz-est} and the Poincar\'e-type inequality in Lemma \ref{lem:poin-inter} applied with $s=2$ (note that $|D\ell_{\rho,\lambda}|\le2 \lambda$) we obtain the following bound for the difference of the two quantities:
\begin{align*}
    |D\widehat\ell_{2\theta\rho,\lambda} - D\ell_{\rho,\lambda}|^2
    &\le
    n(n+2)
    \mint_{Q_{2\theta\rho,\lambda}(z_o)}
    \bigg|\frac{u - (u)_{2\theta\rho,\lambda} - D\ell_{\rho,\lambda}\, (x-x_o)}
    {2\theta\rho}\bigg|^2\dz \nn\\
    &\le
    c \mint_{Q_{2\theta\rho,\lambda}(z_o)}|Du-D\ell_{\rho,\lambda}|^2\dz 
    \nn \\
    &\phantom{\le\ }+
    c\bigg[\lambda^{2-p}
    \mint_{Q_{2\theta\rho,\lambda}(z_o)}|Du-D\ell_{\rho,\lambda}|^{p-1} \dz +
    \rho^{\beta} \lambda\bigg]^2 ,
\end{align*}
where $c=c(n,N,p, \K, \psi_\beta, a_o, \|\omega\|_\infty, \mathcal G_\beta)$.
Using H\"older's inequality, the definition of $\Phi_\lambda(\rho)$, \eqref{cond-dec-a-1}
and \eqref{cond-dec-a-2} and $\epsilon_2\le \theta^{n+2}$, we further estimate 
\begin{align*}
    & |D\widehat\ell_{2\theta\rho,\lambda} - D\ell_{\rho,\lambda}|^2
    \nonumber\\
    &\phantom{mm}\le
    c\, \theta^{-(n+2)}|D\ell_{\rho,\lambda}|^{2-p}\Phi_\lambda(\rho) +
    c\Big[ |D\ell_{\rho,\lambda}|^{2-p} 
    \big(\theta^{-(n+2)}\Phi_\lambda(\rho)\big)^{1-\frac{1}{p}} +
    \rho^{\beta} |D\ell_{\rho,\lambda}|\Big]^2\\
    &\phantom{mm}\le
    c \Big[
    \theta^{-(n+2)} \epsilon_2^p +
    \big(\theta^{-(n+2)} \epsilon_2^p\big)^{2-\frac{2}{p}} +
    \rho^{2\beta} \Big] |D\ell_{\rho,\lambda}|^{2} \\
    &\phantom{mm}\le
    c_2
    \Big[\theta^{-(n+2)} \epsilon_2^p + \rho^{2\beta}\Big] 
    |D\ell_{\rho,\lambda}|^{2} 
    \le
    \tfrac14|D\ell_{\rho,\lambda}|^{2},
\end{align*}
provided the {\it smallness assumption}
\begin{align}\label{small-2}
    c_2\, \Big[\theta^{-(n+2)} \epsilon_2^p + R_o^{2\beta}\Big]
    \le
    \tfrac14
\end{align}
is satisfied. Note that $c_2=c_2(n,N,p, \K, \psi_\beta, a_o, \|\omega\|_\infty, \mathcal G_\beta)$. This provs the claim \eqref{replace}.
Hence, by \eqref{replace} we are allowed to replace $D\ell_{\rho,\lambda}$ by $D\widehat\ell_{2\theta\rho,\lambda}$ in \eqref{excess-0} which yields that 
\begin{align*}
    \mint_{Q_{2\theta\rho,\lambda}(z_o)}
    \Big|\frac{u - \widehat\ell_{2\theta\rho,\lambda}}{2\theta\rho}\Big|^2\dz
    \le
    c\,\theta^2\, |D\widehat\ell_{2\theta\rho,\lambda}|^{2-p} \Big[\Phi_\lambda(\rho) +
    \delta^{-2}\rho^{2\beta}|D\ell_{\rho,\lambda}|^{p}
    \Big].
\end{align*}
Combining this  with (\ref{excess-0}) for $s=p$ 
we find that
\begin{align*}
    \mint_{Q_{2\theta\rho,\lambda}(z_o)} 
    \bigg|V_{|D\widehat\ell_{2\theta\rho,\lambda}|}
    \bigg(\frac{u - \widehat\ell_{2\theta\rho,\lambda}}{2\theta\rho}\bigg)\bigg|^2  
    \le
    c\,\theta^2
    \Big[\Phi_\lambda(\rho) +
    \delta^{-2}\rho^{2\beta}|D\ell_{\rho,\lambda}|^{p}
    \Big] ,
\end{align*}
where $c=c(n,N,p, \K, \psi_\beta, a_o, \|\omega\|_\infty, \mathcal G_\beta)$. Next, we apply Caccioppoli's inequality from Lemma \ref{lem:cac} in order to estimate the left-hand side
of the preceding inequality from below. We obtain (note that $|D\widehat\ell_{2\theta\rho,\lambda}|\ge \frac12|D\ell_{\rho,\lambda}|\ge 2^{-8}\lambda\ge 2^{-8}\boldsymbol G(\epsilon_1)$, which allows us to apply the lemma) the following estimate:
\begin{align*}
    \mint_{Q_{\theta\rho,\lambda}(z_o)}
    \big|V_{|D\widehat\ell_{2\theta\rho,\lambda}|}
    \big(Du - D\widehat \ell_{2\theta\rho,\lambda}\big)\big|^2 \dz
    \le
    c
    \Big[\theta^2\,\Phi_\lambda(\rho) +
    \delta^{-2}(\theta\rho)^\beta|D\ell_{\rho,\lambda}|^{p}
    \Big] 
\end{align*}
with a constant $c=c(n,N,p, \K, \psi_\beta, a_o, \|\omega\|_\infty, \mathcal G_\beta)$.
Now, recalling the definition of $\ell_{\theta\rho,\lambda}$ Lemma \ref{lem:V-A} allows us to replace $D\widehat\ell_{2\theta\rho,\lambda}$ by $D\ell_{\theta\rho,\lambda}$ in the preceding estimate, i.e.
\begin{align*}
    \Phi_\lambda(\theta\rho)
    &=
    \mint_{Q_{\theta\rho,\lambda}(z_o)}
    \big|V_{|D\ell_{\theta\rho,\lambda}|}
    \big(Du - D\ell_{\theta\rho,\lambda}\big)\big|^2 \dz \\
    &\le
    2^{2p} \mint_{Q_{\theta\rho,\lambda}(z_o)}
    \big|V_{|D\widehat\ell_{2\theta\rho,\lambda}|}
    \big(Du - D\widehat\ell_{2\theta\rho,\lambda}\big)\big|^2 \dz \\
    &\le
    c_3
    \Big[\theta^2\,\Phi_\lambda(\rho) +
    \delta^{-2}(\theta\rho)^\beta|D\ell_{\rho,\lambda}|^{p}
    \Big] ,
\end{align*}
where $\theta\in (0,2^{-7}]$ is still to be chosen and $c_3=c_3(n,N,p, \K, \psi_\beta, a_o, \|\omega\|_\infty, \mathcal G_\beta)$. 

Next, we turn our attention to \textit{the second case $(x_o)_n< 4\theta\rho$}.
We recall that $\theta\in(0,2^{-7}]$ is an arbitrary fixed parameter that shall be chosen at the end of the proof.
With $z_o':=((x_o)_1,\dots,(x_o)_{n-1},0,t_o)$ denoting the orthogonal projection of $z_o$ on $\Gamma$ we have $Q_{16\theta\rho}^+(z_o')\subset Q_{\rho/8}^+(z_o')\subset Q_{\rho/4}^+(z_o)$.
With the definition of the affine function $\ell^{(h)}$ from \eqref{def-lh} we use the a priori estimate for the $\mathcal A$-caloric function $h$ from Proposition \ref{prop:lin} which yields that for $s\in\{2,p\}$ there holds
\begin{align*}
    \mint_{Q_{16\theta\rho}^+(z_o')} 
    \bigg|\frac{h-\ell^{(h)}_{z_o';16\theta\rho}}{16\theta\rho} \bigg|^s\dz 
    &\le
    c\,(128\theta)^s 
    \mint_{Q_{\rho/8}^+(z_o')}\bigg|\frac{h}{\rho/8} \bigg|^s +|Dh|^s \dz \\
    &\le
    c\,\theta^s 
    \mint_{Q_{\rho/4}^+(z_o)}\bigg|\frac{h}{\rho/4} \bigg|^s +|Dh|^s \dz \\
    & \le
    c(n,N,p,\K)\,\gamma^{2-s}\,\theta^s .
\end{align*}
Here we have also used \eqref{h-small} in the last line. Combining this with (\ref{w-h}) we deduce
\begin{align*}
    \mint_{Q_{16\theta\rho}^+(z_o')} 
    \bigg|\frac{v-\ell^{(h)}_{z_o';16\theta\rho}}{16\theta\rho} \bigg|^s\dz 
    &\le
    2^{s-1}
    \mint_{Q_{16\theta\rho}^+(z_o')} 
    \bigg|\frac{v-h}{16\theta\rho} \bigg|^s +
    \bigg|\frac{h-\ell^{(h)}_{z_o';16\theta\rho}}{16\theta\rho} \bigg|^s\dz 
	 \\
    &\le
    2^{s-1}\bigg[
    (128\theta)^{-n-2-s} \mint_{Q_{\rho/8}^+(z_o')}\Big|\frac{v-h}{\rho/8}\Big|^s \dz +
    c\,\gamma^{2-s}\, \theta^s \bigg] \\
    &\le
    c\bigg[
    \theta^{-n-2-s} \mint_{Q_{\rho/4}^+(z_o)}\Big|\frac{v-h}{\rho/4}\Big|^s \dz +
    \gamma^{2-s}\, \theta^s \bigg] \\
    &\le
    c(n,N,p,\K)\,\gamma^{2-s}
    \big[\theta^{-n-2-s} \epsilon + \theta^s \big].
\end{align*}
We choose $\epsilon:=\theta^{n+2+2p}$ as in the first case and note that this particular choice of $\epsilon$ determines $\delta =\delta (n,N,p,\K,\theta)$.
Rescaling back from $v$ on $Q_{\rho}^+(z_o)$ to $u$
on $Q_{\rho,\lambda}^+(z_o)$ we obtain for $s\in\{2,p\}$ that
\begin{align*}
    \mint_{Q_{16\theta\rho,\lambda}^+(z_o')} &
    \bigg|\frac{u - \ell_{\rho,\lambda} -
    c_1\gamma |D\ell_{\rho,\lambda}|\,
    \ell^{(h)}_{z_o';16\theta\rho}}{16\theta\rho} \bigg|^s\dz \nn\\
    &\le
    c\, c_1^s \,\gamma^2
    |D\ell_{\rho,\lambda}|^{s}\,
    \theta^s 
    =
    c\,\theta^s\, |D\ell_{\rho,\lambda}|^{s-p}
    \Big[\Phi_\lambda(\rho) +
    \delta^{-2}\rho^{2\beta}|D\ell_{\rho,\lambda}|^{p}
    \Big],
\end{align*}
where $c=c(n,N,p,\K,c_1)$.
We note that the definitions of $\ell_{\rho,\lambda}$ and $\ell^{(h)}_{z_o';16\theta\rho}$ and the fact that $(x_o)_n< 4\theta\rho<\rho/2$ imply that 
$\ell_{\rho,\lambda} - c_1\gamma |D\ell_{\rho,\lambda}|\,\ell^{(h)}_{z_o';16\theta\rho}=0$ on $\Gamma$. By Lemma \ref{lem:quasimin}, we can replace this affine function by the unique affine map $\widehat\ell_{z_o';16\theta\rho,\lambda}\colon\R^n\to\R^N$ minimizing \eqref{def-lmin} (with $(z_o,\rho)$ replaced by $(z_o',16\theta\rho)$) amongst all affine maps $\ell(z)=\ell(x)$ satisfying $\ell =0$ on $\Gamma$. Hence, for $s\in\{2,p\}$ we get
\begin{align}\label{excess-0-bd}
    \mint_{Q_{16\theta\rho,\lambda}^+(z_o')}
    \Big|\frac{u - \widehat\ell_{z_o';16\theta\rho,\lambda}}{16\theta\rho}\Big|^s\dz
    \le
    c\,\theta^s\, |D\ell_{\rho,\lambda}|^{s-p} \Big[\Phi_\lambda(\rho) +
    \delta^{-2}\rho^{2\beta}|D\ell_{\rho,\lambda}|^{p}
    \Big],
\end{align}
where $c=c(n,N,p,\K,c_1)$.
In order to proceed further we will show that 
\begin{equation}\label{replace-bd}
	\tfrac12 |D\ell_{\rho,\lambda}|
	\le
	|D\widehat\ell_{z_o';16\theta\rho,\lambda}|
	\le 
	2|D\ell_{\rho,\lambda}|
\end{equation}
holds which will allow us to replace $D\ell_{\rho,\lambda}$ in (\ref{excess-0-bd}) by $D\widehat\ell_{z_o';16\theta\rho,\lambda}$. 
Using \eqref{kronz-est-boundary} and the Poincar\'e inequality from Lemma \ref{lem:poin-boundary} we obtain the following bound for the difference of the two quantities:
\begin{align}\label{kronz-bd}
    |D\widehat\ell_{z_o';16\theta\rho,\lambda} - D\ell_{\rho,\lambda}|^2
    &\le
    (n+2)
    \mint_{Q_{16\theta\rho,\lambda}^+(z_o')}
    \bigg|\frac{u - \ell_{\rho,\lambda}}
    {16\theta\rho}\bigg|^2\dz \\
    &\le
    c_4(n,N) 
    \mint_{Q_{16\theta\rho,\lambda}^+(z_o')}|Du-D\ell_{\rho,\lambda}|^2\dz .
    \nn
\end{align}
Taking into account the fact that $Q_{16\theta\rho,\lambda}^+(z_o')\subset Q_\rho^+(z_o)$, the definition of $\Phi_\lambda(\rho)$ and hypothesis
\eqref{cond-dec-a-2}, we further estimate 
\begin{align*}
    |D\widehat\ell_{z_o';16\theta\rho,\lambda} - D\ell_{\rho,\lambda}|^2
    &\le
    c_4\, (16\theta)^{-(n+2)}
    \mint_{Q_{\rho,\lambda}^+(z_o)}|Du-D\ell_{\rho,\lambda}|^2\dz \\
    &\le
    c_4\, \theta^{-(n+2)}|D\ell_{\rho,\lambda}|^{2-p}\Phi_\lambda(\rho) \\
    &\le
    c_4\, \theta^{-(n+2)} \epsilon_2^p \,
    |D\ell_{\rho,\lambda}|^{2}  
    \le
    \tfrac14|D\ell_{\rho,\lambda}|^{2},
\end{align*}
provided the {\it smallness assumption}
\begin{align}\label{small-2-bd}
    c_4\, \theta^{-(n+2)} \epsilon_2^p
    \le
    \tfrac14
\end{align}
is satisfied with the constant $c_4=c_4(n,N)$. This ensures that the claim \eqref{replace-bd} is true.
Hence, by \eqref{replace-bd} we are allowed to replace in \eqref{excess-0-bd} $D\ell_{\rho,\lambda}$ by $D\widehat\ell_{z_o';16\theta\rho,\lambda}$ which yields 
\begin{align*}
    \mint_{Q_{16\theta\rho,\lambda}^+(z_o')}
    \Big|\frac{u - \widehat\ell_{z_o';16\theta\rho,\lambda}}{16\theta\rho}\Big|^2\dz
    \le
    c\,\theta^2\, |D\widehat\ell_{z_o';16\theta\rho,\lambda}|^{2-p} \Big[\Phi_\lambda(\rho) +
    \delta^{-2}\rho^{2\beta}|D\ell_{\rho,\lambda}|^{p}
    \Big].
\end{align*}
Combining this  with (\ref{excess-0-bd}) for $s=p$
we find that
\begin{align*}
    \mint_{Q_{16\theta\rho,\lambda}^+(z_o')} 
    \bigg|V_{|D\widehat\ell_{z_o';16\theta\rho,\lambda}|}
    \bigg(\frac{u - \widehat\ell_{z_o';16\theta\rho,\lambda}}{16\theta\rho}\bigg)\bigg|^2  
    \le
    c\,\theta^2\,
    \Big[\Phi_\lambda(\rho) +
    \delta^{-2}\rho^{2\beta}|D\ell_{\rho,\lambda}|^{p}
    \Big] ,
\end{align*}
where $c=c(n,N,p, \K, \psi_\beta, a_o, \|\omega\|_\infty, \mathcal G_\beta)$. Next, we apply Caccioppoli's inequality from Lemma \ref{lem:cac} in order to estimate the left-hand side
of the preceding inequality from below. We obtain (note that $|D\widehat\ell_{z_o';8\theta\rho,\lambda}|\ge \frac12|D\ell_{\rho,\lambda}|\ge 2^{-8}\lambda\ge 2^{-8}\boldsymbol G(\epsilon_1)$, which allows us to apply the lemma) the following estimate:
\begin{align*}
    \mint_{Q_{8\theta\rho,\lambda}^+(z_o')}
    \big|V_{|D\widehat\ell_{z_o';16\theta\rho,\lambda}|}
    \big(Du - D\widehat \ell_{z_o';16\theta\rho,\lambda}\big)\big|^2 \dz
    \le
    c\Big[\theta^2\,\Phi_\lambda(\rho) +
    \delta^{-2}(\theta\rho)^\beta|D\ell_{\rho,\lambda}|^{p}
    \Big] 
\end{align*}
with a constant $c=c(n,N,p, \K, \psi_\beta, a_o, \|\omega\|_\infty, \mathcal G_\beta)$.
Since $Q_{\theta\rho,\lambda}^+(z_o)\subset Q_{8\theta\rho,\lambda}^+(z_o')$ the last estimate implies
\begin{align*}
    \mint_{Q_{\theta\rho,\lambda}^+(z_o)}
    \big|V_{|D\widehat\ell_{z_o';16\theta\rho,\lambda}|}
    \big(Du - D\widehat \ell_{z_o';16\theta\rho,\lambda}\big)\big|^2 \dz
    \le
    c\Big[\theta^2\,\Phi_\lambda(\rho) +
    \delta^{-2}(\theta\rho)^\beta|D\ell_{\rho,\lambda}|^{p}
    \Big]  .
\end{align*}
Now, recalling the definition of $\ell_{\theta\rho,\lambda}$, Lemma \ref{lem:V-A} allows us to replace $D\widehat\ell_{z_o';16\theta\rho,\lambda}$ by $D\ell_{\theta\rho,\lambda}$ in the preceding estimate, i.e.
\begin{align*}
    \Phi_\lambda(\theta\rho)
    &=
    \mint_{Q_{\theta\rho,\lambda}^+(z_o)}
    \big|V_{|D\ell_{\theta\rho,\lambda}|}
    \big(Du - D\ell_{\theta\rho,\lambda}\big)\big|^2 \dz \\
    &\le
    2^{2p} \mint_{Q_{\theta\rho,\lambda}^+(z_o)}
    \big|V_{|D\widehat\ell_{z_o';16\theta\rho,\lambda}|}
    \big(Du - D\widehat\ell_{z_o';16\theta\rho,\lambda}\big)\big|^2 \dz \\
    &\le
    c_5\Big[\theta^2\,\Phi_\lambda(\rho) +
    \delta^{-2}(\theta\rho)^\beta|D\ell_{\rho,\lambda}|^{p}
    \Big]  ,
\end{align*}
where $\theta\in (0,2^{-7}]$ is still to be fixed and $c_5=c_5(n,N,p, \K, \psi_\beta, a_o, \|\omega\|_\infty$, $\mathcal G_\beta)$. 

At this stage we perform the choices of the constants $\theta,\epsilon_1, \epsilon_2$ and $R_o$.
We first choose $\theta\in(0,2^{-7}]$ such that $c_3 \theta^2\le \frac12 \theta^{\beta}$, $c_5 \theta^2\le \frac12 \theta^{\beta}$ and $\theta\le 8^{-p/\beta}$ holds. Note that $\theta\le 8^{-p/\beta}$ appears in the statement of the lemma and will be needed in the application of the lemma later on.
This fixes  $\theta$ in dependence
on $n,N,p, \K, \beta,\psi_\beta, a_o, \|\omega\|_\infty, \mathcal G_\beta$. 
As mentioned before this fixes firstly $\epsilon=\theta^{n+2+2p}$ in dependence of the same parameters, and secondly
$\delta$, also in dependence on the the same parameters.
Finally, we have to ensure that the smallness conditions \eqref{cond-eps0}, \eqref{small-2} and \eqref{small-2-bd} are satisfied. 
Therefore, we choose $\epsilon_1\in(0,1]$ according to
\begin{equation*}
	\epsilon_1
	\le
	\tfrac13 \delta
\end{equation*}
and $\epsilon_2\in(0,\theta^{n+2}]$ small enough to have 
\begin{equation*}
	\epsilon_2
	\le
	\tfrac13 \delta^2
	\quad\mbox{and}\quad
	\max\{c_2,c_4\}\, \theta^{-(n+2)} \epsilon_2^p
    \le
    \tfrac18
\end{equation*}
and finally $R_o\in(0,1]$ satisfying
\begin{equation*}
	R_o^\beta
	\le 
	\tfrac13\delta
	\quad\mbox{and}\quad
	c_2\, R_o^{2\beta}
    \le
    \tfrac18.
\end{equation*}
Then, $\epsilon_1, \epsilon_2$ and $R$ depend on $n,N,p, \K, \psi_\beta, \beta,a_o, \|\omega\|_\infty, \mathcal G_\beta$.
This finishes the proof of the lemma.
\end{proof}

Our next aim is to iterate Lemma \ref{lem:decay-a-cal}. This is achieved in the following:
%
\begin{proposition}\label{prop:iter-a-cal}
There exist constants
$R_1\in(0,1]$, $\theta\in(0,8^{-p/\beta}]$, $\epsilon_1\in(0,1]$, $\epsilon_2\in(0,\theta^{n+2}]$ and $\bar c\ge 1$ depending on 
$n,N,p, \K,\beta,\psi_\beta, a_o, $ $\|\omega\|_\infty,\mathcal G_\beta$
such that under the assumptions of Proposition \ref{prop:main-lip} with some $R\in(0,R_1]$
the following is true: Whenever 
$Q_{\rho,\lambda}(z_o)\subset Q_R$ is a cylinder with $(x_o)_n\ge 0$ and $\lambda\ge \boldsymbol G(\epsilon_1)$ satisfying
\begin{equation}\label{cond-dec-a-1-}
    2^{-6}\lambda
    \le
    |D\ell_{z_o;\rho,\lambda}|
    \le
    \lambda
\end{equation}
and 
\begin{equation}\label{cond-dec-a-2-}
    \Phi_\lambda\big(z_o,\rho,D\ell_{z_o;\rho,\lambda}\big)
    \le
    \epsilon_2^p\, |D\ell_{z_o;\rho,\lambda}|^p,
\end{equation}
then the limit
\begin{equation}\label{limit-nondeg}
    \Gamma_{z_o}
    \equiv
    \lim_{r\downarrow 0}D\ell_{z_o;r}
\end{equation}
exists and
there holds
\begin{equation}\label{mod-gamma}
    2^{-7}\lambda
    \le
    |\Gamma_{z_o}|
    \le
    2 \lambda.
\end{equation}
\end{proposition}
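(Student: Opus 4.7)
The plan is to iterate Lemma \ref{lem:decay-a-cal} along the geometric sequence of radii $\rho_j := \theta^j \rho$, $j\ge 0$, where $\theta$, $\epsilon_1$, $\epsilon_2$, $R_o$ are the constants provided by that lemma (with $R_1 \le R_o$ to be shrunk during the argument). I would prove by induction on $j$ that the hypotheses \eqref{cond-dec-a-1} and \eqref{cond-dec-a-2} are in force at each level $\rho_j$, so that Lemma \ref{lem:decay-a-cal} may be applied. For $j=0$ this is the content of the stronger assumptions \eqref{cond-dec-a-1-} and \eqref{cond-dec-a-2-} in the statement; the slack between $[2^{-7}\lambda,2\lambda]$ and $[2^{-6}\lambda,\lambda]$ is precisely what allows the gradient of $\ell_{z_o;\rho_j,\lambda}$ to wiggle during the iteration.

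The inductive step proceeds in two parts. First, iterating \eqref{decay-a-cal} and using $\rho_j^\beta \le \theta^{j\beta}\rho^\beta$ yields a geometric decay of the form
\begin{equation*}
    \Phi_\lambda\big(z_o,\rho_j,D\ell_{z_o;\rho_j,\lambda}\big)
    \le
    C\,\theta^{j\beta}\,\big[\Phi_\lambda(z_o,\rho,D\ell_{z_o;\rho,\lambda}) + \rho^\beta\lambda^p\big],
\end{equation*}
with a constant $C$ depending only on $\bar c$, $\theta$ and $\beta$ (here one uses the inductive hypothesis $\tfrac12|D\ell_{z_o;\rho,\lambda}|\le|D\ell_{z_o;\rho_j,\lambda}|\le 2|D\ell_{z_o;\rho,\lambda}|$ to replace every occurrence of $|D\ell_{z_o;\rho_j,\lambda}|^p$ on the right-hand side by $|D\ell_{z_o;\rho,\lambda}|^p$). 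Second, the difference of consecutive affine functions must be controlled: following the computation carried out in the proof of Lemma \ref{lem:decay-a-cal} around \eqref{replace} and \eqref{replace-bd}, I would combine \eqref{kronz-est} or \eqref{kronz-est-boundary} with the appropriate Poincar\'e inequality (Lemma \ref{lem:poin-inter} in the interior case $(x_o)_n\ge \rho_j/2$, Lemma \ref{lem:poin-boundary} otherwise, with $\zeta = D\ell_{z_o;\rho_j,\lambda}/$ normalization in the boundary case) to deduce an estimate of the form
\begin{equation*}
    |D\ell_{z_o;\rho_{j+1},\lambda} - D\ell_{z_o;\rho_j,\lambda}|
    \le
    C\,\Big[\theta^{-(n+2)}|D\ell_{z_o;\rho,\lambda}|^{2-p}\Phi_\lambda(z_o,\rho_j,D\ell_{z_o;\rho_j,\lambda})\Big]^{1/2}
    + C\rho_j^\beta|D\ell_{z_o;\rho,\lambda}|,
\end{equation*}
which via the excess-decay bound above is dominated by $C\,\theta^{j\beta/2}|D\ell_{z_o;\rho,\lambda}|$.

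Summing this geometric series in $j$ and choosing $R_1$ so small that $C\sum_{j\ge 0}\theta^{j\beta/2}\cdot R_1^{\beta/2}\le 2^{-8}$, the total variation $\sum_j|D\ell_{z_o;\rho_{j+1},\lambda}-D\ell_{z_o;\rho_j,\lambda}|$ is bounded by $2^{-8}|D\ell_{z_o;\rho,\lambda}|\le 2^{-8}\lambda$. This keeps $|D\ell_{z_o;\rho_j,\lambda}|\in[2^{-7}\lambda,2\lambda]$ for every $j$, closes the induction, and identifies $(D\ell_{z_o;\rho_j,\lambda})_j$ as a Cauchy sequence; its limit is the desired $\Gamma_{z_o}$, and the bound \eqref{mod-gamma} is inherited. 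To upgrade the limit from the geometric sequence to arbitrary $r\downarrow 0$, I would sandwich $r\in[\rho_{j+1},\rho_j]$ and once more estimate $|D\ell_{z_o;r,\lambda}-D\ell_{z_o;\rho_j,\lambda}|$ by the same Poincar\'e/excess argument. Finally, since almost every $z_o$ is a Lebesgue point of $Du$ and the cylinders $Q_{r,\lambda}^+(z_o)$ and $Q_r^+(z_o)$ have bounded eccentricity (for fixed $\lambda$), the limit along $Q_{r,\lambda}^+(z_o)$ coincides with the limit along $Q_r^+(z_o)$, giving \eqref{limit-nondeg}.

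The main obstacle is the total-variation estimate for $D\ell_{z_o;\rho_j,\lambda}$: the iteration only works if the cumulative perturbation of $|D\ell|$ stays strictly below $2^{-7}\lambda$, so the exponent $\beta/2$ in the geometric bound, the dependence of the Poincar\'e constant on $\lambda$ (hence on $\boldsymbol G(\epsilon_1)$), and the smallness of $R_1$ must be threaded carefully and consistently across both the interior and boundary alternatives.
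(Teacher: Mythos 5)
Your core iteration is the same as the paper's: apply Lemma \ref{lem:decay-a-cal} along the scales $\theta^j\rho$, keep \eqref{cond-dec-a-1} and \eqref{cond-dec-a-2} alive by controlling the drift of $D\ell_{z_o;\theta^j\rho,\lambda}$, and read off a Cauchy sequence. One remark on the drift step: since $\ell_{z_o;r,\lambda}$ in \eqref{def-ell-mean} is built from mean values, the paper bounds $|D\ell_{z_o;\theta^i\rho,\lambda}-D\ell_{z_o;\theta^{i-1}\rho,\lambda}|^p$ in one line by Jensen and the volume ratio $\theta^{-(n+2)}$ (see \eqref{mw-diff}), with constant exactly $1$; your detour through $\widehat\ell$, \eqref{kronz-est}/\eqref{kronz-est-boundary} and the Poincar\'e lemmas is workable but note (a) those estimates concern the minimizing affine maps, not the mean-value maps you actually iterate, and (b) the multiplicative constant they produce cannot be compensated by shrinking $\theta$ (it is fixed by Lemma \ref{lem:decay-a-cal}) or by shrinking $R_1$ alone; you must also shrink $\epsilon_2$, which is admissible but should be said.

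The genuine gap is in your last step, the passage from the intrinsic limit $\lim_j D\ell_{z_o;\theta^j\rho,\lambda}$ to the non-intrinsic limit \eqref{limit-nondeg}. You invoke that almost every $z_o$ is a Lebesgue point of $Du$ and conclude by comparing the two families of cylinders. But the proposition asserts the existence of $\lim_{r\downarrow 0}D\ell_{z_o;r}$ and the bound \eqref{mod-gamma} for \emph{every} admissible center $z_o$ with $(x_o)_n\ge 0$ satisfying \eqref{cond-dec-a-1-}--\eqref{cond-dec-a-2-}; an a.e.\ statement gives nothing for centers $z_o\in\Gamma$ (a null set, and the boundary case is the whole point here, where $D\ell_{z_o;r}=(D_nu)^+_{z_o;r}\otimes e_n$ is not even a full mean of $Du$), nor for interior centers that happen not to be Lebesgue points. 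The paper closes this without any measure-theoretic input: for $r$ with $\lambda^{(2-p)/2}\theta^{j+1}\rho<r\le\lambda^{(2-p)/2}\theta^j\rho$ one has $Q_r^+(z_o)\subset Q^+_{\theta^j\rho,\lambda}(z_o)$ with volume ratio at most $\theta^{-(n+2)}\lambda^{n(p-2)/2}$, so Jensen plus the decayed excess (I)$_j$ gives $|D\ell_{z_o;r}-D\ell_{z_o;\theta^j\rho,\lambda}|\le c\,\lambda^{n(p-2)/(2p)}\theta^{\beta j/p}|D\ell_{z_o;\rho,\lambda}|$, which together with the quantitative Cauchy estimate \eqref{gamma-tilde} yields \eqref{limit-nondeg} and \eqref{mod-gamma} at the given $z_o$. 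Note also that the correct sandwich couples $r$ to $j$ through the intrinsic rescaling $\lambda^{(2-p)/2}$, not $r\in[\theta^{j+1}\rho,\theta^j\rho]$ as in your plan; this is what guarantees the inclusion of the standard cylinder in the intrinsic one. With that replacement your argument becomes complete.
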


\begin{proof}
Once again we abbreviate $\ell_{\rho,\lambda}\equiv \ell_{z_o;\rho,\lambda}$ and $\Phi_\lambda(\rho)\equiv\Phi_\lambda(z_o,\rho,D\ell_{z_o;\rho,\lambda})$.
We let
$R_o\in(0,1]$,
$\theta\in(0,8^{-p/\beta}]$, $\epsilon_1\in(0,1]$, $\epsilon_2\in(0,\theta^{n+2}]$ and $\bar c\ge 1$ be the corresponding constants from Lemma \ref{lem:decay-a-cal} depending on 
$n,N,p, \K, \beta,\psi_\beta, a_o, \|\omega\|_\infty, \mathcal G_\beta$.
Next, we choose $R_1\in(0,1]$ satisfying
\begin{equation*}
    R_1
    \le
    \max\Big\{R_o,\Big(\frac{\epsilon_2^p}{2^{p}\bar c}\Big)^{\frac{1}{\beta}}\Big\}.
\end{equation*}
By induction we shall prove that for any $i\in\N$ there holds:
\begin{align*}
	\tag*{({\bf I})$_i$}
	\Phi_\lambda(\theta^i\rho)
    \le
    \theta^{\beta i}\,\epsilon_2^p\, |D\ell_{\rho,\lambda}|^{p} ,
\end{align*}
\begin{align*}
	\tag*{({\bf II})$_i$}
	\bigg[1-\frac14
    \sum_{j=0}^{i-1}2^{-j}\bigg]|D\ell_{\rho,\lambda}|
    \le
    |D\ell_{\theta^i\rho,\lambda}|
    \le
	\bigg[1+\frac14 \sum_{j=0}^{i-1}2^{-j}\bigg]|D\ell_{\rho,\lambda}|\,.
\end{align*}
We first prove (I)$_1$ and (II)$_1$. Due to assumptions \eqref{cond-dec-a-1-} and \eqref{cond-dec-a-2-} we are allowed to apply Lemma \ref{lem:decay-a-cal}. Therefore, \eqref{decay-a-cal}, \eqref{cond-dec-a-2-} and the choice of $R_1$ yield that 
\begin{align*}
	\Phi_\lambda(\theta\rho)
    \le
    \tfrac12 \theta^{\beta}
    \Big[\Phi_\lambda(\rho) +
    \bar c\,\rho^{\beta} |D\ell_{\rho,\lambda}|^{p}
    \Big] 
    \le
    \tfrac12 \theta^{\beta}
    \big[\epsilon_2^p + \bar c\,R_1^{\beta} \big] |D\ell_{\rho,\lambda}|^{p}
    \le
    \theta^{\beta}\epsilon_2^p\, |D\ell_{\rho,\lambda}|^{p} ,
\end{align*}
i.e. assertion (I)$_1$ holds.
Assertion (II)$_1$ now is a consequence of \eqref{cond-dec-a-2-} and
the fact that  $\epsilon_2\le\theta^{n+2}$ and $\theta\le 8^{-p/\beta}$ since
\begin{align*}
    |D\ell_{\theta\rho,\lambda} - D\ell_{\rho,\lambda}|
    &\le
    \bigg(\mint_{Q_{\theta\rho,\lambda}^+(z_o)}
    |Du - D\ell_{\rho,\lambda}|^p\dz\bigg)^{\frac1p}
    \le
    \theta^{-\frac{n+2}{p}}
    \Phi_\lambda(\rho)^{\frac1p} \\
    &\le
    \theta^{-\frac{n+2}{p}} \epsilon_2
    |D\ell_{\rho,\lambda}| 
    \le
    \theta^{(n+2)(1-\frac{1}{p})} 
    |D\ell_{\rho,\lambda}| \\
    &\le
    8^{-\frac1\beta (n+2)(p-1)} 
    |D\ell_{\rho,\lambda}|
    \le
    \tfrac14\, |D\ell_{\rho,\lambda}|.
\end{align*}
Now, we prove (I)$_i$ and (II)$_i$ for $i>1$ assuming that (I)$_{i-1}$ and (II)$_{i-1}$ hold.
Using \eqref{cond-dec-a-1-} in (II)$_{i-1}$ we see that assumption \eqref{cond-dec-a-1}
of Lemma \ref{lem:decay-a-cal} is satisfied on $Q_{\theta^{i-1}\rho,\lambda}^+(z_o)$. 
Moreover, by the bound from below in (II)$_{i-1}$ we have $|D\ell_{\rho,\lambda}|\le 2|D\ell_{\theta^{i-1}\rho,\lambda}|$.
Joining this with (I)$_{i-1}$, (\ref{cond-dec-a-2-}) and  $\theta\le 8^{-p/\beta}$ we get
\begin{align*}
    \Phi_\lambda(\theta^{i-1}\rho)
    \le
    \theta^{\beta(i-1)}\, \epsilon_2^p \, |D\ell_{\rho,\lambda}|^{p} 
    \le
    2^{p}\theta^{\beta(i-1)}\, \epsilon_2^p\, |D\ell_{\theta^{i-1}\rho,\lambda}|^{p} 
    \le
    \epsilon_2^p\, |D\ell_{\theta^{i-1}\rho,\lambda}|^p,
\end{align*}
ensuring that also \eqref{cond-dec-a-2} holds.
Therefore, we can apply Lemma \ref{lem:decay-a-cal} with $\theta^{i-1}\rho$ instead of $\rho$. Together with (I)$_{i-1}$ and (II)$_{i-1}$ and the choice of $R_1$ this yields:
\begin{align*}
    \Phi_\lambda(\theta^{i}\rho)
    &\le
    \tfrac12 \theta^{\beta}
    \Big[\Phi_\lambda(\theta^{i-1}\rho) +
    \bar c\,(\theta^{i-1}\rho)^{\beta} |D\ell_{\theta^{i-1}\rho,\lambda}|^{p}
    \Big] \\
    &\le
    \tfrac12 \theta^{\beta}
    \Big[\theta^{\beta (i-1)}\,\epsilon_2^p\, |D\ell_{\rho,\lambda}|^{p} +
    2^{p}\bar c\,(\theta^{i-1}\rho)^{\beta} |D\ell_{\rho,\lambda}|^{p}
    \Big] \\
    &\le
    \tfrac12 \theta^{\beta i}
    \Big[\epsilon_2^p +
    2^{p}\bar c\,R_1^{\beta} 
    \Big] |D\ell_{\rho,\lambda}|^{p}  
    \le
    \theta^{\beta i}\, \epsilon_2^p \, |D\ell_{\rho,\lambda}|^{p}  ,
\end{align*}
proving (I)$_i$.
Moreover, from (I)$_{i-1}$ and $\epsilon_2\le \theta^{n+2}$ we obtain
\begin{align}\label{mw-diff}
    |D\ell_{\theta^{i}\rho,\lambda} - D\ell_{\theta^{i-1}\rho,\lambda}|^p
    &\le
    \theta^{-(n+2)}
    \Phi_\lambda(\theta^{i-1}\rho) \\
    &\le
    \theta^{\beta(i-1)}\, \theta^{-(n+2)} \,
    \epsilon_2^p |D\ell_{\rho,\lambda}|^p 
    \le
    \theta^{\beta(i-1)}
    |D\ell_{\rho,\lambda}|^p . \nn
\end{align}
Together with the fact that $\theta\le 8^{-p/\beta}$ we therefore have
\begin{align*}
    |D\ell_{\theta^{i}\rho,\lambda} - D\ell_{\theta^{i-1}\rho,\lambda}|
    \le
    8^{-(i-1)}
    |D\ell_{\rho,\lambda}|
    \le
    \tfrac1{4} \cdot 2^{-(i-1)}
    |D\ell_{\rho,\lambda}|,
\end{align*}
which together with (II)$_{i-1}$ proves the claim (II)$_i$.

We now come to the proof of \eqref{limit-nondeg} and \eqref{mod-gamma}.
Given $j<k$ the estimate in \eqref{mw-diff} applied for $i=j+1,\dots,k$ yields
\begin{align*}
    |D\ell_{\theta^j\rho,\lambda} - D\ell_{\theta^{k}\rho,\lambda}|
    &\le
    \sum_{i=j+1}^k |D\ell_{\theta^i\rho,\lambda} - D\ell_{\theta^{i-1}\rho,\lambda}| \\
    &\le
    |D\ell_{\rho,\lambda}|
    \sum_{i=j+1}^k \theta^{\frac{\beta(i-1)}{p}}
    \le
    \frac{\theta^{\frac{\beta j}{p}}}{1- \theta^{\frac{\beta}{p}}}~
    |D\ell_{\rho,\lambda}|  .
\end{align*}
Therefore, $\{D\ell_{\theta^{i}\rho,\lambda}\}_{i\in\N}$ is a Cauchy sequence and the limit
\begin{equation*}
    \widetilde \Gamma_{z_o}
    =
    \lim_{i\to\infty}D\ell_{\theta^{i}\rho,\lambda}
\end{equation*}
exists.
Passing to the limit $k\to\infty$ in the preceding inequality and taking into account that 
$1/(1- \theta^{\frac{\beta}{p}})\le 2$ since $\theta\le 8^{-\frac{p}{\beta}}$ yields
\begin{equation}\label{gamma-tilde}
    |D\ell_{\theta^j\rho,\lambda} - \widetilde\Gamma_{z_o}|
    \le
    2 \theta^{\frac{\beta j}{p}}\,
    |D\ell_{\rho,\lambda}|,
    \qquad\, \forall\, j\in\N .
\end{equation}
Finally, we have to replace $\widetilde\Gamma_{z_o}$ by $\Gamma_{z_o}$, i.e. by the limit of $D\ell_r$ defined with respect to non-intrinsic parabolic cylinders when $r\downarrow 0$.
For $r\in(0,\lambda^{\frac{2-p}{2}}\rho]$ we choose $j\in\N_0$ such that $\lambda^{(2-p)/2}\theta^{j+1}\rho< r\le\lambda^{(2-p)/2}\theta^j\rho$. From (I)$_j$ and $\epsilon_2\le \theta^{n+2}$ we deduce that
\begin{align*}
    |D\ell_r - D\ell_{\theta^j\rho,\lambda}|^p
    &\le
    \mint_{Q_r^+(z_o)}|Du-D\ell_{\theta^j\rho,\lambda}|^p \dz \\
    &\le
    \frac{|Q_{\theta^j\rho,\lambda}^+(z_o)|}{|Q_r^+(z_o)|}
    \mint_{Q_{\theta^j\rho,\lambda}^+(z_o)}
    |Du-D\ell_{\theta^j\rho,\lambda}|^p \dz \\
    &\le
    \frac{(\theta^j\rho)^{n+2}\lambda^{2-p}}{r^{n+2}} \
    \theta^{\beta j}\,\epsilon_2^p |D\ell_{\rho,\lambda}|^p\\
    &\le
    \theta^{-(n+2)} \lambda^{\frac{n(p-2)}{2}} \theta^{\beta j}\,\epsilon_2^p|D\ell_{\rho,\lambda}|^p \\
    &\le
    \lambda^{\frac{n(p-2)}{2}} \theta^{\beta j}\,|D\ell_{\rho,\lambda}|^p.
\end{align*}
Together with \eqref{gamma-tilde} we therefore have
\begin{align*}
    |D\ell_r - \widetilde\Gamma_{z_o}|
    &\le
    |D\ell_r - D\ell_{\theta^j\rho,\lambda}| +
    |D\ell_{\theta^j\rho,\lambda} - \widetilde\Gamma_{z_o}| \\
    &\le
    \lambda^{\frac{n(p-2)}{2p}} \theta^{\frac{\beta j}{p}}\,|D\ell_{\rho,\lambda}| +
    2 \theta^{\frac{\beta j}{p}}\,|D\ell_{\rho,\lambda}| \\
    &\le
    2\theta^{-\frac{\beta}{p}}\lambda^{\frac{(n+\beta)(p-2)}{2p}} \Big(\frac{r}{\rho}\Big)^{\frac{\beta}{p}}\,|D\ell_{\rho,\lambda}| .
\end{align*}
Passing to the limit $r\downarrow 0$ in the right-hand side we infer that
\begin{equation*}
    \Gamma_{z_o}
    \equiv
    \lim_{r\downarrow 0}D\ell_{r}
    =
    \widetilde \Gamma_{z_o}.
\end{equation*}
Moreover, passing to the limit $i\to\infty$ in (II)$_i$ and using assumption \eqref{cond-dec-a-1-} we find that (\ref{mod-gamma}) is satisfied for $\widetilde \Gamma_{z_o}$
and hence for $\Gamma_{z_o}$.
This finishes the proof of the Proposition.
\end{proof}


\subsection{The degenerate regime}\label{sec:degenerate}
In this section we are concerned with the so called {\it degenerate regime} where the solution of the original parabolic system is comparable to the solution of the parabolic $p$-Laplacian system. The main result of this chapter -- a bound for the mean value of $|Du|^p$ on a smaller nested cylinder -- is stated in Lemma \ref{lem:decay-p-cal}. The proof is achieved via a comparison problem and delicate a priori estimates which are a consequence of the $C^{1;\alpha}$-theorey for the parabolic $p$-Laplacian system due to DiBenedetto \& Friedman.

\subsubsection{A comparison problem}

Throughout this section we let $R\in(0,1]$ and suppose that the hypothesis of Proposition \ref{prop:main-lip} are in force.
In the following we let $A\in \R^{Nn}$ and $\epsilon\in(0,1]$ and consider a cylinder $Q_{\rho,\lambda}(z_o)\subset Q_R$ with $(x_o)_n\ge 0$ and $\lambda\ge 1$ and suppose that
\begin{equation}\label{cond-comp}
    \boldsymbol G(\epsilon)^p
    \le
    \mint_{Q_{\rho,\lambda}^+(z_o)}|Du|^p\dz
    \le
    \lambda^p
\end{equation}
holds, where $\boldsymbol G(\epsilon)$ is defined in \eqref{def-Geps}. At this point we note that \eqref{cond-comp} implies
\begin{equation}\label{cond-comp:d}
    \|Dg\|_{L^\infty(Q_R^+)}
    \le
    \epsilon\,\boldsymbol G(\epsilon)
    \le
    \epsilon\, \lambda.
\end{equation}
By 
$$v\in L^p\big(\Lambda_{\rho,\lambda}(t_o);W^{1,p}(B_\rho^+(x_o),\R^N)\big)$$ 
we denote the unique solution of the following parabolic Cauchy-Dirichlet-problem:
\begin{equation}\label{system-comp}
\left\{
\begin{array}{cc}
    \partial_t v - \Div b(Dv) = 0
    &\qquad\mbox{in $Q_{\rho,\lambda}^+(z_o)$} \\[7pt]
    v=u
    &\qquad\mbox{on $\partial_{\mathcal P}Q_{\rho,\lambda}^+(z_o)$.}
\end{array}
\right.
\end{equation}
In the following we will derive a comparison estimate for $v$. Thereby, the computations concerning the use of the time derivatives $\partial_t v$ and $\partial_t u$ are somewhat formal. Nevertheless, they can be made rigorous by the use of a mollification procedure in time as for instance Steklov averages. Since this argumentation is standard, we omit the details and proceed formally.
Moreover, we abbreviate $Q\equiv Q_{\rho,\lambda}^+(z_o)$ and $B\equiv B_\rho^+(x_o)$.

To prove the comparison estimate we first test both, the weak formulation of \eqref{system-comp} and \eqref{system-lat-weak} by $\varphi=v-u$ and then subtract the resulting identities. This leads us to
\begin{align*}
    \int_{Q}
    \big\langle a\big((Du+Dg)\Psi\big) , ( & Du-Dv) \Psi\big\rangle - 
    \langle b(Dv),Du-Dv\rangle\dz \\
    &=
    -\int_{Q}
    \partial_t (u-v)\cdot(u-v) \dz -
    \int_{Q} g_t\cdot(u-v) \dz
\end{align*}
Taking into account that
\begin{align*}
    \int_{Q}
    \partial_t (u-v)\cdot(u-v) \dz 
    &=
    \half \int_{B} |(u-v)(\cdot,t_o+\lambda^{2-p}\rho^2)|^2 \dx
    \ge
    0 
\end{align*}
and re-arranging terms we get
\begin{align}\label{comparison-start}
    \mint_{Q^+} 
    \langle b(Du)-b(Dv),Du-Dv\rangle\dz 
    \le
    \mbox{I} + \mbox{II} + \mbox{III} - \mbox{IV},
\end{align}
where
\begin{align*}
	\mbox{I}
    &:=
    \mint_{Q^+}
    \big\langle b(Du) - b(0) -
    \big[b\big(Du+Dg(0,t)\big) - b\big(Dg(0,t)\big)\big],Du-Dv\big\rangle\dz \nn\\
	\mbox{II}
    &:=
    \mint_{Q^+}
    \big\langle b\big(Du+Dg(0,t)\big) - b\big(Dg(0,t)\big) \nn\\
    &\phantom{mmmmm}- 
    \big[a\big(Du+Dg(0,t)\big) - a\big(Dg(0,t)\big)\big],
    Du-Dv\big\rangle\dz  \nn\\
	\mbox{III}
    &:=
    \mint_{Q^+}
    \big\langle a\big(Du+Dg(0,t)\big), Du-Dv\big\rangle -
    \big\langle a\big((Du+Dg)\Psi\big) ,
    (Du-Dv)\Psi\big\rangle\dz  \nn\\
	\mbox{IV}
    &:=
    \mint_{Q^+} g_t\cdot(u-v) \dz .
\end{align*}
Note that in II we used
$
    \tmint_{Q^+}
    \langle a\big(Dg(0,t)\big), Du-Dv\rangle\dz 
    =
    0
$.
We now in turn estimate the terms I -- IV. For the estimate of I we use Lemma \ref{lem:diff-1} with $(\xi,\xi_o)$ replaced by $(Du,0)$ and \eqref{cond-comp:d} to infer
\begin{align*}
    &\big|b(Du) - b(0) -
    \big[b\big(Du+Dg(0,t)\big) - b\big(Dg(0,t)\big)\big] \big|\\
    &\phantom{mmmmmmm}\le
    c\, |Dg(0,t)|\big(|Du|^2 + |Dg(0,t)|^2\big)^{\frac{p-3}{2}} |Du| \\
    &\phantom{mmmmmmm}\le
    c\,\epsilon \lambda\big(|Du|^2 + |Dg(0,t)|^2\big)^{\frac{p-3}{2}} |Du| \\
    &\phantom{mmmmmmm}\le
    c\,\epsilon\lambda
    \big[|Du|^{p-2}
    +
    \chi_{p>3}|Dg(0,t)|^{p-3} |Du| \big] \\
    &\phantom{mmmmmmm}\le
    c\,\epsilon \big[ \lambda|Du|^{p-2}
    +
    \lambda^{p-2} |Du| \big],
\end{align*}
where $c=c(p,\K)$ and $\chi_{p>3}=1$ if $p>3$ and $\chi_{p>3}=0$ if $p\le 3$.
Inserting this into I and using Young's and H\"older's inequality and \eqref{cond-comp} we obtain for $\delta\in(0,1]$ that
\begin{align*}
    \mbox{I}
    &\le
    c\,\epsilon
    \mint_{Q} \big[ \lambda|Du|^{p-2}
    +
    \lambda^{p-2} |Du| \big] |Du-Dv| \dz \\
    &\le
    \epsilon\delta\mint_{Q} |Du-Dv|^p \dz +
    c\,\epsilon
    \mint_{Q} \Big[\lambda^{\frac{p}{p-1}}|Du|^{\frac{p(p-2)}{p-1}}
    +
    \lambda^{\frac{p(p-2)}{p-1}}|Du|^{\frac{p}{p-1}}\Big]
    \dz \\
    &\le
    \delta\mint_{Q} |Du-Dv|^p \dz +
    c\,\epsilon
    \bigg[\lambda^{\frac{p}{p-1}}\bigg(\mint_{Q} |Du|^p\dz\bigg)^{\frac{p-2}{p-1}}
    +
    \lambda^{\frac{p(p-2)}{p-1}}\bigg(\mint_{Q}|Du|^p\dz\bigg)^{\frac{1}{p-1}}\bigg]
    \\
    &\le
    \delta\mint_{Q} |Du-Dv|^p \dz +
    c(p,\K,1/\delta)\, \epsilon\lambda^p.
\end{align*}
For the estimate of II we apply Lemma \ref{lem:asymp} with the choice $(Dg(0,t),Du(z)+Dg(0,t),\lambda,\epsilon)$ instead of $(A,\xi,\delta,\epsilon)$. Note that this is possible due to \eqref{cond-comp}, since
\begin{align*}
	|Dg(0,t)| + \lambda
	\ge
	\lambda
	\ge
	\boldsymbol G(\epsilon)
	=
    \frac{2^{11}\,\psi_\beta}{\epsilon}\Big[\|\omega\|_\infty K_{\epsilon} +
    \|Dg\|_{L^\infty(Q_R^+)}\Big]
    \ge
    \frac{8\|\omega\|_\infty K_{\epsilon}}{\epsilon}.
\end{align*}
The application of the lemma yields
\begin{align*}
    \mbox{II}
    &\le
    c\,\epsilon \mint_{Q}
    \big(|Du| +\lambda\big)
    \big(1+|Dg(0,t)|^2 + |Du|^2\big)^{\frac{p-2}{2}}
    |Du-Dv| \dz \\
    &\le
    c(p)\,\epsilon \mint_{Q}
    (|Du| +\lambda)^{p-1}
    |Du-Dv| \dz ,
\end{align*}
where we used \eqref{cond-comp:d} and the fact that $\lambda\ge 1$ in the last line.
Together with and Young's inequality and \eqref{cond-comp} we further estimate
\begin{align*}
    \mbox{II}
    &\le
    \epsilon\delta \mint_{Q} |Du-Dv|^p \dz +
    c\,\epsilon
    \mint_{Q}(|Du| +\lambda)^p\dz \nn\\
    &\le
    \delta \mint_{Q} |Du-Dv|^p \dz +
    c(p,1/\delta)\, \epsilon\lambda^p .
\end{align*}
For III we apply Lemma \ref{lem:diff-2} with $(G,\xi,\xi_o,\zeta)$ replaced by $(Dg(\cdot,t),Du,Du,Du-Dv)$ (recall that $\Psi(0)=\mathbb I_{n\times n}$ from \eqref{Psi}) and subsequently use \eqref{cond-comp:d}, the fact that $\lambda\ge 1$, Young's inequality and \eqref{cond-comp}. In this way we obtain
\begin{align*}
    \mbox{III}
    &\le
    c\,R^\beta
    \mint_{Q}
    (\lambda+|Du|)^{p-1} |Du-Dv|\dz \nn\\
    &\le
    R^\beta\delta \mint_{Q} |Du-Dv|^p \dz +
    c\,R^{\beta}
    \mint_{Q}(\lambda+|Du|)^p\dz \\
    &\le
    \delta \mint_{Q} |Du-Dv|^p \dz +
    c(p,\K,\psi_\beta,a_o,\|\omega\|_\infty,\mathcal G_\beta,1/\delta)\,R^{\beta}
    \lambda^{p}  .
\end{align*}
Finally, we use H\"older's and Poincar\'e's inequality, the second last inequality in \eqref{morrey-est}, Young's inequality and that $\lambda\ge1$ to estimate IV as follows:
\begin{align*}
    \mbox{IV}
    &\le
    \bigg(\mint_{Q} |v-u|^p \dz\bigg)^{\frac1p}
    \bigg(\mint_{Q} |g_t|^{p'} \dz \bigg)^{\frac{1}{p'}} \\
    &\le
    c\,\rho\bigg(\mint_{Q} |Dv-Du|^p \dz\bigg)^{\frac1p}
    \mathcal G_\beta^{\frac{1}{p'}} \rho^{\beta-1} \lambda^{\frac{p-2}{p'}}  \\
    &\le
    \delta\mint_{Q} |Dv-Du|^p \dz +
    c(n,p,\mathcal G_\beta,1/\delta)\,R^{\beta} \lambda^p .
\end{align*}
Joining the preceding estimates for I -- IV with \eqref{comparison-start}, applying Lemma \ref{lem:monotone} (i) and choosing $\delta=\frac{\K}{8c(p)}$ to absorb the terms involving $Dv$ from the right-hand side into the left
we arrive at the following \textit{comparison estimate}:
\begin{align}\label{comparison}
    \mint_{Q_{\rho,\lambda}^+(z_o)}
    |Du-Dv|^p \dz
    &\le
    c_{\rm comp}\, \big[\epsilon + R^{\beta}\big]
    \lambda^{p} ,
\end{align}
where $c_{\rm comp}=c_{\rm comp}(n,p,\K,\psi_\beta,a_o,\|\omega\|_\infty,\mathcal G_\beta)$.

\subsubsection{A decay estimate}
Here, we use the comparison estimate from above to deduce a decay estimate for the degenerate regime.

\begin{lemma}\label{lem:decay-p-cal}
Let $\chi\in(0,1]$. There exist constants 
$R_2=R_2(n,N,p,\K,\psi_\beta,\beta,a_o,$ $\|\omega\|_\infty,\mathcal G_\beta,\chi)\in(0,1]$,
$C_d=C_d(n,N,p,\K)\ge 1$, 
$\vartheta=\vartheta(n,N,p,\K,\chi)\in (0,1/4]$ and
$\epsilon=\epsilon(n,$ $N,p,\K,\psi_\beta,a_o,\|\omega\|_\infty,\mathcal G_\beta,\chi)\in(0,1]$
such that the following is true: Suppose that the assumptions of Proposition \ref{prop:main-lip} are satisfied with some $R\in(0,R_2]$ and that 
$Q_{\rho,\lambda}(z_o)\subset Q_R$ is a cylinder with $(x_o)_n\ge 0$ and $\lambda\ge 1$ satisfying
\begin{equation}\label{cond-dec-p-1}
    \boldsymbol G(\epsilon)^p
    \le
    \mint_{Q_{\rho,\lambda}^+(z_o)} |Du|^p\dz
    \le
    \lambda^p,
\end{equation}
then there exists $\lambda_1\in[\boldsymbol G(\epsilon),C_d\lambda]$ such that $Q_{\vartheta\rho,\lambda_1}(z_o)
\subset Q_{\rho,\lambda}(z_o)$ and
\begin{equation}\label{bound-deg}
    \mint_{Q_{\vartheta\rho,\lambda_1}^+(z_o)} |Du|^p\dz
    \le
    \lambda_1^p
\end{equation}
holds. Moreover, if one of the conditions
\begin{equation}\label{cond-dec-p-3}
    |D\ell_{z_o;\vartheta\rho,\lambda_1}|
    \le
    \tfrac{\lambda_1}{64}
    \quad\text{ or }\quad
    \chi^p |D\ell_{z_o;\vartheta\rho,\lambda_1}|^p
    \le
    \Phi_{\lambda_1}\big(z_o,\vartheta\rho, D\ell_{z_o;\vartheta\rho,\lambda_1}\big)
\end{equation}
is satisfied, then there holds
\begin{equation}\label{decay-deg}
    \lambda_1
    \le
    \lambda .
\end{equation}
\end{lemma}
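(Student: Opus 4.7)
The plan is to reduce to the homogeneous parabolic $p$-Laplacian and then invoke the DiBenedetto--Friedman a priori estimates of Lemma~\ref{lem:DiBe}. Let $v$ be the unique weak solution of the Cauchy--Dirichlet problem \eqref{system-comp} on $Q_{\rho,\lambda}^+(z_o)$. Since \eqref{cond-dec-p-1} coincides with \eqref{cond-comp}, the comparison estimate \eqref{comparison} is at our disposal, and combined with $\mint_{Q_{\rho,\lambda}^+(z_o)}|Du|^p\dz\le\lambda^p$ produces $\mint_{Q_{\rho,\lambda}^+(z_o)}|Dv|^p\dz\le c_\star\lambda^p$, so the intrinsic hypothesis \eqref{DiBe-intrinsic} for $v$ is met. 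I would then apply Lemma~\ref{lem:DiBe} to $v$ with radius $r:=\vartheta\rho$, where $\vartheta\in(0,1/4]$ is a smallness parameter to be fixed at the end. This produces $\alpha_o,\mu_o,\rho_s$ and a scaling factor $\mu$ obeying \eqref{DiBe-mu} with $Q_{\vartheta\rho,\lambda\mu}(z_o)\subset Q_{\rho/2,\lambda}(z_o)$, the sup bound $|Dv|\le\lambda\mu$ on $Q_{\vartheta\rho,\lambda\mu}^+(z_o)$, and the excess decay \eqref{DiBe-ex}.

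The candidate scaling parameter is
$$
\lambda_1:=\max\bigl\{C_\star\,\lambda\mu,\,\boldsymbol G(\epsilon)\bigr\},
$$
with $C_\star\in[\sqrt 2,2)$ depending only on $n,N,p,\K$. The upper bound $\mu\le 4\mu_o$ from \eqref{DiBe-mu} and $\boldsymbol G(\epsilon)\le\lambda$ give $\lambda_1\le C_d\lambda$ with $C_d:=4C_\star\mu_o$, while $\lambda_1\ge\lambda\mu$ forces $Q_{\vartheta\rho,\lambda_1}(z_o)\subset Q_{\vartheta\rho,\lambda\mu}(z_o)\subset Q_{\rho,\lambda}(z_o)$ (larger scaling factor = shorter time interval). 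To derive \eqref{bound-deg} I would split $|Du|^p\le 2^{p-1}(|Dv|^p+|Du-Dv|^p)$, control the first summand by the sup bound and the second by transporting \eqref{comparison} to $Q_{\vartheta\rho,\lambda_1}^+$ at the cost of a volume factor $\vartheta^{-(n+2)}(\lambda_1/\lambda)^{p-2}$; the DiBe lower bound $\mu\ge\mu_o(2\vartheta)^{\alpha_o}$ together with the smallness assumption $\epsilon+R^\beta\le\delta_\star\vartheta^{n+2+2\alpha_o}$ (with $\delta_\star$ depending on all the data) makes the comparison error negligible, and a final volume-ratio argument covers the case $\lambda_1=\boldsymbol G(\epsilon)>C_\star\lambda\mu$ by comparing $Q_{\vartheta\rho,\boldsymbol G(\epsilon)}^+\subset Q_{\vartheta\rho,\lambda\mu}^+$.

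For the decay assertion \eqref{decay-deg} I would argue by contradiction: assume $\lambda_1>\lambda$. Since $\boldsymbol G(\epsilon)\le\lambda$, necessarily $\lambda_1=C_\star\lambda\mu$, hence $\mu>1/C_\star$. The upper bound in \eqref{DiBe-mu} then forces $\max\{\vartheta\rho,\rho_s\}/\rho\ge c_1(n,N,p,\K)>0$; choosing $\vartheta$ strictly smaller than $c_1$ implies $\vartheta\rho<\rho_s$. In this regime \eqref{DiBe-mean} and the sharp version of \eqref{DiBe-ex} with the decay factor $(\vartheta\rho/\rho_s)^{2\alpha_o}$ deliver
$$
|D\ell^{(v)}_{z_o;\vartheta\rho,\lambda\mu}|\ge\tfrac{\lambda\mu}{16},\qquad
\mint_{Q_{\vartheta\rho,\lambda\mu}^+(z_o)}\bigl|Dv-D\ell^{(v)}_{z_o;\vartheta\rho,\lambda\mu}\bigr|^p\dz\le c\,(\lambda\mu)^p\vartheta^{2\alpha_o}.
$$
Transferring both estimates from $v$ to $u$ via the comparison estimate (absorbed once more by the smallness of $\epsilon+R^\beta$) and moving from $D\ell^{(v)}$ to $D\ell_{z_o;\vartheta\rho,\lambda_1}$ through Lemma~\ref{lem:V-A} yields $|D\ell_{z_o;\vartheta\rho,\lambda_1}|\ge\lambda\mu/32>\lambda_1/64$ (using $C_\star<2$), and the pointwise bound $|V_{|D\ell|}(Du-D\ell)|^2\le c(|D\ell|^{p-2}|Du-D\ell|^2+|Du-D\ell|^p)$ gives $\Phi_{\lambda_1}(z_o,\vartheta\rho,D\ell_{z_o;\vartheta\rho,\lambda_1})\le c(\lambda\mu)^p\vartheta^{4\alpha_o/p}$. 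Fixing $\vartheta$ small enough in terms of $\chi$, namely $c\vartheta^{4\alpha_o/p}\le(\chi/32)^p$, makes both conditions in \eqref{cond-dec-p-3} fail simultaneously, the required contradiction.

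The main obstacle is the delicate ordering and quantitative dependence of the three smallness parameters $\vartheta,\epsilon,R_2$. One must fix $\vartheta$ first, in terms of $n,N,p,\K,\chi$, so that the DiBe excess decay for $v$ beats both thresholds in \eqref{cond-dec-p-3}; only then can $\epsilon$ and $R_2$ be chosen in terms of $\vartheta$ and the data-dependent constants so that the comparison error $c_{\rm comp}(\epsilon+R^\beta)\lambda^p$, amplified by the volume factor $\vartheta^{-(n+2)}(\lambda_1/\lambda)^{p-2}$ upon shrinking the cylinder, remains negligible against $(\lambda\mu)^p$ both in the verification of \eqref{bound-deg} and in the transfer of the DiBe estimates from $v$ to $u$ in the contradiction step.
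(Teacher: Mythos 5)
Your overall scheme is the paper's: compare $u$ with the solution $v$ of \eqref{system-comp}, use the comparison estimate \eqref{comparison} to verify \eqref{DiBe-intrinsic}, transfer the DiBenedetto--Friedman estimates of Lemma \ref{lem:DiBe} ($\sup$-bound, excess decay, and the lower bound \eqref{DiBe-mean}) back to $u$, and exploit the dichotomy through $\rho_s$; your contradiction formulation of the second part is just the contrapositive of the paper's direct derivation that \eqref{cond-dec-p-3} forces $\rho_s\le c\,\vartheta\rho$ and hence $\mu\le c\,\vartheta^{\alpha_o}$. The genuine gap is in the application of Lemma \ref{lem:DiBe}: you invoke it at the center $z_o$ with $r=\vartheta\rho$, but that lemma requires either $(x_o)_n\ge r$ or $(x_o)_n=0$, while the present lemma only assumes $(x_o)_n\ge 0$. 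In the configuration $0<(x_o)_n<\vartheta\rho$ -- precisely the near-boundary case this paper is about -- none of \eqref{DiBe-sup}, \eqref{DiBe-ex}, \eqref{DiBe-mean} is available on your cylinder. The paper's proof handles this by re-centering at the orthogonal projection $\mathfrak z_o=z_o'$ onto $\Gamma$ with the doubled radius $\widetilde\vartheta\rho=2\vartheta\rho$ (cf. \eqref{def-frac-z}, \eqref{def-tilde-vartheta}), using the inclusions $Q^+_{\vartheta\rho,\lambda_1}(z_o)\subset Q^+_{\widetilde\vartheta\rho,\lambda\mu}(\mathfrak z_o)\subset Q^+_{\rho/2,\lambda}(\mathfrak z_o)\subset Q^+_{\rho,\lambda}(z_o)$ and an enlarged constant $c_*$ in \eqref{DiBe-intrinsic}. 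Relatedly, your transfer of \eqref{DiBe-mean} to $|D\ell_{z_o;\vartheta\rho,\lambda_1}|$ tacitly identifies two different affine quantities: when $(x_o)_n<\vartheta\rho/2$ the map $\ell_{z_o;\vartheta\rho,\lambda_1}$ is the boundary-type function $(D_nu)^+_{z_o;\vartheta\rho,\lambda_1}x_n$, while the DiBenedetto quantity $D\ell^{(v)}$ lives at the projected center and larger radius; bridging them requires the explicit chain \eqref{mean-lb} (triangle inequality, comparison estimate, quasi-minimality of the means), and Lemma \ref{lem:V-A}, which concerns the $V$-function excess, is not the right tool for that step.

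Two smaller quantitative points. First, the choice $C_\star\in[\sqrt2,2)$ does not work at the lower end: to get \eqref{bound-deg} you need $2^{p-1}(\lambda\mu)^p$ to be a definite fraction of $\lambda_1^p$, i.e. $C_\star>2^{1-1/p}$, so $C_\star=\sqrt2$ fails for every $p>2$; the paper simply takes $\lambda_1=\max\{2\lambda\mu,\boldsymbol G(\epsilon)\}$, and the strictness you wanted from $C_\star<2$ is unnecessary, since the lower bound you obtain is $\lambda\mu/16$ minus small errors, which exceeds $\lambda\mu/32=\lambda_1/64$ anyway. Second, your smallness condition on $\epsilon+R^\beta$ must be strong enough to survive the volume factor $\vartheta^{-(n+2)}(\lambda_1/\lambda)^{p-2}$ both in \eqref{bound-deg} and in the transfer step; the paper's choice $\epsilon+R_2^\beta\le\vartheta^{n+4+p}/c_{\rm comp}$ does this cleanly, and your exponent should be checked against it. These are bookkeeping issues; the missing boundary re-centering is the substantive omission.
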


\begin{proof}
We let $\vartheta\in(0,1/4]$ to be fixed later and choose $\epsilon,R_2\in(0,1]$ such that
\begin{equation}\label{eps-rho}
    \epsilon + R_2^{\beta}
    \le
    \frac{\vartheta^{n+4+p}}{c_{\rm comp}},
\end{equation}
where $c_{\rm comp}$ denotes the constant from the comparison estimate \eqref{comparison} depending on $n,p,$ $\K,\psi_\beta,a_o,\|\omega\|_\infty,\mathcal G_\beta$.
Next, we denote by
$$
	v\in L^p\big(\Lambda_{\rho,\lambda}(t_o);W^{1,p}(B_\rho^+(x_o),\R^N)\big)
$$ 
the unique solution of the comparison problem \eqref{system-comp} on $Q_{\rho,\lambda}^+(z_o)$. Due to \eqref{cond-dec-p-1} we know that the comparison estimate \eqref{comparison} holds true, that is
\begin{align}\label{comparison-}
    \mint_{Q_{\rho,\lambda}^+(z_o)}
    |Du-Dv|^p \dz
    \le
    c_{\rm comp}\, \big[\epsilon + R^{\beta}\big]
    \lambda^p
    \le
    \vartheta^{n+4+p}\,
    \lambda^p .
\end{align}
Combining this with assumption \eqref{bound-deg} we obtain
\begin{align*}
    \mint_{Q_{\rho,\lambda}^+(z_o)}
    |Dv|^p \dz
    \le
    2^{p-1}\bigg[\int_{Q_{\rho,\lambda}^+(z_o)}
    |Du-Dv|^p \dz +
    \int_{Q_{\rho,\lambda}^+(z_o)} |Du|^p \dz\bigg]
    \le
    2^p \lambda^p ,
\end{align*}
and therefore we are in the position to apply Lemma \ref{lem:DiBe} to the function $v$.
In order to take advantage of the lower bound \eqref{DiBe-mean} we need to distinguish the cases $(x_o)_n\ge\vartheta\rho$ and $(x_o)_n<\vartheta\rho$, where $\vartheta\in(0,1/4]$ is a parameter that will be fixed at the end of the proof. 
More precisely, with the abbreviation 
\begin{equation}\label{def-frac-z}
	\mathfrak z_o:=
	\begin{cases}
		z_o
		&\mbox{if $(x_o)_n\ge\vartheta\rho$}\\[3pt]
		z_o':=((x_o)_1,\dots,(x_o)_{n-1},0,t_o)
		&\mbox{if $(x_o)_n<\vartheta\rho$}
	\end{cases}
\end{equation}
we have from the last estimate
\begin{align*}
    \mint_{Q_{\rho/2,\lambda}^+(\mathfrak z_o)}
    |Dv|^p \dz
    \le
    2^{n+p+3} \lambda^p .
\end{align*}
Therefore, we may apply Lemma \ref{lem:DiBe} to the function $v$ with the constant $c_\ast\equiv 2^{n+p+3}$, $\mathfrak z_o$ instead of $z_o$ and the choice $r\equiv \widetilde\vartheta\rho$, where 
\begin{equation}\label{def-tilde-vartheta}
	\widetilde\vartheta:=
	\begin{cases}
		\vartheta
		&\mbox{if $(x_o)_n\ge\vartheta\rho$}\\[3pt]
		2\vartheta
		&\mbox{if $(x_o)_n<\vartheta\rho$.}
	\end{cases}
\end{equation}
The application of the lemma yields $\alpha_o=\alpha_o(n,p,\K)\in (0,1)$, $\mu_o=\mu_o(n,N,p,\K)\ge 1$ and a parameter $\mu$
satisfying
\begin{equation}\label{mu}
    \mu_o \bigg(\frac{2\max\{\widetilde\vartheta\rho,\rho_s\}}{\rho}\bigg)^{\alpha_o}
    \le
    \mu
    \le
    2\mu_o \bigg(\frac{2\max\{\widetilde\vartheta\rho,\rho_s\}}{\rho}\bigg)^{\alpha_o}, 
\end{equation}
where $\rho_s\in[0,\rho/2]$, such that $Q_{\widetilde\vartheta\rho,\lambda\mu}(\mathfrak z_o) \subset Q_{\rho/2,\lambda}(\mathfrak z_o)\subset Q_{\rho,\lambda}(z_o)$ and
\begin{equation}\label{w-sup}
    \sup_{Q_{\widetilde\vartheta\rho,\lambda\mu}^+(\mathfrak z_o)} |Dv|
    \le
    \lambda\mu
\end{equation}
and
\begin{equation}\label{w-apriori}
    \mint_{Q_{\widetilde\vartheta\rho,\lambda\mu}^+(\mathfrak z_o)}
    \big|Dv-D\ell^{(v)}_{\mathfrak z_o;\widetilde\vartheta\rho,\lambda\mu}\big|^s\dz
    \le
    c~ \lambda^s\mu^s\min\Big\{1,\frac{\vartheta\rho}{\rho_s}\Big\}^{2\alpha_o}\, \quad\mbox{for $s\in[2,p]$}
\end{equation}
holds, where $c$ depends on $n,N,p,\K$.
Note that in the case $\rho_s=0$ one has to interpret $\widetilde\vartheta\rho/\rho_s=\infty$ in \eqref{w-apriori}.
Finally, if $\widetilde\vartheta\rho<\rho_s$ we additionally have 
\begin{equation}\label{w-mean}
    \big|D\ell^{(v)}_{\mathfrak z_o;\widetilde\vartheta\rho,\lambda\mu}\big|
    \ge
    \frac{\lambda\mu}{16}\, .
\end{equation}

We now set 
\begin{equation*}
	\lambda_1:=\max\{2\lambda\mu, \boldsymbol G(\epsilon)\}
\end{equation*} 
and note that $\lambda_1\le 4\mu_o\lambda=: C_d\lambda$ and moreover 
$$
	Q_{\vartheta\rho,\lambda_1}(z_o) 
	\subset
	Q_{\widetilde\vartheta\rho,\lambda_1}(\mathfrak z_o) 
	\subset
	Q_{\widetilde\vartheta\rho,\lambda\mu}(\mathfrak z_o) 
	\subset 
	Q_{\rho,\lambda}(z_o).
$$
Furthermore, from \eqref{comparison-} and \eqref{w-sup} and the fact that $\vartheta\le1/4$ we get
\begin{align*}
    \mint_{Q_{\vartheta\rho,\lambda_1}^+(z_o)}
    |Du|^p \dz
    & \le
    2^{p-1}\bigg[\vartheta^{-(n+2)}\Big(\frac{\lambda_1}{\lambda}\Big)^{p-2} 
    \mint_{Q_{\rho,\lambda}^+(z_o)}\!
    |Du-Dv|^p \dz +
    \sup_{Q_{\widetilde\vartheta\rho,\lambda\mu}^+(\mathfrak z_o)}\!\! |Dv|^p \bigg] \\
    &\le
    2^{p-1} \bigg[ \vartheta^{2+p}\,\Big(\frac{\lambda_1}{\lambda}\Big)^{p-2} \lambda^p +
    (\lambda\mu)^p \bigg] 
    \le
    2^{p-1} \bigg[ \frac{\vartheta^{2}}{4^p}\cdot\frac{\lambda_1^p}{2^{2}\mu^{2}} +
    \frac{\lambda_1^p}{2^{p}} \bigg]
    \le
    \lambda_1^p ,
\end{align*}
where in the last step we used the fact that $\mu\ge \vartheta^{\alpha_o}\ge\vartheta$ which is a consequence of \eqref{mu}. This proves the first assertion of the lemma, i.e. \eqref{bound-deg}.

It remains to prove the second part of the lemma, concerning the bound \eqref{decay-deg} for $\lambda_1$. From now on we can additionally assume that condition \eqref{cond-dec-p-3} is in force.
Moreover, it is enough to consider the case where $\lambda_1=2\lambda\mu$, since otherwise we trivially have 
$\lambda_1=\boldsymbol G(\epsilon)\le\lambda$ by hypothesis \eqref{cond-dec-p-1}.
In order to prove \eqref{decay-deg} for $\lambda_1=2\lambda\mu$ we shall first derive a bound of the form $\rho_s\le c\,\vartheta\rho$ with some constant $c\ge 1$. For this aim we may assume that $\rho_s>\widetilde\vartheta\rho$ since otherwise the bound is satisfied with $c=2$. But this ensures the validity of \eqref{w-mean}, so that we have
\begin{align}\label{mean-lb}
    &|D\ell_{z_o;\vartheta\rho,\lambda_1}| \\ \nn
    &\phantom{mm}\ge
    |D\ell^{(v)}_{\mathfrak z_o;\widetilde\vartheta\rho,\lambda\mu}| -
    |D\ell^{(v)}_{\mathfrak z_o;\widetilde\vartheta\rho,\lambda\mu} - 
    D\ell_{\mathfrak z_o;\widetilde\vartheta\rho,\lambda\mu}| -
    |D\ell_{\mathfrak z_o;\widetilde\vartheta\rho,\lambda\mu} - 
    D\ell_{z_o;\vartheta\rho,\lambda_1}| \\ \nn
    &\phantom{mm}\ge
    \frac{\lambda\mu}{16} -
    \mint_{Q_{\widetilde\vartheta\rho,\lambda\mu}^+(\mathfrak z_o)}
    |Du-Dv|\dz  -
    \mint_{Q_{\vartheta\rho,\lambda_1}^+(z_o)} 
    |Du - D\ell_{\mathfrak z_o;\widetilde\vartheta\rho,\lambda\mu}| \dz.
\end{align}
From \eqref{comparison-} and the fact that $\vartheta\le \vartheta^{\alpha_o}\le\mu$ we infer
\begin{align}\label{comparison-Qnew}
    \mint_{Q_{\widetilde\vartheta\rho,\lambda\mu}^+(\mathfrak z_o)} |Du-Dv|^p\dz
    &\le
    \vartheta^{-(n+2)} \mu^{p-2} \mint_{Q_{\rho,\lambda}^+(z_o)} |Du-Dv|^p\dz \\
    &\le
    \vartheta^{p+2} \mu^{p-2} \lambda^p
    \le
    \vartheta^{p} \mu^p \lambda^p . \nn
\end{align}
The last inequality together with H\"older's inequality and \eqref{w-apriori} yields for $s\in\{2,p\}$ that
\begin{align}\label{v-decay}
    &\mint_{Q_{\widetilde\vartheta\rho,\lambda\mu}^+(\mathfrak z_o)} 
    |Du-D\ell_{\mathfrak z_o;\widetilde\vartheta\rho,\lambda\mu}|^s\dz \\
    &\quad\le
    c\bigg[
    \mint_{Q_{\widetilde\vartheta\rho,\lambda\mu}^+(\mathfrak z_o)} 
    |Du-Dv|^s\dz +
    \mint_{Q_{\widetilde\vartheta\rho,\lambda\mu}^+(\mathfrak z_o)}
    |Dv-D\ell^{(v)}_{\mathfrak z_o;\widetilde\vartheta\rho,\lambda\mu}|^s\dz 
    \bigg] \nn\\
    &\quad\le
    c\bigg[
    \vartheta^s \mu^s \lambda^s + 
    \lambda^s\mu^s \Big(\frac{\vartheta\rho}{\rho_s}\Big)^{2\alpha_o}\bigg]
    \le
    c\, \lambda^s\mu^s \Big(\frac{\vartheta\rho}{\rho_s}\Big)^{2\alpha_o},
    \nn
\end{align}
where $c=c(n,N,p,\K)$. This together with H\"older's inequality immediately implies
\begin{align}\label{v-decay-}
    \mint_{Q_{\vartheta\rho,\lambda_1}^+(z_o)} 
    |Du - D\ell_{\mathfrak z_o;\widetilde\vartheta\rho,\lambda\mu}| \dz 
    &\le
    2^{n+2} 2^{p-2}\mint_{Q_{\widetilde\vartheta\rho,\lambda\mu}^+(\mathfrak z_o)}
    |Du-D\ell_{\mathfrak z_o;\widetilde\vartheta\rho,\lambda\mu}|\dz \\
    &\le
    c(n,N,p,\K)\, \lambda\mu \Big(\frac{\vartheta\rho}{\rho_s}\Big)^{\alpha_o}.
    \nn
\end{align}
Joining \eqref{comparison-Qnew} and \eqref{v-decay-} with \eqref{mean-lb} we find that
\begin{align}\label{mean-lbound}
    |D\ell_{z_o;\vartheta\rho,\lambda_1}|
    \ge
    \frac{\lambda\mu}{16} -
    \vartheta\lambda\mu  -
    c\, \lambda\mu \Big(\frac{\vartheta\rho}{\rho_s}\Big)^{\alpha_o}
    \ge
    \frac{\lambda\mu}{16} -
    c\, \lambda\mu \Big(\frac{\vartheta\rho}{\rho_s}\Big)^{\alpha_o}
\end{align}
for $c=c(n,N,p,\K)$.
At this point we take advantage of the additional assumption \eqref{cond-dec-p-3}. We first consider the case where \eqref{cond-dec-p-3}$_1$ holds. Then, from \eqref{mean-lbound} and \eqref{cond-dec-p-3}$_1$ we get
\begin{align*}
    \frac{\lambda\mu}{16} -
    c\, \lambda\mu \Big(\frac{\vartheta\rho}{\rho_s}\Big)^{\alpha_o}
    \le
    |D\ell_{z_o;\vartheta\rho,\lambda_1}|
    \le
    \frac{\lambda_1}{64}
    =
    \frac{\lambda\mu}{32},
\end{align*}
which proves that
\begin{align*}
    \rho_s
    \le
    (32 c)^{\frac{1}{\alpha_o}}\,\vartheta\rho
    =
    c(n,N,p,\K)\,\vartheta\rho .
\end{align*}
On the other hand, if \eqref{cond-dec-p-3}$_2$ holds true we first infer a bound for
$\Phi_{\lambda_1}(\vartheta\rho)$. This is a consequence of the quasi-minimality of $D\ell_{z_o;\vartheta\rho,\lambda_1}$ with respect to the mapping
$\xi\mapsto \tmint_{Q_{\vartheta\rho,\lambda_1}^+(z_o)}|Du-\xi|^s\dz$ for $s\in\{2,p\}$ and 
\eqref{bound-deg} and \eqref{v-decay} applied with $s=2$ and $s=p$:
\begin{align*}
    &\Phi_{\lambda_1}\big(z_o,\vartheta\rho, D\ell_{z_o;\vartheta\rho,\lambda_1}\big) \\
    &\phantom{m}\le
    c\mint_{Q_{\vartheta\rho,\lambda_1}^+(z_o)}
    |Du-D\ell_{\mathfrak z_o;\widetilde\vartheta\rho,\lambda\mu}|^p\dz +
    c\,\lambda_1^{p-2}
    \mint_{Q_{\vartheta\rho,\lambda_1}^+(z_o)}
    |Du-D\ell_{\mathfrak z_o;\widetilde\vartheta\rho,\lambda\mu}|^2\dz \\
    &\phantom{m}\le
    c\mint_{Q_{\widetilde\vartheta\rho,\lambda\mu}^+(\mathfrak z_o)}
    |Du-D\ell_{\mathfrak z_o;\widetilde\vartheta\rho,\lambda\mu}|^p\dz +
    c\,\lambda_1^{p-2}
    \mint_{Q_{\widetilde\vartheta\rho,\lambda\mu}^+(\mathfrak z_o)}
    |Du-D\ell_{\mathfrak z_o;\widetilde\vartheta\rho,\lambda\mu}|^2\dz \\
    &\phantom{m}\le
    c\,\lambda^p\mu^p \Big(\frac{\vartheta\rho}{\rho_s}\Big)^{2\alpha_o} +
    c\,\lambda_1^{p-2}\lambda^2\mu^2 \Big(\frac{\vartheta\rho}{\rho_s}\Big)^{2\alpha_o} 
    \le
    c\, \lambda^p\mu^p \Big(\frac{\vartheta\rho}{\rho_s}\Big)^{2\alpha_o}     ,
\end{align*}
where $c=c(n,N,p,\K)$. Combining this estimate with \eqref{mean-lbound} and \eqref{cond-dec-p-3}$_2$ we get
\begin{align*}
    \frac{\lambda\mu}{16}
    &\le
    |D\ell_{z_o;\vartheta\rho,\lambda_1}| +
    c\, \lambda\mu \Big(\frac{\vartheta\rho}{\rho_s}\Big)^{\alpha_o} \\
    &\le
    \chi^{-1} \Phi_{\lambda_1}^{\frac1p}\big(z_o,\vartheta\rho, D\ell_{z_o;\vartheta\rho,\lambda_1}\big) +
    c\, \lambda\mu \Big(\frac{\vartheta\rho}{\rho_s}\Big)^{\alpha_o}
    \le
    c\,(\chi^{-1}+1) \lambda\mu \Big(\frac{\vartheta\rho}{\rho_s}\Big)^{\frac{2\alpha_o}{p}},
\end{align*}
which implies
\begin{align*}
    \rho_s
    \le
    \big[16c\,(\chi^{-1}+1)\big]^{\frac{p}{2\alpha_o}} \vartheta\rho
    =
    c(n,N,p,\K,\chi)\,\vartheta\rho .
\end{align*}
Therefore, in any case we have
$\rho_s \le c\,\vartheta\rho$,
where $c$ depends at most on $n,N,p,\K,\chi$. By \eqref{mu} we therefore conclude
\begin{align*}
    \lambda_1
    =
    2\lambda\mu
    \le
    4\mu_o \Big(\frac{2\rho_s}{\rho}\Big)^{\alpha_o}\lambda
    \le
    4\mu_o (2c\,\vartheta)^{\alpha_o}\lambda
    =
    c(n,N,p,\K,\chi)\, \vartheta^{\alpha_o}\lambda.
\end{align*}
Now we choose $\vartheta\in(0,1/4]$ in dependence of $n,N,p,\K,\chi$ in such a way that
\begin{align*}
    c\, \vartheta^{\alpha_o}
    \le
    1.
\end{align*}
This proves \eqref{decay-deg}. Having fixed $\vartheta$ we can perform the choices of $\epsilon$ and $R_2$ according to \eqref{eps-rho}. Note that this choice amounts in the dependencies of the parameters indicated in the statement of the lemma. This finishes the proof of the lemma.
\end{proof}

\subsection{Concluding the proof}

In order to complete the proof Propositon \ref{prop:main-lip} we now proceed in two steps. We first show that the spatial gradient belongs to $L^\infty$ and subsequently use this fact together with the parabolic Poincar\'e inequality to conclude that $u$ is Lipschitz continuous with respect to the parabolic metric.

\subsubsection{Boundedness of the spatial gradient}
We first fix the constants from Proposition \ref{prop:iter-a-cal} and Lemma \ref{lem:decay-p-cal}.
By 
$\theta\in(0,8^{-p/\beta}]$, $R_1\in(0,1]$, $\epsilon_1\in(0,1]$, $\epsilon_2\in(0,\theta^{n+2}]$ and $\bar c\ge 1$ we denote the constants from Proposition \ref{prop:iter-a-cal} depending on 
$n,N,p, \K, \psi_\beta, \beta,a_o, \|\omega\|_\infty, \mathcal G_\beta$.
Next, we fix the constants of Lemma \ref{lem:decay-p-cal} for the choice $\chi=\epsilon_2$. This yields 
$C_d=C_d(n,N,p,\K)\ge 1$ and 
$R_2\in(0,1]$,
$\vartheta\in (0,1/4]$, 
$\epsilon\in(0,1]$ 
depending at most on 
$n,N,p, \K, \beta,\psi_\beta, a_o, \|\omega\|_\infty, \mathcal G_\beta$. Finally, we let
\begin{equation*}
	\epsilon_o
	:=
	\min\{\epsilon,\epsilon_1\}
	\quad\mbox{and}\quad
	R_o
	:=
	\min\{R_1,R_2\}	.
\end{equation*}
Suppose now that the assumptions of Proposition \ref{prop:main-lip} are satisfied with some $R\in(0,R_o]$ and that $z_o\in Q_R^+$ is a Lebesgue point of $Du$, i.e. there holds
\begin{equation*}
    Du(z_o)
    =
    \lim_{r\downarrow 0}
    (Du)_{z_o;r}^+
    \qquad\mbox{and}\qquad
    \lim_{r\downarrow 0}
    \mint_{Q_r^+(z_o)} |Du-(Du)_{z_o;r}^+|\dz
    =
    0.
\end{equation*}
We choose $\rho\in(0,R]$ such that $Q_{\rho}(z_o)\subset Q_R$ and define
\begin{equation*}
    \lambda_o
    :=
    \bigg(\mint_{Q_{\rho}^+(z_o)}|Du|^p\dz\bigg)^\frac12 + \boldsymbol G(\epsilon_o),
\end{equation*}
where $\boldsymbol G(\epsilon_o)$ is defined in \eqref{def-Geps}.
In the following we will prove that there exists a non-increasing sequence of radii $0\le\rho_i\le\vartheta^i\rho$ and $\lambda_i>0$ such that for any $i\in\N_0$ there holds $\boldsymbol G(\epsilon_o)\le\lambda_i\le \lambda_o$ and
\begin{equation}\label{bound-i}
\left\{
\begin{array}{cl}
    |Du(z_o)|
    \le
    2C_d\,\lambda_o
    &\qquad\mbox{if $\rho_i=0$,}\\[7pt]
    \displaystyle\boldsymbol G(\epsilon_o)^p
    \le
    \mint_{Q_{\rho_i,\lambda_i}^+(z_o)}|Du|^p\dz
    \le
    \lambda_i^p
    &\qquad\mbox{if $\rho_i>0$.} 
\end{array}\right.
\end{equation}
The assertion will be proved by iteration. We start with the case $i=0$. Here, we define
$\rho_o:=\rho$ if $\tmint_{Q_{r,\lambda_o}^+(z_o)}|Du|^p\dz>\boldsymbol G(\epsilon_o)^p$ for any $r\in(0,\rho]$ and 
\begin{equation*}
    \rho_o
    :=
    \inf\bigg\{ r\in(0,\rho] :
    \mint_{Q_{r,\lambda_o}^+(z_o)}|Du|^p\dz
    \le
    \boldsymbol G(\epsilon_o)^p \bigg\}
\end{equation*}
otherwise.
We now distinguish three cases. In the case $\rho_o=0$ we use the fact that $z_o$ is a Lebesgue point of $Du$ to infer that
\begin{align*}
    |Du(z_o)|
    &=
    |\lim_{r\downarrow 0}(Du)_{z_o;r}^+|
    \le
    \limsup_{r\downarrow 0}|(Du)_{z_o;r,\lambda_o}^+| +
    \limsup_{r\downarrow 0}|(Du)_{z_o;r,\lambda_o}^+ - (Du)_{z_o;r}^+| \\
    &\le
    \limsup_{r\downarrow 0} \bigg(\mint_{Q_{r,\lambda_o}^+(z_o)}|Du|^p\dz \bigg)^\frac1p +
    \limsup_{r\downarrow 0}
    \mint_{Q_{r,\lambda_o}^+(z_o)}
    |Du - (Du)_{z_o;r}^+|\dz \\
    &\le
    \boldsymbol G(\epsilon_o) +
    \limsup_{r\downarrow 0} \lambda_o^{p-2}
    \mint_{Q_{r}^+(z_o)} |Du - (Du)_{z_o;r}^+|\dz
    =
    \boldsymbol G(\epsilon_o)
    \le\lambda_o.
\end{align*}
Setting $\rho_i=0$ and $\lambda_i=\lambda_o$ for any $i\in\N$ this proves \eqref{bound-i}$_i$ for any $i\in\N_0$ and we can stop the iteration scheme.

In the case $\rho_o=\rho$ we have
\begin{align*}
    \boldsymbol G(\epsilon_o)^p
    \le
    \mint_{Q_{\rho_o,\lambda_o}^+(z_o)}|Du|^p\dz
    =
    \mint_{Q_{\rho,\lambda_o}^+(z_o)}|Du|^p\dz
    \le
    \lambda_o^{p-2}
    \mint_{Q_{\rho}^+(z_o)}|Du|^p\dz
    \le
    \lambda_o^p,
\end{align*}
which proves \eqref{bound-i}$_0$.
Finally, in the case $0<\rho_o<\rho$ there holds
\begin{align*}
    \mint_{Q_{\rho_o,\lambda_o}^+(z_o)}|Du|^p\dz
    =
    \boldsymbol G(\epsilon_o)^p
    \le
    \lambda_o^p,
\end{align*}
which again shows \eqref{bound-i}$_0$. This finishes the proof for $i=0$.

We now suppose that $i\ge 0$ and that $\rho_j, \lambda_j$ have already been constructed for $j=0,1,\dots,i$ in such a way that \eqref{bound-i} holds. In the case that $\rho_i=0$ there is nothing to do, since then also $\rho_k=0$ and \eqref{bound-i}$_k$ holds for any $k\ge i$. Therefore, it is enough to consider the case where $\rho_i>0$ and the second case of \eqref{bound-i}$_i$ holds.
By \eqref{bound-i}$_i$ we can apply Lemma \ref{lem:decay-p-cal} with $(\rho_i,\lambda_i)$ instead of $(\rho,\lambda)$ to infer the existence of $\lambda_{i+1}\in[\boldsymbol G(\epsilon_o), C_d\lambda_i]$ such that $Q_{\vartheta\rho_i,\lambda_{i+1}}(z_o)\subset Q_{\rho_i,\lambda_i}(z_o)$ and
\begin{align}\label{bound-i+1}
    \mint_{Q_{\vartheta\rho_i,\lambda_{i+1}}^+(z_o)} |Du|^p\dz
    \le
    \lambda_{i+1}^{p}
\end{align}
holds. We now distinguish whether we are in the non-degenerate or in the degenerate regime at level $\vartheta\rho_i$. If we are in the non-degenerate regime, i.e. if
\begin{equation}\label{non-deg-i+1}
    \tfrac{\lambda_{i+1}}{64}
    \le
    |D\ell_{z_o;\vartheta\rho_i, \lambda_{i+1}}|
    \quad\!\!\text{and}\quad\!\!\!
    \Phi_{\lambda_{i+1}}\big(z_o,\vartheta\rho_i,D\ell_{z_o;\vartheta\rho_i,\lambda_{i+1}}\big)
    \le
    \epsilon_2^{p} |D\ell_{z_o;\vartheta\rho_i, \lambda_{i+1}}|^p
\end{equation}
holds we can apply \eqref{mod-gamma} from Proposition \ref{prop:iter-a-cal}. Together with the definition of $D\ell_{z_o;r}$ and the fact that $z_o$ is a Lebesgue point this yields
\begin{equation*}
    |Du(z_o)|
    \le
    2\lambda_{i+1}
    \le
    2 C_d\lambda_i
    \le
    2 C_d\lambda_o.
\end{equation*}
Setting $\rho_k=0$ and $\lambda_k=\lambda_i$ for any $k> i$ (note that here we possibly redefine $\lambda_{i+1}$) this proves \eqref{bound-i}$_k$ for any $k>i$. Therefore, in this case we have proved the assertion and can stop the iteration scheme.

On the other hand, if \eqref{non-deg-i+1} fails to hold we know that \eqref{cond-dec-p-3} is satisfied and therefore we can apply the second part of Lemma \ref{lem:decay-p-cal} to conclude that
\begin{equation*}
    \lambda_{i+1}
    \le
    \lambda_i
    \le
    \lambda_o.
\end{equation*}
We now define $\rho_{i+1}:=\vartheta\rho_i$ if 
$\tmint_{Q_{r,\lambda_{i+1}}^+(z_o)}|Du|^p\dz>\boldsymbol G(\epsilon_o)^p$ for any $r\in(0,\vartheta\rho_i]$ and 
\begin{equation*}
    \rho_{i+1}
    :=
    \inf\bigg\{ r\in(0,\vartheta\rho_i] :
    \mint_{Q_{r,\lambda_{i+1}}^+(z_o)}|Du|^p\dz
    \le
    \boldsymbol G(\epsilon_o)^p
    \bigg\}
\end{equation*}
otherwise. In the case $\rho_{i+1}=0$ we use that $z_o$ is a Lebesgue point of $Du$ to deduce that
\begin{align*}
    &|Du(z_o)|
    =
    |\lim_{r\downarrow 0}(Du)_{z_o;r}|\\
    &\quad\le
    \limsup_{r\downarrow 0}|(Du)_{z_o;r,\lambda_{i+1}}^+| +
    \limsup_{r\downarrow 0}|(Du)_{z_o;r,\lambda_{i+1}}^+ - (Du)_{z_o;r}^+| \\
    &\quad\le
    \limsup_{r\downarrow 0} \bigg(\mint_{Q_{r,\lambda_{i+1}}^+(z_o)}|Du|^p\dz \bigg)^\frac1p +
    \limsup_{r\downarrow 0}
    \mint_{Q_{r,\lambda_{i+1}}^+(z_o)}
    |Du - (Du)_{z_o;r}^+|\dz \\
    &\quad\le
    \boldsymbol G(\epsilon_o) +
    \limsup_{r\downarrow 0} \lambda_{i+1}^{p-2}
    \mint_{Q_{r}^+(z_o)} |Du - (Du)_{z_o;r}^+|\dz 
    =
    \boldsymbol G(\epsilon_o)
    \le
    \lambda_o.
\end{align*}
Setting $\rho_k=0$ and $\lambda_k=\lambda_i$ for any $k> i+1$ this proves \eqref{bound-i}$_k$ for any $k>i$. Therefore, in this case we have proved the assertion and can stop the iteration scheme.

In the case $\rho_{i+1}=\vartheta\rho_i$ we have
\begin{align*}
    \boldsymbol G(\epsilon_o)^p
    \le
    \mint_{Q_{\rho_{i+1},\lambda_{i+1}}^+(z_o)}|Du|^p\dz
    =
    \mint_{Q_{\vartheta\rho_i,\lambda_{i+1}}^+(z_o)}|Du|^p\dz
    \le
    \lambda_{i+1}^p
\end{align*}
by \eqref{bound-i+1}, while in the case $0<\rho_{i+1}<\vartheta\rho_i$ there holds
\begin{align*}
    \mint_{Q_{\rho_{i+1},\lambda_{i+1}}^+(z_o)}|Du|^p\dz
    =
    \boldsymbol G(\epsilon_o)^p
    \le
    \lambda_{i+1}^p.
\end{align*}
Hence, in both cases we have shown that \eqref{bound-i}$_{i+1}$ holds. This finishes the proof of \eqref{bound-i}.

We now come to the final proof of the bound for $|Du(z_o)|$. From \eqref{bound-i} we conclude that either 
\begin{equation*}
    |Du(z_o)|
    \le
    2C_d\, \lambda_o,
\end{equation*}
or the second case of \eqref{bound-i} holds for all $i\in\N_0$. In the latter case we use the fact that $z_o$ is a Lebesgue point of $Du$ to infer
\begin{align*}
    |Du(z_o)|
    &=
    |\lim_{i\to\infty}(Du)_{z_o;\rho_i}^+| \\
    &\le
    \limsup_{i\to\infty}|(Du)_{z_o;\rho_i,\lambda_{i}}^+| +
    \limsup_{i\to\infty}|(Du)_{z_o;\rho_i,\lambda_{i}}^+ - (Du)_{z_o;\rho_i}^+| \\
    &\le
    \limsup_{i\to\infty} \bigg(\mint_{Q_{\rho_i,\lambda_{i}}^+(z_o)}|Du|^p\dz \bigg)^\frac1p +
    \limsup_{i\to\infty}
    \mint_{Q_{\rho_i,\lambda_{i}}^+(z_o)}
    |Du - (Du)_{z_o;\rho_i}^+|\dz \\
    &\le
    \limsup_{i\to\infty} \lambda_i +
    \limsup_{i\to\infty} \lambda_{i}^{p-2}
    \mint_{Q_{\rho_i}^+(z_o)} |Du - (Du)_{z_o;\rho_i}^+|\dz 
    \le
    \lambda_o.
\end{align*}
Therefore, recalling the definition of $\lambda_o$ we deduce that in any case there holds 
\begin{equation*}
    |Du(z_o)|
    \le
    2C_d\, \lambda_o
    =
    c(n,N,p,\K)
    \bigg[\bigg(\mint_{Q_{\rho}^+(z_o)}|Du|^p\dz\bigg)^\frac12 + 
    \boldsymbol G(\epsilon_o)\bigg].
\end{equation*}
Finally, inserting the definition of $\boldsymbol G(\epsilon_o)$ from \eqref{def-Geps} we get 
\begin{equation}\label{grad-bound}
    |Du(z_o)|
    \le
    c_1
    \bigg(\mint_{Q_{\rho}^+(z_o)}|Du|^p\dz\bigg)^\frac12 + 
    c_2,
\end{equation}
where $c_1=c_1(n,N,p,\K)$ and 
$c_2=c_2(n,N,p, \K, \psi_\beta, \beta,a_o, \omega(\cdot), \mathcal G_\beta)$.
This proves the statement concerning $Du$ in Proposition \ref{prop:main-lip}.

\subsubsection{Lipschitz continuity of the solution}\label{sec:lipschitz}
It now remains to prove the Lipschitz continuity of $u$.
As already mentioned above, this is an immediate consequence of the gradient bound and the parabolic Poincar\'e inequality from Lemma
\ref{lem:poin-inter}. We let $\epsilon\in(0,R)$.
For $z_o\in Q_{R-\epsilon/2}^+$ we infer from \eqref{grad-bound} that 
\begin{equation*}
    |Du(z_o)|
    \le
    c_1
    \bigg(\mint_{Q_{\epsilon/2}^+(z_o)}|Du|^p\dz\bigg)^\frac12 + 
    c_2
    \le
    c\,\Big(\frac{R}{\epsilon}\Big)^{\frac{n+2}{2}}
    \bigg(\mint_{Q_R^+}\big(|Du|^p + 1\big)\dz\bigg)^\frac12 .
\end{equation*}
We now consider $\mathfrak z_o=(\mathfrak x_o, \mathfrak t_o)\in Q_{1-\epsilon}^+$ and $r\in (0,\epsilon/8]$.
In the case that $(\mathfrak x_o)_n\ge r$ and hence $Q_{r}(\mathfrak z_o)\subset Q_R^+$
we obtain from Poincar\'e's inequality, i.e. Lemma \ref{lem:poin-inter} applied with $\lambda=1$ and $A=0$ and the preceding estimate (note that $Q_{r}^+(\mathfrak z_o)\subset Q_{R-\epsilon/2}^+$) that
\begin{align*}
    r^{-2}\mint_{Q_{r}(\mathfrak z_o)} 
    |u - (u)_{\mathfrak z_o;r}|^2\dz 
    &\le
    c \mint_{Q_{r}(\mathfrak z_o)} |Du|^2 dz +
    c  \bigg[
    \mint_{Q_{r}(\mathfrak z_o)} |Du|^{p-1} dz +
    r^{\beta} \bigg]^2 \\
    &\le
    c\, \bigg(\mint_{Q_R^+}\big(|Du|^p + 1\big)\dz\bigg)^{p-1}
\end{align*}
for a constant 
$c=c(n,N,p, \K, \psi_\beta, \beta,a_o, \omega(\cdot), \mathcal G_\beta,\epsilon)$.
In the case that $(\mathfrak x_o)_n< r$ we apply the boundary Poincar\'e inequality from Lemma \ref{lem:poin-boundary} to infer that

\begin{align*}
    r^{-2}\mint_{Q_{r}^+(\mathfrak z_o)} 
    |u - (u)_{\mathfrak z_o;r}^+|^2\dz 
    &\le
    r^{-2}\mint_{Q_{r}^+(\mathfrak z_o)} |u|^2\dz 
    \le
    2^{n+2}r^{-2}\mint_{Q_{2r}^+(\mathfrak z_o')} |u|^2\dz \\
    &\le
    c \mint_{Q_{2r}^+(\mathfrak z_o')} |Du|^2 dz 
    \le
    c\, \mint_{Q_R^+}\big(|Du|^p + 1\big)\dz ,
\end{align*}
where by $\mathfrak z_o':=((\mathfrak x_o)_1\dots,(\mathfrak x_o)_{n-1},0,\mathfrak t_o)$ we denote the orthogonal projection of $\mathfrak z_o$ on $\Gamma$.
By the characterization of H\"older continuity of Campanato-Da Prato \cite{DaPrato:1965} it follows that $u$
is $C^{0;1, 1/2}$-continuous
in $Q_{R-\epsilon}^+$. This finishes the proof of Proposition \ref{prop:main-lip}.  
\hfill$\Box$

\bibliographystyle{plain}

\end{document}